\providecommand{\noopsort}[1]{} 
\theoremstyle{plain}
\newtheorem{theorem}{Theorem}[section]
\newtheorem{corollary}[theorem]{Corollary}
\newtheorem{lemma}[theorem]{Lemma}
\newtheorem{proposition}[theorem]{Proposition}
\newtheorem{convention}[theorem]{Convention}
\theoremstyle{definition}
\theoremstyle{remark}
\newtheorem{remark}[theorem]{Remark}
\numberwithin{equation}{section}
\newcommand{\Z}{\mathbb{Z}}\newcommand{\Q}{\mathbb{Q}}
\newcommand{\C}{\mathbb{C}}
\DeclareMathOperator{\SO}{SO}
\DeclareMathOperator{\SU}{SU}
\DeclareMathOperator{\diag}{diag}
\newcommand{\rk}{\operatorname{rank}}
\title[Cohomogeneity one manifolds with singly generated cohomology II]{Cohomogeneity one manifolds with singly generated rational cohomology II}
\author{Jason DeVito}
\date{}
\begin{document}


\begin{abstract} We classify cohomogeneity one actions on smooth, simply connected, closed manifolds with the rational cohomology of a sphere.  In particular, we show that such a manifold is diffeomorphic to a sphere, a Brieskorn variety, the Wu manifold $SU(3)/SO(3)$, one of two homogeneous spaces of the form $\mathbf{G}_2/SU(2)$, or a member of a particular four-parameter family of $7$-manifolds.
\end{abstract}
\bigskip

\maketitle


\section{Introduction}

Cohomogeneity one manifolds, that is, manifolds admitting an action by a Lie group with a one-dimensional orbit space, are of fundamental importance in Riemannian geometry, especially in the study of non-negative and positive curvature.  For example, Grove and Ziller \cite{GZ} have shown that all closed cohomogeneity one manifolds admit Riemannian metrics of non-negative Ricci curvature, and that such a manifold admits a metric with positive Ricci curvature if and only if it has finite fundamental group.  In addition, such closed cohomogeneity one manifolds always admit metrics of almost non-negative sectional curvature \cite{ST}.   Cohomogeneity one manifolds are also crucial in the construction of the most recently discovered Riemannian manifold with positive sectional curvatre \cite{Dearricott11, GroveVerdianiZiller11} and have also been used to construct metrics of non-negative sectional curvature on many of the exotic $7$-spheres \cite{GroveZiller00}.  A generalization of this construction has recently been used \cite{GoetteKerinShankar20} to equip all exotic $7$-spheres with non-negatively curved Riemannian metrics.

Manifolds of non-negative sectional curvature are expected to have a relatively simple topology.  For example, a theorem of Gromov \cite{Gromov81} gives a universal bound (depending only on the dimension) on the total sum of Betti numbers of a non-negatively curved manifold and the Bott conjecture predicts that a non-negatively curved manifold should be rationally elliptic, that is, that the rational homotopy groups $\pi_k(M)\otimes\mathbb{Q}$ are trivial for $k \geq 2\dim M$  .  As such, in the search for new candidates for interesting curvature properties, one typically looks at manifolds with simpler topologies.

From the perspective of rational homotopy theory, simply connected manifolds whose rational cohomology ring is generated by a single element are the simplest - these are the spaces with precisely one non-trivial rational homotopy group in odd degree.  For homogeneous space, and more generally biquotients (the other main sources of non-negatively curved manifolds), Kapovitch and Ziller \cite{KapovitchZiller04} classified all the simply connected examples whose rational cohomology ring is singly generated.  For cohomogeneity one manifolds, the even dimensional case has recently been solved by Kennard and the author \cite{DeVitoKennard20}.  We also point out that Wang \cite{Wang60} classified cohomogeneity one actions on standard spheres (with one family missing) but only provides a full classification when the dimension is even, but not $4$, or larger than $31$.  Straume\cite{Straume96,Straume97} has classified cohomogeneity one actions on homotopy spheres, and Asoh \cite{Asoh81,Asoh83} has classified cohomogeneity one actions on $\mathbb{Z}/2\mathbb{Z}$-homology spheres.  In this paper, we generalize the work of Asoh, Straume, and Wang, and complete the classification in the odd dimensional case.

\begin{theorem}\label{thm:main}
Let $M^n$ be a simply connected, closed manifold with the rational cohomology of $S^n$ and suppose $G$ is a compact Lie group acting via a cohomogeneity one action.  Then the action is orbit-equivalent to one of the following:
	\begin{enumerate}
	\item a sphere with a linear action,
	\item a Brieskorn variety $B^{2m-1}_d$ with a standard action,  where $m \geq 3$, $d \geq 2$, and where either $m$ is $even$ or $d$ is odd,
	\item the Wu manifold $W^5 = \SU(3)/\SO(3)$ with one of infinitely many action by $S^3\times S^1$,
	\item one of two $11$-manifolds of the form $\mathbf{G}_2/SU(2)_i$ for $i=1$ or $i=3$, with action by $SU(3)\times SU(2)$, or
	\item  a $7$-manifold $M_{(p_-,q_-),(p_+,q_+)}$ with $p_- q_+ \neq p_+ q_-$ and with action $S^3\times S^3$.
	\end{enumerate}
	
	Conversely, each of these listed manifolds is a simply connected closed cohomogeneity one rational sphere.
\end{theorem}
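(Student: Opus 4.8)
The plan is to prove the two directions separately, with the overwhelming majority of the work in the forward (classification) direction. First I would dispose of the orbit space: since $M$ is simply connected, $M/G$ cannot be $S^1$ (otherwise $M$ would fiber over $S^1$ with connected fiber $G/H$, forcing a surjection $\pi_1(M)\twoheadrightarrow\pi_1(S^1)=\Z$), so $M/G$ is a closed interval. The action is then encoded by a group diagram $H\subset\{K^-,K^+\}\subset G$, where $G/K^{\pm}$ are the two singular orbits, $G/H$ is the principal orbit, and the normal spheres satisfy $K^{\pm}/H=S^{l_{\pm}}$. After the standard normalizations (passing to a connected, almost effective, non-reducible diagram in the sense of Grove--Ziller and Hoelscher), $M$ is reconstructed as a union of normal disk bundles
\begin{equation*}
M \;=\; G\times_{K^-}D^{l_-+1}\;\cup_{G/H}\;G\times_{K^+}D^{l_++1}.
\end{equation*}

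Second, I would extract the cohomological constraints. The Mayer--Vietoris sequence of this decomposition, in which the two pieces retract to $G/K^{\pm}$ and their overlap retracts to $G/H$, reads
\begin{equation*}
\cdots \to H^k(M;\Q) \to H^k(G/K^-;\Q)\oplus H^k(G/K^+;\Q) \to H^k(G/H;\Q) \to H^{k+1}(M;\Q) \to \cdots.
\end{equation*}
Imposing $H^*(M;\Q)\cong H^*(S^n;\Q)$ severely restricts the rational cohomology of the three homogeneous spaces (forcing, in most degrees, an additivity relation between the Betti numbers of $G/K^{\pm}$ and those of $G/H$). Combined with the rational ellipticity of homogeneous spaces and the Montgomery--Samelson/Borel classification of transitive actions on spheres — which pins each pair $(K^{\pm},H)$ to a short explicit list — this reduces the problem to enumerating the compatible triples $(G;K^-,K^+)$ that share a common subgroup $H$ admitting \emph{two} sphere presentations simultaneously and satisfying the cohomological additivity.

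Third, I would carry out that enumeration, which I expect to be the main obstacle. One must run through the admissible ambient groups $G$ and all compatible pairs of sphere-presentations meeting along $H$, discard the diagrams violating the constraints of the previous step, and for each survivor identify the diffeomorphism type of $M$. The difficulty is that this is a long, delicate Lie-theoretic case analysis whose output must recover \emph{exactly} the five families and nothing more — in particular correctly isolating the sporadic examples (the Wu manifold $\SU(3)/\SO(3)$, the two $\Gtwo/\SU(2)$ spaces, and the four-parameter family of $7$-manifolds) while confirming that every remaining diagram yields a linearly-acted sphere or a standard Brieskorn variety $B^{2m-1}_d$. Where the dimension, homotopy-sphere, or $\Z/2$-homology-sphere hypotheses overlap, I would leverage the prior classifications of \cite{DeVitoKennard20,Straume96,Straume97,Asoh81,Asoh83,Wang60} to shorten the argument.

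Finally, for the converse I would verify, family by family, the four asserted properties directly from each group diagram. Closedness is automatic because $M/G$ is a compact interval; simple connectivity follows from van Kampen applied to the two disk bundles, i.e.\ the standard cohomogeneity-one criterion that the images of $\pi_1(K^-)$ and $\pi_1(K^+)$ generate $\pi_1(G)$; and the rational-sphere condition follows by running the Mayer--Vietoris computation above in reverse. Exhibiting the stated group action in each case is immediate from the explicit diagram. The sphere and Brieskorn cases here are classical, whereas the three sporadic families require a direct rational cohomology computation from their diagrams.
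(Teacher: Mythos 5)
Your setup (ruling out $M/G=S^1$, the group diagram, the double disk bundle decomposition, and the Mayer--Vietoris additivity $b_k(G/K^-)+b_k(G/K^+)=b_k(G/H)$ for $0<k<n-1$) is correct and matches the paper's preliminaries. But there is a genuine gap: your third step, the enumeration, \emph{is} the theorem, and the tools you assemble in step two are not sufficient to carry it out. Two specific failures. First, Borel's classification of transitive sphere actions does \emph{not} pin the pairs $(K^\pm,H)$ to a short list: it constrains only the effective quotient of the $K^\pm$-action on $K^\pm/H$, and the ineffective kernel --- a normal subgroup of $K^\pm$ contained in $H$ --- can a priori be an arbitrary compact group, so the set of candidate diagrams is infinite. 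The paper controls these kernels by first proving that every cohomogeneity one action on a rational sphere is \emph{primitive} (Theorem \ref{thm:primitive}), whence the two kernels intersect in a finite group (Proposition \ref{prop:finiteintersection}); your proposal never establishes primitivity or any substitute for it, yet without it even sporadic non-primitive diagrams cannot be discarded. Second, rational Mayer--Vietoris additivity gives no relation between $n$, $\ell_\pm$, and the topology of the individual orbits: it constrains sums of Betti numbers, not the cohomology of $G/H$ and $G/K^\pm$ separately, and not the dimension. The paper's enumeration only becomes finite because of the Grove--Halperin theorem on the rational homotopy type of the homotopy fiber of $G/H\to M$ (Theorem \ref{thm:GH}) combined with the connecting-homomorphism result (Proposition \ref{prop:Connecting}); these yield, e.g., $n=2(\ell_-+\ell_+)+1$ and $H^\ast(G/H;\Q)\cong H^\ast(S^{\ell_-}\times S^{\ell_+}\times S^{\ell_-+\ell_+};\Q)$ in the main case (Proposition \ref{prop:lasttop}), which in turn force corank and semisimplicity restrictions on $K^\pm$ and $G$ that drive the Lie-theoretic analysis. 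Nothing in your step two reproduces this information.

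A further omission: your framework is blind to the cases where a singular orbit is non-orientable (Cases (1)--(3) of Theorem \ref{thm:GH}). Rational coefficients see little of a non-orientable orbit's cohomology, and the paper must there abandon purely rational arguments in favor of $\Z/2$-coefficient spectral sequences, Wu classes, and $\Sq^1$ (Section \ref{sec:nonorient}) to show these cases produce only homotopy spheres and the Wu manifold. Your converse direction is essentially fine and agrees with the paper's verification-by-citation, but as it stands the forward direction defers its entire content to an unexecuted case analysis whose feasibility depends on results you neither invoke nor prove.
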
 

We actually prove a slightly stronger result.  Namely, we recover the action up to so-called normal extensions and ineffective kernel.  Given a cohomogeneity one $G$-action on a manifold $M$, if $G'\subseteq G$ is a normal subgroup of $G$ for which the restriction of the $G$-action to $G'$ is still cohomogeneity one, then we say that the $G$-action is a \textit{normal extension} of the $G'$-action.

The linear actions of Case (1) are classified by Straume \cite{Straume96}; see also \cite[Theorem 6.1]{Asoh81} for the complete list.  We also note that if $M$ is even dimensional, it must fall into this case \cite{DeVitoKennard20}.

Regarding Case (2), the Brieskorn varieties $B^{2m-1}_d$ \cite{Brieskorn66} are defined as the intersection of the zero set of the complex polynomial $z_0^d + \sum_{i=1}^m z_i^2 = 0$ with a small sphere about the origin in $\mathbb{C}^{m+1}$.  Writing $\vec{z} = (z_1,..., z_m)$, up to finite cover, the group $SO(2)\times SO(m)$ acts via $(z,A)\ast (z_0, \vec{z}) = (z^2z_0, z \vec{z} A^t)$.  When $m=8$, the restriction of this action to $SO(2)\times Spin(7)$ is still cohomogeneity one, as is the action by $SO(2)\times\mathbf{G}_2$ when $m=7$.

 When $d= 1$, $B^{2m-1}_1$ is equivariantly diffeomorphic to $S^{2m-1}$ with a linear action and when $d=2$, $B^{2m-1}$ is diffeomorphic to $T^1 S^m$.  When $m$ is even $H^m(B^{2m-1}_d)\cong \mathbb{Z}/d\mathbb{Z}$ while $B^{2n-1}$ is a homotopy sphere if both $m$ and $d$ are odd.  For completeness, we note that if $m$ is odd and $d$ is even, then $B^{2m-1}_d$ has the integral cohomology groups of $S^{m-1}\times S^m$ and when $m=2$ and $d > 1$, then $B^3_d$ is not simply connected.  We compute these cohomology groups in Appendix \ref{sec:appendix}.

In Case (3),  $W^5$ is simply connected with cohomology determined by $H_2(W^5;\Z) \cong \Z_2$.  Apart from $S^2$, it is the only example which is not $2$-connected, and the only example which is not spin.

For Case (4), the index $i$ on $SU(2)$ refers to its Dynkin index.  The two $SU(2)_i$ are the two non-trivial normal subgroups of $SO(4)\subseteq \mathbf{G}_2$.  In both cases, $SU(3)\times SU(2)_{4-i}$, acts on $\mathbf{G}_2/SU(2)_i$ by $(A,B)\ast gSU(2)_i = AgB^{-1} SU(2)_i$.  The ineffective kernel of the action is $\mathbb{Z}/2\mathbb\mathbb{Z}$ generated by $(I,-I)$.  The space $\mathbf{G}_2/SU(2)_1$ is diffeomorphic to $T^1 S^6$.  Topologically, $\mathbf{G}_2/SU(2)_3$ is the most interesting: with $\mathbb{Z}/2\mathbb{Z}$ coefficients, its cohomology ring is isomorphic to that of $S^5\times S^6$, while with $\mathbb{Z}/3\mathbb{Z}$ cofficients, it is isomorphic that of $S^3\times \mathbb{H}P^2$ \cite{McclearyZiller97}.  Every other manifold $M$ appearing in Theorem \ref{thm:main} is highly connected, and, moreover, for each $m\in \Z$, $H^\ast(M;\mathbb{Z}_m)$ is isomorphic to either $H^\ast(S^{n};\mathbb{Z}_m)$ or to $H^\ast(S^{(n-1)/2}\times S^{(n + 1)/2)}; \Z_m)$.

Finally, in Case (5), the manifolds $M_{(p_-,q_-),(p_+,q_+)}$ are found in both \cite{GroveWilkingZiller08} and \cite{EscherUltman11}; these are the $P^7_C$ manifolds of \cite{Hoelscher10}.  Each $M_{(p_-,q_-),(p_+,q_+)}$, is two-connected with third homology group isomorphic to $\Z_r$ \cite[Theorem 13.1]{GroveWilkingZiller08} where $r = p_-^2q_+^2 - p_+^2 q_-^2\neq 0$.  The $P_k$ candidates for positive sectional curvature are the subfamily consisting of the manifolds $M_{(1,1),(2k-1, 2k+1)}$.

As mentioned before, the case of rational spheres of even dimensions has already been solved \cite{DeVitoKennard20}.  This work differs from the even dimensional case in two ways.  First, the even dimensional examples always have positive Euler characteristic, which forces the existence of a singular orbit with odd codimension.  Looking at Theorem \ref{thm:GH}, this assumption leaves a limited number of possibility for the homotopy fiber of the inclusion of a principal orbit into the manifold.  This also forces at least one singular isotropy group to have full rank, with already significantly reduces the possibilities.  Second, Frank \cite{Frank13} has already classified the so-called primitive cohomogeneity one manifolds of positive Euler characteristic.  While we do show that a cohomogeneity one rational sphere must be primitive (Theorem \ref{thm:primitive}), this result is less useful in the odd dimensional case since there is no known classification of primitive cohomogeneity one manifolds when the Euler characteristic is zero.  The observation that these actions much be primitive does, however, turn out to be frequently used.

We conclude this section with an outline of the proof. Recall also that cohomogeneity one manifolds whose orbit space is an interval can be classified in terms of their so-called group diagrams \cite{Mostert57}.  A group diagram is a collection of four compact Lie groups $H, K^\pm, $ and $G$ satisfying two properties:  that $H\subseteq K^\pm \subseteq G$ and that both $K^\pm/H$ are diffeomorphic to sphere $S^{\ell_\pm}$, possibly of different dimensions, with both $\ell_\pm\geq 1$.

In dimension at most $7$, we use Hoelscher's \cite{Hoelscher10} classification, together with cohomology ring calculations of Escher and Ultman \cite{EscherUltman11} to check which examples are rational spheres.  In higher dimensions, we work as follows.  Let $\mathcal{F}$ denote the homotopy fiber of the inclusion of a principal orbit $G/H$ into $M$.  According to Grove and Halperin \cite{GroveHalperin87}, the rational homotopy type of $\mathcal{F}$ is determined up to finitely many possibilities by the orientability of the singular orbits $G/K^\pm$ and their codimensions, naturally falling into $6$ possibilities, see Theorem \ref{thm:GH}.  It turns out two of these possibilities can only occur in dimension at most $7$.

The remaining four possibilities are handled one at a time.  The first such family occurs when precisely one singular orbit is non-orientable.  In this case, we use the constrained topology of the orbits to compute the diffeomorphism types of the singular and principal orbits.  Then a simple Mayer-Vietoris argument shows that this case can only arise for actions on homotopy spheres, whose classification is already complete \cite{Straume96}.  This is done in Section \ref{sec:nonorient}.

In Section \ref{sec:exceptional}, the second and third cases are dealt with.  While the second case is trivial, the third requires significantly more work.  We first use the constrained topology to prove the groups $H,K^\pm,G$ must fall into precisely $5$ possibilities.  It turns out that each of these possibilities is encountered in \cite{Wang60}; we adapt his arguments to handle this case.

We complete the proof of Theorem \ref{thm:main}, in Section \ref{sec:last}, we handle the last case, which natural splits into two subcases depending on whether the two integers $\ell_\pm$ have same parity or not.  When they have the same parity, we again follow an approach of Wang to give a short proof that $M$ must be diffeomorphic to a standard sphere with linear action.  Lastly, we turn to the subcase where $\ell_+$ and $\ell_-$ have different parities.  This turns out to be the most interesting and most difficult case, making up roughly half of this article.  It is within this case that the Brieskorn examples in dimension $4n-1$, the actions on the homogeneous quotients of $\mathbf{G}_2$, and some some linear actions on spheres arise.   The proof in this case begins with a computation of the rational topology of the singular and principal orbits.  This knowledge, together with known classification results on the topology of Lie groups and homogeneous spaces, allows us to narrow down the form of the Lie group $G$ and the conjugacy classes of the various isotropy groups to a point where we can fully classify them.  We are then able use Asoh's classification \cite{Asoh81} to show that this information, in most cases, determines the cohomogeneity one manifold up to equivariant diffeomorphism.   For the remaining cases, we classify the group diagrams and identify them with known cohomogeneity one actions.

Finally, in Appendix \ref{sec:appendix}, we compute the cohomology groups of Brieskorn varieties.

\textbf{Acknowledgements:}  We would like to thank Lee Kennard and Krishnan Shankar for many helpful conversations.  This research was supported by NSF DMS-2105556.

\section{Preliminaries}

\subsection{Cohomogeneity one manifolds}  A smooth manifold $M$ with a smooth action of a Lie group $G$ is said to be a cohomogeneity one $G$-manifold if the orbit space $M/G$ is one-dimensional.  This is equivalent to requiring the principal orbit to have codimension $1$.  We will always restrict to the case where the manifold $M$ is closed (compact and boundaryless) and simply connected, and where $G$ is compact and connected.  A fundamental structure result of Mostert \cite{Mostert57} asserts the following.

\begin{theorem} Suppose $M$ is a simply connected closed cohomogeneity one manifold under the action of a compact Lie group $G$.  Then $M/G$ is homeomorphic to $[-1,1]$ and 

\begin{itemize}\item  points in $(-1,1)$ correspond to principal orbits $G/H$,

\item  the boundary points $\pm 1\in [-1,1]$ correspond to singular orbits $G/K^\pm$,

\item  both $K^\pm/H\cong S^{\ell_\pm}$ are diffeomorphic to spheres of positive dimension $\ell_\pm$, and

\item $M$ is equivariantly diffeomorphic to the union of two disk bundles $D(G/K^\pm) = G\times_{K^\pm} D^{\ell_\pm + 1}$ over $G/K^\pm$ glued together along their common boundary $G/H\cong G\times_{K^\pm} S^{\ell_\pm}$ via the identity map.

\item  Conversely, a group diagram $H\subseteq K^\pm \subseteq G$ with $K^\pm/H\cong S^{\ell_\pm}$ with both $\ell_\pm \geq 1$ uniquely determines a cohomogeneity manifold $M = D(G/K^+)\cup_{G/H} D(G/K^-)$.

\end{itemize}

\end{theorem}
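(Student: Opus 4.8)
The plan is to analyze the quotient map $\pi\colon M\to M/G$ through the slice theorem, which replaces the global problem by a finite collection of local linear models indexed by orbit type, and then to reassemble these models into the two–disk–bundle decomposition. First I would show that $M/G$ is a compact connected $1$–manifold with boundary. By the slice theorem, around any orbit $G\cdot p$ a $G$–invariant tube is equivariantly diffeomorphic to the associated bundle $G\times_{G_p} V_p$, where $V_p$ is the normal slice carrying the orthogonal slice representation of the isotropy group $G_p$; hence a neighborhood of $[G\cdot p]$ in $M/G$ is homeomorphic to $V_p/G_p$. Since the action has cohomogeneity one, $V_p/G_p$ is $1$–dimensional for every $p$. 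For a linear orthogonal action the orbits lie in concentric spheres, so a one–dimensional quotient forces either $G_p$ to act trivially on $V_p$ with $\dim V_p=1$ (quotient $\R$, an interior point, and by minimality of the principal isotropy this is the principal case) or $G_p$ to act transitively on the unit sphere $S^{\dim V_p-1}$ of $V_p$ (quotient $[0,\infty)$, a boundary point). Thus $M/G$ is a $1$–manifold with boundary, and compactness and connectedness leave only $S^1$ or $[-1,1]$.

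Next I would use simple connectivity to pin down the orbit space and the endpoint orbits. If $M/G\cong S^1$ then $\pi$ is a fiber bundle with connected fiber $G/H$ over $S^1$, and its homotopy exact sequence yields a surjection $\pi_1(M)\to\pi_1(S^1)=\Z$, contradicting $\pi_1(M)=1$; hence $M/G=[-1,1]$. The same idea rules out exceptional (codimension–one) singular orbits: a boundary point with $\dim V_p=1$ and reflection slice action produces a connected double cover $G/H\to G/G_p$, which would force a $\Z/2$ in $\pi_1(M)$. Consequently both endpoints $\pm1$ carry genuine singular orbits $G/K^\pm$ whose isotropy $K^\pm$ acts transitively on the slice sphere $S^{\ell_\pm}$ with $\ell_\pm\geq1$, and the stabilizer of a point of that sphere is exactly the principal isotropy $H$ — the principal isotropy groups over the connected interior all being conjugate. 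This gives the chain $H\subseteq K^\pm\subseteq G$ with $K^\pm/H\cong S^{\ell_\pm}$.

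Then I would assemble the global decomposition. The slice theorem identifies a closed tubular neighborhood of $G/K^\pm$ with the disk bundle $D(G/K^\pm)=G\times_{K^\pm}D^{\ell_\pm+1}$, mapping onto a collar $\pi^{-1}([\pm1,0])$, and its boundary with the sphere bundle $G\times_{K^\pm}S^{\ell_\pm}=G/H$, a principal orbit. Over the interior, $\pi$ is a trivial bundle $G/H\times(-1,1)$, so the region between the two tubes is absorbed into the collars; the two boundaries are canonically identified with the same $G/H$, giving the equivariant diffeomorphism $M\cong D(G/K^+)\cup_{G/H}D(G/K^-)$ glued by the identity. The converse is a direct construction: from a diagram $H\subseteq K^\pm\subseteq G$ with $K^\pm/H\cong S^{\ell_\pm}$ one forms the two disk bundles and glues them along $G/H$, producing a smooth closed manifold on which $G$ acts with orbit space $[-1,1]$, canonically up to equivariant diffeomorphism.

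The step I expect to be the main obstacle is the local–to–global bookkeeping at the singular orbits: correctly identifying the slice representation of $K^\pm$ as a transitive action on a sphere, verifying that the point–stabilizer inside each slice sphere is conjugate to the \emph{common} principal isotropy $H$ across both ends, and excluding the exceptional $\ell=0$ case via simple connectivity. The slice theorem is the workhorse throughout, but turning two local disk–bundle models into a single equivariant gluing — while tracking equivariance and the identification of both boundaries with one copy of $G/H$ — is where the genuine care lies.
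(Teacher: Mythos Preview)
The paper does not actually prove this theorem; it simply records it as a fundamental structure result of Mostert and cites \cite{Mostert57}, using it as a black box throughout. Your outline is the standard slice-theorem argument and is correct in all essentials: the local quotient $V_p/G_p$ is either $\R$ or $[0,\infty)$, giving a compact $1$-manifold with boundary; simple connectivity rules out $M/G\cong S^1$ via the long exact sequence of the fibration $G/H\to M\to S^1$; the tube around each singular orbit is the disk bundle $G\times_{K^\pm}D^{\ell_\pm+1}$; and the interior is a product since the base is contractible. One small refinement worth making explicit: your exclusion of exceptional orbits (``would force a $\Z/2$ in $\pi_1(M)$'') is right but deserves the van Kampen step---when $\ell_-=0$, the map $G/H\to G/K^-$ is a connected double cover, and pairing the quotient $\pi_1(G/K^-)\twoheadrightarrow\Z/2$ with the trivial map on $\pi_1(G/K^+)$ produces a surjection $\pi_1(M)\to\Z/2$.
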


We say $M$ is a \textit{double disk bundle} as it can be expressed as a union of two disk bundles glued along their common boundary.  This decomposition of $M$ will be frequently used in combination with the Mayer-Vietoris sequence in cohomology to relate the relatively simple topology of $M$ with that of the principal and singular leaves.  To that end note that $D(G/K^\pm)$ is homotopy equivalent to $G/K^\pm$, with a deformation retract given by contracting the disk factor to a point.  The composition $G/H \cong G\times_{K^\pm} K^\pm/H \subseteq D(G/K^\pm) \rightarrow G/K^\pm$ is simply the canonical projection map $G/H\rightarrow G/K^\pm$.  In particular, if whenever we apply the Mayer-Vietoris sequence to the double disk bundle decomposition of $M$, we will always identify the disk bundles $D(G/K^\pm)$ with $G/K^\pm$ and the inclusion $G/H\rightarrow D(G/K^\pm)$ with the projection $G/H\rightarrow G/K^\pm$.

We allow the $G$-action to have non-trivial ineffective kernel, so we can always pass to a cover of $G$.  For the duration of the paper, we will adopt the following convention regarding cohomogeneity one actions of a compact Lie group $G$ on a manifold $M$:

\begin{convention}\label{convention}:  \begin{itemize}\item  $G\cong G_0\times T^k$ is isomorphic to the product of a simply connected Lie group $G_0$.

 \item  $H\subseteq G$ will always denote the principal isotropy group.

\item  $K^\pm\subseteq G$ will denote the two singular isotropy groups.

\item  $\ell_\pm$ will denote the dimension of the fiber spheres $K^\pm/H\cong S^{\ell_\pm}$.

\item  No proper normal subgroup of $G$ acts with the same orbits as the $G$-action.

\item The phrase ``$M$ is a rational sphere" is an abbreviation for ``$M^n$ is a closed smooth simply connected manifold whose rational cohomology is isomorphic to $H^\ast(S^n;\Q)$''. 

\end{itemize}
\end{convention}

We remark that in terms of the group diagram, the condition that no normal subgroup of $G$ acts with the same orbits is equivalent to requiring that the projection of $H$ to any simple factor of $G$ is not surjective.  This also implies that the ineffective kernel of the action is at most finite.

We will occasionally need to deal with disconnected Lie groups.  In this case, we use the notation $L^0$ to denote the identity component of a Lie group $L$.

The pair of integers $\ell_\pm$ will play an important role in what follows, so we record some preliminary information regarding them.  See, e.g., \cite[Lemma 1.6]{GroveWilkingZiller08} for a proof.

\begin{proposition}\label{prop:ellinfo}  Suppose $M$ is a closed simply connected $G$-manifold with $G$ a compact Lie group acting via a cohomogeneity one action.  Then 

\begin{itemize}\item  If $G/K^\pm$ is not simply connected, then $\ell_\mp = 1$, and

\item  In particular, if both $\ell_\pm \geq 2$, the all three orbits $G/K^\pm$ and $G/H$ are simply connected, and $H$ and $K^\pm$ are connected.

\item  In addition, if $\ell_+ = 1$ and $\ell_- > 1$, then $H = H^0\cdot \mathbb{Z}_k$, $K^+ = H^0\cdot S^1 = H\cdot S^1$, and $K^- = (K^-)^0\cdot \mathbb{Z}_k$.

\end{itemize}

\end{proposition}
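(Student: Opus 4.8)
The plan is to extract everything from the homotopy long exact sequences of the two bundles $S^{\ell_\pm}\cong K^\pm/H \hookrightarrow G/H \to G/K^\pm$ and $H \hookrightarrow K^\pm \to K^\pm/H$, combined with van Kampen's theorem applied to the double disk bundle decomposition $M = D(G/K^+)\cup_{G/H} D(G/K^-)$. The only global hypotheses I use are that $M$ is simply connected and $G$ is connected, and the only local input is the slice representation at a singular orbit.

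For the first bullet I would first record that the projection $p_\pm\colon G/H \to G/K^\pm$ is always surjective on $\pi_1$, since in the sequence $\pi_1(G/H)\xrightarrow{(p_\pm)_*}\pi_1(G/K^\pm)\to \pi_0(S^{\ell_\pm})=\ast$ the last term is trivial, and that $(p_\pm)_*$ is moreover an isomorphism as soon as $\ell_\pm\geq 2$, because then $\pi_1(S^{\ell_\pm})=0$ as well. Since each disk bundle deformation retracts to $G/K^\pm$ and their overlap to $G/H$ (all connected, as $G$ is), van Kampen gives $\pi_1(M)\cong \pi_1(G/K^+)\ast_{\pi_1(G/H)}\pi_1(G/K^-)$ with amalgamating maps $(p_\pm)_*$. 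If $\ell_-\geq 2$ then $(p_-)_*$ is an isomorphism, so the pushout collapses onto $\pi_1(G/K^+)$; as $\pi_1(M)=1$ this forces $\pi_1(G/K^+)=1$, which is exactly the contrapositive of the first claim (the symmetric statement handles the other sign). The second bullet is then immediate: if both $\ell_\pm\geq 2$ the argument makes both $G/K^\pm$ simply connected, and the isomorphisms $(p_\pm)_*$ make $G/H$ simply connected too, while connectedness of $H$ and $K^\pm$ follows from $\pi_1(G/L)\to\pi_0(L)\to\pi_0(G)=\ast$, which shows a simply connected orbit $G/L$ forces $\pi_0(L)=\ast$.

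For the third bullet, suppose $\ell_+=1$ and $\ell_->1$. By the first bullet $G/K^+$ is simply connected, so $K^+$ is connected, and its action on the normal disk $D^{\ell_++1}=D^2$ factors through $K^+\to \SO(2)$; transitivity on the normal circle $K^+/H=S^1$ forces this homomorphism onto $\SO(2)$ with kernel exactly $H$, so $H\trianglelefteq K^+$ and $K^+/H\cong S^1$. Since $H^0$ is characteristic in $H$ it is normal in $K^+$, and a dimension count shows the quotient map $q\colon K^+\to K^+/H^0$ is onto a one-dimensional compact connected group, i.e.\ a circle. The crucial point is that $q$ is primitive: the sequence of $H^0\hookrightarrow K^+\to K^+/H^0$ gives $q_*\colon\pi_1(K^+)\twoheadrightarrow\pi_1(K^+/H^0)=\Z$ because $\pi_0(H^0)=\ast$. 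Choosing a lattice vector $w$ in a maximal torus $T\subseteq K^+$ with $q_*(w)=1$ yields a circle $C\cong S^1\subseteq T$ on which $q$ restricts to an isomorphism; since $\ker q=H^0$ this gives $K^+=H^0\cdot C=H\cdot C$, and because $q|_C$ is an isomorphism the group $\mathbb{Z}_k:=H\cap C=q|_C^{-1}(H/H^0)$ is cyclic of order exactly $k=|H/H^0|$ with $H=H^0\cdot\mathbb{Z}_k$. Finally, because $\ell_-\geq 2$, the sequence $0=\pi_1(S^{\ell_-})\to\pi_0(H)\to\pi_0(K^-)\to\pi_0(S^{\ell_-})=\ast$ of $H\hookrightarrow K^-\to K^-/H$ gives $\pi_0(H)\cong\pi_0(K^-)$, so the same cyclic group $\mathbb{Z}_k\subseteq H\subseteq K^-$ meets every component of $K^-$ and $K^-=(K^-)^0\cdot\mathbb{Z}_k$.

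I expect the delicate step to be the Lie-theoretic extraction in the third bullet: one must quotient by $H^0$ rather than by $H$ (the map $K^+\to K^+/H$ has degree $k$ on $\pi_1$, whereas $q\colon K^+\to K^+/H^0$ is primitive), and it is precisely the connectedness of $\ker q=H^0$ that guarantees a degree-one circle subgroup $C$, making $H\cap C$ cyclic of order exactly $k$ rather than some multiple of $k$. The matching of the component groups of $H$ and $K^-$ then rests on the clean isomorphism $\pi_0(H)\cong\pi_0(K^-)$, in which the hypothesis $\ell_-\geq 2$ is used essentially; everything else is bookkeeping with long exact sequences and van Kampen.
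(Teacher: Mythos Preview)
Your proof is correct and complete. The paper itself does not prove this proposition but simply cites \cite[Lemma 1.6]{GroveWilkingZiller08}; your argument via van Kampen on the double disk bundle decomposition, together with the long exact sequences of the sphere bundles $S^{\ell_\pm}\to G/H\to G/K^\pm$ and the slice representation at the singular orbit, is precisely the standard argument underlying that reference.
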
\label{prop:equiv}

If one wants to classify actions up to $G$-equivariant diffeomorphism, it is important to be able to recognize when two diagrams correspond to $G$-equivariantly diffeomorphic manifolds.  This problem is solved in \cite{GroveWilkingZiller08}, see also \cite[Proposition 1.3]{Hoelscher10}.

\begin{proposition}  Suppose $H\subseteq K^\pm \subseteq G$ is a group diagram.  Then any combination of the following modifications to the diagram gives rise to an equivalent $G$-manifold.

\begin{itemize}\item  Switching $K^+$ and $K^-$

\item  Conjugating each group in the diagram by the same element of $G$

\item  Replacing $K^-$ with $aK^- a^{-1}$ for any $a\in N(H)_0$, the identity component of the normalizer of $H$ in $G$.

\end{itemize}

Conversely, two diagrams describing $G$-equivariantly diffeomorphic manifolds must be related by some combination of these three operations.

\end{proposition}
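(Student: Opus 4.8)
The plan is to treat the two directions separately. For the forward direction I would exhibit, for each of the three listed modifications, an explicit $G$-equivariant diffeomorphism between the two double disk bundles. Switching $K^+$ and $K^-$ amounts to composing the orbit-space identification $M/G\cong[-1,1]$ with the reflection $t\mapsto -t$; since $M=D(G/K^+)\cup_{G/H}D(G/K^-)$ is manifestly symmetric in its two halves, this produces the same manifold. Simultaneous conjugation by $g\in G$ is handled by right translation: the assignment $xH\mapsto xg^{-1}(gHg^{-1})$ is a well-defined $G$-equivariant diffeomorphism $G/H\to G/(gHg^{-1})$, and it carries each fibre sphere $K^\pm/H$ to $(gK^\pm g^{-1})/(gHg^{-1})$, so it extends over each associated disk bundle $G\times_{K^\pm}D^{\ell_\pm+1}$ to give the required equivalence.

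The substantive operation is the third. For $a\in N(H)_0$ the right translation $R_{a^{-1}}\colon xH\mapsto xa^{-1}H$ is a $G$-equivariant self-diffeomorphism of the principal orbit $G/H$, well-defined precisely because $aHa^{-1}=H$. I would choose a path from $a$ to the identity inside $N(H)_0$, producing a $G$-equivariant isotopy from $R_{a^{-1}}$ to $\id_{G/H}$. Regluing the two disk bundles along $G/H$ using $R_{a^{-1}}$ in place of the identity changes $K^-$ to $aK^-a^{-1}$ while leaving $K^+$ fixed, and the isotopy to the identity shows the resulting manifold is $G$-equivariantly diffeomorphic to the original. This is the standard fact that regluing a boundary by an isotopically trivial equivariant diffeomorphism does not change the total space up to equivariant diffeomorphism.

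For the converse, suppose $F\colon M\to M'$ is a $G$-equivariant diffeomorphism of the manifolds determined by diagrams $H\subseteq K^\pm\subseteq G$ and $H'\subseteq K'^\pm\subseteq G$. It descends to a homeomorphism $\bar F$ of orbit spaces $[-1,1]\to[-1,1]$, which is either orientation preserving or reversing; in the reversing case I first apply the switch of the first paragraph to $M'$ and reduce to the orientation-preserving case. Then $F$ carries principal orbits to principal orbits and $G/K^\pm$ to $G/K'^\pm$. Because $F$ is $G$-equivariant it preserves isotropy exactly, $G_{F(x)}=G_x$; applying this to a base principal point and conjugating the diagram of $M'$ (the second operation), I may assume $H'=H$ and that $F$ fixes the base point $eH$. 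Restricting $F$ to the common boundary principal orbit then gives a $G$-equivariant self-diffeomorphism of $G/H$, necessarily of the form $R_n$ for some $n\in N(H)$.

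The heart of the argument is to extract from $n$ exactly the three permitted operations. Restricting $F$ to the two halves gives $G$-equivariant diffeomorphisms of disk bundles $D(G/K^\pm)\to D(G/K'^\pm)$, each inducing $R_n$ on the boundary $G/H$. Since $D(G/K^+)$ deformation retracts onto its core $G/K^+$, the induced map on cores is an equivariant diffeomorphism $G/K^+\to G/K'^+$; equivariance forces $K^+$ and $K'^+$ to be conjugate, and after the normalizations already made I arrange $K'^+=K^+$. The ambiguity of $n$ modulo the subgroup $N(H)\cap K^+$ fixing the fibre over $eK^+$ can then be absorbed so that $F$ is the identity on the $+$ half, leaving a residual boundary identification $R_{n_0}$ with $n_0\in N(H)$; the requirement that $R_{n_0}$ extend over $D(G/K'^-)$ forces $K'^-=n_0K^-n_0^{-1}$, which is exactly the third operation. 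The main obstacle, and the step I would spend the most care on, is showing $n_0\in N(H)_0$ rather than merely $n_0\in N(H)$. This connectivity input follows because, after the above normalizations, $F$ is the identity on the $+$ half, so the regluing map $R_{n_0}$ of the boundary extends over a disk bundle and is therefore isotopic, through $G$-equivariant diffeomorphisms of $G/H$, to the identity; this pins $n_0$ into the identity component of $N(H)$ and completes the proof that $F$ is realized by a combination of the three operations.
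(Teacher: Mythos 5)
First, a point of reference: the paper does not prove this proposition at all — it quotes it from Grove–Wilking–Ziller and Hoelscher — so your argument has to stand on its own. Your forward direction does: it is the standard argument (explicit equivariant diffeomorphisms for the first two operations; for the third, regluing along $R_{a^{-1}}$ and using the equivariant isotopy obtained from a path joining $a$ to $e$ inside $N(H)_0$).

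The converse, however, has a genuine gap at exactly the step you single out as the heart of the matter. Your mechanism for forcing $n_0\in N(H)_0$ is the implication: \emph{$R_{n_0}$ extends over a disk bundle, hence it is $G$-equivariantly isotopic to the identity}. That implication is false. For \emph{every} $n\in N(H)$, in any component, the right translation $R_n$ extends to a $G$-equivariant diffeomorphism of disk bundles
\[
G\times_{K^-}D^{\ell_-+1}\longrightarrow G\times_{n^{-1}K^-n}D^{\ell_-+1},\qquad [g,v]\mapsto[gn,v],
\]
where the target is formed using the slice representation of $n^{-1}K^-n$ obtained by conjugating that of $K^-$; one checks directly that this is well defined, equivariant, a diffeomorphism, and restricts to $R_n$ on the boundary $G/H$ (since $n\in N(H)$, the stabilizer of the base vector is still $H$). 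So ``extends over a disk bundle'' places no restriction whatsoever on the component of $N(H)$ containing $n_0$ — yet ruling out the non-identity components is precisely the content of the converse, because equivariant diffeomorphisms of $G/H$ correspond bijectively to $N(H)/H$, and $R_{n_0}$ is equivariantly isotopic to the identity if and only if $n_0\in N(H)_0H$. There is also an internal inconsistency upstream: once you normalize $F$ to fix the base point $eH$, equivariance forces $F$ to be the identity on that \emph{entire} principal orbit, so the ``residual'' gluing in your setup is the identity by construction and your argument never actually confronts which conjugate of $K^-$ the group $K'^-$ is; likewise, making $F$ literally the identity on the whole $+$ half is an unjustified equivariant isotopy-extension claim, not a matter of absorbing an ambiguity in $N(H)\cap K^+$.

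What actually pins the conjugating element into $N(H)_0$ is a connectedness argument in the exact-isotropy stratum, not an extension property. Identify the principal part of $M'$ equivariantly with $(-1,1)\times G/H$; the set of points whose isotropy equals $H$ (not merely lies in the conjugacy class of $H$) is $(-1,1)\times N(H)/H$. Since an equivariant diffeomorphism preserves isotropy groups exactly, $F$ carries the interior of the normal geodesic of $M$ to a connected curve in this set passing through the base point, hence lying in $(-1,1)\times (N(H)_0H)/H$. Letting the parameter tend to $\pm 1$ and using that $N(H)_0H$ is closed, the curve limits onto the singular orbits at points $n_\pm K'^\pm$ with $n_\pm\in N(H)_0H$, and exact preservation of isotropy gives $K^\pm=n_\pm K'^\pm n_\pm^{-1}$. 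Conjugating the diagram of $M'$ by $n_+$ (your second operation) and then replacing $K^-$ by its conjugate under $n_+n_-^{-1}$ (your third operation, after absorbing the harmless factor of $H\subseteq K^-$) exhibits the two diagrams as related by the allowed moves. This step — or the equivalent normal-geodesic argument with the pullback metric — is the missing ingredient your proof needs.
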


\subsection{Important groups and subgroups}

In this subsection, we record several results which will be used repeatedly.  The first, due to Borel \cite{Borel53}, is the classification of transitive actions of compact Lie groups on spheres.

\begin{proposition}  Suppose $G$ is a compact Lie group acting effectively and transitively on a sphere with isotropy group $H$.  Then $G$ and $H$ are given in Table \ref{table:transsphere}.  Conversely, for each pair $(G,H)$ in the table, $G$ acts effectively and transitively on a sphere with stabilizer $H$.

\end{proposition}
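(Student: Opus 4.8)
The plan is to treat this as the classical theorem on transitive actions on spheres (Montgomery--Samelson, Borel) and to organize a proof around the homogeneous fibration together with the rational homotopy of compact Lie groups. Since $S^{n-1}$ is connected for $n\ge 2$, the identity component $G^0$ already acts transitively, so I may assume $G$ is connected; effectiveness then lets me work with $G$ semisimple up to a central torus. Writing $H$ for the isotropy group at a point, transitivity identifies $G/H\cong S^{n-1}$ and yields a fibration $H\embedded G\to S^{n-1}$ together with the dimension equation $\dim G-\dim H=n-1$. These two inputs---the dimension count and the long exact (equivalently rational) homotopy sequence of the fibration---drive the entire argument.

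First I would record a rank dichotomy. Because $\chi(S^{n-1})=2>0$ exactly when $n-1$ is even, the Hopf--Samelson criterion gives $\rank H=\rank G$ for even-dimensional spheres and $\rank H<\rank G$ otherwise. Feeding this into the rational homotopy sequence---recalling that a compact connected Lie group is rationally a product of odd spheres $S^{2d_i-1}$ indexed by its fundamental degrees $d_i$, so that $\pi_\ast(G)\tensor\Q$ is concentrated in odd degrees with total dimension $\rank G$---sharpens the picture. For an odd sphere $S^{2m-1}$ the sequence forces $\pi_\ast(G)\tensor\Q\cong\pi_\ast(H)\tensor\Q\oplus\Q_{[2m-1]}$, i.e.\ the degrees of $G$ are exactly those of $H$ together with one extra degree $m$, so $\rank G=\rank H+1$. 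For an even sphere the equal-rank constraint forces $H$ to be a maximal-rank subgroup, which I would then run through the Borel--de Siebenthal classification.

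Next comes the case analysis. In the even case the dimension equation together with maximal rank quickly isolates $\SO(2m+1)\supseteq\SO(2m)$ acting on $S^{2m}$ and the exceptional pair $\Gtwo\supseteq\SU(3)$ acting on $S^6$. In the odd case I would exploit the degree bookkeeping: the location of the new degree $m$ controls $\pi_3(G)\tensor\Q\cong\Q^{(\#\,\mathrm{simple\ factors})}$ and hence the number of simple factors of $G$, while the dimension equation bounds the total size. Running through the classification of compact simple Lie groups and their corank-one subgroups, and matching both the degree data and $\dim G-\dim H=2m-1$, produces the classical list: $\SU(n)\supseteq\SU(n-1)$, $\Sp(n)\supseteq\Sp(n-1)$, their extensions acting on the same sphere, and the sporadic pairs $\Spin(7)\supseteq\Gtwo$ on $S^7$ and $\Spin(9)\supseteq\Spin(7)$ on $S^{15}$; the extra $\Un(1)$ factors account for the central torus permitted by Convention~\ref{convention}, explaining the $\Un(n)$ and $\Sp(n)\cdot\Un(1)$ entries. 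For the converse I would simply exhibit each listed $G$ acting by its defining matrix (or spin) representation on the relevant unit sphere, compute the isotropy of a fixed unit vector, and verify effectiveness directly.

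The main obstacle is the odd-sphere case, specifically controlling the reducible groups and the exceptional fibrations. The equality $\rank G=\rank H+1$ does \emph{not} force $G$ simple---the examples $\Sp(n)\times\Sp(1)$ and $\Sp(n)\cdot\Un(1)$ have two factors acting on a single sphere---so I cannot reduce to one simple factor and must instead use the precise degree located by the fibration to rule out spurious products of simple groups and to confirm that only these few reducible configurations survive the dimension equation. Equally delicate are the triality-related identifications behind $\Spin(7)/\Gtwo\cong S^7$ and $\Spin(9)/\Spin(7)\cong S^{15}$, where recognizing the subgroup up to conjugacy (rather than merely up to abstract isomorphism) requires the octonionic and spin-representation descriptions; these are exactly the points where a bare count of degrees and dimensions is insufficient and genuine structure theory of the exceptional groups is needed.
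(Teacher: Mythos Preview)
The paper does not prove this proposition at all; it simply records it as a classical result and attributes it to Borel \cite{Borel53}. Your sketch is therefore not competing with a proof in the paper but rather supplying one where the paper gives only a citation. The outline you give---the rank dichotomy from $\chi(S^{n-1})$, the comparison of fundamental degrees via the rational homotopy sequence of $H\to G\to S^{n-1}$, Borel--de Siebenthal in the equal-rank case, and the corank-one degree bookkeeping in the odd case---is indeed the classical strategy of Montgomery--Samelson and Borel, and you correctly flag the genuinely delicate points (the non-simple groups $\Sp(n)\times\Sp(1)$, $\Sp(n)\cdot\Un(1)$ and the triality-related exceptional pairs). As a sketch this is sound; a full proof would of course require carrying out the case analysis explicitly rather than asserting that it ``quickly isolates'' the answer, and would need at some point to pass from rational information back to integral or diffeomorphism-level statements to conclude that $G/H$ is a sphere and not merely a rational sphere.
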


\begin{table}

\begin{center}

\begin{tabular}{c|c|c}

$G$ & $H$& dimension\\

\hline

$ SO(m)$ & $SO(m-1)$ & $m-1$\\

$SU(m)$ & $SU(m-1)$ & $2m-1$ \\

$SU(m)\times S^1$ & $SU(m-1)\times S^1$ & $2m-1$\\

$Sp(m)$ & $Sp(m-1)$ & $4m-1$\\

$Sp(m)\times S^1$ & $Sp(m-1)\times S^1$ & $4m-1$\\

$Sp(m)\times Sp(1)$ & $Sp(m-1)\times Sp(1)$ & $4m-1$\\

$G_2$ & $SU(3)$ &$6$  \\

$Spin(7)$ & $G_2$ & $7$\\

$Spin(9)$ & $Spin(7)$ & $15$\\

\end{tabular}
\caption{All effective transitive actions on spheres}
\label{table:transsphere}
\end{center}
\end{table}

In all cases but the first in Table \ref{table:transsphere}, the fact that $G/H$ is a sphere with $H$ acting effectively characterizes the embedding of $H$ into $G$ up to conjugacy.  If the first entry $(SO(m),SO(m-1))$ is lifted to the universal cover $Spin(m)$, then the embedding of $Spin(m-1)$ into $Spin(m)$ is unique up to conjugacy except when $m=4$ or $m=8$.  When $m=4$, $(Spin(4),Spin(3)) = (SU(2)\times SU(2), SU(2))$, and there three such embeddings up to conjugacy, given by mapping $SU(2)$ in either diagonally, or into each factor.  When $m=8$, $Spin(7)$ embeds into $Spin(8)$ in three non-conjugate ways.  The action of the outer automorphism group of $Spin(8)$ permutes them.

Due to the exceptional isomorphisms $SU(2)\cong Spin(3)$, $SU(2)^2 \cong Spin(4)$, $Sp(2)\cong Spin(5)$, and $SU(4)\cong Spin(6)$, we also note the following.

\begin{proposition}\label{prop:mix}  If $G$ is a simple Lie group acting transitively on two different spheres of dimensions $m<n$, then $$(G,m,n)\in\{ (Spin(9), 8, 15), (Spin(7), 6,7), (SU(2), 2,3), (Sp(2), 4,7), (SU(4), 5,7)\}.$$  If $H$ is a simple Lie group appearing as the isotropy group of a transitive action of a simple group on spheres of dimensions $m<n$, then $$(H,m,n)\in \{(Spin(7), 7,15), (SU(2), 3,5), (SU(3), 6,7), (Sp(2), 5,11), (SU(4), 6, 9)\}.$$

\end{proposition}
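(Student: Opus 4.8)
The plan is to reduce both assertions to Borel's list (Table~\ref{table:transsphere}) and then carry out a finite bookkeeping whose only genuine content is the correct use of the exceptional isomorphisms $\SU(2)\cong\Spin(3)$, $\SU(2)^2\cong\Spin(4)$, $\Sp(2)\cong\Spin(5)$, and $\SU(4)\cong\Spin(6)$ recalled above. Throughout one keeps only those entries of the table whose acting group is simple, discarding the three product families $\SU(m)\times S^1$, $\Sp(m)\times S^1$, and $\Sp(m)\times\Sp(1)$.

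For the statement about $G$, I would first record, for each simple compact group, every sphere on which it acts transitively. From the simple entries of the table these are $\Spin(m)$ on $S^{m-1}$, $\SU(m)$ on $S^{2m-1}$, and $\Sp(m)$ on $S^{4m-1}$, together with the sporadic actions of $\Gtwo$ on $S^6$, $\Spin(7)$ on $S^7$, and $\Spin(9)$ on $S^{15}$. A simple group acts on two spheres of different dimension precisely when two of these descriptions agree as abstract groups, which by the classification of simple compact groups can happen only through one of the four exceptional isomorphisms, or through the sporadic $\Spin(7)$ and $\Spin(9)$ rows. Running through the possibilities one finds exactly $\SU(2)=\Spin(3)=\Sp(1)$ on $S^2$ and $S^3$; $\Sp(2)=\Spin(5)$ on $S^4$ and $S^7$; $\SU(4)=\Spin(6)$ on $S^5$ and $S^7$; $\Spin(7)$ on $S^6$ and $S^7$; and $\Spin(9)$ on $S^8$ and $S^{15}$. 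For $\Spin(m)$ with $m\ge 10$, $\SU(m)$ with $m\ge 5$, and $\Sp(m)$ with $m\ge 3$ the groups are pairwise non-isomorphic and each supports a single sphere from the table, so no further coincidence occurs, giving the five listed triples.

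For the statement about $H$, I would instead tabulate the abstract isomorphism type of the isotropy subgroup together with the sphere dimension for each simple entry, namely $\Spin(m-1)$ on $S^{m-1}$, $\SU(m-1)$ on $S^{2m-1}$, $\Sp(m-1)$ on $S^{4m-1}$, $\SU(3)$ on $S^6$, $\Gtwo$ on $S^7$, and $\Spin(7)$ on $S^{15}$, and then collect those isotropy types occurring for two different sphere dimensions. Using the exceptional isomorphisms to identify repeats yields $\SU(3)$ on $S^6$ (in $\Gtwo$) and $S^7$ (in $\SU(4)$); $\SU(4)=\Spin(6)$ on $S^6$ (in $\Spin(7)$) and $S^9$ (in $\SU(5)$); $\Sp(2)=\Spin(5)$ on $S^5$ (in $\Spin(6)$) and $S^{11}$ (in $\Sp(3)$); $\Spin(7)$ on $S^7$ (in $\Spin(8)$) and $S^{15}$ (in $\Spin(9)$); and $\SU(2)=\Sp(1)$ on $S^5$ (in $\SU(3)$) and on $S^7$ (in $\Sp(2)$). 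All remaining isotropy types occur for a single sphere only.

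The main obstacle is not any one computation but the disciplined use of the exceptional isomorphisms together with the simplicity hypothesis. The delicate point is $\Spin(4)\cong\SU(2)^2$: since it is \emph{not} simple, the fibration $\Spin(4)/\Spin(3)=S^3$ must be discarded and does \emph{not} furnish an $\SU(2)$-isotropy on $S^3$ for a simple group (indeed no simple compact group has dimension $6$), and overlooking this is the natural way to introduce a spurious low-dimensional entry. Once the non-simple product groups and $\Spin(4)$ are correctly excluded, and once one verifies that outside the finitely many small ranks governed by the exceptional isomorphisms each classical simple group carries a unique sphere both as an acting group and as an isotropy group, the two lists follow by inspection.
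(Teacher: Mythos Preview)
Your approach is exactly the one the paper intends: the proposition is stated immediately after the exceptional isomorphisms and Table~\ref{table:transsphere} with no separate proof, so the argument is precisely the finite bookkeeping you carry out. Your case analysis for the $G$-list reproduces the five stated triples.

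There is one point worth flagging. In the $H$-list you obtain $(\SU(2),5,7)$, arising from $\SU(3)/\SU(2)=S^5$ and $\Sp(2)/\Sp(1)=S^7$, whereas the proposition as printed asserts $(\SU(2),3,5)$. Your reasoning is the correct one: for $\SU(2)\cong\Spin(3)$ to occur as isotropy on $S^3$ the acting group would have to be $\Spin(4)\cong\SU(2)^2$, which is not simple, exactly as you emphasize. No simple compact group acts transitively on $S^3$ with nontrivial isotropy, so $3$ cannot appear for $H=\SU(2)$. The entry $(\SU(2),3,5)$ in the statement is therefore a typo for $(\SU(2),5,7)$; your careful handling of the $\Spin(4)$ exclusion catches this rather than introducing an error.
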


Finally, we record Table \ref{table:corank2} which will be used in two separate arguments.  Table \ref{table:corank2} consists of all pairs $(G,L)$ where $G$ is a simple simply connected Lie group, $L$ is corank $2$ in $G$, $L$ is simple, and $\dim \pi_{odd}(G/L)\otimes \Q = 2$.  The third and fourth columns indicate that $\pi_{\ell_-}(G/L)\otimes \Q \neq 0$ and $\pi_{\ell_- + \ell_+}(G/L)\otimes \Q\neq 0$, and the $\ell_+$ column is the difference of the previous two columns.   In the last column, restrictions on $n$ are given, and the word``multiple" indicates that there is more than one conjugacy class of embedding of $L$ into $G$.  The first two columns are found in \cite[Table 11]{Onishchik97}, while the computations of the remaining three columns are elementary using the long exact sequence of rational homotopy  groups associated to the bundle $L\rightarrow G\rightarrow G/L$, together with, e.g., \cite[Table 4.2]{Totaro02}, which lists all the non-trivial rational homotopy groups of simple Lie groups.  If the embedding of $L$ into $G$ is not specified, the obvious embedding is intended.

\begin{table}

\begin{tabular}{c|c|c|c|c|c}

$G$ & $L$ & $\ell_-$ & $\ell_- + \ell_+$ & $\ell_+$ & Notes\\

\hline

$SU(m)$ & $SU(m-2)$ & $2m-3$ & $2m-1$ & $2$ & $m\geq 3$, multiple when $m=4$\\

$SU(6)$ & $SO(6)$ & $9$ & $11$ & $2$ & \\

$SU(6)$ & $Sp(3)$ & $5$ & $9$ & $4$ &\\

$SU(5)$ & $Sp(2)$ & $5$ & $9$ & $4$ & multiple\\



$Spin(2m+1)$ & $Spin(2m-3)$ & $4m-5$ & $4m-1$ & $4$ &$m\geq 4$, multiple when $m=3$\\

$Spin(9)$ & $Sp(2)$ & $11$ & $15$ & $4$ \\

$Spin(9)$ & $\mathbf{G}_2$ & $7$ & $15$ & $8$ \\


$Sp(m)$ & $Sp(m-2)$ & $4m-5$ & $4m-1$ & $4$ & $m\geq 2$, multiple when $m=3$\\


$Spin(2m)$ & $Spin(2m-3)$ & $2m-1$ & $4m-5$ & $2m-4$ & $m\geq 4$\\

$Spin(8)$ & $\mathbf{G}_2$ & $7$ & $7$ & $0$\\

$\mathbf{E}_6$ & $\mathbf{F}_4$ & $9$ & $17$ & $8$\\

$\mathbf{F}_4$ & $\mathbf{G}_2$ & $15$ & $23$ & $8$\\

$\mathbf{G}_2$ & $\{e\}$ & $3$ & $11$ & $8$ & 

\end{tabular}

\caption{Pairs $(G,L)$ with $\dim \pi_{odd}(G/L)\otimes \Q = 2$ and $L$ corank $2$, simple}
\label{table:corank2}
\end{table}

\subsection{The Grove-Halperin model of a cohomogeneity one manifold}

In \cite{GroveHalperin87}, Grove and Halperin study double mapping cylinders from the perspective of rational homotopy theory.  Using the notation $\simeq_\Q$ to denote rational homotopy equivalence, their main result, adapted to our situation, can be summarized as follows.

\begin{theorem}[Grove-Halperin]\label{thm:GH} Let $\mathcal F$ be the homotopy fiber of the inclusion $G/H \to M$ of the principal orbit of a cohomogeneity one manifold. Let $h$ denote the number of non-orientable singular orbits.  Then one of the following occurs.
	\begin{enumerate}
	\item $h = 2$, both $\ell_\pm = 1$, and up to finite cover $\mathcal{F}$ has the rational homotopy type of $S^3 \times S^3 \times \Omega S^7$.
	\item $h = 1$, both $\ell_\pm = 1$, and up to finite cover $\mathcal{F}$ has the rational homotopy type of $S^1 \times S^3 \times \Omega S^5$.
	\item $h = 1$, $\ell_+ = 1$, $\ell_- \geq 3$ is odd, and $\mathcal F \simeq_\Q S^1 \times S^{2 \ell_- + 1} \times \Omega S^{2 \ell_- + 3}$.
	\item $h = 0$ and $\mathcal F \simeq_\Q S^{\ell_1} \times S^{\ell_+} \times \Omega S^{\ell_- + \ell_+ + 1}$.
	\item $h = 0$, $l_+ = \ell_-$ is even, and $\mathcal F \simeq_\Q S^{\ell_-} \times \Omega S^{\ell_- + 1}$.
	\item $h=0$, and $\mathcal F$ is rationally homotopy equivalent to one of $$ SU(3)/T^2 \times \Omega S^7 ,Sp(2)/T^2\times \Omega S^9 ,\mathbf{G}_2/T^2\times \Omega S^{13}$$ with $\ell_- = \ell_+ = 2$,  to $Sp(3)/Sp(1)^3\times \Omega S^{13}$ with $\ell_- = \ell_+ = 4$, or to $\mathbf{F}_4/Spin(8)\times \Omega S^{25}$ with $\ell_- = \ell_+ = 8$.
	\end{enumerate}
\end{theorem}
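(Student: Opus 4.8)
The plan is to follow the rational homotopy theory of double mapping cylinders of Grove and Halperin, using the hypothesis that $M$ is a rational sphere. Write $n=\dim M$, so the principal orbit has dimension $\dim(G/H)=n-1$. The first step is a reduction: the inclusion $j\colon G/H\to M$ is rationally null-homotopic. Indeed, a map into the rationalization $S^n_\Q$ is detected by its Postnikov obstructions, which lie in $H^n(G/H;\Q)$ and, when $n$ is even, in $H^{2n-1}(G/H;\Q)$; both groups vanish for dimension reasons, so $j$ is rationally null. The fibration $\mathcal{F}\to G/H\xrightarrow{j}M$ therefore trivializes rationally, and up to finite cover $\mathcal{F}\simeq_\Q (G/H)\times\Omega S^n$. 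Thus the theorem is equivalent to classifying the possible rational homotopy types of the principal orbit; as a consistency check, each of the six listed fibers is exactly $(G/H)\times\Omega S^n$ with $n=\dim(G/H)+1$ and the indicated $G/H$.

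It remains to pin down $G/H$. It is a compact homogeneous space, hence rationally elliptic, and it carries two sphere-bundle structures $S^{\ell_\pm}\to G/H\to G/K^\pm$ coming from the two disk bundles of $M=D(G/K^+)\cup_{G/H}D(G/K^-)$. The engine is the minimal Sullivan model of $M$, obtained as the homotopy fibre product of the two relative sphere-bundle models over the model of $G/H$; imposing $H^\ast(M;\Q)\cong H^\ast(S^n;\Q)$ then constrains the bundle data. Concretely this is visible through Mayer--Vietoris, which together with $\tilde H^\ast(M;\Q)$ concentrated in degree $n$ gives $H^k(G/H;\Q)\cong H^k(G/K^+;\Q)\oplus H^k(G/K^-;\Q)$ for $0<k<n-1$, and through the Gysin sequences of the two bundles, which relate the same groups via the Euler classes $e_\pm\in H^{\ell_\pm+1}(G/K^\pm;\Q)$. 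I would organise the whole argument by orientability using Proposition~\ref{prop:ellinfo}: if both $\ell_\pm\geq 2$ then all orbits are simply connected and $h=0$, whereas a non-orientable singular orbit forces the opposite fibre dimension to be $1$ and supplies the explicit circle structure $K^+=H\cdot S^1$. This is what separates $h=2$, $h=1$, and $h=0$, and the phrase ``up to finite cover'' in cases (1)--(3) records exactly the fundamental groups produced by the non-orientable orbits.

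Solving the coupled Mayer--Vietoris/Gysin system then occupies the bulk of the work. When the two fibre dimensions differ or at least one $\ell_\pm$ is odd, the Euler classes cannot conspire and the system has an essentially unique solution, forcing $H^\ast(G/H;\Q)$ to be the cohomology of a product of two rational spheres whose dimensions are determined by the data $(h,\ell_\pm,n)$ together with the circle reductions of Proposition~\ref{prop:ellinfo}. Feeding this back through the model recovers cases (1)--(4); I would treat the two parity subcases of (4) uniformly, reading the generators of the minimal model off the bundle structures.

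The main obstacle is the case $\ell_+=\ell_-$ even, where the two Euler classes can interact and the system no longer has a unique solution; it branches into three possibilities. One branch is again a product $S^{\ell_-}\times S^{\ell_-}$ (case (4)); a second degenerates to leave $G/H\simeq_\Q S^{\ell_-}$ (case (5)); and the third produces a principal orbit of positive Euler characteristic with $\dim\pi_{\mathrm{odd}}(G/H)\otimes\Q=2$ carrying two even sphere bundles (case (6)). Determining this third branch is the crux. One must classify the compact homogeneous spaces with exactly two odd rational homotopy generators that admit two $S^{\ell_-}$-bundle structures; via the possible Lie-group structures on $G$ and rational homotopy computations of the type underlying Table~\ref{table:corank2}, this forces $\ell_-\in\{2,4,8\}$ --- reminiscent of the Hopf-invariant-one restriction --- and identifies $G/H$ with exactly one of $SU(3)/T^2$, $Sp(2)/T^2$, $\mathbf{G}_2/T^2$, $Sp(3)/Sp(1)^3$, and $\mathbf{F}_4/\Spin(8)$. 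This even-equal-fibre analysis, and not any of the product-of-spheres cases, is where essentially all of the difficulty concentrates.
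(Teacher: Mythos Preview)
The paper does not prove Theorem~\ref{thm:GH}; it is quoted from Grove--Halperin \cite{GroveHalperin87} as a general structural result about double mapping cylinders, and the rest of the paper \emph{applies} it. So there is no ``paper's own proof'' to compare to beyond the citation.

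More importantly, your argument imports a hypothesis that is not in the statement: you assume $M$ is a rational sphere. Theorem~\ref{thm:GH} is asserted for an arbitrary (closed, simply connected) cohomogeneity one manifold; the rational type of $\mathcal F$ is determined solely by $(\ell_-,\ell_+)$ and the orientability data of the singular orbits, regardless of what $M$ is. Your very first step, that $j\colon G/H\to M$ is rationally null because $M\simeq_\Q S^n$, already uses this extra assumption, and everything downstream (the splitting $\mathcal F\simeq_\Q (G/H)\times\Omega S^n$, the ``consistency check,'' the Mayer--Vietoris input $\widetilde H^\ast(M;\Q)$ concentrated in one degree) depends on it. So what you have outlined is not a proof of the Grove--Halperin theorem; at best it is a route to a corollary valid only for rational spheres.

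This is not just a cosmetic issue: the logic of the paper runs in the opposite direction. For instance, in Case~(4) with $\ell_-$ and $\ell_+$ of different parity, Proposition~\ref{prop:lasttop} deduces that $G/H\simeq_\Q S^{\ell_-}\times S^{\ell_+}\times S^{\ell_-+\ell_+}$ and $n=2(\ell_-+\ell_+)+1$ \emph{from} Theorem~\ref{thm:GH} and Proposition~\ref{prop:Connecting}. Your proposal would instead need to establish the rational type of $G/H$ first in order to identify $\mathcal F$, i.e., you would have to reprove large parts of Sections~\ref{sec:nonorient}--\ref{sec:last} as inputs rather than as outputs. (Your consistency check does survive once one unwinds $\Omega S^{\ell_-+\ell_++1}\simeq_\Q S^{\ell_-+\ell_+}\times\Omega S^{2(\ell_-+\ell_+)+1}$, but that only confirms compatibility, not the general theorem.) The actual Grove--Halperin argument builds a Sullivan model for the homotopy fiber of the inclusion of the common boundary into a general double mapping cylinder, without any hypothesis on $H^\ast(M;\Q)$; that is the missing idea here.
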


Writing $\pi_\ast^\Q(M)$ as shorthand for $\pi_\ast(M)\otimes \Q$, it is clear that knowledge of the connecting homomorphism $\partial:\pi_k^\Q(M)\rightarrow \pi_{k-1}^\Q(\mathcal{F})$ will directly control the topology of $G/H$.  Note in every case that, up to a finite cover, $\mathcal F$ has the rational homotopy groups of a closed, simply connected manifold $X$ and a space of the form $\Omega S^{2k+1}$. This is obvious in all cases except possibly in Case (4) when $\ell_- + \ell_+ +1$ is even. However in this case, one uses the fact that $\Omega S^{2t} \simeq_\Q S^{2t-1} \times \Omega S^{4t-1}$.  Combining \cite[Theorem 4.11]{DeVitoGalazGarciaKerin} and \cite[Corollary 2.6]{DeVitoKennard20}, we obtain the following proposition.

\begin{proposition}\label{prop:Connecting}
For a closed, simply connected cohomogeneity one manifold $M$ with principal orbit $G/H$, and homotopy fiber $\mathcal F$ as in Theorem \ref{thm:GH}, the connecting homomorphism $\partial_{odd} : \pi_{odd}^\Q(M) \to \pi_{even}^\Q(\mathcal F)$ has rank one.

Moreover, the image of $\partial_{odd}$ is supported by the the loopspace factor of $\mathcal F$ in the following sense: decomposing a finite cover of $\mathcal F$ as $X\times \Omega S^{2k+1}$, then $\partial_{2k+1} \neq 0$.
\end{proposition}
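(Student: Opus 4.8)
The plan is to analyze the rational homotopy long exact sequence of the fibration $\mathcal{F} \xrightarrow{\ i\ } G/H \xrightarrow{\ p\ } M$ and reduce the statement to a kernel computation for $i_\ast$. Passing to the finite cover supplied by Theorem~\ref{thm:GH} does not change any $\pi_k^\Q(-)$ with $k \geq 2$, so I may assume on the nose that $\mathcal{F} \simeq_\Q X \times \Omega S^{2k+1}$, where $X$ is a closed simply connected rationally elliptic space and the loop factor contributes a single class $z \in \pi_{2k}^\Q(\mathcal{F})$. (When the relevant sphere in Theorem~\ref{thm:GH} is even dimensional, I first apply $\Omega S^{2t} \simeq_\Q S^{2t-1} \times \Omega S^{4t-1}$ to absorb an odd sphere into $X$, so that the loop sphere is always odd dimensional.) Thus $\pi_{even}^\Q(\mathcal{F}) = \pi_{even}^\Q(X) \oplus \Q z$. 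Exactness of
\[
\pi_{odd}^\Q(M) \xrightarrow{\ \partial\ } \pi_{even}^\Q(\mathcal{F}) \xrightarrow{\ i_\ast\ } \pi_{even}^\Q(G/H)
\]
identifies $\operatorname{im}(\partial_{odd})$ with $\ker\!\big(i_\ast|_{\pi_{even}^\Q(\mathcal{F})}\big)$, so that $\rank\partial_{odd} = \dim\pi_{even}^\Q(\mathcal{F}) - \rank\big(i_\ast|_{even}\big)$. The whole proposition is therefore equivalent to the single assertion that this kernel is one dimensional and spanned by $z$.

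The inequality $\rank\partial_{odd} \geq 1$, together with the support claim, amounts to showing $i_\ast(z) = 0$, equivalently that $z$ is a boundary $z = \partial(w)$ for some $w \in \pi_{2k+1}^\Q(M)$. This is where the geometric origin of the loop factor enters: in Cases (1)--(5) the loop sphere is the join $S^{\ell_-}\ast S^{\ell_+} \cong S^{\ell_-+\ell_++1}$ of the two fiber spheres $K^\pm/H$, and in the explicit Grove--Halperin model of the double mapping cylinder the generator $z$ is precisely the class created by the gluing and annihilated by the fiber inclusion $i$; in Case (6) one reads $i_\ast(z)=0$ directly off the model. This identification of $\partial$ as supported on the loop factor is exactly the content of \cite[Theorem~4.11]{DeVitoGalazGarciaKerin}, which I would invoke here, and it immediately yields $\partial_{2k+1} \neq 0$ and $z \in \operatorname{im}(\partial_{odd})$.

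It remains to prove $\rank\partial_{odd} \leq 1$, i.e. that $i_\ast$ is injective on a complement of $\Q z$, namely on $\pi_{even}^\Q(X)$. This is the crux and the main obstacle, since it is not formal: it requires that the even rational homotopy of the manifold factor $X$ genuinely survives into $G/H$. I would organize this along the cases of Theorem~\ref{thm:GH}. In Cases (1)--(3) the factor $X$ is a product of odd spheres, so $\pi_{even}^\Q(X)=0$ and there is nothing to check beyond $z$. In Cases (4)--(6) the even classes of $X$ are carried by the even fiber spheres $K^\pm/H$ and, in Case (6), by the even homotopy of the flag-type factor; these must be detected in $G/H$ through the fiber inclusions $K^\pm/H \hookrightarrow G/H$ and the homotopy sequence of $H \to G \to G/H$. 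The delicate point is Case~(5) and the equal-parity part of Case~(4), where a surviving even class of $X$ and the loop generator $z$ can sit in the \emph{same} degree, so that injectivity of $i_\ast$ there genuinely uses the structure of $G/H$ as a homogeneous space rather than a dimension count.

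Rather than carry out this verification by hand, the efficient route is to combine the model of the previous paragraph with the rank bound of \cite[Corollary~2.6]{DeVitoKennard20}, which supplies exactly $\rank\partial_{odd}\leq 1$; together with the exactness reduction this gives $\ker\!\big(i_\ast|_{\pi_{even}^\Q(\mathcal{F})}\big) = \Q z$, hence $\rank\partial_{odd}=1$ with image the line through the loop generator, and in particular $\partial_{2k+1}\neq 0$. As an independent consistency check one can read the additivity $\chi_\pi(G/H) = \chi_\pi(\mathcal{F}) + \chi_\pi(M)$ of the homotopy Euler characteristic off the same exact sequence and combine it with Poincar\'e duality on the closed manifold $M$; but I expect the real difficulty to lie entirely in the survival statement above, which is why I would lean on the two cited structural results to dispatch it uniformly.
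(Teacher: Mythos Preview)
Your proposal is correct and takes essentially the same approach as the paper: the paper's entire argument is the single sentence ``Combining \cite[Theorem 4.11]{DeVitoGalazGarciaKerin} and \cite[Corollary 2.6]{DeVitoKennard20}, we obtain the following proposition,'' and you invoke precisely these two results for, respectively, the support claim $\partial_{2k+1}\neq 0$ and the rank bound $\rank\partial_{odd}\leq 1$. Your additional exposition (the exact-sequence reduction to $\ker i_\ast$, and the case discussion you ultimately set aside) is helpful commentary but not a different proof.
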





\subsection{Limits on the group diagrams}\label{sec:limits}

In the section, we find some general necessary conditions on the diagram $H\subseteq K^\pm \subseteq G$ in order for the resulting cohomogneity one manifold to be a simply connected rational sphere.

The main result of this subsection is that all cohomogeneity one actions on simply connected rational spheres are primitive.  Recall that a cohomogeneity one action of $G$ on $M$ is called \textit{non-primitive} if it has a group diagram $H\subseteq K^\pm \subseteq G$ for which one can find an intermediate subgroup $H\subseteq K^\pm \subseteq L \subsetneq G$, and is called \textit{primitive} if it is not non-primitive.

\begin{theorem}\label{thm:primitive}  Suppose $M$ is a closed simply connected cohomogeneity one manifold under the action of a connected compact Lie group $G$.  If the rational cohomology ring of $M$ is generated by one element, then the action is primitive.

\end{theorem}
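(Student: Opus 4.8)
## Proof Proposal

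The plan is to argue by contradiction: suppose the action is non-primitive, so there is a group diagram $H \subseteq K^\pm \subseteq L \subsetneq G$ with an intermediate group $L$. The fundamental observation is that non-primitivity gives $M$ a very special structure. Indeed, if both $K^\pm \subseteq L$, then the double disk bundle $M = D(G/K^+) \cup_{G/H} D(G/K^-)$ can be built in two stages: we first form the cohomogeneity one $L$-manifold $N = D(L/K^+) \cup_{L/H} D(L/K^-)$ using the same singular and principal isotropy groups, and then observe that $M$ is the total space of a bundle $N \to M \to G/L$ with fiber $N$ and structure group $L$ (explicitly, $M = G \times_L N$). This is the standard picture of a non-primitive cohomogeneity one manifold as a fiber bundle over the homogeneous space $G/L$ with cohomogeneity one fiber $N$.

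From here I would exploit the rational cohomology. The base $G/L$ is a closed simply connected (or at least rationally simple) homogeneous space, and the fiber $N$ is a closed cohomogeneity one $L$-manifold of positive dimension. First I would rule out the degenerate possibility that $G/L$ is a point, which cannot happen since $L \subsetneq G$ is proper and Convention~\ref{convention} forbids $H$ from surjecting onto simple factors (so $\dim G/L > 0$). Next, because $N$ is itself a closed manifold of positive dimension fibering into $M$, I want to run the Leray--Serre spectral sequence of $N \to M \to G/L$ with rational coefficients. The total space $M$ has singly generated rational cohomology, i.e. $H^\ast(M;\Q) \cong \Q[x]/(x^2)$ with $x$ in a single odd degree $n$. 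The key tension is that the cohomology of a nontrivial product-like fibration with positive-dimensional base \emph{and} positive-dimensional fiber is generically too large to collapse down to that of a rational sphere.

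More precisely, I would use the multiplicative structure. Both $H^\ast(G/L;\Q)$ and $H^\ast(N;\Q)$ inject appropriate classes into the $E_2$-page, and by Poincar\'e duality each of these closed manifolds contributes a fundamental class in its top degree. If the spectral sequence had a surviving class from the base in positive degree and a surviving class from the fiber in positive degree, their product would force at least three nonzero Betti numbers for $M$ (in degrees $0$, something in the middle, and $\dim M$), contradicting the rational sphere hypothesis that $H^\ast(M;\Q)$ is concentrated in degrees $0$ and $n$. The real work is therefore to show that the differentials in the spectral sequence cannot kill all the ``extra'' cohomology. Here I would invoke the Grove--Halperin description (Theorem~\ref{thm:GH}) of the homotopy fiber $\mathcal F$: since $M$ is a rational sphere the connecting homomorphism analysis of Proposition~\ref{prop:Connecting} pins down which rational homotopy of $G/H$ survives, and in particular constrains how much of $H^\ast(G/L;\Q)$ and $H^\ast(N;\Q)$ can be annihilated. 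I expect the main obstacle to be precisely this bookkeeping: showing that no arrangement of differentials can simultaneously trivialize the base cohomology and the fiber cohomology of $N$ while leaving $M$ rationally a sphere. Handling the possibility that $N$ itself has small cohomology (e.g. $N$ a rational sphere of low dimension) is the delicate endpoint, and I would treat it by a direct dimension/Euler-characteristic count together with the structural restrictions in Proposition~\ref{prop:ellinfo}.
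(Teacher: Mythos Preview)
Your setup matches the paper's: the bundle $N \to M \to G/L$ (the paper writes $M_L \to M_G \to G/L$) is exactly the starting point. But from there the proposal is genuinely incomplete, and the gap is precisely the case you flag as ``the delicate endpoint.'' Fibrations whose total space is a rational sphere with positive-dimensional base and fiber certainly exist (Hopf fibrations $S^1\to S^3\to S^2$, $S^3\to S^7\to S^4$), so the multiplicative spectral-sequence heuristic ``too many classes survive'' does not yield a contradiction by itself. You acknowledge this, but your fallback plan (``dimension/Euler-characteristic count together with Proposition~\ref{prop:ellinfo}'') is not an argument; nothing in Proposition~\ref{prop:ellinfo} controls the rational type of $N$ sharply enough to close this. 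You also silently need $L$ connected to make $G/L$ simply connected and avoid local coefficients; this is not automatic and requires its own argument (the paper isolates this as Lemma~\ref{lem:connected}).

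The paper bypasses the spectral sequence entirely and works with rational homotopy groups. The crucial idea you are missing is Lemma~\ref{lem:pullback}: the homotopy fiber of $L/H\hookrightarrow N$ is \emph{the same} as the homotopy fiber $\mathcal{F}$ of $G/H\hookrightarrow M$. This lets you apply Proposition~\ref{prop:Connecting} to $N$, not just to $M$, and forces $\pi_s^\Q(N)\neq 0$ for the \emph{same} odd $s$ determined by the loop-space factor of $\mathcal{F}$. Combined with the surjectivity of $\pi_{odd}^\Q(M)\to\pi_{odd}^\Q(G/L)$ and $\dim\pi_{odd}^\Q(M)=1$, the long exact sequence then forces $\pi_{s+1}^\Q(G/L)\neq 0$ with $s+1$ even and larger than $\dim G/L$, contradicting \cite[Theorem~32.6]{FelixHalperinThomas01}. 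Your invocation of Theorem~\ref{thm:GH} and Proposition~\ref{prop:Connecting} was aimed only at $M$; the leverage comes from applying them simultaneously to $N$ via the pullback lemma.
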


This theorem was proven in \cite{DeVitoKennard20} under the additional assumption that $M$ had positive Euler characteristic.  Here we give a proof which handles all possibilities for the Euler characteristic simultaneously.

Let us first set up some notation.  To begin with, as usual we have the group diagram of a non-primitive action $H\subseteq K^\pm\subseteq L\subseteq G$ which gives rise to two cohomogeneity one manifolds $M_L$ and $M_G$.  The space $M_G$ is equivariantly diffeomorphic to $G\times_L M_L$, where $G$ acts by left multiplication on the first factor; we have an inclusion $j:M_L\rightarrow M_G$ given by $j(m) = [e,m]\in G\times_L M_L$.


The main technical tool for proving Theorem \ref{thm:primitive} is the following.

\begin{lemma}\label{lem:pullback}Suppose $L\subseteq G$ are compact Lie groups, and that $L$ acts smoothly on a manifold $M_L$.  Choose $p\in M_L$ with $L_p = H$ as principal isotropy group and let $M_G = G\times_L M_L$ where $g\ast[g_1,x] = [gg_1,x]$, so $H = G_{[e,p]}$.  Let $j:M_L\rightarrow M_G$ be defined by $j(x) = [e,x]$.  Let $F_L$ be the homotopy fiber of the inclusion $L/H\xrightarrow{i_L} M_L$ and likewise, let $F_G$ be the homotopy fibration of the inclusion $G/H\xrightarrow{i_G} M_G$.  Then the fibration $F_L\rightarrow L/H\xrightarrow{i_L} M_L$ is the pullback of the fibration $F_G\rightarrow G/H\xrightarrow{i_G} M_G$ under $j$.  In particular, $F_L$ is homotopy equivalent to $F_G$.

\end{lemma}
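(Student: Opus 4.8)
The plan is to build a commutative square out of the two orbit inclusions and recognise it as a homotopy pullback; the comparison of homotopy fibers is then purely formal. First I would introduce the map $\bar\jmath\colon L/H \to G/H$ induced by the inclusion $L \embedded G$, namely $lH \mapsto lH$, and verify that the square
\[
\begin{CD}
L/H @>{i_L}>> M_L \\
@V{\bar\jmath}VV @VV{j}V \\
G/H @>{i_G}>> M_G
\end{CD}
\]
commutes. This is a one-line computation in the twisted product: $j(i_L(lH)) = [e, l\ast p] = [l,p]$ and $i_G(\bar\jmath(lH)) = l\ast[e,p] = [l,p]$, where I use the defining relation $[gl, x] = [g, l\ast x]$ of $G\times_L M_L$.

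The second step is to exhibit this square as the fiberwise picture of a map of fiber bundles over $G/L$. The projection $\pi\colon M_G = G\times_L M_L \to G/L$, $[g,x]\mapsto gL$, is the bundle associated to the principal $L$-bundle $G\to G/L$ and has fiber $M_L$; similarly $\rho\colon G/H \to G/L$, $gH\mapsto gL$, is a homogeneous bundle with fiber $L/H$. Under the identifications $G\times_L(L/H)\cong G/H$ and $G\times_L M_L = M_G$ one checks that $i_G = G\times_L i_L$; in particular $\pi\circ i_G = \rho$, so $i_G$ is a bundle map over $G/L$, and its restriction to the fiber $L/H = \rho^{-1}(eL)$ is exactly $i_L\colon L/H\to M_L = \pi^{-1}(eL)$. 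The maps $\bar\jmath$ and $j$ are then the inclusions of these fibers.

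The key (and standard) fact I would now invoke is that, for fibrations $p\colon E\to B$ and $p'\colon E'\to B$ and a map $\phi\colon E\to E'$ with $p'\circ\phi = p$, the square formed by the fiber inclusions over a point $b\in B$, together with $\phi$ and its restriction $\phi|_F$ to the fiber $F = p^{-1}(b)$, is homotopy cartesian. This follows by pasting homotopy pullbacks: since $p'$ is a fibration the strict fiber $F' = {p'}^{-1}(b)$ is the homotopy pullback $E'\times^h_B\{b\}$, whence $E\times^h_{E'}F' = E\times^h_B\{b\} = p^{-1}(b)$ because $p$ is a fibration. Applying this with $E = G/H$, $E' = M_G$, $B = G/L$, and $\phi = i_G$ shows the square above is homotopy cartesian. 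A homotopy-cartesian square identifies the homotopy fibers of its two vertical maps, giving $F_L\simeq F_G$; equivalently, replacing $i_G$ by a fibration and pulling back along $j$ realises (the fibration replacement of) $i_L$ as that pullback, which is precisely the assertion that $F_L\to L/H\to M_L$ is the pullback of $F_G\to G/H\to M_G$ under $j$.

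There is no deep obstacle here, as the conclusion is formal once the setup is in place; the only real work is the bookkeeping of the second paragraph — confirming that the associated-bundle projection $\pi$ is a genuine fiber bundle, that $i_G$ agrees with $G\times_L i_L$ as a bundle map, and that the homotopy fibers are taken over compatible basepoints $p\in M_L$ and $[e,p]\in M_G$ — so that the fiberwise restriction of $i_G$ is literally $i_L$ and the homotopy-pullback lemma applies verbatim.
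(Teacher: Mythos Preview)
Your argument is correct but takes a different route from the paper. The paper computes the \emph{strict} pullback $j^*(G/H)=\{(x,gH)\in M_L\times G/H : [e,x]=[g,p]\}$ directly, observes that the defining relation in $G\times_L M_L$ forces $g\in L$ and $x=gp$, and writes down an explicit diffeomorphism $L/H\to j^*(G/H)$, $lH\mapsto(i_L(lH),lH)$, covering the identity on $M_L$ --- never mentioning the ambient bundle over $G/L$. Your approach instead recognises $i_G$ as the fiberwise map $G\times_L i_L$ between two bundles over $G/L$ and invokes the pasting lemma for homotopy pullbacks to see that the fiber square is homotopy cartesian. The paper's computation is shorter and more elementary; your argument buys a cleaner explanation of why the \emph{homotopy} fibers (and not just the strict pullbacks) agree, a passage the paper leaves implicit.
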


\begin{proof}

We construct a diffeomorphism from $L/H$ to $j^\ast(G/H)$ which covers the identity map on $M$.  Recall that by definition, $$j^\ast(G/H) = \{(x,gH)\in M_L\times G/H: j(x) = i_G(gH)\}.$$  The condition $j(x) = i_G(gH)$ is equivalent to $[e,x] = [g,p]\in M_G$.  It follows that $(x,gH)\in j^\ast(G/H)$ iff $g\in L$ and $x = gp$.

Now, we define $\phi:L/H\rightarrow j^\ast(G/H)$ by $\phi(gH) = (i_L(gH), gH)$; it is easy to verify that this is smooth and that is covers the identity map on $M_L$.  It is also easy to verify that $\phi^{-1}(x,gH) = gH$ is the inverse, so $\phi$ is a diffeomorphism.

\end{proof}

We need another lemma.

\begin{lemma}\label{lem:connected}  Suppose $H\subseteq K^\pm \subseteq G$ is the group diagram of a simply connected manifold $M$.  If the action is non-primitive, then there is a connected proper subgroup $L$ containing $H, K^\pm$.

\end{lemma}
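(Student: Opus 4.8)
The plan is to produce the desired subgroup as the identity component $L^0$ of the given intermediate group, once I show that $L^0$ already contains $H$ (and hence $K^\pm$). Start from non-primitivity: there is a proper $L\subsetneq G$ with $H\subseteq K^\pm\subseteq L$. Since $G$ is connected, $L^0=G$ would force $L=G$, so $L^0\subsetneq G$ is automatically proper; the whole content is therefore to control the component group. First I record a purely group-theoretic reduction: each fibration $H\to K^\pm\to K^\pm/H\cong S^{\ell_\pm}$ has connected base, so $\pi_0(H)\to\pi_0(K^\pm)$ is onto, giving $K^\pm=(K^\pm)^0\cdot H$. Consequently the image of $\pi_0(K^\pm)$ in $\pi_0(L)$ coincides with $\bar H:=\operatorname{im}(\pi_0(H)\to\pi_0(L))$, and once I know $\bar H$ is trivial I immediately get $H\subseteq L^0$ and $K^\pm=(K^\pm)^0H\subseteq L^0$, finishing the proof.

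The heart of the argument is to show $\bar H$ is trivial, and for this I would exploit the two fibrations carrying the topology of $M$. Using the identification $M\cong G\times_L M_L$ recorded above (where $M_L=D(L/K^+)\cup_{L/H}D(L/K^-)$ is the $L$-cohomogeneity-one manifold of the smaller diagram, possibly disconnected), view $M$ as the total space of a bundle $M_L\hookrightarrow M\to G/L$. Since $M$ is connected and simply connected, the homotopy exact sequence collapses to a bijection $\pi_1(G/L)\cong\pi_0(M_L)$ of pointed sets; in particular $\pi_1(G/L)$ is finite. Separately, connectedness of $G$ and the fibration $L\to G\to G/L$ give a surjection $\pi_1(G/L)\twoheadrightarrow\pi_0(L)$, whence $|\pi_0(L)|\le|\pi_1(G/L)|$.

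It then remains to compute $\pi_0(M_L)$. Here I would use that $M_L$ is glued from two disk bundles that deformation retract onto $L/K^+$ and $L/K^-$ along $L/H$. Each of $L/H$ and $L/K^\pm$ has component set of cardinality $|\pi_0(L)|/|\bar H|$ --- for $L/K^\pm$ this uses that the image of $\pi_0(K^\pm)$ equals $\bar H$ from the reduction above --- and the gluing maps are $\pi_0$-bijections, so $|\pi_0(M_L)|=|\pi_0(L)|/|\bar H|$. Combining the three relations yields $|\pi_0(L)|/|\bar H|=|\pi_1(G/L)|\ge|\pi_0(L)|$, which forces $|\bar H|=1$, i.e.\ $H\subseteq L^0$. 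Then $L^0$ is connected, proper in $G$, and contains $H$ and both $K^\pm$, as required.

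The step I expect to be the main obstacle is the bookkeeping around $\pi_0(M_L)$: the exact sequence of the bundle is a sequence of pointed sets rather than groups, and $M_L$ together with $L,H,K^\pm$ may all be disconnected, so I must verify that the stated cardinality identities are honest --- in particular that the image of $\pi_0(H)$ coincides with that of $\pi_0(K^\pm)$, and that the two gluing maps are genuinely bijective on components. Everything else is a direct application of the long exact homotopy sequences. A pleasant feature of this approach is that it is uniform: it needs no case division on the values of $\ell_\pm$, and never invokes Proposition \ref{prop:ellinfo}.
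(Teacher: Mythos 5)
Your proof is correct, but it follows a genuinely different route from the paper's. The paper's proof is two short steps: it quotes Hoelscher's simple-connectivity criterion \cite[Lemma 1.10]{Hoelscher10}, which says that $\pi_1(M)=1$ forces $H = H^+\cdot H^-$ with $H^\pm = H\cap (K^\pm)^0$; since $(K^\pm)^0\subseteq L^0$ this gives $H\subseteq L^0$ immediately, and then each component of $K^\pm$ meets $H$ (connectedness of $K^\pm/H$, the same observation as your reduction $K^\pm=(K^\pm)^0\cdot H$) and hence lies in $L^0$. You avoid Hoelscher's lemma entirely and instead re-derive the needed consequence of simple connectivity from the associated-bundle picture $M\cong G\times_L M_L$ (which the paper records just before Lemma \ref{lem:pullback}): the fibration $M_L\to M\to G/L$ turns $\pi_1(M)=1$ and connectedness of $M$ into a simply transitive action of $\pi_1(G/L)$ on $\pi_0(M_L)$, the fibration $L\to G\to G/L$ gives the surjection $\pi_1(G/L)\twoheadrightarrow\pi_0(L)$, and the double disk bundle structure of $M_L$ gives $|\pi_0(M_L)| = |\pi_0(L)|/|\bar H|$ where $\bar H = \im\of{\pi_0(H)\to\pi_0(L)}$; counting then kills $\bar H$. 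All three counting identities check out ($\pi_0(L/H)\cong L/(L^0H)$, and $L^0K^\pm = L^0H$ makes the gluing maps $\pi_0$-bijections, so the pushout computation is honest), and all the sets involved are finite since $L$ is compact and $M_L$ is closed. What your approach buys is self-containedness: it replaces an external citation by an argument living inside the structure theory the paper already sets up for Theorem \ref{thm:primitive}, and it works uniformly with no hypotheses on $\ell_\pm$ (though note the paper's proof also needs none). What the paper's approach buys is brevity and a finer structural fact, namely the decomposition $H=H^+\cdot H^-$, which your counting argument does not recover. One small point you should make explicit: injectivity of the connecting map $\pi_1(G/L)\to\pi_0(M_L)$ does not follow from exactness of pointed sets alone (trivial kernel is weaker than injectivity there); one needs the standard equivariance of this map for the $\pi_1(G/L)$-action on $\pi_0(M_L)$, which identifies all its fibers with the stabilizer of the base component. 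With that sentence added, the argument is complete.
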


\begin{proof}

Since the action is non-primitive, there is a proper subgroup $L$ of $G$ containing all of $H,K^\pm$.  We claim that $L^0$, the identity component of $L$, also contains $H$ and both $K^\pm$.  We define $H^\pm = H\cap (K^\pm)^0$.  Then, according to \cite[Lemma 1.10]{Hoelscher10}, the fact that $M$ is simply connected implies $H = H^+\cdot H^-$.  Now, since $H^+\subseteq (K^+)^0\subseteq L^0$, and likewise $H^-\subseteq L^0$, it follows that $H\subseteq L^0$.

Now, let $K$ be a component of either $K^\pm$.  Because $K^\pm/H = S^{\ell_\pm}$ is connected, there is an element $h\in H\cap K\subseteq L^0\cap K$ so $K$ intersects $L^0$.  However, since $K\subseteq L$ and $K$ is connected, it must be contained in precisely one component of $L$.  Thus, $K\subseteq L^0$.

\end{proof}

We can now prove Theorem \ref{thm:primitive}.

\begin{proof}(Proof of Theorem \ref{thm:primitive}) 
By computing a Sullivan model, it is easy to see that $H^\ast(M_G;\Q)$ is singly generated and finite dimensional if and only if $\dim \pi_{odd}^\Q(M_G) = 1$ and $\dim \pi^\Q_{even}(M_G)\leq 1$.

Suppose we have a chain of subgroups $H\subseteq K^\pm \subseteq L\subsetneq G$.  From Lemma \ref{lem:connected}, we may assume $L$ is connected, which implies that $M_L$ is connected.  Then we get a fiber bundle $M_L\rightarrow M_G\rightarrow G/L$ with $G/L$ simply connected.  Because the map $\pi_{odd}^\Q(M_G)\rightarrow \pi_{odd}^\Q(G/L)$ is surjective (see, e.g., \cite[Corollary 3.5]{DeVitoKennard20}and $\dim \pi_{odd}^\Q X \geq 1$ for any rationally elliptic space $X$, it follows that $G/L$ also has singly generated rational cohomology.  If $\pi_2^\Q(G/L)\neq 0$, it follows from \cite[Theorem 32.6(i)]{FelixHalperinThomas01} that $\dim(G/L) = \dim M_G - 1$, so $M_L$ is one dimensional, and hence $M_L\cong S^1$.  In this case, the homotopy fiber of the inclusion of a principal orbit (a point) into $M_L$ is homotopy equivalent to $\mathbb{Z}$.  On the other hand, in Theorem \ref{thm:GH}, all of the homotopy fibers are connected.  Thus, we have contradicted Lemma \ref{lem:pullback}, so we must have $\pi_2^\Q(G/L) = 0$.

From the classification of biquotients with singly generated rational cohomology ring \cite{KapovitchZiller04}, it follows that $\pi_2(G/L) = 0$ so that, in particular, $M_L$ is simply connected, so Proposition \ref{prop:Connecting} can be applied.  Now, suppose $\pi_s^\Q(M_G)\neq 0$ with $s$ odd and note that $s\geq \dim M_G$.  From Lemma \ref{lem:pullback} and Proposition \ref{prop:Connecting}, it follows that $\pi_s(M_L) \neq 0$ as well.  Further, since $\pi_{odd}^\Q(M_G)$ surjects onto $\pi_{odd}^\Q(G/L)$, and $\dim\pi_{odd}^\Q(M_G) = 1$, the map $\pi_s^\Q(M_L)\rightarrow \pi_s^\Q(M_g)$ must be trivial.  In particular, we conclude that $\pi_{s+1}^\Q(G/L)\neq 0$.  Since $s+1$ is even and $s+1 > \dim M_G > \dim G/L$, we have contradicted \cite[Theorem 32.6(ii)]{FelixHalperinThomas01}.
\end{proof}

We also mention a one consequence of primitivity.  See \cite[Lemma 3.4]{GroveWilkingZiller08} for a proof.

\begin{proposition}\label{prop:finiteintersection}  Suppose $H\subseteq K^\pm \subseteq G$ is a group diagram of a simply connected manifold.  Denote by $H_\pm$ the ineffective kernel of the action of $H$ on $K^\pm/H$.  If the action is primitive, then $H_+\cap H_-$ is finite.

\end{proposition}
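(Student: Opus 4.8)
The plan is to reduce the statement to the effectiveness hypothesis built into our setup. Set $N = (H_+ \cap H_-)^0$, the identity component of the intersection, and aim to prove that $N$ is a connected normal subgroup of $G$ that is contained in $H$. If I can do this, then $N = gNg^{-1} \subseteq gHg^{-1}$ and $N \subseteq H \subseteq gK^\pm g^{-1}$ for all $g$, so $N$ lies in every isotropy group and hence acts trivially on $M$; that is, $N$ is contained in the ineffective kernel of the $G$-action. By our standing effectiveness assumption (Convention~\ref{convention} and the remark following it), this kernel is finite, so the connected group $N$ is trivial and $H_+ \cap H_-$ is finite. The preliminary observation I would record is that $H_\pm$, being the kernel of the $H$-action on $K^\pm/H$, is precisely the normal core $\bigcap_{k \in K^\pm} kHk^{-1}$ of $H$ in $K^\pm$; thus $H_\pm \trianglelefteq K^\pm$, and taking $k = e$ gives the crucial extra inclusion $H_\pm \subseteq H \subseteq K^\mp$.

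The technical heart, which I expect to be the main obstacle, is to upgrade the a priori weak normality of $H_+ \cap H_-$ (it is only manifestly normal in $H$) to normality of $\mathfrak n := \mathfrak h_+ \cap \mathfrak h_-$ as an ideal of each $\mathfrak k^\pm$. Since $\mathfrak k^\pm$ is a compact, hence reductive, Lie algebra, the ideal $\mathfrak h_\pm \trianglelefteq \mathfrak k^\pm$ splits off a complementary ideal, $\mathfrak k^\pm = \mathfrak h_\pm \oplus \mathfrak c^\pm$ with $[\mathfrak h_\pm, \mathfrak c^\pm] = 0$. To see $[\mathfrak k^+, \mathfrak n] \subseteq \mathfrak n$ I would treat the two summands separately: on the complement, $[\mathfrak c^+, \mathfrak n] \subseteq [\mathfrak c^+, \mathfrak h_+] = 0$; on $\mathfrak h_+$, one has $[\mathfrak h_+, \mathfrak n] \subseteq \mathfrak h_+$ because $\mathfrak h_+$ is a subalgebra containing $\mathfrak n$, and simultaneously $[\mathfrak h_+, \mathfrak n] \subseteq [\mathfrak h_+, \mathfrak h_-] \subseteq \mathfrak h_-$, the last inclusion using exactly the observation $\mathfrak h_+ \subseteq H \subseteq K^-$ together with $\mathfrak h_- \trianglelefteq \mathfrak k^-$. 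Combining, $[\mathfrak h_+, \mathfrak n] \subseteq \mathfrak h_+ \cap \mathfrak h_- = \mathfrak n$, so $[\mathfrak k^+, \mathfrak n] \subseteq \mathfrak n$, and the computation for $\mathfrak k^-$ is identical by symmetry. It is precisely the pair of inclusions $\mathfrak h_\pm \subseteq \mathfrak k^\mp$ that forces these brackets to close; without them the intersection need not be an ideal of either $\mathfrak k^\pm$.

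Primitivity enters at the final step, and it is equally essential. The closed subgroup generated by $K^+$ and $K^-$ contains $H$ and both $K^\pm$, so if it were proper it would display the action as non-primitive; hence $\langle K^+, K^- \rangle = G$. Consequently the normalizer $N_{\mathfrak g}(\mathfrak n)$, which is a subalgebra containing both $\mathfrak k^+$ and $\mathfrak k^-$, must be all of $\mathfrak g$, so $\mathfrak n$ is an ideal of $\mathfrak g$. Since $G$ is connected (Convention~\ref{convention}), the $\mathrm{Ad}(G)$-invariant subalgebra $\mathfrak n$ integrates to a connected normal subgroup $N \trianglelefteq G$ with $N \subseteq H$; equivalently, the closed subgroup $N_G(N)$ contains $K^+$ and $K^-$, hence their generated subgroup $G$. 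This is exactly the $N$ produced in the first paragraph, so the argument closes.

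Two points deserve care in a full write-up. First, the bracket computation above is the real content and the step most likely to hide an error, so I would double-check the direction of each inclusion and the commuting-ideals decomposition. Second, in passing from the Lie-algebra statement to group-level normality I am using that normalization is a closed condition and that $G$ is connected; when $K^\pm$ are disconnected (which can occur only if some $\ell_\pm = 1$, by Proposition~\ref{prop:ellinfo}) I would verify that the identity components $(K^\pm)^0$ already suffice to normalize $N$ and that their generated subgroup is dense in $G$, so that $N_G(N)$, being closed, is still all of $G$. This is the same conclusion reached by \cite[Lemma 3.4]{GroveWilkingZiller08}.
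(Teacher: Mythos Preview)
The paper does not give its own proof of this proposition; it simply cites \cite[Lemma 3.4]{GroveWilkingZiller08}. Your argument is a correct reconstruction of the natural Lie-algebraic proof, and is almost certainly what that reference does. The key computation---that $\mathfrak n = \mathfrak h_+ \cap \mathfrak h_-$ is an ideal in each $\mathfrak k^\pm$ via the reductive splitting $\mathfrak k^\pm = \mathfrak h_\pm \oplus \mathfrak c^\pm$ together with the cross-inclusion $\mathfrak h_\pm \subseteq \mathfrak k^\mp$---is sound, and primitivity then forces $\mathfrak n \trianglelefteq \mathfrak g$ as you say.

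One small remark on the caveat you raise: when $K^\pm$ are disconnected, the passage from ``$\mathfrak k^\pm$ normalize $\mathfrak n$'' to ``$N_G(N)=G$'' is cleanly handled by the same mechanism as in Lemma~\ref{lem:connected}. Set $L = \overline{\langle (K^+)^0,(K^-)^0\rangle}$; since $M$ is simply connected one has $H = (H\cap (K^+)^0)\cdot(H\cap(K^-)^0)\subseteq L$, and then every component of $K^\pm$ meets $H\subseteq L$ and hence lies in $L$. Primitivity now gives $L=G$, so the closed subgroup $N_G(N)\supseteq L = G$. With that observation your sketch is complete.
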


\section{Dimensions up to seven}\label{sec:atmost7}
Cohomogeneity one actions on closed, simply connected manifolds have been classified in dimensions at most $7$ by Parker \cite{Parker}, Neumann \cite{Neumann}, and Hoelscher \cite{Hoelscher10}.  As the case of cohomogeneity one even-dimensional rational spheres has already been completed, we focus on the case when $n\in \{3,5,7\}$ is odd.

In dimension $3$, there is only the sphere for topological reasons.  Further, the only cohomogeneity one action is, up to equivalence, the standard $T^2$ action on $S^3$.

In dimension 5, Hoelscher proved that if $M^5$ is a rational sphere, then it is diffeomorphic to $S^5$ or the Wu manifold $SU(3)/SO(3)$.  Further, when $M\cong SU(3)/SO(3)$, $G$ must always be, up to cover, isomorphic to $S^3\times S^1$.

In dimension 7, Hoelscher classified the group diagrams (see, for example, \cite[Section 1.1]{Hoelscher10}).  He shows that up to equivariant diffeomorphism, $M^7$ is 
	\begin{enumerate}
	\item a symmetric space,
	\item a product of a cohomogeneity one manifold and a homogeneous space,
	\item a Brieskorn variety $B_d^7$, or
	\item one of four primitive examples $P^7_A,\ldots,P^7_D$ or one of nine non-primitive examples $N^7_A,\ldots,N^7_I$.
	\end{enumerate}
	
We must determine which of these are rational spheres.  First, the classification of symmetric spaces implies that the only one with the rational cohomology of $S^7$ is the standard $S^7$.

As for Case (2), the factors are simply connected, closed manifolds since $M$ is, so the K\"unneth formula implies that $M$ is not a rational cohomology sphere.

For Case (3), we compute more generally the cohomology of Brieskorn varieties in Appendix \ref{sec:appendix}.  For dimension seven in particular, we have $B^7_d$ is always a rational sphere with $H^4(B^7_d)\cong \mathbb{Z}_d$.  


Finally, for Case (4), we first note that the non-primitive examples are ruled out using Theorem \ref{thm:primitive}.  The cohomology rings of the primitive examples are computed in \cite[Section 3]{EscherUltman11}. In the notation of Escher and Ultman, the four classes are called $L$, $M$, $N$, and $O$. For the $L$, $N$, and $O$, classes, $H^2 = \Z$, so these are not rational spheres. The manifolds in the $M$ family (which are the $P^7_C$ family in \cite{Hoelscher10}) are parameterized by four integers $p_\pm, q_\pm$, all congruent to $1$ mod $4$.  Setting $r = |p_-^2 q_+^2 -p_+^2 q_-^2|/8$, $M_{(p_\pm, q_\pm)}$ is a rational sphere iff $r\neq 0$.  Further when $r\neq 0$, $M_{p_\pm, q_\pm}$ is a 2-connected rational $7$-sphere with $H^4 \cong \Z_r$.

As a quick remark, note that for any integer $t$, precisely one of $\pm(2t + 1)$ is congruent to $1$ mod $4$, and similarly for $\pm(2t-1)$.  If we set $p_+ = \pm(2t+1)$, $p_- = \pm(2t-1)$, and $q_\pm = 1$ with the $\pm$ signs chosen to make both $p_\pm$ congruent to $1$ mod $4$, then it is simple to verify that $r = t$.  In other words, for any integer $t$, the torsion group $\Z/t\Z$ appears as $H^4(M_{p_\pm, q_\pm};\Z)$ for an appropriate choice of $p_\pm$ and $q_\pm$.

We summarize all this in the following corollary.

\begin{corollary}\label{cor:dim7orless}If $M$ is cohomogeneity one rational sphere with $\dim M \in \{3,5,7\}$, then up to ineffective kernel in the action, $M$ is equivariantly diffeomorphic to one of the following:
\begin{enumerate}
\item a standard sphere $S^n$ with linear action.
\item the Wu manifold $W^5 = \SU(3)/\SO(3)$ with an action by $G = S^3\times S^1$.
\item a Brieskorn variety $B^7_d$ with standard action by $G = SO(2)\times SO(4)$
\item $M_{(p_\pm, q_\pm)}$, where $p_-^2 q_+^2 \neq p_+^2 q_-^2$, with standard action by $G = S^3\times S^3$.
\end{enumerate}
Moreover, all of these manifolds admit such a cohomogeneity one action.
\end{corollary}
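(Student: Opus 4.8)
The plan is to argue dimension by dimension, in each case invoking the relevant low-dimensional classification of cohomogeneity one actions and then sieving out those manifolds which are rational spheres. In dimension $3$, every simply connected closed manifold is diffeomorphic to $S^3$, and one checks directly that the only cohomogeneity one action is, up to equivalence, the standard linear $T^2$-action; this produces Case (1). In dimension $5$ I would cite Hoelscher's classification, which already isolates $S^5$ and the Wu manifold $W^5 = SU(3)/SO(3)$ as the only rational spheres and pins down $G \cong S^3 \times S^1$ up to cover in the Wu case, yielding Cases (1) and (2).

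The substance lies in dimension $7$, where I would run through the four families of Hoelscher's classification one at a time. The symmetric spaces are eliminated by the classification of symmetric spaces, which leaves only the standard $S^7$. Products of a lower-dimensional cohomogeneity one manifold with a homogeneous space are eliminated by the K\"unneth formula, since both factors are closed simply connected manifolds of positive dimension and hence $H^\ast(M;\Q)$ cannot be that of a sphere. The Brieskorn varieties $B^7_d$ are handled by the cohomology computation carried out in Appendix \ref{sec:appendix}, which shows that each $B^7_d$ is a rational $7$-sphere with $H^4(B^7_d) \cong \Z_d$ and which carries the standard $SO(2)\times SO(4)$-action; this gives Case (3). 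Finally, among the explicit primitive and non-primitive examples, the non-primitive ones are ruled out immediately by Theorem \ref{thm:primitive}, and for the primitive examples $L,M,N,O$ I would invoke the cohomology ring computations of Escher and Ultman: the $L$, $N$, and $O$ classes satisfy $H^2 = \Z$ and so are not rational spheres, while the $M$-class (the $P^7_C$-family) is a rational sphere precisely when $r = |p_-^2 q_+^2 - p_+^2 q_-^2|/8 \neq 0$, with action by $S^3 \times S^3$; this yields Case (4).

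For the converse \emph{moreover} statement, I would note that each manifold on the list was exhibited together with an explicit cohomogeneity one action in the course of the argument: the linear actions on spheres are standard, the $S^3 \times S^1$-action on $W^5$ is supplied by Hoelscher, the standard $SO(2)\times SO(4)$-action realizes each $B^7_d$, and the $S^3 \times S^3$-actions realize each $M_{(p_\pm,q_\pm)}$ with $r \neq 0$. I expect the main obstacle to be bookkeeping rather than conceptual: the real work is already contained in the cited classifications, so the difficulty lies in correctly matching the notation across the several sources (Hoelscher's group diagrams, the Escher--Ultman labels $L,M,N,O$, and the $P^7_\bullet$ labels) and in the separate cohomology computation for the Brieskorn varieties deferred to the appendix, which is where the only genuinely nontrivial calculation sits.
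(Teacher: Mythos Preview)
Your proposal is correct and follows essentially the same approach as the paper: a dimension-by-dimension case analysis invoking Hoelscher's classification, with the dimension~$7$ case handled by running through the four Hoelscher families (symmetric spaces via classification, products via K\"unneth, Brieskorn via the appendix computation, and the primitive/non-primitive examples via Theorem~\ref{thm:primitive} and the Escher--Ultman cohomology calculations). The paper's argument is identical in structure and in the specific tools cited, so there is nothing to add.
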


\section{Case (1), (2), and (3) of Theorem \ref{thm:GH}}\label{sec:nonorient}

In this section, we prove Theorem \ref{thm:main} under the assumption that at least one singular orbit is non-orientable.  We recall Convention \ref{convention} and that $n = \dim M$ is always assumed to be odd. The first step is to prove the following.

\begin{proposition}\label{thm:h=2}
Suppose $M^n$ is a rational sphere which admits a cohomogeneity one action by a compact Lie group $G$ such that both singular orbits are non-orientable, then $n = 7$ and $M$ is classified by Corollary \ref{cor:dim7orless}.
\end{proposition}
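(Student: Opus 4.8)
Since the hypothesis forces $h = 2$, and Case (1) of Theorem \ref{thm:GH} is the only possibility with two non-orientable singular orbits, the plan is to exploit the very rigid rational homotopy type it supplies. First I would record that we are necessarily in Case (1): both $\ell_\pm = 1$ and, up to a finite cover, the homotopy fiber $\mathcal{F}$ of $G/H \to M$ satisfies $\mathcal{F} \simeq_\Q S^3 \times S^3 \times \Omega S^7$. The key observation is that the dimension $n$ is then essentially forced by the degree of the loopspace factor.

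Next I would apply Proposition \ref{prop:Connecting}. Writing a finite cover of $\mathcal{F}$ as $X \times \Omega S^{2k+1}$ with $X \simeq_\Q S^3 \times S^3$, the loopspace factor is $\Omega S^7$, so $2k+1 = 7$. The proposition then guarantees that the connecting homomorphism $\partial_7 : \pi_7^\Q(M) \to \pi_6^\Q(\mathcal{F})$ of the fibration $\mathcal{F} \to G/H \to M$ is nonzero; in particular $\pi_7^\Q(M) \neq 0$.

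Finally, I would translate this into a statement about $n$. Because $M$ is simply connected, closed, and has $H^\ast(M;\Q) \cong H^\ast(S^n;\Q)$ with $n$ odd, its minimal Sullivan model is $(\Lambda(x_n), 0)$ with $x_n$ in odd degree $n$, so $M \simeq_\Q S^n$ and $\pi_k^\Q(M) \neq 0$ if and only if $k = n$. Combining this with $\pi_7^\Q(M) \neq 0$ yields $n = 7$. With the dimension pinned down, Corollary \ref{cor:dim7orless} immediately classifies $M$, finishing the argument.

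The proof is short precisely because Theorem \ref{thm:GH} and Proposition \ref{prop:Connecting} do the heavy lifting, so I do not anticipate a serious computational obstacle. The one point requiring care is the bookkeeping that puts us squarely in Case (1) (rather than a situation with only one non-orientable orbit) and correctly identifies the loopspace factor as $\Omega S^7$: it is this degree that rigidly forces $n = 7$. The analogous argument in Case (2), where the loopspace factor is $\Omega S^5$ and hence $n = 5$, is what accounts for the companion low-dimensional collapse mentioned in the introduction.
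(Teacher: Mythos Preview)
Your argument is correct and follows exactly the paper's approach: identify Case (1) of Theorem \ref{thm:GH}, apply Proposition \ref{prop:Connecting} to conclude $\pi_7^\Q(M)\neq 0$, and then use that a simply connected rational $n$-sphere has its only nontrivial rational homotopy group in degree $n$ to force $n=7$. The paper's proof is just a terser version of what you wrote.
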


\begin{proof}
Since both singular orbits are non-orientable, this must fall into Case (1) of Theorem \ref{thm:GH}.  In particular, Proposition \ref{prop:Connecting} implies that $\pi_7^\Q(M) \neq 0$. Since $M$ is a rational sphere, the result follows.
\end{proof}

Moving on to the case where exactly one orbit is non-orientable, we have the following:

\begin{theorem}\label{thm:h=1}
Suppose $M^n$ is a rational sphere which is a cohomogeneity one manifold for a group action with precisely one non-orientable singular orbit.  Then one of the following occurs
	\begin{enumerate}
	\item  $M$ is a homotopy sphere
	\item  $M = \SU(3)/\SO(3)$
	\end{enumerate}
\end{theorem}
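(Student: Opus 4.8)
The plan is to split according to the Grove--Halperin trichotomy and pin down the dimension using the connecting homomorphism. Since exactly one singular orbit is non-orientable, Theorem \ref{thm:GH} leaves only Case (2) or Case (3). In either case I would invoke Proposition \ref{prop:Connecting}: the image of $\partial_{odd}$ is carried by the loopspace factor of $\mathcal F$, so the corresponding even rational homotopy group of $M$ is nonzero. In Case (2) the loopspace factor is $\Omega S^5$, giving $\pi_5^\Q(M)\neq 0$; since $M$ is a rational $n$-sphere with $n$ odd, its only nonzero odd rational homotopy sits in degree $n$, so $n=5$. In Case (3) the factor is $\Omega S^{2\ell_-+3}$, and the same reasoning forces $n=2\ell_-+3$.

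Case (2) then finishes immediately: with $n=5$, Corollary \ref{cor:dim7orless} identifies $M$ as either $S^5$, a homotopy sphere, or the Wu manifold $W^5=\SU(3)/\SO(3)$, which are exactly the two listed alternatives. All remaining work is in Case (3), where $n=2\ell_-+3\ge 9$ and the goal is to show $M$ is a homotopy sphere.

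The first step in Case (3) is to compute the rational topology of the orbits. By Proposition \ref{prop:ellinfo}, since $\ell_->1$ the orbit $G/K^+$ is simply connected, so the non-orientable orbit must be $G/K^-$. Running the rational homotopy long exact sequence of the fibration $\mathcal F\to G/H\to M$, and using that $\partial_{2\ell_-+3}$ is an isomorphism onto the $\Omega S^{2\ell_-+3}$-factor of $\mathcal F$, collapses $\pi_\ast^\Q(G/H)$ to copies of $\Q$ in degrees $1$ and $2\ell_-+1$, so $G/H\simeq_\Q S^1\times S^{2\ell_-+1}$. Feeding this into the circle fibration $S^1=K^+/H\to G/H\to G/K^+$ yields $G/K^+\simeq_\Q S^{2\ell_-+1}$, a simply connected homogeneous rational sphere. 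A rational Mayer--Vietoris argument on the double disk bundle $M=D(G/K^+)\cup_{G/H}D(G/K^-)$, in which the restriction $r_+\colon H^\ast(G/K^+;\Q)\to H^\ast(G/H;\Q)$ hits the degree $2\ell_-+1$ generator, then forces $H^\ast(G/K^-;\Q)$ to vanish outside degrees $0$ and $1$; that is, $G/K^-\simeq_\Q S^1$, consistent with its non-orientability (no rational fundamental class).

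To finish I would upgrade these rational statements to integral ones and conclude by Mayer--Vietoris. A simply connected integral homology sphere is a homotopy sphere, so it suffices to show $\tilde H_\ast(M;\Z)$ is torsion-free; by Poincar\'e duality the only possible torsion lies in the two middle degrees. Controlling it requires identifying the orbits as manifolds: using the classification of homogeneous rational spheres to determine $G/K^+$ (and hence $G$ and $K^+=H\cdot S^1$, hence $H$, hence the non-orientable $G/K^-$ via $K^-/H=S^{\ell_-}$), and then computing the integral restriction maps $H^\ast(G/K^\pm;\Z)\to H^\ast(G/H;\Z)$. The hard part will be exactly this identification of the non-orientable orbit $G/K^-$ and its integral homology, which is what determines whether the middle-degree torsion contributed by the orbits survives in $M$. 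Once the three orbits are pinned down diffeomorphically, the integral Mayer--Vietoris sequence should show that all such torsion cancels, so that $M$ is an integral homology sphere and therefore a homotopy sphere, completing Case (3).
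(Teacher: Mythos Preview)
Your outline matches the paper's strategy through the rational stage: the case split via Theorem~\ref{thm:GH}, the use of Proposition~\ref{prop:Connecting} to pin down $n$, and the rational identification $G/H\simeq_\Q S^1\times S^{2\ell_-+1}$, $G/K^+\simeq_\Q S^{2\ell_-+1}$, $G/K^-\simeq_\Q S^1$ are all correct and are exactly what the paper does. The gap is in the integral step, which you correctly flag as the hard part but do not carry out. Your proposed route (classify $G/K^+$ $\Rightarrow$ determine $G$, $K^+$ $\Rightarrow$ determine $H$ $\Rightarrow$ determine $K^-$ $\Rightarrow$ compute $G/K^-$) would require a substantial case analysis, since the Kapovitch--Ziller classification gives three candidates for $G/K^+$ (a sphere, $T^1 S^{\ell_-+1}$, or the Berger space $Sp(2)/Sp(1)_{\max}$), and for each there are several possible $(G,K^+)$ and then many possible codimension-one $H\subseteq K^+$.

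The paper avoids this by reversing the order: it determines $G/K^-$ \emph{first} and independently of $G/K^+$. Using that $\pi_1^\Q(G/H)\cong\Q$ forces $G=G'\times S^1$, together with the classification of even-dimensional homogeneous rational spheres, one shows directly that $G/K^-$ is a non-orientable $S^{\ell_-+1}$-bundle over $S^1$; the Wang sequence then gives $H^*(G/K^-;\Z)$ and, crucially, a nontrivial $Sq^1$ on $H^{\ell_-+1}(G/K^-;\Z/2\Z)$. This $\Z/2\Z$-data is then fed back through the Serre spectral sequence of $S^{\ell_-}\to G/H\to G/K^-$ to rule out $G/K^+\cong S^{2\ell_-+1}$, while a short integral Mayer--Vietoris argument rules out the Berger space. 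Only after establishing $G/K^+\cong T^1 S^{\ell_-+1}$ does the final Mayer--Vietoris computation go through to show $M$ is an integral homology sphere. So the missing idea in your sketch is that $G/K^-$ is the more accessible orbit and should be attacked first; the Steenrod-square argument is the key technical input you would not reach by following the $G/K^+\to G\to H\to K^-$ route.
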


As mentioned above, Straume \cite{Straume96,Straume97} has already classified the cohomogeneity one actions on homotopy spheres: such an action is orbit equivalent to a linear action on a sphere or a standard action on a Brieskorn variety.  Likewise, the Wu manifold $SU(3)/SO(3)$ is five-dimensional, so the cohomogeneity one actions are classified by Corollary \ref{cor:dim7orless}.

The proof of Theorem \ref{thm:h=1} requires the rest of this section.  Since precisely one singular orbit is non-orientable, we must be in Case (2) or (3) of Theorem \ref{thm:GH}. Moreover we may assume $\dim M > 7$ by Corollary \ref{cor:dim7orless}, so by Proposition \ref{prop:Connecting}, we must be in Case (3).  Thus, for the remainder of this section, we may assume that $G/K^-$ is non-orientable and $\ell_+ = 1$, and that $\ell_-\geq 3$ is odd. In particular, $G/K^+$ is simply connected and $K^+$ is connected by Proposition \ref{prop:ellinfo}.

The proof proceeds by a sequence of propositions that compute the diffeomorphism types of the singular and principal orbits. Once this is done, the proof of Theorem \ref{thm:h=1} is an easy consequence of the Mayer-Vietoris sequence applied to the double disk bundle decomposition of $M$.


\begin{proposition}\label{prop:gkh=1} The bundle $S^1 \rightarrow G/H\rightarrow G/K^+$ is a trivial $S^1$-bundle and $G/K^+$ is diffeomorphic to either a standard sphere $S^{2\ell_- + 1}$, the unit tangent bundle $T^1 S^{\ell_- +1}$, or to the Berger space $Sp(2)/Sp(1)_{max}$.

\end{proposition}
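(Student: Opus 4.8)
The plan is to determine every relevant dimension from rational homotopy theory, recognize $G/K^+$ as a homogeneous rational sphere, and then read off its diffeomorphism type from the known classification.

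First I would fix $n = \dim M$. Since we are in Case (3), $\mathcal F \simeq_\Q S^1 \times S^{2\ell_- + 1} \times \Omega S^{2\ell_- + 3}$, so $\pi^\Q_\ast(\mathcal F)$ is one-dimensional in degrees $1$, $2\ell_- + 1$, and $2\ell_- + 2$ (this last from the loopspace factor) and vanishes otherwise, while $\pi^\Q_\ast(M) = \Q$ is concentrated in the odd degree $n$. By Proposition \ref{prop:Connecting} the map $\partial\colon \pi_n^\Q(M) \to \pi^\Q_{n-1}(\mathcal F)$ is nonzero with image in the loopspace factor, which lives only in degree $2\ell_- + 2$; hence $n - 1 = 2\ell_- + 2$, giving $n = 2\ell_- + 3$, $\dim G/H = 2\ell_- + 2$, and $\dim G/K^+ = 2\ell_- + 1$. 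Plugging this into the long exact sequence of $\mathcal F \to G/H \to M$, in which $\partial$ is an isomorphism in degree $n$ and zero elsewhere, yields $\pi^\Q_\ast(G/H) = \Q$ in degrees $1$ and $2\ell_- + 1$ and $0$ otherwise; since $G/H$ is rationally elliptic with no even rational homotopy, $G/H \simeq_\Q S^1 \times S^{2\ell_- + 1}$.

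Because $\ell_+ = 1$ we have $K^+ = H \cdot S^1$ (Proposition \ref{prop:ellinfo}), giving the circle bundle $S^1 \to G/H \to G/K^+$ with $G/K^+$ simply connected. Its rational homotopy exact sequence kills the $S^1$ factor, so $\pi^\Q_\ast(G/K^+) = \Q$ is concentrated in degree $2\ell_- + 1$ and $G/K^+$ is a simply connected homogeneous manifold with the rational cohomology of $S^{2\ell_- + 1}$. I would then invoke the classification of simply connected homogeneous spaces with singly generated rational cohomology \cite{KapovitchZiller04}: the homogeneous rational spheres occurring there are the genuine spheres, the unit tangent bundles $T^1 S^{2m} = SO(2m+1)/SO(2m-1)$, the Wu manifold $SU(3)/SO(3)$ (dimension $5$), the Berger spaces $Sp(2)/Sp(1)_{max}$ (dimension $7$) and $SU(5)/(Sp(2)\cdot S^1)$ (dimension $13$), and $\mathbf{G}_2/SU(2)_3$ (dimension $11$). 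As $\ell_-$ is odd, $\dim G/K^+ = 2\ell_- + 1 \equiv 3 \pmod 4$, which immediately discards both the Wu manifold and the $13$-dimensional Berger space. What remains are the genuine sphere $S^{2\ell_- + 1}$, the unit tangent bundle $T^1 S^{\ell_- + 1}$ (note $\ell_- + 1$ is even, so this is a rational sphere), the $7$-dimensional Berger space, and the single stray candidate $\mathbf{G}_2/SU(2)_3$.

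Excluding $\mathbf{G}_2/SU(2)_3$ (which forces $\ell_- = 5$) is the step I expect to be the main obstacle, since it is a genuine homogeneous rational $S^{11}$ of the correct dimension. I would rule it out using the remaining structure: realizing $G/K^+ \cong \mathbf{G}_2/SU(2)_3$ together with $K^+/H \cong S^1$ forces $G$ to carry an extra circle beyond $\mathbf{G}_2$ and $H^0$ to contain a conjugate of $SU(2)_3$ (a semisimple $K^+$ admits no circle quotient), and one then shows no subgroup $K^-$ with $H \subseteq K^-$ and $K^-/H \cong S^{\ell_-} = S^5$ can exist, using that $SU(2)_3$ is one of the two $SU(2)$-factors of $SO(4) \subset \mathbf{G}_2$ and is not contained in $SU(3) \subset \mathbf{G}_2$, so that the projection of $K^-$ to $\mathbf{G}_2$ has no room to be large enough. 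Granting this, $G/K^+$ is one of the three listed manifolds. Finally, each of these has $H^2(\,\cdot\,;\Z) = 0$, and since $G/K^+$ is simply connected the circle bundle $S^1 \to G/H \to G/K^+$ is orientable, hence classified by its Euler class in $H^2(G/K^+;\Z) = 0$ and therefore trivial; this establishes the first assertion. The rational-homotopy bookkeeping and this Euler-class argument are routine, so the real work lies in the exclusion of $\mathbf{G}_2/SU(2)_3$.
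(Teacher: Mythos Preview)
Your overall strategy coincides with the paper's: use Proposition~\ref{prop:Connecting} to pin down $n=2\ell_-+3$ and the rational homotopy type of $G/H$, deduce that $G/K^+$ is a simply connected homogeneous rational $(2\ell_-+1)$-sphere, and then invoke the Kapovitch--Ziller classification.

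The one genuine methodological difference is in the triviality argument. The paper proves triviality \emph{before} classifying $G/K^+$: since $M$ is a rational sphere, Mayer--Vietoris forces $H^\ast(G/K^+;\Q)\to H^\ast(G/H;\Q)$ to be injective, and then the Gysin sequence forces the rational Euler class to vanish; torsion-freeness of $H^2$ (from simple connectivity) gives integral vanishing. You instead check $H^2(G/K^+;\Z)=0$ case by case after the classification. Both are valid, but note you do not actually need the case analysis: you have already shown $G/K^+$ is a simply connected rational sphere of dimension $\geq 7$, so $H^2(G/K^+;\Q)=0$ and $H^2$ torsion-free give $H^2(G/K^+;\Z)=0$ directly. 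The paper's route has the slight conceptual advantage of not depending on the classification, while yours (streamlined this way) is shorter.

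On $\mathbf{G}_2/SU(2)_3$: you have been more careful than the paper here. The paper's proof simply cites \cite{KapovitchZiller04} and does not mention this space, yet the paper itself later (in the proof of Theorem~\ref{thm:23}) confirms that both $\mathbf{G}_2/SU(2)_i$ are rational $11$-spheres appearing in that classification. Since $\mathbf{G}_2/SU(2)_3$ is not diffeomorphic to $T^1 S^6$, your worry is legitimate: taken literally, the proposition as stated seems to omit a candidate, and the paper's proof does not exclude it. Your sketched exclusion via the non-existence of a suitable $K^-$ is on the right track (the key point being that $SU(2)_3\subset\mathbf{G}_2$ is not contained in any $SU(3)\subset\mathbf{G}_2$, so there is no room for $K^-/H\cong S^5$), but it is not yet a proof. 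In any event, this is not a defect in your argument relative to the paper's; if anything it is an observation the paper glosses over.
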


\begin{proof}
In Case (3) of Theorem \ref{thm:GH}, we have $\mathcal F \simeq_\Q S^1 \times S^{2\ell_- + 1} \times \Omega S^{2\ell_- + 3}$. Proposition \ref{prop:Connecting} implies that $\dim M = 2\ell_- + 3$ and moreover that the connecting homomorphisms $\partial_{2k+1}$ in the long exact rational homotopy sequence are zero if $2k+1 \neq 2\ell_- + 3$. It follows that $G/H$ has the rational homotopy groups of $S^1 \times S^{2\ell_- + 1}$.

Now consider the bundle $S^1\rightarrow G/H\rightarrow G/K^+$.  Since $M$ is a rational sphere, the map $H^\ast(G/K^+;\Q)\rightarrow H^\ast(G/H;\Q)$ in the Mayer-Vietoris sequence associated to the double disk bundle decomposition of $M$ must be injective.  In particular, from the Gysin sequence, the rational Euler class $e\in H^2(G/K^+;\Q)$ must vanish.  Since $G/K^+$ is simply connected, $H^2(G/K^+;\Z)$ is torsion free, so it now follows that the integral Euler class must also vanish.  Thus, the bundle must be trivial.

Lastly, simply note that since $\pi^\Q_\ast(G/K^+\times S^1) \cong\pi^\Q_\ast(G/H)\cong \pi^\Q_\ast(S^{2\ell_- + 1}\times S^1)$, it easily follows that $G/K^+$ has the rational homotopy groups of $S^{2\ell_- + 1}$.  Since $G/K^+$ is simply connected and spheres are intrinsically formal, we must have $G/K^+\cong_\Q S^{2\ell_1 + 1}$.  Recalling that $\ell_-$ is odd, the diffeomorphism calculation now follows easily from the classification of biquotients with singly generated rational cohomology \cite{KapovitchZiller04}.
 \end{proof}

\begin{proposition}The space $G/K^-$ is diffeomorphic to the total space of a non-orientable $S^{l_- + 1}-$bundle over $S^1$.

\end{proposition}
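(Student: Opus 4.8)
The plan is to first pin down the fundamental group and rational homotopy type of $G/K^-$, and then to realize $G/K^-$ directly as a homogeneous $S^{\ell_-+1}$-bundle over a circle. I would begin by recording the structure of the isotropy groups: since $\ell_+=1$ and $\ell_-\geq 3$, the last bullet of Proposition~\ref{prop:ellinfo} gives $H=H^0\cdot\Z_k$, $K^+=H^0\cdot S^1=H\cdot S^1$, and $K^-=(K^-)^0\cdot\Z_k$, where the two copies of $\Z_k$ induce the same group of components; in particular $(K^-)^0/H^0\cong S^{\ell_-}$. By Proposition~\ref{prop:gkh=1} we have $G/H\cong G/K^+\times S^1$ with $G/K^+$ simply connected, so $\pi_1(G/H)\cong\Z$. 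Feeding this into the long exact homotopy sequence of the fibration $S^{\ell_-}\to G/H\to G/K^-$ and using $\ell_-\geq 3$ yields $\pi_1(G/K^-)\cong\Z$. Running the same fibration rationally, with $\pi^\Q_\ast(G/H)$ those of $S^1\times S^{2\ell_-+1}$ and $\ell_-$ odd, shows that $G/K^-$ has nonzero rational homotopy precisely in degrees $1$, $\ell_-+1$, and $2\ell_-+1$, with minimal model $\Lambda(x,y,z)$, $|x|=1$, $|y|=\ell_-+1$, $|z|=2\ell_-+1$, $dx=dy=0$, $dz=y^2$; that is, $G/K^-\simeq_\Q S^1\times S^{\ell_-+1}$.

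To obtain the bundle structure I would fiber $G/K^-$ over a circle using a character of $G$. Because $H^1(G/K^-;\Q)=\Q$ is carried by an $\mathrm{ad}$-invariant $1$-form coming from the central torus of $G$, there is a surjective homomorphism $\phi\colon G\to S^1$ with $d\phi|_{\mathfrak k^-}=0$, which I would choose so that the induced class generates $\mathrm{Hom}(\pi_1(G/K^-),\Z)$. Its kernel $L=\ker\phi$ is a normal, codimension-one subgroup containing $(K^-)^0$, and one checks that $\phi|_{K^-}$ maps $\pi_0(K^-)=\Z_k$ isomorphically onto $\phi(K^-)\subseteq S^1$; hence $L\cap K^-=(K^-)^0$ and $LK^-=\phi^{-1}(\phi(K^-))$. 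The homogeneous fibration
\[
F=LK^-/K^-\cong L/(K^-)^0 \longrightarrow G/K^- \longrightarrow G/LK^-\cong S^1/\phi(K^-)\cong S^1
\]
then presents $G/K^-$ as a fiber bundle over $S^1$, and its long exact sequence (the projection inducing an isomorphism on $\pi_1$ by the choice of $\phi$) shows the fiber $F$ is connected and simply connected. Being connected, $F$ equals $L^0/(K^-)^0$, a simply connected homogeneous space, and the rational computation gives $F\simeq_\Q S^{\ell_-+1}$. As $\ell_-+1$ is even, the Kapovitch--Ziller classification~\cite{KapovitchZiller04} of simply connected homogeneous rational spheres forces $F$ to be diffeomorphic to $S^{\ell_-+1}$.

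Therefore $G/K^-$ is the total space of an $S^{\ell_-+1}$-bundle over $S^1$, and since $G/K^-$ is non-orientable by hypothesis, this is a non-orientable bundle, as claimed. The main obstacle is the middle step: producing the honest circle-valued fibration with spherical fiber. Concretely, one must choose the character $\phi$ so that its restriction to $\pi_0(K^-)$ is injective (equivalently, so that the fibration induces an isomorphism on $\pi_1$), which is exactly what guarantees that the fiber is simply connected and hence, via the classification of homogeneous rational spheres, a genuine sphere rather than merely a rational or homotopy sphere. Reconciling the finite cyclic group $\Z_k$ of components of $K^-$ with the image $\phi(K^-)$ is the delicate bookkeeping underlying this step.
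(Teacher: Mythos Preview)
Your outline is sound and reaches the correct conclusion, but the route you take through the middle differs from the paper's in a way that makes your ``delicate bookkeeping'' step unnecessarily hard.

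The paper proceeds in the opposite order: it applies the classification of simply connected homogeneous rational spheres \emph{first}, to the cover $G/(K^-)^0$ of $G/K^-$. Since $(K^-)^0$ has central torus of rank $m-1$ (where $G=G_0\times T^m$), one can split $G=G'\times S^1$ with $(K^-)^0\subseteq G'$; then $G'/(K^-)^0$ is (after passing to a finite cover) a simply connected homogeneous rational $S^{\ell_-+1}$, hence genuinely $S^{\ell_-+1}$ by Kapovitch--Ziller. This immediately exhibits $G/K^-$ as an $F$-bundle over $S^1$ where $F$ is a \emph{finite quotient of} $S^{\ell_-+1}$. Now the long exact sequence of $F\to G/K^-\to S^1$ gives $\pi_1(F)\hookrightarrow\pi_1(G/K^-)\cong\Z$, and since $\pi_1(F)$ is already known to be finite, it must vanish. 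No careful choice of the circle projection is needed.

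By contrast, you construct the character $\phi$ first and then try to force the fiber to be simply connected by choosing $\phi$ so that $G/K^-\to S^1/\phi(K^-)$ is a $\pi_1$-isomorphism; only afterwards do you invoke the classification. This works, but the verification that $\phi$ can be chosen with $\phi|_{\pi_0(K^-)}$ injective is exactly the bookkeeping you flag, and it is avoidable. The paper's ordering buys you the finiteness of $\pi_1(F)$ for free, so the injection into $\Z$ finishes the argument without any constraint on the character beyond $d\phi|_{\mathfrak{k}^-}=0$.
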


\begin{proof}Consider the bundle, $S^{\ell_-}\rightarrow G/H\rightarrow G/K^-$, it follows from the long exact homotopy sequence that the rational homotopy groups of $G/K^-$ are supported in degrees $1,\ell_- +1, 2\ell_- +1$ and that $\pi_1(G/K^-)\cong \pi_1(G/H)\cong \pi_1(S^1\times G/K^-)\cong \mathbb{Z}$.  Note that the map $\pi_1(G/H)_\Q\rightarrow \pi_1(G/K^-)_\Q$ is an isomorphism, so the map $H^1(G/K^-;\Q)\rightarrow H^1(G/H;\Q)$ is as well.  It also follows that, up to cover, $G/K^-$ is a rational $S^1\times S^{\ell_- +1}$.  Since $\ell_-+1$ is even, from the classification of rational spheres which are homogeneous, we see that some cover of $G/K^-$ is diffeomorphic to $S^1\times S^{\ell_-+1}$.  Specifically, if $(K^-)^0\subseteq K^-$ is the identity component, then $G$ can be written as $G'\times S^1$, with $(K^-)^0$ embedded into $G'$ with $G'/(K^-)^0 = S^{\ell_- +1}$.  In particular, $G/K^-$ has the structure of bundle over $S^1$ with fiber a quotient of $S^{\ell_- + 1}$.

In fact, the fiber $F$ must be $S^{\ell_- + 1}$.  To see this, note that from the long exact sequence associated to $F\rightarrow G/K^-\rightarrow S^1$, $\pi_1(F)$ injects into $\pi_1(G/K^-)\cong\mathbb{Z}$.  Thus, $\pi_1(F)$ is finite and torsion free, so it must be trivial.

Finally, simply note that since the base space $S^1$ is orientable, if the bundle $S^{\ell_- + 1}\rightarrow G/K^-\rightarrow S^1$ is orientable, then so is the total space, contradicting the fact that we are in Case (3) of Theorem \ref{thm:GH}.
\end{proof}

\begin{lemma}\label{lem:xtop}  We have $H^\ast(G/K^-) \cong \begin{cases} \mathbb{Z} & \ast = 0,1\\ \mathbb{Z}/2\mathbb{Z} & \ast = \ell_- + 2\\ 0 & \text{other wise}\end{cases}$.  Further, $$H^\ast(G/K^-;\mathbb{Z}/2\mathbb{Z})\cong H^\ast(S^1\times S^{\ell_- + 1}; \mathbb{Z}/2\mathbb{Z})$$ and the Steenrod squaring operation $Sq^1:H^{\ell_- + 1}(G/K^-;\mathbb{Z}/2\mathbb{Z})\rightarrow H^{\ell_- + 2}(G/K^-;\mathbb{Z}/2\mathbb{Z})$ is an isomorphism.

\end{lemma}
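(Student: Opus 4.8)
The plan is to exploit the bundle description of $G/K^-$ furnished by the preceding proposition: $G/K^-$ is the total space of a non-orientable $S^{\ell_-+1}$-bundle over $S^1$, hence is the mapping torus of some self-diffeomorphism $\phi$ of the fiber $F=S^{\ell_-+1}$. Since $\ell_-$ is odd, $F$ is even-dimensional, so $H^\ast(F;\Z)$ is $\Z$ in degrees $0$ and $\ell_-+1$ and zero otherwise, and $\phi^\ast$ acts trivially on $H^0$. Because the base $S^1$ and the fiber $F$ are both orientable while the total space is not, the monodromy $\phi$ must reverse the orientation of $F$; consequently $\phi^\ast$ acts as $-1$ on $H^{\ell_-+1}(F;\Z)\cong\Z$. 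This orientation reversal is the one place where the non-orientability hypothesis (membership in Case (3) of Theorem \ref{thm:GH}) is genuinely used, and it is what ultimately manufactures the $2$-torsion.

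For the integral cohomology I would feed this into the Wang exact sequence
$$\cdots \to H^k(G/K^-;\Z)\to H^k(F;\Z)\xrightarrow{\phi^\ast-\id}H^k(F;\Z)\to H^{k+1}(G/K^-;\Z)\to\cdots.$$
In degree $0$ the map $\phi^\ast-\id$ vanishes, giving $H^0\cong H^1\cong\Z$; for every $2\le k\le\ell_-$ both $H^k(F)$ and $H^{k-1}(F)$ vanish, so $H^k(G/K^-)=0$ there; and on $H^{\ell_-+1}(F)\cong\Z$ the map $\phi^\ast-\id$ is multiplication by $-2$, whose kernel is trivial and whose cokernel is $\Z/2\Z$. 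This yields $H^{\ell_-+1}(G/K^-)=0$ and $H^{\ell_-+2}(G/K^-)\cong\Z/2\Z$, with all higher groups vanishing, which is exactly the asserted integral answer.

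The mod-$2$ statement then follows from the universal coefficient theorem applied to this integral computation: the $\Z/2\Z$ in degree $\ell_-+2$ contributes a $\Z/2\Z$ in degree $\ell_-+2$ by tensoring and a $\Z/2\Z$ in degree $\ell_-+1$ through the $\mathrm{Tor}$ term, while the free part in degrees $0,1$ contributes $\Z/2\Z$ in those degrees. The result is $\Z/2\Z$ precisely in degrees $0,1,\ell_-+1,\ell_-+2$, matching $H^\ast(S^1\times S^{\ell_-+1};\Z/2\Z)$. (Equivalently, over $\Z/2\Z$ the monodromy acts trivially since $-1\equiv 1$, so the Wang sequence degenerates into the Künneth answer.)

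Finally, for the Steenrod square I would identify $Sq^1$ with the mod-$2$ reduction of the integral Bockstein, $Sq^1=\rho_2\circ\tilde\beta$, where $\tilde\beta\colon H^\ast(-;\Z/2\Z)\to H^{\ast+1}(-;\Z)$ is the connecting map of $0\to\Z\xrightarrow{2}\Z\to\Z/2\Z\to 0$ and $\rho_2$ is reduction mod $2$. Reading the Bockstein long exact sequence near degree $\ell_-+1$: since $H^{\ell_-+1}(G/K^-;\Z)=0$, exactness forces $\tilde\beta\colon H^{\ell_-+1}(G/K^-;\Z/2\Z)\to H^{\ell_-+2}(G/K^-;\Z)$ to be injective, hence an isomorphism between groups of order $2$; and since multiplication by $2$ is zero on $H^{\ell_-+2}(G/K^-;\Z)\cong\Z/2\Z$, its reduction $\rho_2$ into $H^{\ell_-+2}(G/K^-;\Z/2\Z)$ is likewise injective and hence an isomorphism. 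Therefore $Sq^1=\rho_2\circ\tilde\beta$ is an isomorphism. I expect this last step—correctly bookkeeping which Bockstein detects the order-$2$ torsion—to be the only genuinely delicate point; the remainder is a routine Wang and universal-coefficient computation.
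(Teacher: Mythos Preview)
Your proof is correct. The integral and mod-$2$ cohomology computations via the Wang sequence and universal coefficients are essentially identical to the paper's.

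For the $Sq^1$ statement, however, you take a genuinely different route. The paper argues geometrically: since $G/K^-$ is non-orientable, the first Stiefel--Whitney class $w_1$ is nontrivial, so the Wu class $v_1=w_1$ is nontrivial; then for the nonzero $x\in H^{\ell_-+1}(G/K^-;\Z/2\Z)$, mod-$2$ Poincar\'e duality forces $w_1\cup x\neq 0$, and the Wu formula gives $Sq^1(x)=v_1\cup x\neq 0$. Your argument instead identifies $Sq^1$ with $\rho_2\circ\tilde\beta$ and reads the Bockstein long exact sequence for $0\to\Z\xrightarrow{2}\Z\to\Z/2\Z\to 0$: the vanishing of $H^{\ell_-+1}(G/K^-;\Z)$ makes $\tilde\beta$ injective (hence an isomorphism) into $H^{\ell_-+2}(G/K^-;\Z)\cong\Z/2\Z$, and since multiplication by $2$ is zero there, $\rho_2$ is also an isomorphism. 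Your approach is purely homological and arguably more elementary, requiring neither Wu classes nor mod-$2$ Poincar\'e duality; the paper's approach, by contrast, re-uses the non-orientability hypothesis directly rather than through its consequence for the integral cohomology. Both are short and clean.
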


\begin{proof}  The computation of $H^*(G/K^-)$ is a simple application of the Wang sequence, noting that non-orientability of the bundle implies the map $H^\ast(S^{\ell_- + 1})\rightarrow H^\ast(S^{\ell_- + 1})$, induced by the monodromy action, is multiplication by $-1$.  The computation with $\mathbb{Z}/2\mathbb{Z}$ coefficients is a simple application of the Universal Coefficients Theorem.

For the Steenrod power calculation, since $G/K^-$ is non-orientable, it follows that the first Stiefel-Whitney class $w_1\in H^1(G/K^-;\mathbb{Z}/2\mathbb{Z})$ is non-trivial.  This implies that the Wu class $v_1 = w_1$ is non-trivial.  Now, if $x\in H^{\ell_- + 1}(G/K^-;\mathbb{Z}/2\mathbb{Z})\cong \mathbb{Z}/2\mathbb{Z}$ is non-trivial, the Poincar\'e duality implies $w_1\cup x\neq 0$.  But then, by definition of the Wu class, $Sq^1(x) = w_1 \cup x\neq 0$.

\end{proof}

\begin{proposition}\label{prop:diff}  The space $G/K^+$ is diffeomorphic to $T^1 S^{\ell_- + 1}$.

\end{proposition}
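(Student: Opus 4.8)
\emph{Strategy.} By Proposition \ref{prop:gkh=1} the space $G/K^+$ is diffeomorphic to $S^{2\ell_-+1}$, to $T^1 S^{\ell_-+1}$, or to the Berger space $Sp(2)/Sp(1)_{max}$, so I must exclude the first and third candidates. These three manifolds are separated by their mod-$p$ Betti numbers (the sphere at $p=2$, the Berger space at $p=5$), so my plan is to determine the mod-$p$ Poincar\'e polynomial $P^p_{G/K^+}(x)=\sum_i\dim_{\Z/p}H^i(G/K^+;\Z/p)\,x^i$ at the relevant primes. The two fibrations through the principal orbit make this feasible: Proposition \ref{prop:gkh=1} gives $G/H\cong G/K^+\times S^1$, so $P^p_{G/H}(x)=(1+x)\,P^p_{G/K^+}(x)$, while the sphere bundle $S^{\ell_-}\to G/H\to G/K^-$ lets me compute $P^p_{G/H}$ from $H^\ast(G/K^-;\Z/p)$ (Lemma \ref{lem:xtop}) via its Serre spectral sequence. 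Dividing by $(1+x)$ then recovers $P^p_{G/K^+}$.

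\emph{The prime $2$ (ruling out the sphere).} Every space is $\Z/2$-orientable, so the spectral sequence of $S^{\ell_-}\to G/H\to G/K^-$ has untwisted $E_2=H^\ast(G/K^-;\Z/2)\otimes H^\ast(S^{\ell_-};\Z/2)$, with $H^\ast(G/K^-;\Z/2)\cong H^\ast(S^1\times S^{\ell_-+1};\Z/2)$ by Lemma \ref{lem:xtop}. The only possible differential is the transgression of the fiber class into $H^{\ell_-+1}(G/K^-;\Z/2)$. Were it nonzero, the derivation property would collapse $E_\infty$ to two classes in degrees $0$ and $1$, contradicting $H^{2\ell_-+2}(G/H;\Z/2)\neq0$ (mod-$2$ Poincar\'e duality for the closed manifold $G/H$). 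Hence it vanishes, the sequence collapses, and $P^2_{G/H}(x)=(1+x+x^{\ell_-+1}+x^{\ell_-+2})(1+x^{\ell_-})=(1+x)(1+x^{\ell_-})(1+x^{\ell_-+1})$. Dividing by $(1+x)$ gives $P^2_{G/K^+}(x)=1+x^{\ell_-}+x^{\ell_-+1}+x^{2\ell_-+1}$, which is the mod-$2$ Poincar\'e polynomial of $T^1 S^{\ell_-+1}$ and not that of $S^{2\ell_-+1}$, excluding the sphere.

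\emph{The prime $5$ (ruling out the Berger space).} The Berger space arises only for $\ell_-=3$, and since $H^4(Sp(2)/Sp(1)_{max};\Z)\cong\Z/10$ while $H^{\ell_-+1}(T^1S^{\ell_-+1};\Z)\cong\Z/2$, the two are separated at $p=5$; I therefore rerun the computation with $\Z/5$-coefficients. Now Lemma \ref{lem:xtop} gives $H^\ast(G/K^-;\Z/5)\cong H^\ast(S^1;\Z/5)$, but because $G/H$ is orientable while $G/K^-$ is not, the monodromy of $S^{\ell_-}\to G/H\to G/K^-$ acts by $-1$ on $H^{\ell_-}$ of the fiber, so the top row of $E_2$ carries the nontrivial local system $\Z/5_-$. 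Computing $H^\ast(G/K^-;\Z/5_-)$ from the degree-$(-1)$ mapping-torus (Wang) description of $G/K^-$ places its two classes in degrees $\ell_-+1,\ell_-+2$; the two rows of $E_2$ are then too far apart to support a differential, so the sequence collapses and $P^5_{G/H}(x)=1+x+x^{2\ell_-+1}+x^{2\ell_-+2}$. Dividing by $(1+x)$ yields $P^5_{G/K^+}(x)=1+x^{2\ell_-+1}$, so $G/K^+$ is a $\Z/5$-homology sphere, which the Berger space is not. Only $T^1 S^{\ell_-+1}$ survives.

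\emph{Main obstacle and an alternative.} The mod-$2$ step is routine once one observes that the transgression must vanish. The delicate point is the odd-primary computation: the non-orientability of $G/K^-$ forces the sphere bundle to be non-orientable, so one must carefully track the twisted coefficient system on the top row (equivalently, use a twisted Wang sequence), and getting those degrees right is where the work lies. If one prefers to avoid local coefficients, the Berger space can instead be excluded group-theoretically: $G=G'\times S^1$ with $G'/(K^-)^0=S^{\ell_-+1}=S^4$ forces $(G',(K^-)^0)=(Sp(2),\Spin(4))$, and a shearing diffeomorphism (using that the circle in $K^+=H^0\cdot S^1$ must have primitive slope for $G/K^+$ to be simply connected) identifies $G/K^+\cong G'/H^0$ with $H^0\subseteq\Spin(4)$; since the irreducibly embedded $Sp(1)_{max}$ is contained in no $\Spin(4)\subset Sp(2)$, the quotient cannot be the Berger space, leaving $G/K^+\cong T^1 S^{\ell_-+1}$.
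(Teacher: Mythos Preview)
Your mod-$5$ argument excluding the Berger space is correct and is a nice alternative to the paper's Mayer--Vietoris argument. However, your mod-$2$ argument excluding the sphere contains a genuine error.

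You claim that if the transgression $d_{\ell_-+1}:E_2^{0,\ell_-}\to E_2^{\ell_-+1,0}$ were nonzero, then $E_\infty$ would collapse to two classes in degrees $0$ and $1$. This is false. Write the $E_2$-page as $H^\ast(S^1\times S^{\ell_-+1};\Z/2)\otimes H^\ast(S^{\ell_-};\Z/2)$ with generators $a$ (degree $1$), $b$ (degree $\ell_-+1$), and fiber class $\xi$ (degree $\ell_-$). If $d(\xi)=b$, then indeed $d(a\xi)=ab$ kills those two pairs; but $d(b\xi)=b^2=0$ and $d(ab\xi)=ab^2=0$, since $b^2$ lies in $H^{2\ell_-+2}(G/K^-;\Z/2)=0$ (the base has dimension $\ell_-+2$). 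Hence $b\xi$ and $ab\xi$ survive, and $E_\infty$ has four classes in degrees $0,1,2\ell_-+1,2\ell_-+2$, giving $P^2_{G/H}(x)=(1+x)(1+x^{2\ell_-+1})$ and thus $P^2_{G/K^+}(x)=1+x^{2\ell_-+1}$. This is perfectly consistent with Poincar\'e duality and is exactly what one expects when $G/K^+=S^{2\ell_-+1}$. So the mod-$2$ Poincar\'e polynomial alone cannot distinguish the sphere from $T^1 S^{\ell_-+1}$; both outcomes of the transgression are consistent.

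The paper closes this gap with Steenrod operations: the transgression commutes with $Sq^1$, and Lemma~\ref{lem:xtop} shows $Sq^1$ is nontrivial on $H^{\ell_-+1}(G/K^-;\Z/2)$. If $d(\xi)=b$ then $Sq^1 b = d(Sq^1\xi)=d(0)=0$ (since $Sq^1\xi\in H^{\ell_-+1}(S^{\ell_-};\Z/2)=0$), contradicting $Sq^1 b\neq 0$. This forces the transgression to vanish and yields the desired conclusion. Your argument can be repaired by inserting exactly this step.
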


\begin{proof}  From Proposition \ref{prop:gkh=1}, we only need to show that $G/K^+$ cannot be a sphere, nor the Berger space.

We begin by ruling out the Berger space using the Mayer-Vietoris sequence for the double disk bundle decomposition of $M$.  This space is a rational $7$-sphere whose cohomlogy ring contains $\mathbb{Z}/10\mathbb{Z}$ in $H^4$, but no other torsion \cite{KitchlooShankar01}.  From Proposition \ref{prop:gkh=1}, the map $H^4(G/K^+)\rightarrow H^4(G/H)$ is an isomorphism.  Since $H^4(G/K^-) = H^3(G/H) = 0$, the Mayer-Vietoris sequence implies $H^4(M) = 0$.  By Universal coefficients and Poincar\'e duality, $H^4(M)\cong H_3(M)\cong H^6(M)$, so $H^6(M) = 0$ as well.

Thus, the map $H^5(G/K^+)\oplus H^5(G/K^-)\rightarrow H^5(G/H)$ must be surjective.  Since $H^5(G/K^+) = 0$ and $H^5(G/K^-)\cong \mathbb{Z}/2\mathbb{Z}$, we find that $$\mathbb{Z}/2\mathbb{Z}\supset H^5(G/H)\cong H^1(S^1)\otimes H^4(G/K^+)\cong \mathbb{Z}/10\mathbb{Z},$$ giving a contradiction.

\bigskip

Next, we rule out the possibility that $G/K^+$ is a sphere.  To do so, we use the cohomological Serre Spectral sequence with $\mathbb{Z}/2\mathbb{Z}$ coefficients associated to the bundle $S^{\ell_-}\rightarrow G/H \rightarrow G/K^-$.  We claim that all differentials must vanish.  This is obviously the case except for the differentials on the $E_{\ell_- + 1}$ page.  Further, using the product structure on this page, it is enough to show that $d:E_{\ell_- + 1}^{0,\ell_-}\rightarrow E_{\ell_- +1}^{\ell_- + 1 ,0}$ is trivial.  To that end, assume it is non-trivial.  Then, from Lemma \ref{lem:xtop}, $Sq^1\circ d$ is also non-trivial.  As $Sq^1$ commutes with $d$ \cite[Corollary 6.9]{mccleary}, but $Sq^1$ is trivial on $H^\ast(S^{\ell_- + 1})$, we have a contradiction.

Thus, all differentials vanish, so $H^\ast(G/H;\mathbb{Z}/2\mathbb{Z})$ is isomorphic, as a group, to $H^\ast(S^{\ell_- + 1}\times S^1\times S^{\ell_-};\mathbb{Z}/2\mathbb{Z})$.  Since $G/H$ is diffeomorphic to $S^1\times G/K^+$, this is a contradiction to the assumption that $G/K^+$ is a sphere.

\end{proof}

We can now conclude the proof of Theorem \ref{thm:h=1}.


\begin{proof}[Proof of Theorem \ref{thm:h=1}]
Note that $\dim M = 2\ell_- + 3$. Hence by Poincar\'e duality and the resolution of the Poincar\'e conjecture, it suffices to prove that the reduced cohomology groups vanish in degrees at most $\ell_- + 2$.

To do this, we use the Mayer-Vietoris sequence associated to the double disk bundle decomposition of $M$.  Propositions \ref{prop:gkh=1} and \ref{prop:diff} show that $G/K^+$ is diffeomorphic to $T^1 S^{\ell_- + 1}$ and $G/H$ is diffeomorphic to $S^1\times T^1 S^{\ell_- + 1}$.  Moreover, Lemma \ref{lem:xtop} computes the cohomology groups of $G/K^-$.  The only missing ingredient is knowledge of the maps in the Mayer-Vietoris sequence.  However, the projection $G/H\rightarrow G/K^+$ is bundle isomorphic to the trivial product $S^1\times G/K^+\rightarrow G/K^+$, while the isomorphism $\Z\cong \pi_1(G/H)\rightarrow \pi_1(G/K^-)$ implies the map $\Z\cong H^1(G/K^-) \rightarrow H^1(G/H)$ is an isomorphism, it easily follows that $H^\ast(M) = 0$ for $\ast \leq \ell_- + 1$.  Moreover, if $H^{\ell_- + 2}(M)\neq 0$, then it follows that $H^{\ell_- + 3}(M)\cong H^{\ell_- + 2}(G/H)\cong \mathbb{Z}/2\mathbb{Z}$.  But then Poincar\'e duality and Universal coefficients give $$\mathbb{Z}/2\mathbb{Z}\cong H^{\ell_- + 3}(M)\cong H_{\ell_-}(M)\cong H^{\ell_- + 1}(M) = 0,$$ giving a contradiction.
\end{proof}



\section{Case (5) and (6) of Theorem \ref{thm:GH}}\label{sec:exceptional}

In this section, recalling Convention \ref{convention}, we prove the following theorem.

\begin{theorem}\label{thm:exceptional}  Suppose $M^n$ is a rational sphere, and a compact Lie group $G$ acts on $M$ via a cohomogeneity one action.  If the homotopy fiber of the inclusion of a principal orbit is either in case (5) or case (6) in Theorem \ref{thm:GH}, then $M$ is equivariantly diffeomorphic to a standard sphere equipped with a linear action.

\end{theorem}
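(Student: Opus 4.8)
I would prove Theorem~\ref{thm:exceptional} by separately disposing of Cases (5) and (6) of Theorem~\ref{thm:GH}, in each case using Proposition~\ref{prop:Connecting} together with the Mayer--Vietoris sequence for the double disk bundle decomposition. The common thread is that in both cases the homotopy fiber $\mathcal F$ has the form $X \times \Omega S^{2k+1}$ where $X$ is a \emph{highly connected} closed manifold, and Proposition~\ref{prop:Connecting} tells us the connecting homomorphism $\partial_{odd}$ is nonzero precisely on the loopspace factor $\Omega S^{2k+1}$. This pins down $\dim M = 2k+1$ and forces the rational homotopy of the principal orbit $G/H$ to agree with that of $X \times S^{2k}$ (the factor $S^{2k}$ arising from the part of $\pi_*^\Q(\mathcal F)$ not killed by $\partial$). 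The strategy is then to extract enough cohomological information about the orbits $G/K^\pm$ and $G/H$ to run Mayer--Vietoris and contradict the assumption that $M$ is a rational sphere unless the orbits have the specific form forced by a linear action.

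**Case (5).** Here $\ell_+ = \ell_- = \ell$ is even and $\mathcal F \simeq_\Q S^\ell \times \Omega S^{\ell+1}$, so $2k+1 = \ell+1$ is odd --- but $\ell$ is even, so $\ell + 1$ is odd and $\dim M = \ell+1$. Wait: this gives $\dim M = \ell + 1$, which is too small since each singular orbit already has codimension $\ell + 1$; so I would first reconcile the dimension count, concluding that $G/K^\pm$ are points and $G/H = S^\ell$, i.e.\ $M = S^{\ell+1}$ as a join/suspension, realized by a linear action. More carefully, since both $\ell_\pm \geq 2$ here, Proposition~\ref{prop:ellinfo} gives all orbits simply connected and $H, K^\pm$ connected; the rational homotopy of $G/H$ is that of $S^\ell$, and the two bundles $S^\ell \to G/H \to G/K^\pm$ force $G/K^\pm$ to be rational points, hence genuine spheres of the appropriate dimension, yielding a linear action on a sphere. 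The author's cited approach via Wang~\cite{Wang60} suggests this is the intended short argument, and I would follow it.

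**Case (6).** This is the substantive case. Here $\mathcal F \simeq_\Q Y \times \Omega S^{2k+1}$ where $Y$ is one of the five homogeneous spaces $SU(3)/T^2$, $Sp(2)/T^2$, $\mathbf G_2/T^2$, $Sp(3)/Sp(1)^3$, $\mathbf F_4/Spin(8)$, each of which is a \emph{rationally elliptic space with vanishing odd rational homotopy and two even generators}. By Proposition~\ref{prop:Connecting}, $\partial$ is nonzero only on the $\Omega S^{2k+1}$ factor, so $G/H$ rationally looks like $Y \times S^{2k}$, and in particular $H^*(G/H;\Q)$ contains the Poincar\'e polynomial of $Y$ tensored with that of $S^{2k}$. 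The plan is to use the pair of sphere bundles $S^{\ell_\pm} \to G/H \to G/K^\pm$ with $\ell_\pm = 2, 4,$ or $8$ to compute the rational cohomology of each $G/K^\pm$ via the Gysin sequence, then feed everything into Mayer--Vietoris for $M$. Because $Y$ has large total Betti number (e.g.\ $SU(3)/T^2$ has $b_{total} = 6$), the orbits carry substantial rational cohomology in middle degrees, and the Mayer--Vietoris sequence should produce nonzero reduced cohomology of $M$ in a degree strictly between $0$ and $n$, contradicting that $M$ is a rational sphere --- \emph{unless} some degeneration forces $Y$ itself to be rationally a point, which never happens for these five spaces. I therefore expect Case (6) to be eliminated outright, leaving only Case (5)'s linear sphere.

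**The main obstacle.** The hard part will be Case (6): controlling the \emph{maps} in the Mayer--Vietoris sequence, not merely the groups. Knowing $H^*(G/K^\pm;\Q)$ and $H^*(G/H;\Q)$ as graded vector spaces is insufficient; I must understand the restriction maps $H^*(G/K^\pm;\Q) \to H^*(G/H;\Q)$ well enough to rule out a miraculous cancellation that could make $M$ a rational sphere. The cleanest route is to exploit that $M$ being a rational sphere forces $H^*(G/K^+;\Q) \oplus H^*(G/K^-;\Q) \to H^*(G/H;\Q)$ to be an isomorphism in all degrees $0 < * < n$, and then show that the known rational cohomology rings of $Y$ and the codimensions $\ell_\pm + 1$ make this numerically impossible --- a Poincar\'e polynomial count comparing $(1+t^{\ell})\,P_Y(t)$ (for $G/H$) against the contributions from the two Gysin sequences. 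Verifying this count for each of the five candidate spaces is where the real work lies, and I expect it to succeed because the "$\Omega S^{2k+1}$" loopspace factor is the only source of the single nontrivial odd class, leaving the even-heavy cohomology of $Y$ with no room to cancel.
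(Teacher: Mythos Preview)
Your Case (5) argument is essentially the paper's: Proposition \ref{prop:Connecting} gives $n = \ell_- + 1$, so both singular orbits have dimension zero, hence are fixed points, and a cohomogeneity one action with two fixed points is linear on a sphere. (Your phrasing wobbles momentarily on the dimension count, but you land in the right place.)

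Your Case (6) plan, however, is aimed at the wrong target. You expect to derive a contradiction via Mayer--Vietoris and thereby \emph{eliminate} Case (6) entirely. This cannot work, because Case (6) \emph{does} contain rational spheres: for each of the five pairs $(G,H) \in \{(SU(3),T^2), (Sp(2),T^2), (\mathbf G_2,T^2), (Sp(3),Sp(1)^3), (\mathbf F_4,Spin(8))\}$ there is a genuine linear cohomogeneity one action of $G$ on a standard sphere $S^{2k+1}$ with principal isotropy $H$ and homotopy fiber precisely of the form listed in Theorem \ref{thm:GH}(6). So any Poincar\'e--polynomial argument that applies uniformly to all group diagrams in Case (6) must fail on these examples; the ``miraculous cancellation'' you hope to rule out actually happens for the correct choice of $K^\pm$.

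What the theorem requires, and what the paper does, is not elimination but \emph{classification}: one must show that among all possible group diagrams $H \subseteq K^\pm \subseteq G$ giving a Case (6) homotopy fiber, only the linear ones produce a rational sphere. The paper accomplishes this in two steps. First, it pins down $(G,H)$ to exactly the five Wang pairs using the rational homotopy of $G/H$ together with vanishing of rational Pontryagin classes (Proposition \ref{prop:gtype}). Second, it shows that Wang's original uniqueness argument --- which for each such $(G,H)$ rules out all diagrams except the linear one --- carries over from standard spheres to rational spheres. This second step needs two ingredients: primitivity (replacing Wang's (6.2), supplied by Theorem \ref{thm:primitive}) and a cohomological computation (Proposition \ref{prop:wang75}, replacing Wang's (7.5)) showing that certain specific ``bad'' embeddings of $K^\pm$ yield a nontrivial $H^4(M;\Q)$. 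Your Mayer--Vietoris idea is in the right spirit for this last piece, but it must be applied selectively to the non-linear diagrams, not globally to Case (6) as a whole.
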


We quickly prove case (5), where $\ell_- = \ell_+$ is even and $\mathcal{F}$ is rationally $S^{\ell_- }\times \Omega S^{\ell_- + 1}$.

\begin{proposition}  Suppose $M^n$ is a rational sphere, and a compact Lie group $G$ acts on $M$ via a cohomogeneity one action.  If the homotopy fiber of the inclusion of a principal orbit belongs to case (5) in Theorem \ref{thm:GH}, then $M$ is equivariantly diffeomorphic to a standard sphere equipped with a linear action having two fixed points.

\end{proposition}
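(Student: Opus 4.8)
The plan is to show that Case (5) forces the whole manifold to collapse onto a linear suspension. First I would pin down the dimension of $M$ from the connecting homomorphism. Writing the finite cover $\mathcal{F} \simeq_\Q S^{\ell_-} \times \Omega S^{\ell_- + 1}$ in the form $X \times \Omega S^{2k+1}$ of Proposition \ref{prop:Connecting}, with $X = S^{\ell_-}$ and $2k+1 = \ell_- + 1$ (odd, since $\ell_-$ is even), that proposition asserts $\partial_{\ell_- + 1}\colon \pi_{\ell_- + 1}^\Q(M) \to \pi_{\ell_-}^\Q(\mathcal F)$ is nonzero and lands in the loopspace factor. Hence $\pi_{\ell_- + 1}^\Q(M) \neq 0$. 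As $M$ is a rational $n$-sphere with $n$ odd, its rational homotopy is a single copy of $\Q$ in degree $n$, so this forces $n = \ell_- + 1$.

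Next I would read off the singular orbits by a dimension count. In any cohomogeneity one action one has $\dim(G/K^\pm) = \dim(G/H) - \ell_\pm = (n-1) - \ell_\pm$. With $n = \ell_- + 1$ and $\ell_+ = \ell_-$, both right-hand sides vanish, so $G/K^+$ and $G/K^-$ are single points; equivalently $K^+ = K^- = G$, both singular orbits are $G$-fixed points, and the principal orbit is the homogeneous sphere $G/H = K^\pm/H = S^{\ell_-}$. Borel's classification (Table \ref{table:transsphere}) then identifies the pair $(G,H)$, up to finite ineffective kernel, as one of the transitive actions on spheres, and in particular realizes the $G$-action on $G/H = S^{\ell_-}$ as a standard linear action.

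Finally I would recognize the group diagram. It is simply $H \subseteq G = K^+ = K^-$, which by Mostert's structure theorem \cite{Mostert57} determines the equivariant diffeomorphism type of $M$. The standard linear action of $G$ on $S^{n} = S^{\ell_- + 1}$, obtained by extending the linear action on $\R^{\ell_- + 1}$ trivially across the last coordinate via $G \hookrightarrow \SO(\ell_- + 1) \hookrightarrow \SO(\ell_- + 2)$ and acting on the unit sphere of $\R^{\ell_- + 2} = \R^{\ell_- + 1} \oplus \R$, realizes exactly this diagram: its two singular orbits are the antipodal poles (the two $G$-fixed points), and each principal orbit is a copy of the equatorial $S^{\ell_-} = G/H$. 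By Mostert's uniqueness, $M$ is therefore $G$-equivariantly diffeomorphic to this standard linear sphere, which has precisely two fixed points, as claimed.

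I do not expect a serious obstacle here: once the connecting homomorphism forces $n = \ell_- + 1$, the codimension-one orbit fills out all of $G/H$ and both singular orbits collapse to fixed points, after which the identification with a linear suspension is essentially bookkeeping. The only point requiring a little care is the exceptional cases $G = \Spin(4)$ and $G = \Spin(8)$ of Borel's list, where the embedding of the isotropy group into $G$ is not unique up to conjugacy; but whichever admissible embedding $M$ carries is realized by its own linear suspension, so the conclusion is unaffected.
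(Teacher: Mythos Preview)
Your argument is correct and follows the same essential route as the paper: use Proposition \ref{prop:Connecting} to force $n = \ell_- + 1$, then a dimension count to conclude both singular orbits are points. The only difference is in the final step. The paper simply cites \cite[Propositions 1.22 and 1.23]{Hoelscher10} for the statement that a cohomogeneity one action with two fixed points is equivalent to an isometric action on a standard sphere, whereas you reprove this by exhibiting the linear suspension with the same group diagram and invoking the uniqueness part of Mostert's theorem. Your version is more self-contained; the paper's is shorter. Your remark about the non-uniqueness of embeddings for $\Spin(4)$ and $\Spin(8)$ is correct but, as you note, immaterial here since the linear model is built from whatever embedding $H \subseteq G$ the given action carries.
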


\begin{proof}  Acording to \cite[Propositions 1.22 and 1.23]{Hoelscher10}, a cohomogeneity one action with two fixed points is equivalent to an isometric action on a standard sphere.  So we need only show the $G$ action has two fixed points.

From Proposition \ref{prop:Connecting}, we see that $n -1 = \ell_-$. Thus we have $\dim G/K^\pm = \dim G/H - \ell_\pm = 0$.  Since $G$ is connected, it follows that both $G/K^\pm$ are points.

\end{proof}

For the remainder of this section, we assume Case (6) of Theorem \ref{thm:GH} occurs.  Thus, $\ell_- = \ell_+ \in \{2,4,8\}$ and $\mathcal{F}$ has the rational type of one of $$ (SU(3)/T^2)\times \Omega S^7, (Sp(2)/T^2)\times \Omega S^9, (\mathbf{G}_2/T^2)\times \Omega S^{13}, $$ $$ (Sp(3)/Sp(1)^3) \times \Omega S^{13}, (\mathbf{F}_4/Spin(8))\times \Omega S^{25}.$$   We will prove Theorem \ref{thm:exceptional} by reducing it to a theorem of Wang \cite{Wang60}[Theorem V].  In the proof of this theorem, Wang considers five possibilities for $(G,H)$: $$(G,H)\in \{(SU(3),T^2), (Sp(2),T^2), (\mathbf{G}_2,T^2), (Sp(3), Sp(1)^3), (\mathbf{F}_4, Spin(8))\}.$$  Note that for each of these five possibilities for $(G,H)$, the values of $\ell_-=\ell_+$ constrain $K^\pm$ to both be locally isomorphic to $S^1\times SU(2)$ for the first three possibilities, while they are both locally isomorphic to $Sp(2)\times Sp(1)$ and $Spin(9)$ in the fourth and fifth cases, respectively.

For each of the five pairs, Wang  shows that the assumption that $M$ is a standard sphere implies there is, up to equivalence, a unique cohomogeneity one $G$ action on $M$ with principal isotropy group $H$.  For a given $(G,H)$, there are, of course, several inequivalent cohomogeneity one manifolds coming from different possible embeddings of $K^\pm$, but Wang uses just two propositions \cite[(6.2) and (7.5)]{Wang60} to rule out any actions which are not on standard spheres.  Thus, Case (6) of Theorem \ref{thm:GH} will be handled if we can prove that both \cite[(6.2) and (7.5)]{Wang60} are valid in the case of rational spheres, and if we can show that the set up of Case (6) implies $(G,H)$ must be one of these five possibilities.

Wang's \cite[(6.2)]{Wang60}, generalized to rational spheres and adapted to our notation, is as follows:

\begin{proposition}  Suppose a cohomogeneity one $G$-manifold $M$ has group diagram $H\subseteq K^\pm \subseteq G$ and that  both $G/K^\pm$ have even codimension in $M$.  If $M$ is a rational sphere, then $K^+$ and $K^-$ generate $G$.

\end{proposition}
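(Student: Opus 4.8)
The plan is to deduce this directly from the primitivity of the action, which has already been established in full generality in Theorem~\ref{thm:primitive}. Recall that the rational cohomology of a sphere is singly generated, so the hypothesis that $M$ is a rational sphere places us squarely within the scope of that theorem, and $G$ is connected by Convention~\ref{convention}. I would interpret ``$K^+$ and $K^-$ generate $G$'' in the topological sense, namely that the smallest closed subgroup $L := \overline{\langle K^+, K^-\rangle}$ containing both singular isotropy groups is all of $G$. First I would observe that if, to the contrary, $L \subsetneq G$, then $L$ is a proper closed subgroup with $H \subseteq K^\pm \subseteq L \subsetneq G$, so the chain $H \subseteq K^\pm \subseteq L$ exhibits the group diagram as non-primitive in the sense of Section~\ref{sec:limits}. (If a connected intermediate group is desired, Lemma~\ref{lem:connected} lets one replace $L$ by its identity component, since $M$ is simply connected.) This contradicts Theorem~\ref{thm:primitive}, so $L = G$, which is exactly the assertion. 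The only point needing care is the distinction between the abstract subgroup generated by $K^\pm$ and its closure; since non-primitivity is detected by closed intermediate subgroups, passing to closures is harmless, and there is essentially no obstacle along this route.

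I would emphasize that this argument does not use the even-codimension hypothesis at all: it enters Wang's original reasoning in \cite{Wang60} only because primitivity was not available to him. For completeness, and to locate where that hypothesis actually does its work, I would also sketch the direct Mayer--Vietoris alternative. Writing $i_\pm\colon G/H \to G/K^\pm$ for the projections and running the Mayer--Vietoris sequence of the double disk bundle $M = D(G/K^+)\cup_{G/H} D(G/K^-)$ with rational coefficients, the fact that $M$ is a rational $n$-sphere forces the map $\psi_k\colon H^k(G/K^+;\Q)\oplus H^k(G/K^-;\Q)\to H^k(G/H;\Q)$ given by $\psi_k(a,b)=i_+^\ast a - i_-^\ast b$ to be injective for every $1\le k\le n-1$. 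If $L\subsetneq G$ as above, then any class $\alpha\in H^k(G/L;\Q)$ satisfies $i_+^\ast\pi_+^\ast\alpha = i_-^\ast\pi_-^\ast\alpha$, where $\pi_\pm\colon G/K^\pm\to G/L$ are the natural projections (both composites equal the common projection $G/H\to G/L$), so $(\pi_+^\ast\alpha,\pi_-^\ast\alpha)\in\ker\psi_k$; injectivity then forces $\pi_\pm^\ast\alpha=0$ for all $1\le k\le n-1$.

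The hard part in this second route is to convert the vanishing of $\pi_\pm^\ast$ on the positive-degree rational cohomology of $G/L$ into a genuine contradiction. The natural move is to apply it to the fundamental class in top degree $H^{\dim G/L}(G/L;\Q)$ and argue that its pullback to $G/K^\pm$ cannot vanish, which requires controlling orientability of $G/L$ and of the fibers of $\pi_\pm$; this is precisely the step where the even-codimension hypothesis is used in Wang's bookkeeping. Since the primitivity route bypasses this delicate orientation analysis completely, I would present the primitivity argument as the proof and mention the Mayer--Vietoris computation only as a remark indicating its relationship to \cite[(6.2)]{Wang60}.
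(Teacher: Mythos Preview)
Your proposal is correct and follows essentially the same approach as the paper: the paper also observes that the statement is an immediate consequence of Theorem~\ref{thm:primitive}, noting explicitly that the even-codimension hypothesis is unnecessary. Your additional care with closures and the invocation of Lemma~\ref{lem:connected}, together with the sketch of the Mayer--Vietoris alternative, goes beyond what the paper records but is entirely consistent with it.
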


This proposition is, in fact, valid even without the restriction on the parity of the codimensions of $G/K^\pm$.  This is an immediate consequence of Theorem \ref{thm:primitive} above, for if the group generated by $K^+$ and $K^-$ is strictly contained in $G$, then the action is primitive.

Wang's \cite[(7.5)]{Wang60}, adapted to the case of rational spheres, is as follows:

\begin{proposition}\label{prop:wang75}Suppose a cohomogeneity one $G$-manifold $M$ has group diagram $T^2\subseteq U(2), U(2)\subseteq G$ with $G$ simple, simply connected, and rank $2$.  If for both $K^\pm$, the center $Z(K^\pm)$ is contained in the semi-simple part of $K^\mp$, then $M$ is not a rational sphere.

\end{proposition}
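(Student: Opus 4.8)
The plan is to reduce the statement to a computation of $H^\ast(M;\Q)$ through the Mayer--Vietoris sequence of the double disk bundle decomposition, exploiting the fact that $H=T^2$ is simultaneously a maximal torus of $G$ and of each $K^\pm=U(2)$. The first step is to translate the hypothesis on the centers into root-theoretic language. Writing $\mathfrak t$ for the Lie algebra of $T^2$ and fixing roots $\alpha_\pm$ of $G$ so that the semisimple part $SU(2)\subseteq K^\pm$ is the one corresponding to the root pair $\pm\alpha_\pm$, one checks that the Lie algebra of $Z(K^\pm)$ is $\ker(\alpha_\pm|_{\mathfrak t})$, while the maximal torus of $[K^\mp,K^\mp]$ is $\R\,\alpha_\mp^\vee$. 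Hence the condition $Z(K^\pm)\subseteq [K^\mp,K^\mp]$ becomes $\ker\alpha_\pm=\R\,\alpha_\mp^\vee$, i.e.\ $\langle\alpha_+,\alpha_-^\vee\rangle=0$, i.e.\ the two defining roots $\alpha_+$ and $\alpha_-$ are orthogonal. In particular the reflections $s_{\alpha_+},s_{\alpha_-}\in W:=W_G$ commute and generate a subgroup $\Gamma=\langle s_{\alpha_+},s_{\alpha_-}\rangle\cong\Z/2\Z\times\Z/2\Z$ of order $4$; since such orthogonal roots exist among rank-$2$ simple systems only for $Sp(2)$ and $\mathbf{G}_2$, we have $|W|\in\{8,12\}$.

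Next I would identify the Mayer--Vietoris map rationally. By Borel's description, $H^\ast(G/H;\Q)=H^\ast(G/T^2;\Q)$ is the coinvariant algebra $A$, which by Chevalley's theorem is isomorphic to the regular representation of $W$ as a graded $W$-module; the projections $G/H\to G/K^\pm$ induce the inclusions of the $W_{K^\pm}=\langle s_{\alpha_\pm}\rangle$-invariants, so that $H^\ast(G/K^\pm;\Q)=A^{\langle s_{\alpha_\pm}\rangle}$ and both maps are injective. Consequently the Mayer--Vietoris map $j\colon A^{\langle s_{\alpha_+}\rangle}\oplus A^{\langle s_{\alpha_-}\rangle}\to A$, $(a,b)\mapsto a-b$, has kernel isomorphic to $A^{\Gamma}=A^{\langle s_{\alpha_+}\rangle}\cap A^{\langle s_{\alpha_-}\rangle}$. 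Because $A$ is the regular representation of $W$, restriction to $\Gamma$ gives $\dim_\Q A^{\Gamma}=|W|/|\Gamma|=|W|/4\geq 2$, so $A^\Gamma$ contains a nonzero element in some positive degree.

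Finally I would extract the contradiction from degree reasons. The coinvariant algebra $A=H^\ast(G/T^2;\Q)$ is concentrated in degrees $0,2,\dots,2N$, where $N$ is the number of positive roots of $G$; since $\dim G=2N+2$ we have $n=\dim M=\dim(G/H)+1=2N+1$, so $A$ vanishes above degree $n-1$. Thus any nonzero positive-degree element of $A^\Gamma$ lies in some degree $k$ with $0<k<n$, whence $\ker(j^k)\neq 0$. Feeding this into the short exact sequence $0\to\operatorname{coker}(j^{k-1})\to H^k(M;\Q)\to\ker(j^k)\to 0$ coming from Mayer--Vietoris shows that $H^k(M;\Q)\neq 0$ for some $0<k<n$, so $M$ is not a rational sphere. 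I expect the main obstacle to be the bookkeeping of the first step: confirming that $H$ is a common maximal torus and that the center condition is \emph{exactly} orthogonality of the defining roots, together with the standard but essential identification of $H^\ast(G/K^\pm;\Q)$ with Weyl-invariants and the use of Chevalley's theorem to evaluate $\dim_\Q A^\Gamma$; once these are in place, the degree count is automatic and uniform across the admissible groups.
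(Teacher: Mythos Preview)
Your argument is correct and takes a genuinely different route from the paper's. The paper works entirely in degree~$4$: it identifies $H=T^2$ with $Z(K^+)\cdot Z(K^-)$, writes $H^*(BH;\Q)=\Q[z_1,z_2]$ with $z_i$ dual to $Z(K^\pm)$, and checks by hand that under the hypothesis both images of $H^4(BK^\pm;\Q)\to H^4(BH;\Q)$ equal $\operatorname{span}(z_1^2,z_2^2)$; since $\dim H^4(BG;\Q)=1$ for $G$ simple, the images still overlap after passing to $H^4(G/K^\pm;\Q)\to H^4(G/H;\Q)$, producing a kernel in Mayer--Vietoris at degree~$4$.

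Your approach replaces this explicit computation with Weyl-group representation theory: translating the center condition into orthogonality of the defining roots $\alpha_\pm$, you obtain a Klein four-subgroup $\Gamma=\langle s_{\alpha_+},s_{\alpha_-}\rangle\subseteq W$, identify the Mayer--Vietoris kernel with the $\Gamma$-invariants in the coinvariant algebra, and use Chevalley's theorem to count $\dim A^\Gamma=|W|/4\ge 2$. This is more conceptual and uniform---in particular it makes transparent that the hypothesis is vacuous for $G=SU(3)$ (no orthogonal roots in $A_2$)---at the price of not pinning down the offending degree. The paper's approach is more concrete and locates the obstruction specifically in $H^4$, which is the lowest degree where anything can happen and is closer in spirit to Wang's original argument. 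Both are valid; yours is arguably cleaner once the root-theoretic translation and the identification $H^*(G/K^\pm;\Q)=A^{\langle s_{\alpha_\pm}\rangle}$ are in hand, and those are standard.
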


\begin{proof} Using the Mayer-Vietoris sequence for the double disk bundle decomposition of $M$, it is enough to show that the map $H^4(G/K^+;\mathbb{Q})\oplus H^4(G/K^-;\mathbb{Q}) \rightarrow H^4(G/H;\mathbb{Q})$ has non-trivial kernel.  In other words, it is sufficient to show the two maps $H^4(G/K^\pm;\Q)\rightarrow H^4(G/H;\Q)$ have images which intersect non-trivially.

 We first note that the group generated by the centers of $K^\pm$, $Z(K^+)Z(K^-)$, is a subgroup of $H$.  Indeed, passing to the two fold cover $SU(2)\times S^1$ of $K^+$, $H$, being full rank, must lift to a subgroup of the form $S^1\times S^1$ with the first $S^1\subseteq SU(2)$.  The projection of the second $S^1$ is precisely $Z(K^+)$, so $Z(K^+)\subseteq H$.  Analogously, one sees that $Z(K^-)\subseteq H$, so $Z(K^+)Z(K^-)\subseteq H$.  The assumption that the center of $K^+$ is contained in the semi-simple part of $K^-$ implies that $Z(K^+)\neq Z(K^-)$, from which it follows that $H = Z(K^+)Z(K^-)$.

Writing $BL$ for the classifing space of a Lie group $L$, the fact that each of  $H,K^\pm$ have full rank in $G$, implies via \cite[Theorem 6.5]{Singhof}  that we have an isomorphism $$H^\ast(G/H;\mathbb{Q})\cong H^\ast(BH;\mathbb{Q})/H^\ast(BG;\mathbb{Q}),$$ and likewise with $H$ replaced by $K^\pm$.  Here, $H^\ast(BG;\mathbb{Q})$ is understood as a subalgebra of $H^\ast(BH;\mathbb{Q})$ with inclusion induced from the inclusion $H\rightarrow G$.  Thus, it is sufficient to show the two maps $H^4(BK^\pm;\Q) \rightarrow H^4(BH;\Q)/H^4(BG;\Q)$ have images which intersect non-trivially.  Because $\dim H^4(BG;\Q) = 1$, as it is for any simple Lie group, it is sufficient to show the two maps $H^4(BK^\pm;\Q)\rightarrow H^4(BH;\Q)$ have images which intersect in a two-dimensional subspace.

Since $H = T^2$, we may identify $H^\ast(BH;\Q)\cong \Q[z_1,z_2]$ where both $|z_i|=2$ and we may likewise identify $H^\ast(BK^\pm;\Q)\cong \Q[z_{\pm}, t_{\pm}]$ with both $|z_{\pm}| = 2$ and $|t_{\pm} | = 4$.  Since $H = Z(K^+)Z(K^-)$, it follows that up to permuting $z_1$ and $z_2$, the inclusion $H\rightarrow K^+$ induces the map $z_+\mapsto z_1$, and similarly, $z_-\mapsto z_2$.  The hypothesis that $Z(K^\pm)$ is a subgroup of the semi-simple part of $K^\mp$ implies that $t_+$ maps to $z_2^2$ and $t_-$ maps to $z_1^2$.  Thus, the two maps $H^\ast(BK^\pm;\Q)\rightarrow H^\ast(BH;\Q)$ intersect in the two dimensional subspace of $H^4(BH;\Q)$ spanned by $z_1^2$ and $z_2^2$, completing the proof.
\end{proof}

To complete the proof of Theorem \ref{thm:exceptional}, we need only show demonstrate that the only possibilities for $(G,H)$ which can arise in Case (6) of Theorem \ref{thm:GH} are the five considered by Wang.  We use the notation $\mathcal{F}'$ to denote one of the elements of $\{SU(3)/T^2, Sp(2)/T^2, \mathbf{G}_2/T^2, Sp(3)/Sp(1)^3, \mathbf{F}_4/Spin(8)\}$.  From Proposition \ref{prop:Connecting}, it follows that in case (6) of Theorem \ref{thm:GH} that $G/H$ has the same rational homotopy groups as some $\mathcal{F}'$.

\begin{proposition}  If a compact Lie group $G$ acts by cohomogeneity one on a rational sphere $M$, then the rational Pontryagin classes of the principal orbit $G/H$ must vanish.

\end{proposition}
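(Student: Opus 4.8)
The plan is to realize the principal orbit $G/H$ inside $M$ as a separating hypersurface with trivial normal bundle, relate its Pontryagin classes to those of $M$ by stability, and then observe that the rational cohomology of $M$ is too sparse to carry any nonzero class in the relevant degrees.

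First I would record the normal bundle computation. In the double disk bundle decomposition $M = D(G/K^+)\cup_{G/H} D(G/K^-)$, the common boundary $G/H$ has a collar; equivalently, over the interior $(-1,1)$ of the orbit space the quotient map $M\to[-1,1]$ is a fiber bundle with fiber $G/H$, so a tubular neighborhood of the inclusion $i\colon G/H\hookrightarrow M$ is diffeomorphic to $G/H\times(-1,1)$. Hence the normal bundle of $i$ is trivial, and there is a stable isomorphism $T(G/H)\oplus\R\cong i^\ast TM$. Because the rational Pontryagin classes are stable characteristic classes, this yields $p_k(G/H)=p_k\big(T(G/H)\oplus\R\big)=i^\ast p_k(M)$ for every $k\geq 1$.

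It then remains to show $i^\ast p_k(M)=0$. Here $p_k(M)\in H^{4k}(M;\Q)$, and since $M$ is a rational sphere of odd dimension $n$, its reduced rational cohomology is concentrated in the single odd degree $n$; as $4k\neq n$ for all $k\geq 1$, we get $H^{4k}(M;\Q)=0$, so $p_k(M)=0$ and therefore $p_k(G/H)=i^\ast p_k(M)=0$. (Even without invoking parity one could note that the only potentially surviving class would live in degree $n$, but $i^\ast$ lands in $H^n(G/H;\Q)=0$ since $\dim(G/H)=n-1$, so the restriction vanishes regardless.)

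I do not expect a genuine obstacle in this argument; the only point meriting care is the triviality of the normal bundle, which is immediate from the product structure of the principal part of a cohomogeneity one manifold and needs only to be stated cleanly. The conclusion then follows formally from stability of Pontryagin classes and the degree constraints on $H^\ast(M;\Q)$.
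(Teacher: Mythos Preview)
Your argument is correct and is essentially identical to the paper's proof: both use the collar of $G/H\subset M$ to obtain $i^\ast TM \cong T(G/H)\oplus \R$, invoke stability of Pontryagin classes, and then observe that a rational sphere of odd dimension has no rational cohomology in positive degrees divisible by $4$. Your parenthetical remark that the restriction $i^\ast$ kills the top class regardless of parity (since $\dim G/H = n-1$) is a small bonus not stated in the paper.
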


\begin{proof}Let $i:G/H\rightarrow M$ be the inclusion.  Since $G/H$ is the boundary of one of the disk bundles whose union is $M$, $G/H$ has a collar, so $i^\ast(TM) = T(G/H)\oplus 1$ with $1$ denoting the trivial line bundle over $G/H$.  Then $i^\ast(p(TM)) = p(i^\ast(TM)) = p(T(G/H))$.  However, since $M$ is a rational sphere of odd dimension, $p(TM) = 1\in H^\ast(M;\mathbb{Q})$.

\end{proof}

As a consequence of this proposition, it follows that in case $(6)$, that  $G/H$ cannot be a product of two or more homogeneous spaces of positive dimensions.  For, by inspection of the rational homotopy groups, if $G/H$ were a product, it would be a product $S^{\ell_-}\times X$ where $X$ has singly generated rational cohomology and where $X$ is not a rational sphere.  From the classification of biquotients with singly generated rational cohomology \cite{KapovitchZiller04}, one can see that a homogeneous rational projective space always has non-zero rational Pontryagin classes and thus, so does the product.

It now follows that $G$ must be simple.  For, if $G = G_1\times G_2$, then $H = H_1\times H_2$ with each $H_i\subseteq G_i$ full rank \cite[pg. 205]{BorelDeSiebenthal49}.  But then $G/H\cong (G_1/H_1)\times (G_2/H_2)$ is a product.  Both $G_1/H_1$ and $G_2/H_2$ have positive dimension, unless $H_1 = G_1$ or $H_2 = G_2$.  But if either of these cases occurs, then $G$ contains a normal subgroup acting with the same orbits, contrary to Convention \ref{convention}.

We are now ready to compute the possibilities for $(G,H)$, completing the analysis of Case (6) of Theorem \ref{thm:GH} and proving Theorem \ref{thm:exceptional}.
  
  \begin{proposition}\label{prop:gtype}Suppose a simple simply connected compact Lie group $G$ acts via a cohomogeneity one action on a rational sphere $M$.  If the non-loop space factor $\mathcal{F}'$ of the homotopy fiber of the inclusion $G/H\rightarrow M$ has the same rational homotopy groups as an element of $\mathcal{F}' \in \{SU(3)/T^2, Sp(2)/T^2, \mathbf{G}_2/T^2, Sp(3)/Sp(1)^3, \mathbf{F}_4/Spin(8)\}$, then $(G,H)$ is one of $$\{(SU(3),T^2), (Sp(2), T^2), (\mathbf{G}_2,T^2), (Sp(3),Sp(1)^3), (\mathbf{F}_4, Spin(8))\}.$$
  
  \end{proposition}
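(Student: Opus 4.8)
The plan is first to force $H$ to have full rank, then to extract from the rational homotopy of $G/H$ exactly two degrees of $G$ and run a degree-counting argument that determines $(G,H)$ in all but one family, which is then settled using the singular isotropy groups. Concretely, each of the five models satisfies $\dim\pi_{even}^{\Q}(\mathcal{F}') = \dim\pi_{odd}^{\Q}(\mathcal{F}') = 2$, so $G/H$ does too, whence $\chi(G/H) > 0$ and, by the classical theorem of Hopf and Samelson, $\rank H = \rank G =: r$. (By Proposition \ref{prop:ellinfo}, $H$ and $K^\pm$ are connected in Case (6).) A full-rank homogeneous space is rationally a complete intersection: its minimal model has one even generator in each degree $2e_j$ (the $e_j$ being the degrees of $H$) and one odd generator in each degree $2d_i-1$ (the $d_i$ the degrees of $G$), and minimization cancels one even against one odd generator in every degree where the restriction $H^\ast(BG;\Q)\to H^\ast(BH;\Q)$ has nonzero linear part, a cancellation that necessarily pairs a degree of $G$ with an equal degree of $H$.

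Since $\dim\pi_{odd}^{\Q}(G/H)=2$, precisely two degrees $a<b$ of $G$ survive, placing $\pi_{odd}^{\Q}(G/H)$ in degrees $2a-1$ and $2b-1$. The top degree of $G$ cannot cancel, since a proper full-rank subgroup has strictly smaller top degree, so $b$ is the Coxeter number $h(G)$. The remaining $r-2$ degrees of $G$ match, as a multiset, $r-2$ of the degrees of $H$, and the two surviving even generators have degree $2c$, where $2c=\ell_\pm$ is the common even-homotopy degree. This yields the rigid identity
\[
\operatorname{degrees}(H)=\{c,c\}\sqcup\bigl(\operatorname{degrees}(G)\setminus\{a,b\}\bigr).
\]
Reading $(a,b,c)$ off the five models as $(2,3,1)$, $(2,4,1)$, $(2,6,1)$, $(4,6,2)$, $(8,12,4)$, the condition $b=h(G)$ restricts $G$ to the finitely many simple groups of that Coxeter number, and the identity forces a definite degree multiset for $H$. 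Using that each simple factor of a compact group contributes the degree $2$ while torus factors contribute the degree $1$, this multiset is the degree multiset of a unique compact group in four of the five cases, and the larger candidate groups are excluded because their leftover degrees cannot occur in the degrees of any compact group of rank $r$. Thus $SU(3)/T^2$ gives $(SU(3),T^2)$, $Sp(2)/T^2$ gives $(Sp(2),T^2)$, $\mathbf{G}_2/T^2$ gives $(\mathbf{G}_2,T^2)$, and $\mathbf{F}_4/Spin(8)$ gives $(\mathbf{F}_4,Spin(8))$.

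The one family the bookkeeping cannot resolve is $\mathcal{F}'=Sp(3)/Sp(1)^3$, with $(a,b,c)=(4,6,2)$: besides $(Sp(3),Sp(1)^3)$ it also admits $(Spin(7),Spin(4)\times Spin(3))$ and $(Spin(8),SU(2)^2\times Sp(2))$, because $B_3$ and $C_3$ share the degrees $2,4,6$ and even the same Weyl group, so these are rationally indistinguishable from the target; indeed the Grassmannian $Spin(7)/(Spin(4)\times Spin(3))$ has the same rational cohomology as $Sp(3)/Sp(1)^3$. This is the main obstacle, since no rational invariant separates $B_n$ from $C_n$. I would break the tie with the cohomogeneity-one data: for $\ell_\pm=4$ the singular group $K^\pm$ acts transitively on $S^4$ with isotropy $H$, so $K^\pm$ is locally $Sp(2)\cdot Sp(1)$ and must embed in $G$ over $H$; one checks directly that $Sp(2)\cdot Sp(1)\cong Spin(5)\cdot Spin(3)$ embeds this way only in $Sp(3)$, since in $Spin(7)$ the centralizer of a $Spin(5)$ is merely $Spin(2)$, and $Spin(5)\times Spin(5)$ does not fit in $Spin(8)$. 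Alternatively one may invoke the vanishing of the rational Pontryagin classes of $G/H$ proved above, which fails for the Grassmannian-type impostors. Either route excludes the impostors and leaves $(Sp(3),Sp(1)^3)$, completing the list.
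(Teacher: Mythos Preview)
Your argument is correct, but it proceeds quite differently from the paper's proof. The paper does not develop the degree--matching formalism at all. Instead, after observing that $\rank H=\rank G$, it argues case by case from low--degree rational homotopy: for the three $T^2$ cases, $\pi_3^\Q(\mathcal{F}')\neq 0$ together with simplicity of $G$ forces $H$ to be a torus (any simple factor of $H$ would make $\pi_3^\Q(H)\to\pi_3^\Q(G)$ surjective), and then $\dim\pi_2^\Q=2$ plus a dimension count pins down $G$; for $\mathbf{F}_4/Spin(8)$, the vanishing of $\pi_3^\Q$ and $\pi_4^\Q$ forces $H$ simple, and the isotropy of a transitive action on $S^8$ makes $H=Spin(8)$, whence $\dim G=52$. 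For $Sp(3)/Sp(1)^3$, the paper first uses the $K^+$--action on $S^4$ together with primitivity (Proposition~\ref{prop:finiteintersection}) to show $H$ is locally $Sp(1)^3$ and $K^\pm$ locally $Sp(2)\times Sp(1)$, and only then dimension--counts to get $G\in\{Sp(3),Spin(7)\}$; the $Spin(8)$ candidate you encounter never arises, because the paper has already determined $H$ before looking at $G$. Your systematic degree identity $\operatorname{degrees}(H)=\{c,c\}\sqcup(\operatorname{degrees}(G)\setminus\{a,b\})$ is a clean and uniform way to generate candidates, at the cost of having to sweep through all simple groups of the given Coxeter number and then eliminate the extra $(Spin(7),Sp(1)^3)$ and $(Spin(8),Sp(2)\times Sp(1)^2)$ possibilities; both you and the paper ultimately resolve $Spin(7)$ by the same embedding obstruction for $Sp(2)\cdot Sp(1)$. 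Your alternative via Pontryagin classes is plausible but would require a computation the paper does not supply; the embedding argument is the safer route.
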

  
  \begin{proof}  In all cases, since $\dim \pi^\Q_{even}(\mathcal{F}') = \dim^\Q \pi_{odd}(\mathcal{F}')$, it follows that $G$ and $H$ must have the same rank.

  Now, initially assume $\mathcal{F}'\in \{SU(3)/T^2, Sp(2)/T^2, \mathbf{G}_2/T^2\}$.  Because $\pi^\Q_3(\mathcal{F}')\neq 0$ and $G$ is simple, it follows that $H$ is a torus.  Since $\dim\pi^\Q_2(\mathcal{F}') = 2$, $H = T^2$ and thus $G$ has rank $2$.  Then counting dimensions and using the classification of simple Lie groups gives the result in this case.
  
  Next, assume $\mathcal{F}' = Sp(3)/Sp(1)^3$.  Since $\pi_2^\Q(\mathcal{F}') = 0$, $H$ is semi-simple.  Since $\dim \pi_4^\Q(\mathcal{F}')-\dim \pi^\Q_3(\mathcal{F}') = 2$, $H$ must have three simple factors.  As $H$ is the isotropy group of the transitive $K^+$ action on $S^4$, $H$ must be locally isomorphic to $Sp(1)^2\times H'$ for some simple Lie group $H'$, with $H'$ acting trivially on $K^+/H$.  The $G$-action is primitive (Theorem \ref{thm:primitive}), so from Proposition \ref{prop:finiteintersection},  the isotropy action of $H'$ on $S^4 = K^-/H$ must be non-trivial, so $H'$ maps non-trivially into $SO(4)$.  Since $H'$ is simple, this forces $H' = Sp(1)$, so $H$ is locally isomorphic to $Sp(1)^3$ and thus, $K^\pm$ are both locally isomorphic to $Sp(2)\times Sp(1)$.  Counting dimensions now shows that $G = Sp(3)$ or $G = Spin(7)$.  However, $K^+$ is covered by $Sp(2)\times Sp(1)$, and elementary representation theory indicates the smallest orthogonal representation of $Sp(2)\times Sp(1)$ has dimension $8$.  Thus, $G= Sp(3)$, which then implies that $K^\pm$ are both isomorphic to $Sp(2)\times Sp(1)$, which then implies that $H$ is isomorphic to $Sp(1)^3$.

  Lastly, assume $\mathcal{F}' = \mathbf{F}_4/Spin(8)$.  Since $\pi_3^\Q(\mathcal{F}') = \pi_4^\Q(\mathcal{F}') = 0$, it follows that $H$ must be simple.  Further, $H$ must be the isotropy group of a transitive action on $S^8$, so $H$ is, up to cover, $Spin(8)$ and both $K^\pm$ are up to cover $Spin(9)$.  Counting dimensions now easily gives $G = \mathbf{F}_4$.  Further, from \cite{BorelDeSiebenthal49}, it follows that $K^\pm$ are both isomorphic to $Spin(9)$, which implies that $H$ is isomorphic to $Spin(8)$.
  
  \end{proof}

\section{Case (4) of Theorem \ref{thm:GH}}\label{sec:last}

In this section, we continue to follow Convention \ref{convention}.  We assume that the homotopy fiber of the inclusion of a principal orbit $G/H$ into $M$ belongs to Case (4) of Theorem \ref{thm:GH}.  Thus, $\mathcal{F} \simeq_\Q S^{\ell_-} \times S^{\ell_+} \times \Omega S^{\ell_- + \ell_+ + 1}$. 

We will first show that if $\ell_-$ and $\ell_+$ have the same parity, then $M$ must be homeomorphic to a sphere.  We first need a lemma.

\begin{lemma}\label{lem:intersection}  Suppose $M^n$ is a rational sphere and that a compact Lie group $G$ acts on $M$ via a cohomogeneity one action.  If the action has no fixed points then $(K^+\cap K^-)/H$ is finite.

\end{lemma}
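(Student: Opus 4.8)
The plan is to recast the statement as a transversality condition between two subspaces of $\mathfrak{g}$ and then to argue by contradiction using Smith theory on the rational sphere $M$. Fix an $\operatorname{Ad}(H)$-invariant splitting $\mathfrak{g} = \mathfrak{h}\oplus\mathfrak{m}$ and set $\mathfrak{m}_\pm := \mathfrak{k}^\pm\cap\mathfrak{m}$, so that $\mathfrak{k}^\pm = \mathfrak{h}\oplus\mathfrak{m}_\pm$ and $\mathfrak{m}_\pm\cong T_{eH}(K^\pm/H)$ has dimension $\ell_\pm$. If $Y = h + m$ with $h\in\mathfrak{h}$ and $m\in\mathfrak{m}$ lies in $\mathfrak{k}^+\cap\mathfrak{k}^-$, then its $\mathfrak{m}$-component $m$ lies in both $\mathfrak{m}_+$ and $\mathfrak{m}_-$; hence $\mathfrak{k}^+\cap\mathfrak{k}^- = \mathfrak{h}\oplus(\mathfrak{m}_+\cap\mathfrak{m}_-)$. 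Therefore $(K^+\cap K^-)/H$ is finite if and only if $\mathfrak{m}_+\cap\mathfrak{m}_- = 0$, and I will prove the latter.

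Suppose instead that there is a nonzero $X\in\mathfrak{m}_+\cap\mathfrak{m}_-$, and let $\sigma = \overline{\exp(\R X)}$, a nontrivial torus contained in $K^+\cap K^-$; since $X\in\mathfrak{m}$ we have $\sigma\not\subseteq H$. Along the normal geodesic $c$, where $G_{c(\pm 1)} = K^\pm$ and $G_{c(t)} = H$ for $t\in(-1,1)$, the torus $\sigma$ fixes the singular points $c(\pm 1)$ but not $c(0)$. By Smith theory the fixed-point set $F := M^\sigma$ of a torus acting on a rational sphere is again a $\Q$-homology sphere, and its codimension is even; since $n = \dim M$ is odd, $F$ is odd-dimensional, hence nonempty of positive dimension and therefore connected, with $c(\pm 1)\in F$ and $c(0)\notin F$.

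The heart of the argument is then a separation principle. The principal orbit $G\cdot c(0)$ is the fibre over $0$ of the orbit map $\pi\colon M\to M/G = [-1,1]$, and so it separates $M$ into the two sides $\pi^{-1}([-1,0))$ and $\pi^{-1}((0,1])$ containing the singular orbits $G\cdot c(-1)$ and $G\cdot c(1)$ respectively. As $F$ is connected and meets both sides (at $c(-1)$ and $c(1)$) while missing $c(0)$, it must meet the separating orbit; thus $(G/H)^\sigma\neq\emptyset$, so $\sigma$ is subconjugate to $H$. Consequently $F$ in fact meets every $G$-orbit, since its image under $\pi$ is a connected subset of $[-1,1]$ containing $-1$, $0$, and $1$.

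To finish I would exploit this reduction. The identity component $Z := Z_G(\sigma)^0$ preserves $F$, and one expects the restricted $Z$-action to be cohomogeneity one with orbit space again $[-1,1]$, since the intersection of $F$ with each $G$-orbit is a finite union of $Z$-orbits and hence $F/Z$ maps finitely onto $M/G$. This would present $F$ as a strictly lower-dimensional cohomogeneity-one rational sphere whose singular isotropy groups are $Z\cap K^+$ and $Z\cap K^-$; as $\sigma$ lies in both but not in the principal isotropy, the corresponding intersection remains infinite, and an induction on $\dim M$ — with base cases furnished by Corollary~\ref{cor:dim7orless} — should produce the contradiction. The no-fixed-point hypothesis enters here, to guarantee $Z\not\subseteq K^\pm$ so that the induced action also has no fixed point and the inductive hypothesis is available. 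I expect this final step to be the main obstacle: verifying that the $Z$-action on $F$ is genuinely cohomogeneity one, pinning down its isotropy groups relative to $H$ and $K^\pm$, and separately disposing of the degenerate case in which $\sigma$ is central, where $F$ is $G$-invariant and must be handled on its own.
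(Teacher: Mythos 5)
Your first half is correct, and it is a genuinely nice geometric reduction: with $\mathfrak{m}$ taken to be the orthogonal complement of $\mathfrak{h}$ with respect to a bi-invariant inner product (an arbitrary $\operatorname{Ad}(H)$-invariant complement does not automatically satisfy $\mathfrak{k}^\pm = \mathfrak{h}\oplus\mathfrak{m}_\pm$, but this choice does), finiteness of $(K^+\cap K^-)/H$ is indeed equivalent to $\mathfrak{m}_+\cap\mathfrak{m}_- = 0$; Smith theory together with even codimension and orientability of fixed-point components does show $F = M^\sigma$ is a \emph{connected} rational homology sphere containing $c(\pm 1)$; and the separation argument correctly forces $F$ to meet the principal orbit, so that $\sigma\subseteq gHg^{-1}$ for some $g\in G$. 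But at this point you have no contradiction---a torus can lie in $K^+\cap K^-$, fail to lie in $H$, and still be conjugate into $H$---and the proof never gets past this point.

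The planned induction fails for a concrete reason: by definition $\sigma$ acts trivially on $F = M^\sigma$, so $\sigma$ lies in the ineffective kernel of the $Z = Z_G(\sigma)^0$-action on $F$ and hence in \emph{every} isotropy group of that action. In particular, at a point $gc(0)\in F$ the principal isotropy of the $Z$-action is $Z\cap gHg^{-1}$, which contains $\sigma$. Your key claim that ``$\sigma$ lies in both [singular isotropy groups of the restricted action] but not in the principal isotropy'' is therefore false, and after passing to the effective quotient of $Z$ there is no reason the intersection of the new singular isotropy groups should be infinite modulo the new principal isotropy; the inductive hypothesis has nothing to bite on. There are secondary gaps as well: $F$ need not be simply connected, so it need not be a rational sphere in the sense of Convention \ref{convention}, and the base case Corollary \ref{cor:dim7orless} (like the whole classification) assumes simple connectivity; also, that the $Z$-action on $F$ has no fixed point is asserted, not proved ($K^\pm\neq G$ does not prevent $Z\subseteq gK^\pm g^{-1}$). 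For what it is worth, the paper's proof runs along entirely different, cohomological lines: primitivity (Theorem \ref{thm:primitive}) handles the case $\ell_\pm = 1$; for $\ell_\pm\geq 2$ one sets $K^0 = (K^+\cap K^-)^0\supseteq H$, uses Browder's theorem to identify $K^0/H$ with a sphere of dimension $1$, $3$, or $7$, and then argues with the Euler class of the fibration $K^0/H\to G/H\to G/K^0$: either some $H^k(G/K^0;\Q)\to H^k(G/H;\Q)$ fails to be surjective below the top degree, in which case both Mayer--Vietoris restriction maps factor through $G/K^0$ and $M$ cannot be a rational sphere, or else $G/H$ is a rational sphere, the sphere bundles $G/H\to G/K^\pm$ have trivial Euler class, and both $G/K^\pm$ are points, i.e.\ fixed points---contradicting the hypothesis.
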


\begin{proof}   Assume the $G$ action has no fixed points, so both $K^\pm$ are proper subgroups of $G$.  Set $K = K^+\cap K^-$ and assume for a contradiction that $K$ has positive dimension.  If, say, $\ell_+ = 1$, then $K$ must be a union of components of $K^+$.  However, from the surjective map $S^{\ell_+}\cong K^+/H\rightarrow K^+/K$, we see that $K^+/K$ is connected, so $K = K^+$.  But then the action is non-primitive, with $K^\pm \subseteq K^-\subsetneq G$.  This contradicts Theorem \ref{thm:primitive}.  Thus we must have both $\ell_\pm > 1$.  In particular, $H$ and $K^\pm$ are all connected and $K$ is a proper subgroup of both $K^\pm$.

As $H$ is connected, we have $H\subseteq K^0\subseteq K^\pm$, with $K^0$ denoting the identity component of $K$.  Thus we have a homogeneous fiber bundle $K^0/H\rightarrow K^\pm/H \cong S^{\ell_\pm}\rightarrow K^\pm/K^0$ whose total space is a sphere $S^{\ell_\pm}$.  Thus, we deduce that $K^0/H$ is a homotopy $S^1,S^3,$ or $S^7$ \cite{Browder62}, and hence diffeomorphic to a sphere \cite[4.6]{Borel532}.

 Now, consider the homogeneous fibration associated to the triple $H\subseteq K^0\subseteq G$,  $K^0/H\rightarrow G/H\rightarrow G/K^0$.   We claim that the induced map $H^k(G/K^0;\Q)\rightarrow H^k(G/H;\Q)$ is not surjective for at least one $k< \dim G/H$. 

Momentarily believing this claim, note that both maps $\pi_{\pm}:G/H\rightarrow G/K^\pm$ factor through $G/K^0$.  It would then follow that in the Mayer-Vietoris sequence for the double disk bundle decomposition of $M$, the map $H^k(G/K^+;\Q)\oplus H^k(G/K^-;\Q)\rightarrow H^k(G/H;\Q)$ fails to be surjective.  This contradicts the fact that $M$ is a rational sphere.




Thus, we need only demonstrate the claim.   Since both $G/H$ and $G/K^0$ are orientable, this sphere bundle has an Euler class $e\in H^s(G/K^0;\Q)$ for $s\in{2,4,8}$.  If the claim is false, then it follows from the spectral sequence that cupping with $e$, thought of as a map $H^t(G/K^0;\Q)\rightarrow H^{t+s}(G/K^0;\Q)$, must be injective for any $t< \dim G/H$.  It follows easily from this that $G/K^0$ must have singly generated rational cohomology, and hence, that $G/H$ is a rational sphere.

Now, the bundles $S^{\ell_\pm}\rightarrow G/H\rightarrow G/K^\pm$ must have trivial rational Euler classes, for otherwise one of the maps $H^{\ell_\pm + 1}(G/K^+;\Q)\oplus H^{\ell_\pm + 1}(G/K^-)\rightarrow H^{\ell_\pm + 1}(G/H;\Q)$ in the Mayer-Vietoris sequence for the double disk bundle decomposition of $M$ has non-trivial kernel, contradicting the fact that $M$ is a rational sphere.  Since $G/H$ is a rational sphere, it now follows that both $G/K^\pm$ are points.  In other words, the $G$ action has a fixed point, giving a contradiction, and establishing the claim.

\end{proof}

\begin{proposition}Suppose $M^n$ is a cohomogeneity one rational sphere with homotopy fiber $\mathcal{F}$ belonging to Case (4) of Theorem \ref{thm:GH}.  If $\ell_-$ and $\ell_+$ have the same parity, then $M$ is homeomorphic to a sphere.

\end{proposition}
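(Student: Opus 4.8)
The plan is to prove that $M$ is an integral homology sphere; since $M$ is simply connected, the (generalized) Poincar\'e conjecture then upgrades this to a homeomorphism $M \cong S^n$. The argument runs entirely through the double disk bundle decomposition and the Mayer--Vietoris sequence, with the same-parity hypothesis entering at two crucial points.

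First I would record the reductions forced by Case (4) together with same parity. Since $\ell_-$ and $\ell_+$ have the same parity, $\ell_- + \ell_+ + 1$ is odd, so $\Omega S^{\ell_- + \ell_+ + 1}$ has rational homotopy concentrated in the single even degree $\ell_- + \ell_+$. Proposition \ref{prop:Connecting} then pins down $\dim M = \ell_- + \ell_+ + 1$ and, feeding the vanishing of the other connecting maps into the long exact rational homotopy sequence of $\mathcal F \to G/H \to M$, yields $G/H \simeq_\Q S^{\ell_-} \times S^{\ell_+}$. Running this through the long exact sequences of the fibrations $S^{\ell_\pm} \to G/H \to G/K^\pm$ shows that each singular orbit is a rational homology sphere of its own dimension, namely $G/K^+ \simeq_\Q S^{\ell_-}$ with $\dim(G/K^+) = \ell_-$, and $G/K^- \simeq_\Q S^{\ell_+}$ with $\dim(G/K^-) = \ell_+$.

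The main step, and the expected main obstacle, is to promote this to \emph{integral} information: I want each $G/K^\pm$ to be an integral homology sphere. When $\ell_-$ and $\ell_+$ are both even, this is clean: then $\chi(G/K^\pm) = 2 > 0$, so $\operatorname{rank} G = \operatorname{rank} K^\pm$, the orbits have torsion-free cohomology concentrated in even degrees, and being rational $\ell_\mp$-spheres they are integral homology spheres. The genuinely delicate case is $\ell_-, \ell_+$ both odd, where $\chi = 0$ and the equal-rank shortcut is unavailable; here an odd-dimensional homogeneous rational sphere could \emph{a priori} carry torsion, as the Wu manifold and the Berger space show. I expect to exclude such exotic singular orbits by invoking the classification of homogeneous rational homology spheres (via \cite{KapovitchZiller04}) together with the isotropy constraint that $K^+ \supseteq H$ with $K^+/H = S^{\ell_+}$ and $K^-/H = S^{\ell_-}$, following Wang's analysis; the parity/dimension bookkeeping forces each $G/K^\pm$ to be a standard sphere. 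Edge cases with $\ell_\pm = 1$ (where $G/K^\mp$ is a circle) are handled directly.

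Finally I would assemble the pieces. The second use of same parity is that $\ell_+$ and $\ell_-$ cannot differ by exactly $1$, so for the bundle $S^{\ell_+} \to G/H \to G/K^+ \cong S^{\ell_-}$ the integral Euler class lies in $H^{\ell_+ + 1}(S^{\ell_-};\Z) = 0$, and likewise on the other side; hence both bundles are cohomologically split and $G/H$ is an integral homology $S^{\ell_-} \times S^{\ell_+}$, with $\pi_\pm^\ast$ the evident split injections. Running the Mayer--Vietoris sequence of $M = D(G/K^+) \cup_{G/H} D(G/K^-)$ with $\Z$-coefficients then gives exactly $H^\ast(M;\Z) \cong H^\ast(S^n;\Z)$, so $M$ is a simply connected integral homology sphere, hence a homotopy sphere, hence homeomorphic to $S^n$. (Once $M$ is a homotopy sphere, Straume's classification \cite{Straume96} identifies the action with a linear one, giving the stronger diffeomorphism statement recorded in the introduction.)
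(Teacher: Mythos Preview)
Your overall strategy---prove $M$ is an integral homology sphere and invoke the Poincar\'e conjecture---matches the paper's endgame, and your dimension count $n = \ell_- + \ell_+ + 1$ and the rational identifications of the orbits are correct. The gap is in your ``main step'' of promoting rational to integral information. Your both-even argument is slightly off (equal rank gives vanishing odd \emph{rational} Betti numbers, not torsion-freeness over $\Z$), though the conclusion there is salvageable via the classification of even-dimensional homogeneous rational spheres. The real problem is the both-odd case, where you write only ``I expect to exclude such exotic singular orbits\ldots following Wang's analysis''---this is a hope, not an argument, and the case analysis against the Kapovitch--Ziller list ($T^1 S^{2k}$, the Wu manifold, the Berger space) is nontrivial work you have not supplied. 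There is a further unaddressed subtlety when $\ell_- = \ell_+$: even granting that both $G/K^\pm$ are integral homology $\ell$-spheres, Mayer--Vietoris requires the two images in $H^\ell(G/H;\Z) \cong \Z^2$ to span, and Gysin alone does not give this.

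The paper bypasses all of this with a different and much cleaner argument, following Wang. After disposing of the fixed-point case, it considers the $G$-equivariant map $f\colon G/H \to G/K^+ \times G/K^-$, $gH \mapsto (gK^+, gK^-)$. The same-parity dimension count makes source and target equidimensional; equivariance plus Sard plus Ehresmann shows $f$ is a fiber bundle with fiber contained in $(K^+ \cap K^-)/H$. A preparatory lemma (Lemma~\ref{lem:intersection}), itself proved using primitivity, shows this fiber is finite, so $f$ is a finite covering. Mayer--Vietoris on $H_1$ (using that $M$ is simply connected and that the relevant fundamental groups are abelian in Case~(4)) then shows $f_\ast$ is $\pi_1$-surjective, so $f$ is a diffeomorphism. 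This directly identifies $G/K^\pm \cong S^{\ell_\mp}$ as genuine spheres and identifies the projections $G/H \to G/K^\pm$ with the factor projections of a product, after which the Mayer--Vietoris finish is immediate. The key ideas you are missing are this diagonal map and the finiteness of $(K^+ \cap K^-)/H$.
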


\begin{proof}  If the $G$ action has a fixed point, we are done from \cite{Hoelscher10}[Proposition 1.28].  So, we assume the $G$ action has no fixed points.  We will follow an approach of Wang [(4.7) and (4.8)]\cite{Wang60}.  Consider the map $f:G/H\rightarrow G/K^+\times G/K^-$ with $f(gH) = (gK^+, gK^-)$.  We will show that $f$ is a diffeomorphism.

To begin with, since $\ell_-$ and $\ell_+$ have the same parity, Proposition \ref{prop:Connecting} together with the fact that $\pi_{2k}(\Omega S^{\ell_- + \ell_+ + 1})\otimes \Q$ is supported in degree $2k = \ell_- + \ell_+$, we conclude $n = \ell_- + \ell_+ -1$.  Thus, $\dim G/K^+\times G/K^- = \dim G/H$.  It now follows that $f$ is a covering map.  Indeed, being $G$-equivariant, $f$ must have constant rank, but it must have full rank at some point by Sard's theorem.  Hence, by Ehresmann's theorem \cite{Ehresmann}, $f$ is a fiber bundle map.  Now, suppose the $f(eH) = f(gH)$ for some $g\in G$.  Then $gK^+ = eK^+$ and $gK^- = eK^-$, that is, $g\in K^+\cap K^-$, so the fiber is contained in $(K^+\cap K^-)/H$.  Since the action has no fixed points, Lemma \ref{lem:intersection} shows that $(K^+\cap K^-)/H$ is finite.  Thus, the fiber is finite and so $f$ is a covering map.

By the classification of covering maps, if we can show $f$ induces a surjection on fundamental groups, it will then follow that $f$ is a diffeomorphism.  Because we are in Case (4) of Theorem \ref{thm:GH}, the homotopy fiber $\mathcal{F}$ has abelian fundamental group, which implies the fundamentals groups of $G/H$ and both $G/K^\pm$ are abelian.  In particular, we may identify $\pi_1$ with $H_1$ for $G/H$ and both $G/K^\pm$.

Using the K\:unneth Theorem to identify $H_1(G/K^+)\times H_1(G/K^-))\cong H_1(G/K^+)\oplus H_1(G/K^-)$, it is not too hard to see that the composition $H_1(G/H)\rightarrow H_1(G/K^+)\oplus H_1(G/K^-)$ is the map appearing in the Mayer-Vietoris sequence for the double disk bundle decomposition of $M$.  In particular, since $M$ is simply connected, this composition is surjective.  It follows that $f_\ast$ is surjective on fundamental groups.  As $f$ is a covering, we therefore conclude it is a diffeomorphism.

To complete the proof, simply observe that  for either choice of $\pm$, $f$ gives a bundle isomorphism covering the identity  between the bundle $S^{\ell_\pm}\rightarrow G/H\rightarrow G/K^\pm$ and $G/K^\mp\rightarrow G/K^+\times G/K^-\rightarrow G/K^\pm$.  In particular, $G/K^\pm$ must be diffeomorphic to $S^{\ell_\mp}$, $G/H$ is diffeomorphic to $S^{\ell_+}\times S^{\ell_-}$, and the projection maps $G/H\rightarrow G/K^\pm$ are the obvious ones.

Now a simple application of Mayer-Vietoris shows that $M$ is an integral homology sphere, and hence homeomorphic to a sphere by the resolution of the Poincar\'e conjecture.

\end{proof}

We henceforth assume that $\ell_-$ and $\ell_+$ have different parities.  We will prove:

\begin{theorem}\label{thm:paritycase}  Suppose $M^n$ is a rational sphere and that $G$ is a compact Lie group acting via a cohomogeneity one action on $M$ with homotopy fiber $\mathcal{F}$ belonging to Case (4) of Theorem \ref{thm:GH}.  Assume in addition that $\ell_-$ and $\ell_+$ have different parities.  Then the action is orbit equivalent to one of the following.

\begin{itemize}\item a linear action on a standard sphere

\item  The usual action by $SO(2)\times SO(2k)$ on a Brieskorn variety $B^{4k-1}_d$, or, when $k = 4$, the restriction of this action to $SO(2)\times Spin(7)\subseteq SO(2)\times SO(8)$.

\item  An action of $SU(3)\times SU(2)$ on one of two homogeneous spaces of the form $\mathbf{G}_2/SU(2)$.
\end{itemize}

\end{theorem}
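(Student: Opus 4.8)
The plan is to determine the rational topology of all three orbits, use it together with the classification of transitive actions on spheres and Table \ref{table:corank2} to pin down the conjugacy classes of $H \subseteq K^\pm \subseteq G$, and finally invoke Asoh's equivariant classification \cite{Asoh81} to recognize the resulting manifolds. First I would normalize notation by relabelling the two sides, which is permissible since switching $K^+$ and $K^-$ gives an equivalent manifold (Proposition \ref{prop:equiv}), so that $\ell_+$ is even and $\ell_-$ is odd. Since the two integers have opposite parity, $\ell_- + \ell_+ + 1$ is even, and the identity $\Omega S^{\ell_- + \ell_+ + 1} \simeq_\Q S^{\ell_- + \ell_+} \times \Omega S^{2(\ell_- + \ell_+) + 1}$ lets me rewrite the Case (4) fiber as $\mathcal F \simeq_\Q S^{\ell_-} \times S^{\ell_+} \times S^{\ell_- + \ell_+} \times \Omega S^{2(\ell_- + \ell_+) + 1}$. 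The last factor is the loop space of an odd sphere, hence rationally a single homotopy group in degree $2(\ell_- + \ell_+)$; Proposition \ref{prop:Connecting} shows $\partial_{\mathrm{odd}}$ is an isomorphism onto exactly this class. Thus $n = 2(\ell_- + \ell_+) + 1$ and $G/H \simeq_\Q S^{\ell_-} \times S^{\ell_+} \times S^{\ell_- + \ell_+}$.

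Next I would feed this into the sphere bundles $S^{\ell_\pm} \to G/H \to G/K^\pm$. Using the long exact rational homotopy sequences together with a dimension count gives $G/K^+ \simeq_\Q S^{\ell_-} \times S^{\ell_- + \ell_+}$ and $G/K^- \simeq_\Q S^{\ell_+} \times S^{\ell_- + \ell_+}$. In particular $\dim \pi_{\mathrm{odd}}^{\Q}(G/K^+) = 2$ with $\dim \pi_{\mathrm{even}}^{\Q}(G/K^+) = 0$, while $G/K^-$ carries one even homotopy group. Since $K^+/H$ is an even sphere and $K^-/H$ an odd sphere, I obtain the rank relations $\rank K^+ = \rank H = \rank G - 2$ and $\rank K^- = \rank G - 1$. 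The upshot is that $K^+$ is a corank-two subgroup whose quotient $G/K^+$ has precisely two odd rational homotopy groups, in the degrees $\ell_-$ and $\ell_- + \ell_+$ — exactly the situation tabulated in Table \ref{table:corank2}.

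The heart of the argument is the resulting reduction to a finite list. When $G$ and $K^+$ are both simple, Table \ref{table:corank2} directly supplies the admissible pairs $(G, K^+)$ along with $\ell_-$ and $\ell_- + \ell_+$; these entries I expect to survive, after the last step, only as linear actions on spheres. When $G$ is not simple (so $G \cong G_0 \times T^k$ in the notation of Convention \ref{convention}, or $G_0$ has several factors), or when $K^+$ fails to be simple, I would argue directly: primitivity (Theorem \ref{thm:primitive}) forces $K^+$ and $K^-$ to generate $G$, the corank-one description of $G/K^-$ constrains any toral or extra factor, and the requirement that both $K^\pm/H$ be spheres restricts the factors through Table \ref{table:transsphere} and Proposition \ref{prop:mix}. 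It is precisely these non-simple configurations that yield the Brieskorn family (through the $SO(2)$ factor of $SO(2) \times SO(2k)$, with its restriction to $SO(2) \times Spin(7)$ when $k = 4$) and the two homogeneous spaces $\mathbf{G}_2/SU(2)_i$ (through $G = SU(3) \times SU(2)$). I expect this case division to be the main obstacle: for each candidate $(G, K^+)$ one must test for a compatible corank-one $K^-$ and a common $H$ with $K^\pm/H$ spheres, and representation-theoretic input (the minimal faithful representations of the relevant $K^\pm$) is needed both to discard spurious candidates and to fix the embeddings up to conjugacy.

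Finally, with the conjugacy classes of $H \subseteq K^\pm \subseteq G$ determined, I would normalize each group diagram using the modifications of Proposition \ref{prop:equiv} and then apply Asoh's classification \cite{Asoh81}, which in most cases shows that the diagram determines the cohomogeneity one manifold up to $G$-equivariant diffeomorphism. Each surviving manifold is then identified with a linear action on a standard sphere, the standard $SO(2) \times SO(2k)$ action on $B^{4k-1}_d$, or the $SU(3) \times SU(2)$ action on $\mathbf{G}_2/SU(2)_i$. In the few cases where Asoh's result does not apply verbatim, I would instead enumerate the admissible group diagrams by hand, again reducing their number via Proposition \ref{prop:equiv}, and match them with these same known actions, thereby completing the classification in the mixed-parity case.
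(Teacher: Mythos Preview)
Your proposal is correct and follows essentially the same strategy as the paper: compute the rational homotopy types of the orbits via Proposition \ref{prop:Connecting} (this is Proposition \ref{prop:lasttop}), split into cases according to whether $G$ is simple, has a circle factor, or is semi-simple with two simple factors, use Table \ref{table:corank2} together with primitivity and Table \ref{table:transsphere} to pin down the groups in each case, and then invoke Asoh's results (supplemented by direct enumeration of diagrams in the $(\ell_-,\ell_+)=(3,2)$ case) to identify the manifolds. The paper's execution of the non-simple semi-simple case is considerably more intricate than your outline suggests --- it requires a careful analysis of diagonally embedded factors (Propositions \ref{prop:diag}--\ref{prop:secondprojection}) before Table \ref{table:corank2} can be applied to the pair $(G_1,L_1)$ --- but your anticipated obstacles and tools are the right ones.
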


We begin by computing the rational topology of the principal and singular leaves.

\begin{proposition}\label{prop:lasttop}Suppose $M^n$ is a rational sphere and that $G$ is a compact Lie group acting via a cohomogeneity one action on $M$ with homotopy fiber $\mathcal{F}$ belonging to Case (4) of Theorem \ref{thm:GH}.  If $\ell_-$ and $\ell_+$ have different parities, then $n = 2(\ell_- + \ell_+) + 1$ and there are isomorphisms $$H^\ast(G/H;\Q)\cong H^\ast(S^{\ell_-}\times S^{\ell_+}\times S^{\ell_- + \ell_+};\Q), H^\ast(G/K^\pm;\Q)\cong H^\ast(S^{\ell_\mp}\times S^{\ell_- + \ell_+};\Q).$$  Further, there are also corresponding isomorphisms between the rational homotopy groups.

\end{proposition}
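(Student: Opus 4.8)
The plan is to read off the rational homotopy of the orbits directly from the Grove--Halperin model together with the connecting homomorphism analysis of Proposition \ref{prop:Connecting}. Since we are in Case (4), we have $\mathcal{F} \simeq_\Q S^{\ell_-} \times S^{\ell_+} \times \Omega S^{\ell_- + \ell_+ + 1}$, and because $\ell_-$ and $\ell_+$ have different parities, $\ell_- + \ell_+$ is odd, so $\ell_- + \ell_+ + 1$ is even. Thus $\Omega S^{\ell_- + \ell_+ + 1}$ has its rational homotopy supported in two degrees: using $\Omega S^{2t} \simeq_\Q S^{2t-1} \times \Omega S^{4t-1}$ (as noted after Theorem \ref{thm:GH}), the loopspace factor contributes a rational homotopy group in degree $\ell_- + \ell_+$ and in degree $2(\ell_- + \ell_+) + 1$. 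Hence $\pi^\Q_{even}(\mathcal{F})$ is supported in the single degree $\ell_- + \ell_+$, while $\pi^\Q_{odd}(\mathcal{F})$ is supported in degrees $\ell_-$, $\ell_+$, and $2(\ell_- + \ell_+) + 1$.

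First I would pin down $n$. Proposition \ref{prop:Connecting} tells us $\partial_{odd}\colon \pi^\Q_{odd}(M) \to \pi^\Q_{even}(\mathcal{F})$ has rank one and is supported on the loopspace factor, meaning $\partial_{2k+1} \neq 0$ where $2k+1 = \ell_- + \ell_+ + 1$ is the generating degree of the loopspace. Since $\pi^\Q_{even}(\mathcal{F})$ lives only in degree $\ell_- + \ell_+$, the nonzero connecting map must originate from $\pi^\Q_{\ell_- + \ell_+ + 1}(M)$; as $M$ is a rational sphere this forces $n = \ell_- + \ell_+ + 1$ to be impossible on parity grounds (that sum is odd plus one, i.e.\ even, but $n$ is odd), so instead the relevant rational homotopy group of $M$ must sit in the top loopspace degree $2(\ell_- + \ell_+) + 1$, giving $n = 2(\ell_- + \ell_+) + 1$, which is odd as required. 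I would make this precise by chasing the long exact sequence $\cdots \to \pi^\Q_\ast(\mathcal{F}) \to \pi^\Q_\ast(G/H) \to \pi^\Q_\ast(M) \xrightarrow{\partial} \pi^\Q_{\ast-1}(\mathcal{F}) \to \cdots$ and using that $\pi^\Q_\ast(M)$ is one-dimensional, concentrated in degree $n$.

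Next, to compute $\pi^\Q_\ast(G/H)$, I would feed the degree information into the same long exact sequence. The connecting map kills exactly the degree-$(\ell_- + \ell_+)$ class of $\pi^\Q_{even}(\mathcal{F})$ (the image of $\partial$) via the degree-$n$ generator of $\pi^\Q(M)$, while all other homotopy of $\mathcal{F}$ injects into $\pi^\Q_\ast(G/H)$ and the map $\pi^\Q_\ast(G/H) \to \pi^\Q_\ast(M)$ is otherwise zero below degree $n$. This leaves $G/H$ with rational homotopy in degrees $\ell_-$, $\ell_+$, and $\ell_- + \ell_+$ — each one-dimensional — which matches $S^{\ell_-} \times S^{\ell_+} \times S^{\ell_- + \ell_+}$. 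Since $G/H$ is a simply connected rationally elliptic space whose rational homotopy agrees with that of this product of odd spheres, and a product of odd-dimensional spheres is rationally formal with free (exterior) rational cohomology, the cohomology isomorphism $H^\ast(G/H;\Q) \cong H^\ast(S^{\ell_-} \times S^{\ell_+} \times S^{\ell_- + \ell_+};\Q)$ follows; I would invoke the formality of such spaces (or a minimal Sullivan model computation) to pass from homotopy to cohomology.

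For the singular orbits, I would use the sphere bundles $S^{\ell_\pm} \to G/H \to G/K^\pm$ and their long exact rational homotopy sequences. Since all three orbits are simply connected (Proposition \ref{prop:ellinfo}, as both $\ell_\pm \geq 1$ and at least one is $\geq 2$ with the parity constraint), and $\ell_\pm$ is odd or the Euler class vanishes rationally, the fiber inclusion is rationally injective on the fundamental class and one reads off that $G/K^\pm$ has rational homotopy in degrees $\ell_\mp$ and $\ell_- + \ell_+$. The cleanest route is to observe that $G/H$ is rationally the product $S^{\ell_-} \times S^{\ell_+} \times S^{\ell_- + \ell_+}$ with the fiber $S^{\ell_\pm}$ rationally splitting off, so the base $G/K^\pm$ is rationally $S^{\ell_\mp} \times S^{\ell_- + \ell_+}$; again formality gives the cohomology isomorphism. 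The main obstacle I anticipate is being careful about \emph{which} odd-sphere class the loopspace contributes — that is, confirming via Proposition \ref{prop:Connecting} that the degree-$(\ell_- + \ell_+)$ even class, and not one of the genuine sphere factors $S^{\ell_-}$ or $S^{\ell_+}$, is the one annihilated by the connecting homomorphism — since a parity slip there would misidentify the surviving degrees. The decomposition $\Omega S^{2t} \simeq_\Q S^{2t-1} \times \Omega S^{4t-1}$ and the ``supported by the loopspace factor'' clause of Proposition \ref{prop:Connecting} are exactly what resolve this, so I would state that bookkeeping explicitly rather than treat it as routine.
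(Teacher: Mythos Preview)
Your overall strategy --- compute rational homotopy groups from the long exact sequence of $\mathcal{F} \to G/H \to M$ first, then pass to cohomology --- is a plausible alternative to the paper's route, which goes the other way: it computes $H^\ast(G/H;\Q)$ directly from the Serre spectral sequence of $\mathcal{F} \to G/H \to M$ (trivially collapsing below degree $n$ since $M$ is a rational $n$-sphere and $\dim G/H = n-1$), then uses Mayer--Vietoris plus the Serre spectral sequence of $S^{\ell_\pm}\to G/H\to G/K^\pm$ to get $H^\ast(G/K^\pm;\Q)$, and only at the end deduces the rational homotopy groups.

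However, your execution has genuine gaps. First, the degree bookkeeping is wrong in exactly the way you warned yourself about. One of $\ell_-,\ell_+$ is even, so one of the sphere factors $S^{\ell_\pm}$ already contributes to $\pi^\Q_{even}(\mathcal{F})$; moreover $\ell_- + \ell_+$ is \emph{odd}, so the claim that $\pi^\Q_{even}(\mathcal{F})$ is ``supported in the single degree $\ell_- + \ell_+$'' cannot be right. The decomposition $\Omega S^{\ell_-+\ell_++1} \simeq_\Q S^{\ell_-+\ell_+} \times \Omega S^{2(\ell_-+\ell_+)+1}$ contributes homotopy in degrees $\ell_-+\ell_+$ (odd) and $2(\ell_-+\ell_+)$ (even), not $2(\ell_-+\ell_+)+1$. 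Thus $\pi^\Q_{even}(\mathcal{F})$ lives in the two degrees $\ell_+$ (say) and $2(\ell_-+\ell_+)$, and it is the latter class that $\partial_n$ hits; this still gives $n = 2(\ell_- + \ell_+) + 1$, but not via the parity argument you sketched. You also omit the degree $2\ell_+ - 1$ class coming from the even sphere, so your list of surviving homotopy degrees for $G/H$ is incomplete (though, once corrected, it does match that of $S^{\ell_-}\times S^{\ell_+}\times S^{\ell_-+\ell_+}$).

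Second, even once $\pi^\Q_\ast(G/H)$ is correctly identified, the step ``same rational homotopy groups $\Rightarrow$ same rational cohomology'' needs more: formality says the cohomology determines the rational homotopy type, not the converse. With an even sphere present one needs either an intrinsic-formality argument or a direct minimal-model computation. The paper sidesteps this by computing cohomology first; for the homotopy statement it then invokes \cite[Corollary 2.78]{FelixOpreaTanre08} to split off $S^{\ell_-}$ from $G/H$.

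Third, for $G/K^\pm$ you assert that the fibre $S^{\ell_\pm}$ ``rationally splits off'', but for the odd $\ell_\pm$ the rational Euler class is not automatically zero. The paper handles this by first using the Mayer--Vietoris sequence for the double disk bundle decomposition of $M$: since $M$ is a rational sphere, $H^\ast(G/K^\pm;\Q)\to H^\ast(G/H;\Q)$ must be injective, which forces all differentials in the Serre spectral sequence of $S^{\ell_\pm}\to G/H\to G/K^\pm$ to vanish.
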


\begin{proof}Because $\ell_-$ and $\ell_+$ have different parities, $\pi_{2k}(\Omega S^{\ell_- + \ell_+ + 1})$ non-trivial when $2k = 2(\ell_- + \ell_+)$.  Now, Proposition \ref{prop:Connecting} implies that $n-1 = 2(\ell_- + \ell_+) $, i.e., that $n = 2(\ell_- + \ell_+) + 1$.

For the rational cohomology, we use the rational Serre spectral sequence for the fibration $\mathcal{F}\rightarrow G/H\rightarrow M$.  Since $M$ is a rational sphere, all differentials with domain $E_p^{0,k}$ for $k\leq n-1$ vanish for trivial reasons.  Since $\dim G/H = n-1$, this completely determines the rational cohomology of $G/H$.



For the rational cohomology of $G/K^\pm$, note that the the maps $H^\ast(G/K^\pm;\Q)\rightarrow H^\ast(G/H;\Q)$ must be injections, for otherwise the Mayer-Vietoris sequence associated to the double disk bundle decomposition of $M$ would show that $M$ is not a rational sphere.  Thus, in rational Serre spectral sequence for the sphere bundles $G/H\rightarrow G/K^\pm$, we see the all differentials must vanish.  It follows that $H^\ast(G/H;\Q)\cong H^\ast(S^{\ell_\pm}\times G/K^\pm;\Q)$, from which the rational cohomology ring of $G/K^\pm$ is easily determined.

Lastly, we prove the statement about rational homotopy groups.  For the case of a product of two spheres, it is easy to compute the minimal Sullivan model directly, giving the result.  For $G/H$, it follows that the map $H_{\ell_-}(S^{\ell_-};\Q)\rightarrow H_{\ell_-}(G/H;\Q)$ is injective, so the Hurewicz homomorphism $\pi_{\ell_-}(G/H)\otimes \Q\rightarrow H_{\ell_-}(G/H;\Q)$ is non-trivial.  Then \cite[Corollary 2.78]{FelixOpreaTanre08} shows that $G/H$ has the rational homotopy type of $S^{\ell_-} \times G/K^-$, so the conclusion follows.

\end{proof}

We now use Proposition \ref{prop:lasttop} to better understand the structure of $H,K^\pm,$ and $G$.

\begin{proposition}\label{prop:gstructure}  Suppose a compact connected Lie group $G$ acts via an irreducible cohomogeneity one action on a rational sphere $M^n$ with the homotopy fiber of the inclusion of a principal orbit $G/H$ into $M$ falling into Case (4) of Theorem \ref{thm:GH}.  Assume $\ell_-$ is odd and $\ell_+$ is even.  Then $G$ is semi-simple unless $\ell_- = 1$, in which case $G$ has a one dimensional torus factor.  Further, $K^+$ is always semi-simple, and $H$ is semi-simple unless $\ell_+ = 2$, in which case $H$ has a one dimensional torus factor.  Lastly, $K^-$ is semi-simple if both $\ell_- > 1$ and $\ell_+ > 2$, has a two dimensional torus factor if $\ell_- = 1$ and $\ell_+ =2$, and has a one dimensional torus factor otherwise.

\end{proposition}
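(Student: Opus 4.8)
The plan is to detect torus factors purely homotopically and then reduce the whole proposition to the single assertion that $K^+$ is semisimple. For a compact connected Lie group $L$, writing $L$ as (a finite cover of) $L_{ss}\times T^t$, the dimension of its torus factor is $t_L := \dim\bigl(\pi_1(L)\otimes\Q\bigr)$, since $\pi_1(L_{ss})$ is finite while $\pi_1(T^t)\otimes\Q=\Q^t$. So it suffices to compute $t_G,t_H,t_{K^\pm}$. By Proposition \ref{prop:lasttop} the orbits satisfy $G/H\simeq_\Q S^{\ell_-}\times S^{\ell_+}\times S^{\ell_-+\ell_+}$, $G/K^+\simeq_\Q S^{\ell_-}\times S^{\ell_-+\ell_+}$, and $G/K^-\simeq_\Q S^{\ell_+}\times S^{\ell_-+\ell_+}$, with corresponding statements for the rational homotopy groups. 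Since $\ell_-$ is odd and $\ell_+$ is even (so $\ell_-+\ell_+$ is odd), $G/K^+$ is rationally a product of odd spheres.

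Next I would feed these facts into the rational homotopy long exact sequences of the three fibrations $H\to K^+\to S^{\ell_+}$, $K^+\to G\to G/K^+$, and $H\to K^-\to S^{\ell_-}$, using $\pi_{2k}(L)\otimes\Q=0$ for every compact Lie group $L$. In the first, the base $S^{\ell_+}$ contributes a rational $\pi_2$ exactly when $\ell_+=2$ and no rational $\pi_1$; in the second, the base $G/K^+$ is a product of odd spheres, so it has no rational $\pi_2$ and contributes a rational $\pi_1$ exactly when $\ell_-=1$; in the third, the base $S^{\ell_-}$ contributes a rational $\pi_1$ exactly when $\ell_-=1$ and no rational $\pi_2$. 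Reading off the resulting short exact sequences gives $t_H=t_{K^+}+[\ell_+=2]$, $t_G=t_{K^+}+[\ell_-=1]$, and $t_{K^-}=t_H+[\ell_-=1]$, where $[\,P\,]$ is $1$ if $P$ holds and $0$ otherwise. Granting the claim $t_{K^+}=0$, these identities yield $t_G=[\ell_-=1]$, $t_H=[\ell_+=2]$, and $t_{K^-}=[\ell_-=1]+[\ell_+=2]$, which is exactly the statement of the proposition.

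It remains to prove $t_{K^+}=0$, i.e.\ that $K^+$ is semisimple. Since $K^+$ acts transitively on the even-dimensional sphere $S^{\ell_+}$, Borel's classification (Table \ref{table:transsphere}) shows the identity component of its effective quotient is $SO(\ell_++1)$ or, when $\ell_+=6$, $\mathbf{G}_2$; both are centerless, so the central torus $Z(K^+)^0$ maps trivially to the effective quotient and hence lies in the ineffective kernel $H_+\subseteq H$. When $\ell_-\geq 3$, I would argue directly with $G$: Convention \ref{convention} forbids any proper normal subgroup from acting with the principal orbits, so if $t_G\geq 1$ then $G_{ss}\cdot H\subsetneq G$, whence the image of $H$ in the torus $G/G_{ss}$ is a proper subtorus and $\pi_1(G/H)\otimes\Q\neq 0$; but $\pi_1(G/H)\otimes\Q=0$ because $\ell_-\neq 1$, a contradiction. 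Thus $t_G=0$ and $t_{K^+}=0$.

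The main obstacle is the boundary case $\ell_-=1$, where the argument above fails and I expect to proceed as follows. Here $K^-/H=S^1$ forces $H\trianglelefteq K^-$, so the connected group $(K^-)^0$ acts by conjugation on the torus $Z(H)^0$; as the automorphism group of a torus is discrete, this action is trivial, so $Z(K^+)^0\subseteq Z(H)^0$ is central in $(K^-)^0$. Combined with its being central in $K^+$ and the fact that $K^-=(K^-)^0\cdot(\text{finite}\subseteq H\subseteq K^+)$, it follows that $Z(K^+)^0$ is central in $G=\langle K^+,K^-\rangle$. A central subtorus contained in $H$ fixes every point of $M$, hence lies in the ineffective kernel of the $G$-action, which is finite by Convention \ref{convention}; therefore $Z(K^+)^0$ is trivial and $t_{K^+}=0$. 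Verifying the conjugation and centrality claims carefully, and confirming that the various low-dimensional fibration sequences behave as expected when the orbits are only almost connected, is where I expect the real care to be needed.
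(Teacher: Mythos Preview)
Your reduction to the single claim $t_{K^+}=0$ via the three fibration identities
\[
t_H=t_{K^+}+[\ell_+=2],\qquad t_G=t_{K^+}+[\ell_-=1],\qquad t_{K^-}=t_H+[\ell_-=1]
\]
is correct and is a clean way to organize the proposition. Your centrality argument in the case $\ell_-=1$ also works (with the minor correction that one should argue with $H^0\trianglelefteq (K^-)^0$ rather than $H\trianglelefteq K^-$; the codimension-one subalgebra of a compact Lie algebra is an ideal, so this is fine, and in fact Proposition~\ref{prop:ellinfo} gives $K^-=H^0\cdot S^1$ connected here).

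The paper, however, avoids the case split entirely by extracting a sharper consequence of the irreducibility convention than you do. You only use Convention~\ref{convention} to conclude that $G_{ss}\cdot H\subsetneq G$ when $t_G\geq 1$; the paper observes more: writing $G=G_0\times T^k$, the projection of $H^0$ to $T^k$ must be \emph{trivial}, not merely proper. (If $p(H^0)$ were a positive-dimensional subtorus $S$, then $G_0\times T'$ for any complement $T'$ of $S$ would be a proper normal subgroup with the same orbits.) Hence $\pi_1^\Q(H)\to\pi_1^\Q(G)$ is the zero map, and the long exact sequence for $H\to G\to G/H$ gives $t_G=\dim\pi_1^\Q(G/H)=[\ell_-=1]$ and $t_H=\dim\pi_2^\Q(G/H)=[\ell_+=2]$ in one stroke, for all $\ell_-$. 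Plugging $t_G=[\ell_-=1]$ into your second identity yields $t_{K^+}=0$ immediately, with no need for the primitivity/centrality detour. The paper then handles $K^-$ much as you do. So your argument is valid but the paper's is shorter and more uniform; the extra ingredient you are missing is simply the observation that irreducibility forces $H^0\subseteq G_0$.
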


\begin{proof}For the claims about $G$ and $H$ we argue as follows.  Writing $G = G_1\times T^k$, irreducibility implies that $H^0\subseteq G_1$.  In particular, the map $\pi^\Q_1(H)\rightarrow \pi^\Q_1(G)$ is trivial.  Since $H$ has finitely many components and $\pi_2(G) = 0$, the long exact sequence in rational homotopy groups associated to the bundle $H\rightarrow G\rightarrow G/H$ now gives isomorphisms $\pi^\Q_1(G)\cong \pi^\Q_1(G/H)$ and $\pi^\Q_1(H)\cong\pi^\Q_2(G/H)$.  The result now follows from Proposition \ref{prop:lasttop}.

To see that $K^+$ is always semi-simple, note that from Proposition \ref{prop:lasttop}, $G/K^+$ has the rational homotopy groups of a product of two odd dimensional spheres.  In particular, $\pi^\Q_{even}(G/K^+)$ is trivial.  Now, assume for a contradiction that $\pi^\Q_1(K^+)$ is non-trivial.  Then the long exact sequence associated to the bundle $K^+\rightarrow G\rightarrow G/K^+$ shows that $\pi^\Q_1(K^+)\rightarrow \pi^\Q_1(G)$ is injective.  On the other hand as $K^+$ has finitely many components, so the map $\pi_1^\Q(G)\rightarrow \pi_1^\Q(G/K^+)$ is a surjective map between vector spaces of the same dimension, so is an isomorphism.  Thus, exactness forces $\pi_1^\Q(K^+) = 0$, so $K^+$ is semi-simple.

Lastly, for $K^-$, from Proposition \ref{prop:ellinfo}, if $\ell_- = 1$, then $K^- = H\cdot S^1$, so $K^-$ has one more circle factor than $H$.  On the other hand, if $\ell_- > 1$, then $G$ is semi-simple, so the long exact sequence associated to $K^-\rightarrow G\rightarrow G/K^-$ gives $\pi_2(G/K^-)\otimes \Q\cong \pi_1(K^-)\otimes \Q$.  The result now follows since $G/K^-$ has the rational homotopy groups of $S^{\ell_+}\times S^{\ell_- + \ell_+}.$

\end{proof}

We now break the proof of Theorem \ref{thm:paritycase} into three cases.  First, we assume $G$ is simple and find there there are precisely two such actions, both of which are linear actions on standard spheres.  Next, we assume $G$ has a circle factor.  Here we show such actions are precisely the Brieskorn actions.  Lastly, we assume $G$ is semi-simple but not simple.  In this case, we find two infinite families of linear actions on spheres and two sporadic examples on rational $11$-spheres.

\subsection{$G$ has one simple factor}

In this subsection, in addition to Convention \ref{convention}, we will always assume that $\ell_-$ is odd, $\ell_+$ is even, and that $G$ is simple.  We prove the following.

\begin{theorem}\label{thm:gsimp} Suppose $M$ is a rational sphere upon which a simple compact Lie group $G$ acts via a cohomogeneity one action.  Suppose further that the homotopy fiber of the inclusion of a principal orbit into $M$ falls within Case (4) of Theorem \ref{thm:GH}, and that $\ell_-$ and $\ell_+$ have different parities.  Then either $G = SU(5)$ and $M$ is equivariantly diffeomorphic to $S^{19}$ with $G$ acting via the representation $ \Lambda^2 \mathbb{C}^5$, or $G = Spin(10)$ and $M$ is equivariantly diffeomorphic to $S^{31}$ with $G$ acting via the spin representation.
\end{theorem}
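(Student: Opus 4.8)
The plan is to pin down the isomorphism types and the embeddings of $H$, $K^+$, $K^-$ inside the simple group $G$ using the rational-homotopy bookkeeping of Proposition~\ref{prop:lasttop} and Proposition~\ref{prop:gstructure}, to reduce the pair $(G,K^+)$ to a short list drawn from Table~\ref{table:corank2}, and then to eliminate all but two entries by exploiting the fact that $K^+$ and $K^-$ share the principal isotropy group $H$. Throughout I take $\ell_-$ odd and $\ell_+$ even, as in Proposition~\ref{prop:gstructure}, and I pass to the simply connected cover of $G$.

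First I would record the rank relations. Applying the long exact rational homotopy sequence to the bundles $K^\pm\to G\to G/K^\pm$ and $H\to G\to G/H$, and using the identifications of Proposition~\ref{prop:lasttop} (so that $\pi_{even}^\Q(G/K^+)=0$ while $\dim\pi_{odd}^\Q(G/K^+)=2$), one obtains $\rank K^+=\rank H=\rank G-2$ and $\rank K^-=\rank H+1$. The key point is that for $G$ simple $\dim\pi_3^\Q(G)=1$; since $\pi_4^\Q(G/K^+)=0$, exactness makes $\pi_3^\Q(K^+)\hookrightarrow\pi_3^\Q(G)$ injective, so $K^+$ has at most one simple factor. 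As $K^+$ is semisimple (Proposition~\ref{prop:gstructure}) and $\ell_+\ge2$, the group $K^+$ is \emph{simple}. Because $K^+/H=S^{\ell_+}$ with $\ell_+$ even and $K^+$ simple, its effective quotient is one of Borel's even-sphere actions (Table~\ref{table:transsphere}), so up to finite cover $K^+\cong\Spin(\ell_++1)$, or $\mathbf{G}_2$ when $\ell_+=6$, with $H\cong\Spin(\ell_+)$ or $\SU(3)$ accordingly. Being a simple corank-two subgroup of the simple group $G$ with $\dim\pi_{odd}^\Q(G/K^+)=2$, the pair $(G,K^+)$ then occurs in Table~\ref{table:corank2}, which also reads off $\ell_-$ and $\ell_+$.

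Next I would run through Table~\ref{table:corank2}, retaining only the rows in which $\ell_-$ is odd, $\ell_+$ is even, and $K^+$ is of the form $\Spin(\mathrm{odd})$ or $\mathbf{G}_2$ \emph{actually} acting transitively on $S^{\ell_+}$. This discards every row with $K^+\in\{\SU(\cdot),\SO(6),\Sp(3),\mathbf{F}_4\}$ and the $\mathbf{G}_2$ and $\mathbf{F}_4$ rows (no transitive action on the required even sphere), leaving the families $(\SU(5),\Sp(2))$, $(\Spin(9),\Sp(2))$, $(\Sp(4),\Sp(2))$, and $(\Spin(2m),\Spin(2m-3))$ with $m\ge4$. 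For each survivor $H$ is now fixed, and the remaining task is to produce a semisimple $K^-$ with $K^-/H\cong S^{\ell_-}$ and $\rank K^-=\rank H+1$, \emph{allowing} $K^-$ to act non-effectively, with a normal factor lying in $H$ (constrained only by $H_+\cap H_-$ finite, via Theorem~\ref{thm:primitive} and Proposition~\ref{prop:finiteintersection}). A rank/dimension comparison against Borel's list kills $(\Spin(9),\Sp(2))$ and $(\Sp(4),\Sp(2))$ at once: here $H=\Spin(4)$ forces $K^-$ to be the rank-three, dimension-$17$ group $\SU(2)\times\mathbf{G}_2$, and $\mathbf{G}_2/\SU(2)\not\cong S^{11}$. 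The same accounting collapses $(\Spin(2m),\Spin(2m-3))$ to $m=5$, since for $m\ge6$ the isotropy $\Spin(2m-4)$ is a simple group too large to arise for any transitive $S^{2m-1}$-action. What then remains is $(\SU(5),\Sp(2))$ with $H=\Spin(4)$, $K^-=\SU(3)\cdot\SU(2)$ (the $\SU(2)$ factor acting trivially on $S^5$) and $(\Spin(10),\Spin(7))$ with $H=\Spin(6)=\SU(4)$, $K^-=\SU(5)$.

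Finally I would verify these two diagrams are forced and recognize them. The embeddings $\Sp(2)\hookrightarrow\SU(5)$, $\SU(3)\SU(2)\hookrightarrow\SU(5)$, $\Spin(7)\hookrightarrow\Spin(10)$, $\SU(5)\hookrightarrow\Spin(10)$ are each unique up to conjugacy by representation-theoretic rigidity, and a short generation check (no faithful low-dimensional representation lets $\Sp(2)$ lie in a proper subgroup of $\SU(5)$ containing $K^-$, and $\Spin(7)\not\subseteq\SU(5)$) confirms primitivity. Feeding the resulting group diagrams into Asoh's classification \cite{Asoh81} then identifies $M$, up to equivariant diffeomorphism, with the standard sphere carrying the advertised linear action: $\SU(5)$ on $\Lambda^2\C^5=S^{19}$ and $\Spin(10)$ on its $32$-dimensional real spin representation, giving $S^{31}$. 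I expect the main obstacle to be the casework of the third paragraph, and within it the $\Spin(8)$ near-miss ($m=4$): unlike the other discarded rows it survives the rank and dimension filters, with $K^-=\Sp(2)\,\Sp(1)$ genuinely available, and it must be excluded by a delicate, triality-sensitive argument showing that any admissible diagram forces $K^+\subseteq K^-$ up to the moves of Proposition~\ref{prop:equiv}, hence is non-primitive and contradicts Theorem~\ref{thm:primitive}.
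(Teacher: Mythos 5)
Your outline follows the paper's own route: force $K^+$ to be simple of corank two, identify it as $\Spin(\ell_++1)$ or $\mathbf{G}_2$ from Table \ref{table:transsphere}, cut $(G,K^+)$ down to rows of Table \ref{table:corank2}, eliminate rows by showing no admissible $K^-$ containing the forced $H$ can exist, and finish with Asoh's rigidity results plus the two known linear actions. Your rank bookkeeping and your eliminations of $(\Spin(9),\Spin(5))/(\Spin(9),\Sp(2))$, $(\Sp(4),\Sp(2))$, and $(\Spin(2m),\Spin(2m-3))$ for $m\geq 6$ are sound. However, your enumeration of the surviving rows is wrong at one point: you discard \emph{every} row with $K^+=\SU(\cdot)$, but $\SU(2)\cong\Spin(3)$ does act transitively on $S^2$, so the row $(\SU(4),\SU(2))$, with $(\ell_-,\ell_+)=(5,2)$, survives your own filter; the paper lists it as the first row of Table \ref{table:corank2part1} and then kills it by an argument you never supply. (It is easy: here $H\cong S^1$ by Proposition \ref{prop:ellinfo}, and since $\ell_+=2$, Proposition \ref{prop:gstructure} forces $K^-$ to have rank two with a circle factor, so $\dim K^-\leq 4 < \dim H+\ell_- = 6$; equivalently, no transitive action on $S^5$ has isotropy a quotient of a circle.)

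The more serious gap is the $m=4$ case $(\Spin(8),\Spin(5))$, which you correctly single out as the one candidate surviving all rank and dimension filters, but then do not exclude: you only assert it ``must be excluded by a delicate, triality-sensitive argument'' showing that any admissible diagram is non-primitive. That is an acknowledged hole at exactly the hardest step, and the mechanism you propose is not the one that works: the obstruction is not that an admissible diagram exists and fails primitivity, it is that no admissible diagram exists at all. The paper cites \cite[7.8]{Asoh81}: with $K^+=\Sp(2)$, $K^-=\Sp(2)\times\Sp(1)$, and $H=\Spin(4)$ required to project onto the $\Sp(1)$ factor of $K^-$, the character of the $8$-dimensional representation of $H$ induced by $H\subseteq K^-\subseteq\Spin(8)\subseteq\SU(8)$ differs from that induced by any embedding $H\subseteq K^+\subseteq\Spin(8)\subseteq\SU(8)$, so no copy of $H$ lies in $K^+\cap K^-$ with the required projections. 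Without this (or some substitute) argument, your proof leaves open a possible third conclusion in the theorem, so as written it is incomplete; the remaining identification step via \cite[7.7 and 7.9]{Asoh81} and Straume's tables matches the paper and is fine in outline.
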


From Proposition \ref{prop:lasttop}, $\pi_{\ast}^\Q(G/K^+)$ has dimension $2$ and is concentrated in odd degrees.  If follows that $K^+$ has corank $2$ in $G$ and that $K^+$ is simple and non-abelian.  As $K^+/H\cong S^{\ell_+}$, Table \ref{table:transsphere} now implies that, up to cover, $K^+ = Spin(\ell_+ + 1)$ or, when $\ell_+ = 6$, that $K^+ = \mathbf{G}_2$.

We now turn to Table \ref{table:corank2}.  Specifically, we find all entries where $L$ is, up to cover, an odd Spin group or $\mathbf{G}_2$, keeping in mind that $Spin(3) = Sp(1) = SU(2)$ and $Spin(5) = Sp(2)$.   We discard any examples where $L$ does not act transitively on $S^{\ell_+}$.  This leaves the following possibilities:  row $1$ with $m=4$, row $4$, row $5$ with $m=4$, row $6$, row $8$ with $m=4$, and row $9$ for all $n\geq 4$; these are all tabulated in Table \ref{table:corank2part1}.

\begin{table}

\begin{tabular}{c|c|c|c|c|c}

$G$ & $K^+$ & $\ell_-$ & $\ell_- + \ell_+$ & $\ell_+$ & Notes\\

\hline

$SU(4)$ & $SU(2)$ & $5$ & $7$ & $2$ & multiple\\



$SU(5)$ & $Sp(2)$ & $5$ & $9$ & $4$ & multiple\\



$Spin(9)$ & $Spin(5)$ & $11$ & $15$ & $4$ &\\

$Spin(9)$ & $Sp(2)$ & $11$ & $15$ & $4$ \\



$Sp(4)$ & $Sp(2)$ & $11$ & $15$ & $4$ & \\


$Spin(2m)$ & $Spin(2m-3)$ & $2m-1$ & $4m-5$ & $2m-4$ & $m\geq 4$\\





\end{tabular}

\caption{Subset of Table \ref{table:corank2} with $K^+ = L$ acting transitively on $S^{\ell_+}$}
\label{table:corank2part1}
\end{table}

\begin{proposition}  Except for row $2$ and the $n=5$ case of row $6$ in Table \ref{table:corank2part1}, the other possibilities do not correspond to cohomogeneity one actions on rational spheres.
\end{proposition}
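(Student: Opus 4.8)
The plan is to treat the rows of Table~\ref{table:corank2part1} one at a time, in each case first pinning down the principal isotropy group $H$ and then asking whether a compatible second singular isotropy group $K^-$ can exist. Since $K^+/H\cong S^{\ell_+}$ with $\ell_+$ even, $H$ is the stabilizer of a point of the transitive $K^+$-action on $S^{\ell_+}$, which determines $\dim H=\dim K^+-\ell_+$ and the Lie type of $H^0$ (for instance $H^0\cong Spin(\ell_+)$ whenever $K^+$ is an odd spin group). Any admissible $K^-$ then satisfies $\dim K^-=\dim H+\ell_-$; moreover, by Proposition~\ref{prop:lasttop} the space $G/K^-$ has the rational homotopy of $S^{\ell_+}\times S^{\ell_-+\ell_+}$ while $G/H$ is rationally $S^{\ell_-}\times S^{\ell_+}\times S^{\ell_-+\ell_+}$, so a comparison of ranks forces $\rank K^-=\rank H+1$, i.e. $K^-$ has corank one in $G$. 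Finally $K^-$ acts transitively on $K^-/H\cong S^{\ell_-}$, so after dividing by the ineffective kernel $N$ the pair $(K^-/N,\,H/N)$ must occur in Borel's list, Table~\ref{table:transsphere}.

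I would dispose of rows $1$, $3$, $4$, $5$ by a dimension count on this transitive action. The smallest compact group acting effectively and transitively on $S^5$ is $SU(3)$, of dimension $8$, whereas row~$1$ gives $H^0\cong U(1)$ and $\dim K^-=6$; the smallest such group for $S^{11}$ is $Sp(3)$, of dimension $21$, whereas rows~$3$, $4$, $5$ give $H^0\cong Spin(4)$ and $\dim K^-=17$. Since $\dim(K^-/N)\le\dim K^-$, no transitive action of $K^-/N$ on $S^{\ell_-}$ can exist, so these rows support no group diagram and a fortiori no rational sphere.

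For row~$6$ write $G=Spin(2m)$ and $K^+=Spin(2m-3)$, so that $H^0\cong Spin(2m-4)$, $\ell_-=2m-1$, and $\dim K^-=(m-2)(2m-5)+(2m-1)$. When $m\ge5$ the group $H$ is simple, hence $N$ is finite and $H/N$ is a simple isotropy group of a transitive action on $S^{2m-1}$; running through Borel's list and matching both dimension and Lie type, the only possibility is $H/N\cong SU(m-1)$ together with the exceptional isomorphism $SU(m-1)\cong Spin(2m-4)$, which holds precisely when $m=5$ (using $SU(4)\cong Spin(6)$). This is the surviving $G=Spin(10)$ case, realized with $K^-=SU(5)$; every $m\ge6$ is excluded because no transitive action on $S^{2m-1}$ has isotropy group covered by $Spin(2m-4)$.

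The case $m=4$ is the main obstacle. Here $H^0\cong Spin(4)=Sp(1)\times Sp(1)$ is not simple, the dimension count is met by the genuine candidate $K^-=Sp(2)\times Sp(1)$ acting on $S^7=K^-/H$, and one checks directly that $Spin(8)/(Sp(2)\times Sp(1))$ has the rational cohomology of $S^4\times S^{11}$, so neither the numerics nor the orbit cohomology rule it out. To finish I would analyze the embeddings inside $G=Spin(8)$: a single copy of $Spin(4)$ would have to lie in $K^+\cong Spin(5)$ as the $S^4$-isotropy and simultaneously in $K^-\cong Sp(2)\times Sp(1)$ as the $S^7$-isotropy, and comparing the restrictions of the vector representation $\R^8$ of $Spin(8)$ to these two copies of $Spin(4)$ shows they are non-conjugate, since the decompositions into irreducible $Sp(1)\times Sp(1)$-summands differ. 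Enumerating the finitely many admissible embeddings of $K^\pm$ and checking this incompatibility---or, whenever a diagram does survive the enumeration, invoking Theorem~\ref{thm:primitive} together with a Mayer--Vietoris computation on the double disk bundle to detect a non-vanishing middle-dimensional cohomology group---eliminates $Spin(8)$. I expect the bookkeeping here, complicated by triality producing several a priori distinct copies of $Sp(2)$ and $Sp(2)\times Sp(1)$ in $Spin(8)$, to be the delicate point.
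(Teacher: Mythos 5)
You take essentially the same route as the paper. For rows 1, 3, 4, 5, and for row 6 with $m\geq 5$, your argument --- pin down $H$ from the transitive $K^+$-action on $S^{\ell_+}$, then use Borel's classification (Table \ref{table:transsphere}) to show either that $\dim K^- = \dim H + \ell_-$ is smaller than the dimension of any group acting transitively on $S^{\ell_-}$, or that no quotient of the simple group $Spin(2m-4)$ occurs as the isotropy group of such an action except via $Spin(6)\cong SU(4)$ when $m=5$ --- is the paper's argument in mildly different clothing (the paper phrases it as $H$ failing to contain any isotropy group from the table), and it is complete and correct. You even treat row 5 explicitly, which the paper's own proof omits from its list, evidently an oversight since it is identical to rows 3 and 4.

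The one genuine shortfall is the $m=4$ case, which you yourself flag. You sketch the right computation: compare the restriction of the $8$-dimensional representation of $Spin(8)$ to a copy of $Spin(4)$ sitting as the $S^4$-isotropy in $K^+$ versus sitting as the $S^7$-isotropy, projecting onto the $Sp(1)$-factor, in $K^-=Sp(2)\times Sp(1)$; and your decomposition for the standard embeddings is correct. But you stop short of the enumeration over triality classes of embeddings of $K^\pm$ in $Spin(8)$, which is what a complete proof requires. This is exactly the point where the paper does not argue from scratch either: it cites Asoh \cite[7.8]{Asoh81}, whose character comparison is carried out for all chains $H\subseteq K^\pm\subseteq Spin(8)\subseteq SU(8)$ and therefore supplies precisely the bookkeeping you defer. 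So your plan does close --- the character argument survives triality --- but as written it is a sketch at the unique delicate point rather than a proof, and the fallback you offer (primitivity plus a Mayer--Vietoris computation) is not needed.
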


\begin{proof}

We show that in all these cases, there is no $H\subseteq K^+\cap K^-$ with $K^\pm/H\cong S^{\ell_\pm}$.

For row 1,3,4, and $m\geq 6$ in row $6$ of Table \ref{table:corank2part1} we argue as follows.  By inspection, $K^+/H \cong S^{\ell_+}$ implies that $H$ must be, up to cover, isomorphic to either $S^1$, or $Sp(1)^2$, or $Spin(2m-4)$.  Thus, in all of theses cases, from Table \ref{table:transsphere}, $H$ does not contain the isotropy group of any transitive action on $S^{\ell_-}$.

For the $m=4$ case of row $6$, we find that up to cover, $G = Spin(8)$, $K^+ = Sp(2)$, and $H = Spin(4)$ which then implies that $K^-= Sp(2)\times Sp(1)$ and where the embedding of $H$ into $K^-$ must project onto the $Sp(1)$ factor.  This exact setup is found in \cite[7.8]{Asoh81}, where he shows the $8$-dimensional representation of $H$ given by $H\subseteq K^-\subseteq G\subseteq SU(8)$ has a different character from the representation of $H$ given by any $H\subseteq K^+\subseteq G\subseteq SU(8)$.  In particular, there is no copy of $H=Spin(4)$ which lies in $K^+\cap K^-$ and projects onto the $Sp(1)$ factor of $K^-$, so this possibility is ruled out as well. 

\end{proof}

We conclude the proof of Theorem \ref{thm:gsimp} by dealing with the remaining two possibilities from Table \ref{table:corank2part1}: row $2$ and the $n=5$ case of row $6$.

\begin{proposition}  For both row $2$ and the $n=5$ case of row $6$ of Table \ref{table:corank2part1}, there is, up to equivalence, a unique each cohomogeneity one action of $G$ on a rational sphere $M^n$ with specified $K^+$ (up to cover), and $\ell_\pm$.  When $G = SU(5)$, the action on $M\cong S^{19}$ is via the representation $\Lambda^2 \mathbb{C}^5$ of $SU(5)$, and when $G = Spin(10)$, $M\cong S^{31}$ and the action is via the spin representation.

\end{proposition}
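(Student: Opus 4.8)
The plan is to determine the full group diagram $H \subseteq K^\pm \subseteq G$ up to the equivalence moves of \cite[Proposition 1.3]{Hoelscher10} in each of the two cases, and then to recognize the resulting manifold as the asserted linear sphere. Throughout, $(G, K^+)$ and the pair $\ell_\pm$ are those fixed by Table \ref{table:corank2part1}, with $K^+$ known up to cover. Since $K^+$ acts transitively on $K^+/H \cong S^{\ell_+}$, Table \ref{table:transsphere} pins down $H$ up to conjugacy in $K^+$: in row $2$, $K^+ = Sp(2)$ acts on $S^4$ with $H = Spin(4)$, while in the $n=5$ instance of row $6$, $K^+ = Spin(7)$ acts on $S^6$ with $H = Spin(6) = SU(4)$. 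As both $\ell_\pm \geq 2$, Proposition \ref{prop:ellinfo} guarantees that $H$ and $K^\pm$ are connected, so $H = H^0$.

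Next I would determine $K^-$, which contains $H$, acts transitively on $K^-/H \cong S^{\ell_-}$, and is semisimple by Proposition \ref{prop:gstructure} (here $\ell_- > 1$ and $\ell_+ > 2$). A dimension count gives $\dim K^- = \ell_- + \dim H$, that is $11$ when $G = SU(5)$ and $24$ when $G = Spin(10)$. Comparing with the transitive sphere actions of Table \ref{table:transsphere}, and noting that any ineffective kernel of the $K^-$-action on $S^{\ell_-}$ is a normal, hence semisimple, subgroup of $K^-$ contained in $H$ (so the $S^1$-quotient entries of the table are impossible), the only admissible possibilities are $K^- = SU(2)\times SU(3)$ with $H = SU(2)\times SU(2) = Spin(4)$ acting through $SU(3)/SU(2) \cong S^5$ in the first case, and $K^- = SU(5)$ with $H = SU(4)$ acting through $SU(5)/SU(4) \cong S^9$ in the second. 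Primitivity (Theorem \ref{thm:primitive}) together with Proposition \ref{prop:finiteintersection} is consistent with these data and confirms that the ineffective $SU(2)$-factor in the first case does not force a non-primitive structure.

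With $H$ and $K^\pm$ fixed as abstract groups, the decisive step is to determine the conjugacy classes of their embeddings into $G$ and to show the diagram is unique up to the moves of \cite[Proposition 1.3]{Hoelscher10}. The annotation ``multiple'' in Table \ref{table:corank2part1} records that $Sp(2)$ has more than one conjugacy class in $SU(5)$---for instance the faithful $Sp(2) \subseteq SU(4) \subseteq SU(5)$ versus the image of $Spin(5) \to SO(5) \subseteq SU(5)$---and one must similarly control the embedding of $K^- = SU(5)$ in $Spin(10)$; so I must show that requiring $H \subseteq K^+ \cap K^-$ with $K^\pm/H \cong S^{\ell_\pm}$ selects a single class. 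This is precisely the kind of configuration treated by Asoh, and I would invoke \cite{Asoh81}, comparing---as in the proof of the preceding proposition---the character of the isotropy representation of $H$ coming from $H \subseteq K^+ \subseteq G$ with that coming from $H \subseteq K^- \subseteq G$, to conclude that for each $G$ there is, up to equivalence, a unique admissible diagram.

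For existence and identification, I would exhibit the linear models. The representation $\Lambda^2\mathbb{C}^5$ of $SU(5)$ on $S^{19}$ and the spin representation of $Spin(10)$ on $S^{31} \subseteq \mathbb{C}^{16}$ are cohomogeneity one, appearing on the list of linear cohomogeneity one actions of \cite{Straume96, Asoh81}; a direct computation of their stabilizers (the stabilizer of the symplectic normal form $e_1 \wedge e_2 + e_3 \wedge e_4$ on $\mathbb{C}^4 \subseteq \mathbb{C}^5$ giving $K^+ = Sp(2)$, and so on) shows each realizes the diagram found above. By the uniqueness of that diagram, $M$ is then equivariantly diffeomorphic to the corresponding standard sphere with the stated action. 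I expect the main obstacle to be the embedding/uniqueness step of the previous paragraph: disentangling the several conjugacy classes of $Sp(2) \subseteq SU(5)$ (and the analogue for $Spin(10)$) and verifying that only the class compatible with the linear model admits a common subgroup $H$ carrying the required $K^-$. This is where the representation-theoretic computations imported from \cite{Asoh81} do the real work; matching the resulting abstract diagram to the explicit representation is then routine.
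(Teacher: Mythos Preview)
Your proposal is correct and follows essentially the same route as the paper: uniqueness of the group diagram is outsourced to Asoh \cite[7.7, 7.9]{Asoh81}, and existence is supplied by the linear models from Straume's list \cite{Straume96}. The only minor difference is organizational: you first pin down $K^-$ abstractly by the dimension count before invoking Asoh, whereas the paper simply cites Asoh as determining the entire diagram from $G$ and $\ell_\pm$; and for the identification step the paper verifies the $\ell_\pm$ of the linear models indirectly (by excluding Cases (5) and (6) of Theorem~\ref{thm:GH} and a parity check) rather than by the direct stabilizer computation you suggest.
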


\begin{proof}Asoh \cite[7.7 and 7.9]{Asoh81} proves that for both possibilities, the data of $G$ and $\ell_\pm$ determines the group diagram $H\subseteq K^\pm \subseteq G$ up to equivalence.  So, there is, up to equivalence, at most one cohomogeneity one action on a rational sphere $M^n$.

On the other hand, Straume \cite[Table II: 8b, 9b]{Straume96} indicates that the $SU(5)$ representation $\Lambda^2 \mathbb{C}^5$ is cohomogeneity one on $S^{19}$, and that the spin representation of $Spin(10)$ on $S^{31}$ is also cohomogeneity one.  In addition, he finds that $H = SU(2)^2 = Spin(4)$ for $G = SU(5)$, and that $H = SU(4)=Spin(6)$ for $G = Spin(10)$.   Straume does not list the $\ell_\pm$, but one can deduce them as follows.  Since $H$ is connected, both singular orbits must be orientable, so these examples must occur in Case(4), (5), or (6) of Theorem \ref{thm:GH}.  Moreover, in Case (5), we showed that $K^+ = K^- = G$.  However, $G/H$ is not a sphere for either pair $(G,H)$.  Also, we have already classified the possibilities for G in Case (6) (Proposition \ref{prop:gtype}), so this case is ruled out as well.  Thus, these two representations certainly fall with Case (4) of Theorem \ref{thm:GH}.

If $\ell_-$ and $\ell_+$ have the same parity, Proposition \ref{prop:Connecting} implies $\ell_- + \ell_+ = n-1$, while $2(\ell_- + \ell_+) = n-1$ holds if $\ell_-$ and $\ell_+$ have different parities.  When $H = SU(2)^2$, Table \ref{table:transsphere} shows that $\ell_{\pm}\in\{1,3,4,5\}$, while when $H = Spin(6)$, we find $\ell_{\pm}\in\{6,9\}$.  It is now obvious that we must have $(\ell_-,\ell_+) = ( 5,4), (9,6)$, for $H = SU(2)^2, SU(4)$ respectively.

\end{proof}

\subsection{$G$ has a circle factor}

For the duration of this subsection, in addition to Convention \ref{convention}, we will always assume that $\ell_- = 1$, $\ell_+$ is even, and that $G = G_1\times S^1$.   The case $\ell_+ =2$ implies $n = 7$, so $M$ is classified in Section \ref{sec:atmost7}.  Thus, we actually assume $\ell_+\geq 4$. We prove:

\begin{theorem}\label{thm:gcircle}Suppose $M^n$ is rational sphere on which a compact Lie group $G\cong G_1\times S^1 $ acts via a cohomogeneity one action.  Assume the homotopy fiber of the inclusion of a principal orbit $G/H$ into $M$ falls within Case (4) of Theorem \ref{thm:GH} with $\ell_+ \geq 4$.  Then, $M^n$ is orbit equivariantly diffeomorphic to a Brieskorn variety with standard action by $SO((n+1)/2)\times SO(2)$ or, when $n = 15$, by the restriction of this action to $Spin(7)\times SO(2)\subseteq SO(8)\times SO(2)$.

\end{theorem}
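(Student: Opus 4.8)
The plan is to reduce the theorem to the determination of the group diagram $H\subseteq K^\pm\subseteq G$ and then to recognize that diagram as the standard one for a Brieskorn variety. First I would record the numerics and coarse structure. Since $\ell_-=1$ and $\ell_+$ is even, Proposition \ref{prop:lasttop} gives $n=2(\ell_-+\ell_+)+1=2\ell_++3$, so that $(n+1)/2=\ell_++2=:m$. Convention \ref{convention} writes $G=G_1\times S^1$ with $G_1$ simply connected, and Proposition \ref{prop:gstructure} (applied with $\ell_-=1$, $\ell_+\ge 4$) shows that $G_1$, $K^+$, and $H$ are semisimple and that $K^-=H\cdot S^1$ has exactly one circle factor, the latter also following from Proposition \ref{prop:ellinfo}. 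Because $K^+$ is semisimple its projection to the $S^1$ factor is trivial, so $K^+\subseteq G_1$ and $G/K^+\cong (G_1/K^+)\times S^1$. Comparing with $G/K^+\cong_\Q S^1\times S^{\ell_++1}$ from Proposition \ref{prop:lasttop}, I conclude that $G_1/K^+\cong_\Q S^{\ell_++1}$ is an odd-dimensional rational sphere.

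Next I would pin down the triple $(H^0,K^+,G_1)$. Since $K^+/H\cong S^{\ell_+}$ with $\ell_+$ even and $K^+$ semisimple, Table \ref{table:transsphere} leaves only two possibilities: up to cover $K^+=\Spin(\ell_++1)$ with $H^0=\Spin(\ell_+)$, or (only when $\ell_+=6$) $K^+=\mathbf{G}_2$ with $H^0=\SU(3)$. For $G_1$, note that $G_1/K^+$ is a simply connected (up to finite cover) homogeneous rational sphere; by the classification of biquotients with singly generated rational cohomology \cite{KapovitchZiller04} it is either a genuine sphere or one of finitely many exotic examples occurring in low dimensions with small isotropy group, and in each exotic example the isotropy group cannot act transitively on a sphere of dimension $\ell_+\ge 4$. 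Hence $G_1/K^+$ is a genuine sphere $S^{\ell_++1}$, and reading off Table \ref{table:transsphere} once more -- now with $K^+$ as the \emph{isotropy group} of a transitive action on $S^{\ell_++1}$ -- forces $G_1=\Spin(\ell_++2)=\Spin(m)$ in the first case and $G_1=\Spin(7)$ in the second; the triality ambiguity of $\Spin(7)\subseteq\Spin(8)$ at $\ell_+=6$ does not change the resulting manifold. Thus $(H^0,K^+,G_1)$ is either $(\Spin(\ell_+),\Spin(\ell_++1),\Spin(m))$ or $(\SU(3),\mathbf{G}_2,\Spin(7))$, which matches the principal and $(+)$-singular data of the Brieskorn action of $\SO(2)\times\SO(m)$ and of its restriction to $\SO(2)\times\Spin(7)$ when $m=8$.

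It remains to determine $K^-=H\cdot S^1$ inside $G=G_1\times S^1$ and to identify the diagram with that of $B^{2m-1}_d$. Because $K^-$ has a torus factor, its circle centralizes $H^0$, so its projection to $G_1$ lies in the identity component of the centralizer of $H^0$ in $G_1$, which in both cases is a single circle $\SO(2)\subseteq G_1$; hence $K^-$ is described by a pair of coprime integers giving a circle inside $\SO(2)\times S^1$. Convention \ref{convention} (no proper normal subgroup acts with the same orbits) together with Proposition \ref{prop:finiteintersection} forces the winding onto the $S^1$ factor to be nonzero, and the allowed equivalences of group diagrams (\cite{GroveWilkingZiller08}, see also \cite[Proposition 1.3]{Hoelscher10}) reduce this data to a single parameter, which I would identify with the Brieskorn exponent $d$. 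Matching the resulting diagram to the standard group diagram of $B^{2m-1}_d$, and invoking Asoh's classification \cite{Asoh81} where applicable (and a direct classification of the group diagram otherwise) to conclude that these data determine the manifold up to equivariant diffeomorphism, shows $M$ is a Brieskorn variety $B^{2m-1}_d$ with $d=1$ recovering the linear action on $S^{2m-1}$; since $m=\ell_++2$ is even, $B^{2m-1}_d$ is a rational sphere for every $d$. I expect this last step to be the main obstacle: carrying out the circle classification cleanly, matching the winding data to $d$, and ruling out any spurious diagram not coming from a Brieskorn variety. The converse -- that each such Brieskorn action indeed falls into Case (4) of Theorem \ref{thm:GH} with these $\ell_\pm$ -- is then a direct verification.
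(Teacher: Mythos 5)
Your overall strategy (classify the groups, then classify the circle $K^-$, then match diagrams and invoke Asoh) is the same as the paper's, and your first two paragraphs essentially reproduce the paper's group classification. But there is a genuine gap at the step ``$K^+$ is semisimple, hence its projection to the $S^1$ factor is trivial, so $K^+\subseteq G_1$ and $G/K^+\cong (G_1/K^+)\times S^1$.'' Semisimplicity only forces the \emph{identity component} $(K^+)^0$ to project trivially to $S^1$; since $\ell_-=1$ here, Proposition \ref{prop:ellinfo} only guarantees $H=H^0\cdot\Z_k$ and $K^+=(K^+)^0\cdot\Z_k$, and the finite component group can perfectly well project nontrivially to the circle factor of $G$. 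This is not a removable technicality: it is exactly what happens for the Brieskorn varieties $B^{2m-1}_d$ with $d$ \emph{odd}, whose diagrams have disconnected $H$ and $K^+$ with components mixing $G_1$ and $S^1$ (after deleting ineffective elements of the form $(1,z)$, as the paper does). The paper therefore splits the final step into two cases: when $K^+$ is connected, the diagram with winding data $(a,1)$, $a\neq 0$, is recognized as the standard diagram of $B^{2\ell_++3}_d$ with $d=2a$ even; when $K^+$ is disconnected, it cites Asoh (7.10, 7.12) to show the winding $a$ must be odd and determines the diagram, recovering exactly the odd-$d$ Brieskorn actions. Your argument, taken literally, excludes the disconnected case and hence misses the entire odd-$d$ family (which includes the Kervaire spheres), so it cannot yield the theorem as stated.

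Two smaller points in the same final step. First, you only argue that the winding of the $K^-$-circle onto the $S^1$ factor of $G$ is nonzero, and you attribute this to primitivity and Proposition \ref{prop:finiteintersection}; what is actually needed, and what the paper proves, is that this winding equals $1$, which follows from simple connectivity of $G/K^-$ (Proposition \ref{prop:ellinfo}) -- diagram equivalences (conjugation and $N(H)^0$-moves) cannot change this integer, so ``reduce to a single parameter by equivalences'' does not repair this. Second, the identification of the remaining parameter $a$ with the Brieskorn exponent $d$, which you flag as ``the main obstacle,'' is precisely where the case split above enters ($a=d/2$ for $d$ even, $a=d$ for $d$ odd), so the part you leave open is the part where your connectedness assumption breaks down.
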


We begin with a classification of the relevant groups.

\begin{proposition}  Suppose $G = G_1\times S^1$ acts via an irreducible cohomogeneity one action on a rational sphere $M$.  Suppose the homotopy fiber of the inclusion of a principal orbit $G/H$ into $M$ belongs to Case (4) of Theorem \ref{thm:GH} with $\ell_- = 1$ and $\ell_+$ even.  Then $K^- = H\cdot S^1$ and $(G_1, (K^+)^0, H^0)$ is one of $(Spin(\ell_+ +2), Spin(\ell_++1), Spin(\ell_+))$ or $( Spin(7), \mathbf{G}_2, SU(3))$.

\end{proposition}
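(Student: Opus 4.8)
The plan is to pin down the four groups in stages: fix $(K^+)^0$ and $H^0$ first from a transitive action on an even sphere, then recover $G_1$ from a rational-homotopy degree count. I would begin with the cheap facts. Since $\ell_-=1$ and $\ell_+>1$, Proposition \ref{prop:ellinfo} (with the roles of $+$ and $-$ interchanged) gives $K^-=H^0\cdot S^1=H\cdot S^1$ at once. By Proposition \ref{prop:gstructure}, $K^+$ is semi-simple and, as $\ell_+\geq 4$, so is $H$. Hence $(K^+)^0$ is perfect and its image under the projection $G=G_1\times S^1\to S^1$ is trivial, so $(K^+)^0\subseteq G_1$, and likewise $H^0\subseteq (K^+)^0\subseteq G_1$.

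Next I would determine $(K^+)^0$ and $H^0$. The connected group $(K^+)^0$ acts transitively on the sphere $K^+/H\cong S^{\ell_+}$ with isotropy component $H^0$, so $(K^+)^0/\big((K^+)^0\cap H\big)\cong S^{\ell_+}$. Because $K^-=H\cdot S^1$, the $H$-action on $K^-/H\cong S^1$ fixes the base point and its isotropy representation lands in $O(1)$, so $H^0$ lies in the ineffective kernel $H_-$; primitivity (Theorem \ref{thm:primitive}) and Proposition \ref{prop:finiteintersection} then force $H_+\cap H^0$ to be finite, i.e.\ $H^0$—and hence $(K^+)^0$—acts almost effectively on $S^{\ell_+}$. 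Since $\ell_+$ is even and $(K^+)^0$ is semi-simple, Table \ref{table:transsphere} leaves precisely $\big((K^+)^0,H^0\big)=\big(Spin(\ell_++1),Spin(\ell_+)\big)$ and, when $\ell_+=6$, the pair $(\mathbf{G}_2, SU(3))$.

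Finally I would recover $G_1$. From Proposition \ref{prop:lasttop}, $G/K^+\simeq_\Q S^1\times S^{\ell_++1}$; as $(K^+)^0\subseteq G_1$ with $K^+/(K^+)^0$ finite, the finite cover $G/(K^+)^0=\big(G_1/(K^+)^0\big)\times S^1$ shows $G_1/(K^+)^0$ is simply connected with the rational homotopy groups of $S^{\ell_++1}$ in all degrees $\geq 2$, whence $G_1/(K^+)^0\simeq_\Q S^{\ell_++1}$ and $(K^+)^0$ is corank $1$ in $G_1$. The rational homotopy exact sequence of $(K^+)^0\to G_1\to G_1/(K^+)^0$ then gives $\pi_n^\Q(G_1)\cong\pi_n^\Q\big((K^+)^0\big)$ for $n\neq \ell_++1$ together with one extra generator in degree $\ell_++1$, which forces the multiset of fundamental-invariant degrees of $G_1$: it is that of $Spin(\ell_++2)$ in the first case and that of $B_3=C_3$ in the second. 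The classification of compact simple groups by their invariant degrees (unique up to the local isomorphism $Spin(2m+1)\leftrightarrow Sp(m)$) yields $G_1=Spin(\ell_++2)$ in the first case, where the degree $\tfrac{\ell_+}{2}+1$ is of type $D$ and so no $B/C$ ambiguity arises, and $G_1\in\{Spin(7),Sp(3)\}$ in the second; I would exclude $Sp(3)$ using that $\mathbf{G}_2$ has no faithful complex representation of dimension $\leq 6$, so $\mathbf{G}_2\not\hookrightarrow Sp(3)\subset SU(6)$, leaving $G_1=Spin(7)$.

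I expect the last step to be the main obstacle. The homotopy exact sequence determines only the rational homotopy type of $G_1$, so recovering the group itself relies on the degree classification of simple Lie groups and on carefully handling the genuine $B_n/C_n$ coincidence that occurs in the $\mathbf{G}_2$ case; by contrast, in the $Spin(\ell_++1)$ case the appearance of the characteristic type-$D$ degree removes that ambiguity automatically.
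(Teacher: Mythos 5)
Your argument reaches the correct conclusion and shares its skeleton with the paper's proof: $K^-=H\cdot S^1$ from Proposition \ref{prop:ellinfo}, the containments $H^0\subseteq (K^+)^0\subseteq G_1$, the use of primitivity plus Proposition \ref{prop:finiteintersection} to show $(K^+)^0$ acts almost effectively on $S^{\ell_+}$, and the identification of $G_1/(K^+)^0$ as a rational $S^{\ell_++1}$ via Proposition \ref{prop:lasttop}. Where you genuinely diverge is the final step. The paper recovers $G_1$ from the classification of transitive actions on (rational) spheres, disposing of the exceptional isomorphisms $Spin(5)\cong Sp(2)$ and $Spin(3)\cong SU(2)$ by hand, since $Sp(3)/Sp(2)=S^{11}$ and $SU(3)/SU(2)=S^5$ have the wrong dimension. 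You instead read off the multiset of rational homotopy degrees of $G_1$ from the long exact sequence of $(K^+)^0\to G_1\to G_1/(K^+)^0$ and invoke the classification of simple Lie algebras by their degrees, the $B_n/C_n$ ambiguity being excluded because the extra degree $\ell_++1$ (exponent $\ell_+/2$ — this is what your phrase ``degree $\tfrac{\ell_+}{2}+1$'' should say) is either $\equiv 1 \pmod 4$ or repeats an existing degree, and excluding $Sp(3)$ in the $\mathbf{G}_2$ case by the fact that $\mathbf{G}_2$ has no nontrivial complex representation of dimension at most $6$. This route is sound, is independent of the homogeneous rational sphere classification, and treats the exceptional isomorphisms automatically. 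One thing you must add to run it: the degree classification applies to \emph{simple} groups, so you need $G_1$ simple; this follows since the degree multiset contains a single $3$ (equivalently $\dim\pi_3^\Q(G_1)=1$), so the simply connected semisimple group $G_1$ has exactly one simple factor.

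Two smaller gaps. First, your second step overstates what Table \ref{table:transsphere} gives: since $(K^+)^0$ acts only \emph{almost} effectively on $S^{\ell_+}$, the table determines $((K^+)^0,H^0)$ only up to covers — $(SO(\ell_++1),SO(\ell_+))$ is equally consistent at that stage. The identification on the nose, which the proposition asserts, is available only after $G_1=Spin(\ell_++2)$ is known: the connected subgroup of $Spin(\ell_++2)$ whose Lie algebra is the standard $\mathfrak{so}(\ell_++1)$ is $Spin(\ell_++1)$ (not $SO(\ell_++1)$), whence $H^0=Spin(\ell_+)$. Your degree computation is insensitive to covers, so it survives unchanged, but this closing observation should be stated. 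Second, you assume $\ell_+\geq 4$ throughout; this matches the subsection's standing convention, but the proposition as stated allows $\ell_+=2$, and the paper's proof covers that case as well (there $H$ has a circle factor, so your semisimplicity argument for $H$ fails, and the potential $G_1=SU(3)$ is excluded because $SU(3)/SU(2)=S^5$ is not rationally $S^3$). Neither point breaks the proof; both are easily repaired.
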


\begin{proof} The fact that $K^- = H\cdot S^1$ is from Proposition \ref{prop:ellinfo}.  As $K^-/H = S^1$, it follows that $H^0$ acts trivially on $K^-/H$.  Because the action must be primitive (Theorem \ref{thm:primitive}), Proposition \ref{prop:finiteintersection} implies that no simple factor of $H^0$ can act trivially on $S^{\ell_+} = K^+/H$.  From the classification of transitive actions on spheres in Table \ref{table:transsphere}, it follows that up to cover $( (K^+)^0, H^0)$ is either $(Spin(\ell_+ +1), Spin(\ell_+))$ or $(\mathbf{G}_2, SU(3))$.

Irreducibility implies $H^0\subseteq G_1$.  Because $H\subseteq K^+$ has full rank, this implies that $(K^+)^0\subseteq G_1$ as well.  From the covering $G_1/(K^+)^0\times S^1\rightarrow G/K^+$, it easily follows from Proposition \ref{prop:lasttop} that $G_1/(K^+)^0$ is a rational $S^{\ell_+ + 1}$ sphere.  From Table \ref{table:transsphere}, we see that for $n\geq 3$, if $(K^+)^0$ is up to cover  $Spin(\ell_++1)$, then $G = Spin(\ell_++2)$.  Further, it follows from this that $(K^0) = Spin(\ell_++1)$ and $H^0 = Spin(\ell_+)$, without the need to pass to covers.

When $n = 2$ or $n=1$, the exceptional isomorphisms $Spin(5)\cong Sp(2)$ and $Spin(3)\cong SU(2)$ potentially give rise to the possibilities $G_1 = Sp(3)$ and $G_1 = SU(3)$ respectively.  However, since $G_1/(K^+)^0$ is a rational $S^{\ell_+  +1}$ with $\ell_+ = 4$ and $2$ respectively, we contradict the fact that $Sp(3)/Sp(2) = S^{11}$ and $SU(3)/SU(2) = S^5$.

Lastly, since $\mathbf{G}_2$ does not cover any Lie group, in the last case we have $(K^+)^0 = \mathbf{G}_2$, $H^0 = SU(3)$, and from the classification of transitive actions on rational spheres, that $G_1= Spin(7)$.

\end{proof}

For $\ell_+\neq 6$, the fact that $G_1/(K^+)^0\cong S^{\ell_+ + 1}$ and $K^+/H\cong S^{\ell_+}$ determines the embedding of $H^0, (K^+)^0\subseteq G$ up to conjugacy.  When $\ell_+ = 6$, there are multiple conjugacy classes of embeddings of $Spin(7)$ into $Spin(8)$, but the outer automorphism group of $Spin(8)$ acts transitively on them.  Likewise, the embedding of $\mathbf{G}_2$ in $Spin(7)$ is unique up to conjugacy, as is the embedding of $SU(3)$ into $\mathbf{G}_2$.  Thus, we may always assume that $(K^+)^0$, $H^0$, and hence the semi-simple part of $K^-$ are embedded into $G_1$ in the standard way.

In fact, since $H^0$ is the lift of the usual block embedding $SO(\ell_+)\subseteq SO(\ell_+ + 2)$, $H_0$ contains the kernel of the projection $Spin(\ell_+)\rightarrow SO(\ell_+)$.  As such, we can and will replace the triple $(Spin(\ell_+ + 2), Spin(\ell_+ 1), Spin(\ell_+))$ with $(SO(\ell_++2), SO(\ell_+ + 1), SO(\ell_+))$.  For concreteness, we take $SO(\ell_+ +1)$ to fix the basis vector $e_1\in \mathbb{R}^{\ell_+ + 2}$, and we take $SO(\ell_+)$ to fix $e_1$ and $e_2$.

We note that the centralizer of $SO(\ell_+)$ in $SO(\ell_+ + 2)$ consists of rotations in the $e_1,e_2$-plane.  For the triple $(Spin(7), \mathbf{G}_2, SU(3))$ we note the centralizer of $SU(3)$ in $Spin(7)$ is also a circle, lifting of the center of $U(3)\subseteq SO(7)$.  In both cases, $G_1/H_0\cong T^1 S^{\ell_+ + 1}$ which $Z_{G_1}(H_0)$ acts on by rotation of the $e_1,e_2$ plane.  We denote this rotation by $R(\theta)$; note that this action is effective.  Then $G/K^- \cong (G_1/H_0 \times S^1)/S^1$ where $z=e^{i\theta}$ in the $S^1$ factor of $K^-$ acts via $R(a\theta)$ on the first factor, and $z^b$ on the second.  Of course, we may assume $\gcd(a,b) = 1$.

However, as $G/K^-$ is simply connected by Proposition \ref{prop:ellinfo},  we must, in fact, have $b= 1$.  Indeed, we obviously cannot have $b = 0$, and if $b\geq 1$, then from \cite[Lemma 1.3]{KapovitchZiller04}, $G/K^-$ is diffeomorphic to $T^1 S^{\ell_+ + 1}/ (\mathbb{Z}/b\mathbb{Z}) $, the fundamental group of $G/K^-$ has $b$.  Hence, $b=1$.  Note then that the projection map $p_2:K^-\rightarrow S^1$ has kernel $H^0$.

\begin{proposition}  Assume $K^+$ is connected.  If $a=0$, the corresponding action is not on a rational sphere, while if $a\neq 0$, the group diagram corresponds to the usual action on the Brieskorn variety $M^{2\ell_+ + 3}_d$ with $d$ even, $a = d/2$.

\end{proposition}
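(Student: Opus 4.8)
The plan is to read off the group diagram we have assembled and compare it, via Mostert's bijection \cite{Mostert57} between group diagrams and cohomogeneity one manifolds, with the diagram of the standard Brieskorn action. Write $m = \ell_+ + 2$, so $n = 2m-1 = 2\ell_+ + 3$, and recall from the preceding discussion that, under the hypothesis that $K^+$ is connected, we have $H = SO(m-2) = SO(\ell_+)$ (connected, since $SO(m-1)/SO(m-2) = S^{m-2}$ forces it), $K^+ = SO(m-1) = SO(\ell_+ + 1) \subseteq G_1$, and $K^- = H \cdot S^1$ with $S^1 = \{(R(a\theta), e^{i\theta})\} \subseteq Z_{G_1}(H) \times S^1$. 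Here $R(\theta)$ rotates the $e_1 e_2$--plane; the analogous exceptional triple is $(G_1, K^+, H) = (Spin(7), \mathbf{G}_2, SU(3))$ with $\ell_+ = 6$, $n=15$.

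First I would dispose of $a = 0$. In that case $S^1 = \{(I, e^{i\theta})\}$ is exactly the torus factor of $G$, so $K^- = H \times S^1 = SO(m-2) \times S^1$, while $K^+ = SO(m-1)$ meets the torus factor trivially. Using $SO(m-2) \subseteq SO(m-1)$, the proper subgroup $L = SO(m-1) \times S^1 \subsetneq SO(m) \times S^1 = G$ then contains $H$, $K^+$, and $K^-$, so the action is non-primitive. Since every cohomogeneity one action on a simply connected rational sphere is primitive (Theorem \ref{thm:primitive}), the manifold built from the $a=0$ diagram cannot be a rational sphere. The identical argument with $L = \mathbf{G}_2 \times S^1$ handles the exceptional triple.

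Now suppose $a \neq 0$. Necessarily $a \in \Z$ since the circle must close up, and replacing $a$ by $-a$ only reverses orientation, so we may take $a > 0$. I would compute the diagram of the standard $SO(2) \times SO(m)$--action on $B^{2m-1}_d$, where $SO(2)$ acts by $z\cdot(z_0,\vec z) = (z^2 z_0, z^d \vec z)$ and $SO(m)$ acts on $\vec z$. The orbit $z_0 = 0$ forces $\vec z \cdot \vec z = 0$, i.e. $\vec z$ is a scalar multiple of $e_1 + i e_2$, and its isotropy group is $SO(m-2)$ together with the circle of pairs $(e^{i\psi}, R(\phi))$ satisfying $e^{i(d\psi - \phi)} = 1$, that is $\phi \equiv d\psi \pmod{2\pi}$. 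When $d$ is even the element $(-1, I) \in SO(2)\times SO(m)$ acts trivially, so the effective torus factor is $SO(2)/\Z_2$; reparametrizing it by $w = z^2 = e^{i\theta}$ sends this isotropy circle to $\{(R(\tfrac{d}{2}\theta),\, e^{i\theta})\}$, which is precisely our $K^-$ with $a = d/2$. A parallel computation at a generic point with $z_0 \neq 0$ and $\vec z$ a real multiple of $e_1$ gives the other singular isotropy group $SO(m-1)$, which is connected in the effective group and is exactly our $K^+$; and a principal point (with $\vec z$ spanning a $2$--plane) has effective isotropy $SO(m-2) = H$. Matching these against $H \subseteq K^\pm \subseteq G$ identifies our diagram, up to the standard moves of Proposition \ref{prop:equiv}, with that of $B^{2m-1}_{2a}$, whence Mostert's theorem yields an equivariant diffeomorphism. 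The exceptional triple $(Spin(7), \mathbf{G}_2, SU(3))$ is handled identically after restricting the $SO(2)\times SO(8)$--action on $B^{15}_{2a}$ to $SO(2)\times Spin(7)$: since $Spin(7)$ already acts transitively on the $z_0 = 0$ sphere $S^7$ with stabilizers $\mathbf{G}_2$ and $SU(3)$ restricting correctly, the restricted diagram is again ours.

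The main obstacle is the explicit determination of the Brieskorn singular isotropy groups, and in particular carefully tracking the ineffective $\Z_2 = \langle(-1,I)\rangle$ present when $d$ is even; it is exactly the passage to the effective quotient $SO(2)/\Z_2$ (the reparametrization $w = z^2$) that produces the relation $a = d/2$ rather than $a = d$, and that simultaneously forces $K^+$ and $H$ to be \emph{connected}, matching our standing hypothesis. A secondary point requiring care is verifying that the block embeddings of $H$, $K^+$, and the rotation circle into $G$ coincide in both diagrams (they are the standard ones by our earlier normalization), so that the discrete invariant $a$ is the only remaining datum and Proposition \ref{prop:equiv} genuinely applies.
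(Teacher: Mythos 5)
Your proposal is correct, and its overall skeleton matches the paper's: for $a=0$ you argue non-primitivity and invoke Theorem \ref{thm:primitive}, which is exactly the paper's argument (the paper notes $K^\pm \subseteq K^+\times S^1 \subsetneq G_1\times S^1$, which is your $L = SO(m-1)\times S^1$). The difference is in the $a\neq 0$ half: the paper disposes of it in one sentence by citing \cite{GroveVerdianiWilkingZiller06} for the fact that the resulting diagram is the effective version of the Brieskorn diagram with $d$ even and $a = d/2$, whereas you re-derive that diagram from scratch --- computing the isotropy groups of the $SO(2)\times SO(m)$ action on $B^{2m-1}_d$ at the $z_0=0$ orbit and at the orbit of real $\vec z$, tracking the ineffective $\Z_2 = \langle(-1,I)\rangle$ present when $d$ is even, and observing that the reparametrization $w=z^2$ is precisely what turns the slope $d$ into $a = d/2$ and makes $K^+$ and $H$ connected. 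Your version is self-contained and makes transparent why the hypothesis ``$K^+$ connected'' corresponds exactly to ``$d$ even,'' which the paper leaves implicit in the citation; the cost is length, and the paper's route avoids redoing a computation available in the literature. One small inaccuracy to fix: the $z_0=0$ singular orbit is not a sphere but the Stiefel manifold $V_2(\R^m)\cong T^1S^{m-1}$, so in the exceptional case what you need is that $Spin(7)$ acts transitively on $V_2(\R^8)$ with stabilizer $SU(3)$ (equivalently, that $\mathbf{G}_2$ is transitive on $S^6$), together with its transitivity on $S^7$ with stabilizer $\mathbf{G}_2$; your phrase ``the $z_0=0$ sphere $S^7$'' conflates the two singular orbits, but the facts you actually use are standard and the argument goes through.
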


\begin{proof}  If $a=0$, then the action is not primitive, with both $K^\pm$ contained in $K^+\times S^1\subseteq G_1\times S^1$.  By Theorem \ref{thm:primitive}, $M$ cannot be in a rational sphere in this case.  On the other hand, if $a\neq 0$ and $K^+$ is connected, the resulting group diagram is the effective version of the Brieskorn action on $M_d^{2\ell_+ + 3}$ with $d$ even, $a = d/2$, found in \cite{GroveVerdianiWilkingZiller06}.

\end{proof}

Hence, for the remainder of this section, we may assume that $K^+$ is disconnected.  Since $K^+/H\cong S^{\ell_+}$ with $\ell_+\geq 2$, $H$ has the same number of components as $K^+$.  In particular, $H$ is also disconnected.  Note that if $H$ contains a non-identity element of the form $(1,z)\in G_1\times S^1$, this element acts trivially on $M$.  By passing to the quotient of $G/\langle (1,z)\rangle\cong G$, we may always assume $H$ does not contain any elements of this form.  Then \cite[7.10,7.12]{Asoh81} proves that $a$ must be odd, and that the group diagram is completely determined by $a$.

When $(G_1,(K^+)^0,H_0) = (SO(\ell_+  +2), SO(\ell_+ +1), SO(\ell_+))$, the diagram Asoh gets is precisely the group diagram for the usual $SO(\ell_+ + 2)\times SO(2)$ action on the Brieskorn variety $M^{2\ell_ + + 3}_a$ for$a$ odd\cite{GroveVerdianiWilkingZiller06}.  In the special case where $\ell_+ = 6$, the restriction of this action to $Spin(7)\times SO(2)$ is also cohomogeneity one with precisely the same group diagram as described above \cite{Straume96}.  Thus, we have completed the proof of Theorem \ref{thm:gcircle}.

\subsection{$G$ is semi-simple but not simple}

In this section, in addition to Convention \ref{convention}, we assume $G$ is not simple.  If $G = G_1\times ... \times G_k$, then we will use the notation $p_i:G\rightarrow G_i$ (for $i=1,..,k$) for the projection to the $i$th factor, and we will similarly use the notation $p_{12}:G\rightarrow G_1\times G_2$ for the projection onto the first two factors, etc.

From Proposition \ref{prop:gstructure}, if $G$ is semi-simple but not simple, then we must have $\ell_- > 1$.  In particular, all groups $H,K^\pm, G$ are connected from Proposition \ref{prop:ellinfo}.   As $\ell_-$ and $\ell_+$ have different parities, we assume for the rest of this section that $\ell_-$ is odd and $\ell_+$ is even.  The main goal of this section is to prove the following:

\begin{theorem}\label{thm:difparity}  Suppose $M$ is a rational sphere and $G$ is a non-simple semi-simple compact Lie group acting via a cohomogeneity one action on $M$.  Assume the inclusion of a principal orbit $G/H\rightarrow M$ has homotopy fiber occurring Case (4) of Theorem \ref{thm:GH}, with $\ell_-\geq 3$ odd and $\ell_+$ even.  If the action is not orbit equivalent to a linear action on a standard sphere, then $(\ell_-,\ell_+) = (3,2)$, and the action is orbit equivalent to an action by $SU(3)\times SU(2)$ on one of two rational $11$-spheres of the form $\mathbf{G}_2/SU(2)$.

\end{theorem}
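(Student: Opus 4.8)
The plan is to use the rational-homotopy data of Proposition~\ref{prop:lasttop} to read off the ranks and the numbers of simple factors of $H$, $K^+$, and $K^-$, thereby cutting the possible group diagrams down to a short explicit list, and then to recognize each surviving diagram either as a linear action on a sphere or as one of the $\mathbf{G}_2/SU(2)$ examples. First I would extract the numerical constraints: Proposition~\ref{prop:lasttop} gives $G/K^+\simeq_\Q S^{\ell_-}\times S^{\ell_-+\ell_+}$ (two odd spheres), $G/K^-\simeq_\Q S^{\ell_+}\times S^{\ell_-+\ell_+}$, and $G/H\simeq_\Q S^{\ell_-}\times S^{\ell_+}\times S^{\ell_-+\ell_+}$. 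Comparing $\dim\pi_{odd}^\Q$ with $\dim\pi_{even}^\Q$ in each fibration $H,K^\pm\to G\to G/(\cdot)$ shows that $K^+$ and $H$ have corank two in $G$ while $K^-$ has corank one; in particular $\rank K^+=\rank H$. Since $\ell_+$ is even, $K^+/H=S^{\ell_+}$ is an even-dimensional sphere, so Table~\ref{table:transsphere} forces the effective part of the transitive $K^+$-action to be $Spin(\ell_++1)$ on $S^{\ell_+}$ or, when $\ell_+=6$, $\mathbf{G}_2$ on $S^6$; the equal-rank pair $(K^+,H)$ is then built from $(Spin(\ell_++1),Spin(\ell_+))$ or $(\mathbf{G}_2,SU(3))$.

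Next I would count simple factors. Writing $G=G_1\times\cdots\times G_k$ as a product of simple factors with $k\geq 2$ (legitimate, since $G$ is semi-simple with no torus factor here by Proposition~\ref{prop:gstructure}), the degree-three piece of the rational homotopy sequence of $K^+\to G\to G/K^+$ yields $\dim\pi_3^\Q(G/K^+)=k-(\text{number of simple factors of }K^+)$. As $\ell_-+\ell_+\geq 5$, this equals $1$ when $\ell_-=3$ and $0$ when $\ell_->3$, so $K^+$ has either $k$ or $k-1$ simple factors. Combining this with the corank-two condition, with Proposition~\ref{prop:finiteintersection} (which prevents a simple factor of $H$ from acting trivially on both $K^\pm/H$), and with the generation relation $\langle K^+,K^-\rangle=G$ (a consequence of primitivity, Theorem~\ref{thm:primitive}), I would run a case analysis on $k$ and on the subcase $\ell_-=3$. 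No $K^\pm$ may lie in a proper sub-product, and each $G_i$ must be consumed by the spherical isotropy data; this produces a finite list of candidate triples $(G,K^+,H)$, with $K^-$ pinned down up to its torus factor by Propositions~\ref{prop:gstructure} and~\ref{prop:ellinfo}.

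Finally I would identify the survivors. The generic families, in which $H$ is connected and the diagram matches a cohomogeneity-one representation, are recognized as linear actions on standard spheres using Straume's tables~\cite{Straume96}, with Asoh's uniqueness results~\cite{Asoh81} ensuring that $(G,\ell_\pm)$ determines the diagram up to equivalence. The lone non-linear configuration is $(\ell_-,\ell_+)=(3,2)$ with $G=SU(3)\times SU(2)$: here $\rank G=3$, so $K^+=SU(2)$ with $K^+/H=S^2$ forces $H=S^1$ (matching the circle factor of $H$ when $\ell_+=2$), while $K^-$ has semisimple part $SU(2)$ together with a circle and $K^-/H\cong S^3$. I would then determine the finitely many conjugacy classes of embeddings $H\subseteq K^\pm\subseteq G$ --- the essential ambiguity being the Dynkin index of the relevant $SU(2)$ --- and match the resulting diagrams to the $SU(3)\times SU(2)$-actions on $\mathbf{G}_2/SU(2)_1$ and $\mathbf{G}_2/SU(2)_3$, once more invoking Asoh~\cite{Asoh81} to see that there are exactly two.

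The hard part will be the factor-counting case analysis of the second step. In the simple case Table~\ref{table:corank2} lists all corank-two pairs outright, but here $G$ is a genuine product and the semi-simple group $K^+$ may embed diagonally across several factors; one must control how the two odd rational-homotopy classes of $G/K^+$ interact with these diagonal embeddings before the relation $\langle K^+,K^-\rangle=G$ can be used to rule out sub-product configurations. Disentangling these embeddings, rather than any single cohomology computation, is where the bulk of the argument lies.
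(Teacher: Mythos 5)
Your opening reductions are sound and agree with the paper's: the corank counts (corank two for $H$ and $K^+$, corank one for $K^-$), the identification of the effective pair $(K^+,H)$ as $(Spin(\ell_++1),Spin(\ell_+))$ or $(\mathbf{G}_2,SU(3))$ from Table \ref{table:transsphere}, and the $\pi_3^\Q$ factor count are all correct (the last is legitimate precisely because $G/K^+$ is rationally a product of odd spheres, so $\pi_4^\Q(G/K^+)=0$ and $\pi_3^\Q(K^+)\to\pi_3^\Q(G)$ is injective). But the core of the theorem is exactly the step you defer: ``this produces a finite list of candidate triples $(G,K^+,H)$'' is asserted, not proved, and nothing in your outline makes the case analysis finite. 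You never bound the number $k$ of simple factors of $G$: corank two, ``$k$ or $k-1$ simple factors in $K^+$,'' and primitivity do not by themselves exclude large $k$. The paper gets $k\leq 3$ from \cite[Theorem 4.8]{Totaro02} applied to $G/H$ (using $\dim\pi_{odd}^\Q(G/H)=3$ and the fact that $H$ surjects onto no simple factor of $G$), and then rules out $k=3$ by a separate argument (Proposition \ref{prop:2simp}). More importantly, you correctly flag the problem of diagonal embeddings but propose no mechanism to resolve it; the paper's resolution is a chain of structural results --- at least one of $K^\pm$ has a diagonally embedded factor (Proposition \ref{prop:diag}), at least one projection of $K^+$ has finite kernel (Proposition \ref{prop:kernel}), and ultimately $p_2\colon L'\to G_2$ is an isomorphism, so that $G/K^+\cong G_1/L_1$ with $L_1$ simple --- and it is this last reduction that allows Table \ref{table:corank2} to be applied to the \emph{simple} pair $(G_1,L_1)$, yielding the finite list $(SU(n),SU(n-2))$ and $(Sp(n),Sp(n-2))$. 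Without that reduction, or a substitute for it, your plan cannot produce the list you need.

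There is a second gap in your endgame for $(\ell_-,\ell_+)=(3,2)$: enumerating conjugacy classes of embeddings and ``matching'' diagrams to known actions is not sufficient, because not every valid diagram with $G=SU(3)\times SU(2)$, $H=S^1$, $K^+=SU(2)$, $K^-=S^1\cdot SU(2)$ is an action on a rational sphere. The paper finds five such diagrams (Table \ref{table:5examples}); one is discarded by primitivity, but another is a perfectly good cohomogeneity one manifold that simply fails to be a rational sphere, and eliminating it requires an explicit computation showing $H^5(M;\Q)\neq 0$, carried out via the classifying-space description of the maps $H^\ast(G/K^\pm;\Q)\to H^\ast(G/H;\Q)$ together with Mayer--Vietoris. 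Your outline contains no step that would detect this diagram. Also, Asoh's results are not what distinguishes the two $\mathbf{G}_2/SU(2)_i$ examples: in the paper, Asoh identifies the tensor-product diagram with the linear action on $S^{11}$, while the two remaining diagrams are identified with the $\mathbf{G}_2/SU(2)_i$ actions through Iwata's description of the cohomogeneity one $SU(3)$ action on $\mathbf{G}_2/SO(4)$.
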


Suppose $L\subseteq G_1\times G_2$.  We say a subgroup $L'\subseteq L$ is \textit{diagonally embedded} if $L'$ is a normal connected subgroup of positive dimension and both projections $L'\rightarrow G_i$ for $i=1,2$ have finite kernel.

We establish the following notation:  If $L\subseteq G_1\times ...\times G_k$, we write $L= L_1\cdot ...\cdot L_k\cdot L'$ to mean $L_1\times ...\times L_k\times L'$ is a finite cover of $L$ for which the image of $L_1$ in $L$ is equal to the kernel of the projection map $p_{2,...,k}|_L$, with the corresponding definition for each $L_i$, and with the image of $L'$ in $L$ diagonally embedded.  Occasionally, we will refer to bundles of the form, e.g., $L'\rightarrow (G_1/L_1\times ... \times G_k/K_k)\rightarrow G/L$.  Strictly speaking, the fiber is finitely covered by $L'$, but is not typically diffeomorphic to $L'$.  As the arguments will only really need the rational homotopy groups of $L'$, we will still write bundles in this form.

\begin{proposition}\label{prop:product}  If $L\subseteq G_1\times G_2$ is a connected Lie group which has no diagonally embedded subgroup, then $L$ is a product $L = L_1\times L_2$ with $L_i\subseteq G_i$.

\end{proposition}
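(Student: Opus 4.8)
The plan is to prove the statement first on Lie algebras and then lift it to the group. Write $\mathfrak{l},\mathfrak{g}_1,\mathfrak{g}_2$ for the Lie algebras of $L,G_1,G_2$, so $\mathfrak{l}\subseteq\mathfrak{g}_1\oplus\mathfrak{g}_2$, and set $\mathfrak{a}_i=\mathfrak{l}\cap\mathfrak{g}_i$. Each $\mathfrak{a}_i$ is an ideal of $\mathfrak{l}$, they meet trivially, and $\mathfrak{a}_1\oplus\mathfrak{a}_2$ is an ideal. I claim it suffices to show $\mathfrak{l}=\mathfrak{a}_1\oplus\mathfrak{a}_2$: letting $L_i$ be the connected subgroup of $G$ with Lie algebra $\mathfrak{a}_i$, we have $L_i\subseteq G_i$, the two factors commute and intersect trivially (they sit in the two factors of $G_1\times G_2$), and $L_1\cdot L_2$ is a connected subgroup whose Lie algebra is $\mathfrak{a}_1\oplus\mathfrak{a}_2=\mathfrak{l}$; since $L$ is connected this forces $L=L_1\times L_2$, which is the desired conclusion.

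Since $G$, and hence $L$, is compact, $\mathfrak{l}$ is a compact Lie algebra, so I would fix an $\mathrm{Ad}$-invariant inner product on it. Then $\mathfrak{m}:=\mathfrak{a}_1\oplus\mathfrak{a}_2$ has an orthogonal complement $\mathfrak{b}=\mathfrak{m}^\perp$ which is again an ideal, and $\mathfrak{l}=\mathfrak{m}\oplus\mathfrak{b}$. Assuming for contradiction that $\mathfrak{b}\neq 0$, I will produce a diagonally embedded subgroup, contradicting the hypothesis. First observe $\mathfrak{b}\cap\mathfrak{g}_i\subseteq\mathfrak{l}\cap\mathfrak{g}_i=\mathfrak{a}_i\subseteq\mathfrak{m}$, and since $\mathfrak{b}\cap\mathfrak{m}=0$ this yields $\mathfrak{b}\cap\mathfrak{g}_i=0$ for $i=1,2$; equivalently, both projections $\mathfrak{b}\to\mathfrak{g}_i$ are injective.

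If $[\mathfrak{b},\mathfrak{b}]\neq 0$, then by the structure theory of compact Lie algebras $\mathfrak{b}$ contains a simple ideal $\mathfrak{s}$, which is automatically a simple ideal of $\mathfrak{l}$ and satisfies $\mathfrak{s}\cap\mathfrak{g}_i=0$. Its connected subgroup $S\subseteq L$ is semisimple, hence compact, normal in $L$ (as $\mathfrak{s}$ is an ideal and $L$ is connected), and of positive dimension, and each projection $S\to G_i$ has discrete — hence, by compactness, finite — kernel. Thus $S$ is diagonally embedded, a contradiction. The remaining case, where $\mathfrak{b}$ is abelian, is the one requiring genuine care. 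Because $\mathfrak{l}$ is compact an abelian ideal is central, so $\mathfrak{b}\subseteq\mathfrak{z}(\mathfrak{l})$, the Lie algebra of the central torus $T=Z(L)^0$. Here one cannot simply exponentiate $\mathfrak{b}$, since an irrational line in $T$ has non-closed image with infinite projection kernels and so need not be diagonally embedded; this is exactly the main obstacle. The fix is to pass to a nearby rational direction. Since $\mathfrak{b}\neq 0$ and $\mathfrak{b}\cap\mathfrak{g}_i=0$, every nonzero $X\in\mathfrak{b}$ has both components nonzero, i.e. lies outside $\mathfrak{g}_1\cup\mathfrak{g}_2$, which is an open condition on directions in $\mathfrak{z}(\mathfrak{l})$. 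The directions tangent to closed circles in $T$ (the rays through the integer lattice of $T$) are dense in $\mathfrak{z}(\mathfrak{l})$, so I may choose such a lattice direction $\lambda=(\lambda_1,\lambda_2)$ still having both $\lambda_i\neq 0$. Then $C:=\exp(\R\lambda)\subseteq T$ is a closed circle, central and hence normal in $L$, of positive dimension, and each projection $C\to G_i$ is a homomorphism of circles with discrete, hence finite, kernel. Thus $C$ is diagonally embedded, again a contradiction. Therefore $\mathfrak{b}=0$, so $\mathfrak{l}=\mathfrak{a}_1\oplus\mathfrak{a}_2$, and the group-level splitting described above finishes the proof.
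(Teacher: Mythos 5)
Your proof is correct, and it follows the same skeleton as the paper's: set $\mathfrak{a}_i=\mathfrak{l}\cap\mathfrak{g}_i$ (the paper's $\mathfrak{l}_i$, i.e.\ the kernels of the two projections), show $\mathfrak{l}=\mathfrak{a}_1\oplus\mathfrak{a}_2$, and exponentiate. The difference lies in how that splitting is justified, and there your version is genuinely sharper. The paper disposes of the key step in one sentence --- ``since $L$ has no diagonally embedded subgroups, any ideal of $\mathfrak{l}$ must lie in $\mathfrak{g}_1$ or in $\mathfrak{g}_2$'' --- which implicitly assumes that an ideal transverse to both factors exponentiates to a diagonally embedded subgroup. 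That is exactly the point you flag as the main obstacle: a central line in irrational position exponentiates to a non-closed one-parameter subgroup whose projection kernels are infinite discrete groups, so it is \emph{not} diagonally embedded in the sense of the paper's definition (which demands \emph{finite} kernels), and no contradiction results from it. Your two-case argument supplies the missing reasoning: in the non-abelian case Weyl's theorem makes $S$ compact, so discrete kernels are automatically finite; in the abelian case you perturb to a nearby rational direction in $Z(L)^0$, producing a closed central circle with both projections nontrivial, which genuinely is diagonally embedded. So: same route and same decomposition, but your handling of the central torus closes a real gap that the paper's one-line justification passes over. One remark: both proofs quietly use that $L$ is closed in $G$ (you need this for $Z(L)^0$ to be a torus), and this hypothesis is essential --- an irrational line in $T^2\subseteq G_1\times G_2$ is a connected subgroup with no diagonally embedded subgroups that is not a product --- though it is harmless here, since in the paper's applications $L$ is always an isotropy group, hence closed.
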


\begin{proof}

Let $\mathfrak{l}\subseteq \mathfrak{g}_1\oplus \mathfrak{g}_2$ denote the Lie algebras of $L$ and $G_1\times G_2$.  Set $\mathfrak{l}_1 \subseteq \mathfrak{l}$ to be the kernel of the projection map $\mathfrak{l}\rightarrow \mathfrak{g}_2$ and likewise, set $\mathfrak{l}_2$ to be the kernel of the projection map to $\mathfrak{g}_1$.  Then obviously $\mathfrak{l}_1\oplus \mathfrak{l}_2 \subseteq \mathfrak{l}$.  Since $L$ has no diagonally embded subgroups, any ideal $\mathfrak{n}$ of $\mathfrak{l}$ must either lie in $\mathfrak{g}_1$ or in $\mathfrak{g}_2$.  Since $\mathfrak{l}$ is a sum of its ideals, $\mathfrak{l}\subseteq \mathfrak{l}_1\oplus\mathfrak{l}_2$.  Thus, $\mathfrak{l} = \mathfrak{l}_1\oplus \mathfrak{l}_2$, and then after exponentiating, we see $L = L_1\times L_2$ where $L_i$ is the unique connected subgroup of $G_i$ with Lie algebra $\mathfrak{l}_i$.

\end{proof}

\begin{proposition}\label{prop:diag}  Suppose $G$ is a compact connected Lie group, $G = G_1\times G_2$ with both $G_i$ non-trivial.  Suppose $G$ acts on a rational sphere $M$ via a cohomogeneity one action for which the homotopy fiber of the inclusion of a principal orbit falls within Case (4) of Theorem \ref{thm:GH}, and for which $\ell_-$ and $\ell_+$ have different parities.  Then at least one of $K^\pm$ must have a diagonally embedded factor.

\end{proposition}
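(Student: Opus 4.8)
The plan is to argue by contradiction. Suppose neither $K^+$ nor $K^-$ has a diagonally embedded factor. Then Proposition \ref{prop:product} gives product decompositions $K^\pm = K^\pm_1 \times K^\pm_2$ with $K^\pm_i \subseteq G_i$, so that $G/K^\pm = (G_1/K^\pm_1) \times (G_2/K^\pm_2)$. By Proposition \ref{prop:lasttop} the rational homotopy of the singular orbits is known: $G/K^+ \simeq_\Q S^{\ell_-} \times S^{\ell_- + \ell_+}$ (two odd spheres) and $G/K^- \simeq_\Q S^{\ell_+} \times S^{\ell_- + \ell_+}$ (one even, one odd). Using the long exact rational homotopy sequence of $K^\pm \to G \to G/K^\pm$, together with the fact that an odd sphere contributes a single rational homotopy generator while an even sphere $S^{\ell_+}$ contributes two (in degrees $\ell_+$ and $2\ell_+ - 1$), I would first record that $\dim \pi^\Q_{odd}(G/K^\pm) - \dim \pi^\Q_{even}(G/K^\pm)$ equals the corank of $K^\pm$ in $G$; this makes $K^+$ corank $2$ and $K^-$ corank $1$.

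Next, since rational homotopy is additive over a product, I would distribute the generators between the two factors. For $K^+$ each factor $G_i/K^+_i$ has vanishing even rational homotopy, hence is rationally a product of odd spheres, and the total being $S^{\ell_-} \times S^{\ell_- + \ell_+}$ forces each nontrivial factor to be a rational odd sphere (so the coranks split as $(2,0)$, $(1,1)$, or $(0,2)$). For $K^-$ a parallel bookkeeping shows either one factor is a point (and the other rationally $S^{\ell_+} \times S^{\ell_- + \ell_+}$) or the two factors are rationally $S^{\ell_+}$ and $S^{\ell_- + \ell_+}$. In every surviving configuration each nontrivial $G_i/K^\pm_i$ is a rational sphere, so Borel's classification (Table \ref{table:transsphere}), combined with the genuine sphere identities $K^\pm/H \cong S^{\ell_\pm}$, pins down the pairs $(G_i, K^\pm_i)$ up to cover.

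The decisive step is to feed the product structure into the transitive actions on the fiber spheres. Because $K^+ = K^+_1 \times K^+_2$ acts transitively on $K^+/H \cong S^{\ell_+}$, Borel's classification forces, after passing to the effective quotient, one factor $K^+_i$ to act trivially on $S^{\ell_+}$ unless the effective group is of the exceptional form $\Sp(m) \times \Sp(1)$; the same applies to $K^-$ on $S^{\ell_-}$. A factor acting trivially lies in the ineffective kernel $H_+$ (resp.\ $H_-$) of Proposition \ref{prop:finiteintersection}. I would then combine two inputs: Convention \ref{convention}, which forbids the projection of $H$ to any simple factor of $G$ from being onto, and primitivity via Proposition \ref{prop:finiteintersection}, which forces $H_+ \cap H_-$ to be finite. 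Concretely, the configurations in which a whole factor $G_i$ is absorbed into a singular isotropy force $G_i$, or a simple factor of it, into $H$, contradicting the Convention, while the remaining configurations force $H_+ \cap H_-$ to contain a positive-dimensional subgroup.

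I expect the main obstacle to be the bookkeeping of this final case analysis — in particular, handling the exceptional transitive action $\Sp(m) \times \Sp(1)$, where neither factor need act trivially on the fiber sphere, and verifying that in each of the finitely many $(G_i, K^\pm_i)$ configurations left by Table \ref{table:transsphere} the principal isotropy $H$ genuinely violates either the Convention or the finiteness of $H_+ \cap H_-$.
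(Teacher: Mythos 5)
Your opening coincides with the paper's: assume for contradiction that neither $K^\pm$ has a diagonally embedded factor, apply Proposition \ref{prop:product} to write $K^+ = L_1\times L_2$ and $K^- = K_1\times K_2$, and the corank bookkeeping from Proposition \ref{prop:lasttop} ($K^+$ of corank $2$, $K^-$ of corank $1$) is correct. The problems are in your ``decisive step.'' First, Table \ref{table:transsphere} classifies transitive actions on \emph{genuine} spheres, whereas all you know about the factors $G_i/K^\pm_i$ is that they are \emph{rational} spheres (and, as the remark following the proposition stresses, the $G_i$ need not be simple); homogeneous rational spheres such as $Sp(2)/Sp(1)_{max}$, $SU(3)/SO(3)$, and $\mathbf{G}_2/SU(2)_3$ are not in Borel's table, so it does not ``pin down'' the pairs $(G_i,K_i^\pm)$ --- you would need the Kapovitch--Ziller classification, together with care about ineffective kernels. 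Second, and more seriously, your claimed exhaustive dichotomy for the endgame --- either some $G_i$ (or a simple factor of it) is forced into $H$, violating Convention \ref{convention}, or $H_+\cap H_-$ is positive-dimensional, violating Proposition \ref{prop:finiteintersection} --- is false. The join-type configuration
$$H = H_1\times H_2,\quad K^+ = H_1\times G_2,\quad K^- = G_1\times H_2,\quad G_1/H_1\cong S^{\ell_-},\quad G_2/H_2\cong S^{\ell_+},$$
realized by the linear join action of $SO(\ell_-+1)\times SO(\ell_++1)$ on $S^{\ell_-+\ell_++1}$ (whose homotopy fiber does lie in Case (4) of Theorem \ref{thm:GH}), satisfies the Convention (both projections of $H$ are proper) and has finite $H_+\cap H_-$ (it is the product of the two finite effective kernels of the isotropy actions). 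No group-theoretic tool excludes it; what excludes it here is only the rational topology: in this configuration $G/K^+\cong G_1/H_1$ is a single rational sphere and $\dim G/H = \ell_-+\ell_+$, contradicting Proposition \ref{prop:lasttop}, which forces $G/K^+\simeq_\Q S^{\ell_-}\times S^{\ell_- + \ell_+}$ and $\dim G/H = 2(\ell_-+\ell_+)$. So the rational-type/corank constraints must be carried through every branch of your case analysis; Convention plus finiteness of $H_+\cap H_-$ alone cannot finish, and as written your sketch would not close exactly this case.

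For comparison, the paper's proof needs no classification at all: since a sphere is not a product of positive-dimensional manifolds, $K^+/H\cong S^{\ell_+}$ forces (up to swapping factors) $H_1 = L_1$; then primitivity (Theorem \ref{thm:primitive}), applied twice, forces $K_1 = G_1$ and $L_2 = G_2$, leaving precisely the join-type configuration above, which is killed in one line because $G/H\cong S^{\ell_-}\times S^{\ell_+}$ contradicts Proposition \ref{prop:lasttop}. Note that the paper reaches for primitivity directly, not merely through its corollary on $H_+\cap H_-$; that is what makes the argument short, and any repair of your approach would in effect have to reproduce this final topological step anyway.
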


\begin{proof}

Assume for a contradiction the proposition is false.  From Proposition \ref{prop:product}, we may write $K^- = K_1\times K_2$ with each $K_i\subseteq G_i$, and similarly $K^+ = L_1\times L_2$.  Since $H$ has full rank in $K^+$, $H = H_1\times H_2$ with $H_i\subseteq K_i\cap L_i$.  Further, since $S^{\ell_+} \cong K^+/H \cong L_1/H_1\times L_2/H_2$, it follows that up to swapping $G_1$ and $G_2$, that $H_1 = L_1$ and $L_2/H_2 = S^{\ell_+}$.

It follows that $H,K^\pm \subseteq K_1\times G_2$.  Thus, the action is not primitive, contradicting Theorem \ref{thm:primitive}, unless $K_1 = G_1$.  Now, $S^{\ell_-} \cong K^-/H\cong G_1/H_1 \times K_2/H_2$.  We note that $H_1\neq G_1$ because the action is irreducible, so we must have $H_2 = K_2$, so $S^{\ell_-}\cong G_1/H_1$.  It follows that $H,K^\pm\subseteq G_1\times L_2$, which again gives a non-primitive action unless $L_2 = G_2$.

But now we reach a contradiction:  $G/H = G_1/H_1\times G_2/H_2 \cong S^{\ell_-}\times S^{\ell_+}$, contradicting the rational cohomology calculation in Proposition \ref{prop:lasttop}.

\end{proof}

\begin{remark}  In the proposition above, the $G_i$ are not necessarily simple factors.  \end{remark}

From \cite[Theorem 4.8]{Totaro02}, as $\dim \pi_{odd}^\Q(G/H) = 3$ and since $H$ does not project onto any simple factor of $G$ (Convention \ref{convention}), it follows that $G$ has at most three simple factors.  Since $\dim\pi_{odd}^\Q(G/K^-) - \dim \pi_{even}^\Q(G/K^-) = 1$, $K^-$ has corank $1$ in $G$.


We may now deal with the case where $G$ has three simple factors.

\begin{proposition}\label{prop:2simp}  Suppose $G$ is a compact Lie group acting via a cohomogeneity one action on a rational sphere $M$.  Suppose the homotopy fiber of the inclusion of a principal leaf falls within Case (4) of Theorem \ref{thm:GH}, and that $\ell_-$ is odd and $\ell_+$ is even.  Then $G$ cannot be a product of three simple factors.

\end{proposition}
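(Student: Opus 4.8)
The plan is to convert the rational data of Proposition \ref{prop:lasttop} into rank and simple-factor information for $H$, $K^\pm$, and $G$, and then to exploit the fact that there are only \emph{two} fibre spheres $S^{\ell_\pm}$ available to ``account for'' the three simple factors of $G$. First I would record the coranks: using that $\dim\pi_{odd}^\Q(G/L)-\dim\pi_{even}^\Q(G/L)=\operatorname{rank}G-\operatorname{rank}L$ for any connected $L\subseteq G$, Proposition \ref{prop:lasttop} gives that $H$ and $K^+$ have corank $2$ while $K^-$ has corank $1$. Since $G/K^+\simeq_\Q S^{\ell_-}\times S^{\ell_-+\ell_+}$ is rationally a product of two odd spheres, $\pi_{even}^\Q(G/K^+)=0$, so the map $\pi_{odd}^\Q(K^+)\to\pi_{odd}^\Q(G)$ is injective. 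Reading this off in degree $3$ (where $\dim\pi_3^\Q$ equals the number of simple factors of a semisimple group, $K^+$ being semisimple by Proposition \ref{prop:gstructure}) shows that $K^+$ has $3-\dim\pi_3^\Q(G/K^+)=3-\delta_{\ell_-,3}\in\{2,3\}$ simple factors; in particular $K^+$ is never simple.

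Next I would analyse the internal structure of $K^+$. Because $K^+$ acts transitively on the even-dimensional sphere $S^{\ell_+}=K^+/H$ and every effective transitive action on an even sphere is by a simple group ($SO(\ell_++1)$, or $\mathbf{G}_2$ when $\ell_+=6$, by Table \ref{table:transsphere}), exactly one simple factor $L$ of $K^+$ acts on the sphere while the remaining factors form a normal subgroup $N\subseteq H$, which is nontrivial since $K^+$ has at least two factors. Primitivity (Theorem \ref{thm:primitive}) together with Proposition \ref{prop:finiteintersection} then forces $N$ to act almost effectively on the \emph{other} sphere $S^{\ell_-}=K^-/H$, since $N\subseteq H_+$ and $H_+\cap H_-$ is finite. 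Thus the two fibre spheres are ``powered'' by $L$ and by (part of) $N$, respectively.

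The heart of the argument is a counting obstruction. In the model situation where $H$, $K^+$, $K^-$ all respect the decomposition $G=G_1\times G_2\times G_3$ (Proposition \ref{prop:product}), each singular isotropy group differs from $H$ in a single factor: because $K^\pm/H\cong S^{\ell_\pm}$ is connected, exactly one factor of $K^+$ (resp.\ of $K^-$) strictly contains the corresponding factor of $H$, and all other factors agree with those of $H$. Since two such indices cannot exhaust three factors, some factor $G_{i_*}$ satisfies $p_{i_*}(H)=p_{i_*}(K^+)=p_{i_*}(K^-)$, and this common image is a \emph{proper} subgroup of $G_{i_*}$ by Convention \ref{convention}. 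Then $H$ and both $K^\pm$ lie in the proper intermediate subgroup $\big(\prod_{i\neq i_*}G_i\big)\times p_{i_*}(H)$, so the action is non-primitive, contradicting Theorem \ref{thm:primitive}. This is the mechanism I expect to drive the whole proof.

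The hard part will be removing the product assumption. In general $H$ and $K^\pm$ need not be product subgroups, and indeed Proposition \ref{prop:diag}, applied to the grouping $G=G_1\times(G_2\times G_3)$, guarantees that one of the singular isotropy groups carries a diagonally embedded factor; such a factor projects nontrivially onto two of the $G_i$ at once, so the crude count ``two spheres, three factors'' no longer immediately isolates an untouched factor. To handle this I would track the simple factors of $K^+$ and $K^-$ under the three projections $p_i$ at the level of $\pi_3^\Q$, using the injectivity of $\pi_3^\Q(K^+)\hookrightarrow\pi_3^\Q(G)$ and the almost-effective action of $N$ on $S^{\ell_-}$ to pin down which $G_i$ each factor covers; the small coranks of $K^\pm$ (one and two) sharply limit how many diagonal factors can occur. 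The aim is to show that, whatever the admissible arrangement of diagonal factors, a factor $G_{i_*}$ with $p_{i_*}(H)=p_{i_*}(K^+)=p_{i_*}(K^-)$ always persists, returning us to the non-primitivity contradiction above. Controlling these diagonal embeddings, rather than the rational bookkeeping, is where the genuine difficulty lies.
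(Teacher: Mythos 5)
Your ``model'' case is handled correctly, and your toolkit matches the paper's: the coranks of $H$, $K^\pm$ extracted from Proposition \ref{prop:lasttop}, semisimplicity of $K^+$ from Proposition \ref{prop:gstructure}, and the observation that the simple factors of $K^+$ acting trivially on $S^{\ell_+}$ must, by Theorem \ref{thm:primitive} and Proposition \ref{prop:finiteintersection}, act almost effectively on $S^{\ell_-}$. The problem is that the case you defer --- isotropy groups with diagonally embedded factors --- is not a residual technicality: by Proposition \ref{prop:diag} a diagonal factor is \emph{always} present, and the analysis of these factors constitutes essentially the entire proof in the paper (showing $K^+$ itself must carry a diagonal factor $L'$, showing each $L_i\subsetneq G_i$ via Totaro's theorem \cite{Totaro02} applied to the bundle $K^-/H\rightarrow p_{23}(K^-)/p_{23}(H)$, and then a case division on how many factors $L'$ covers). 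Since you only outline a strategy for this part (``I would track\ldots'', ``the aim is to show\ldots''), the proposition is not actually proved.

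Moreover, the strategy you outline cannot succeed as stated. You aim to show that in every admissible configuration some factor $G_{i_*}$ survives with $p_{i_*}(H)=p_{i_*}(K^+)=p_{i_*}(K^-)$ proper, yielding non-primitivity. Consider the configuration in which the diagonal factor $L'$ of $K^+=L_1\cdot L_2\cdot L_3\cdot L'$ projects nontrivially onto \emph{all three} simple factors (local model $\Delta SU(2)\subseteq SU(2)^3$); rank counting does not exclude this, since such a $K^+$ has corank $2$ once $G_3\cong SU(2)$. Here $L'\cong SU(2)$ forces $H'\cong S^1$, and then in general $p_i(K^+)=L_i\cdot p_i(L')$ strictly contains $p_i(H)=L_i\cdot p_i(H')$ for every $i$ (a three-dimensional projection of $L'$ versus a one-dimensional projection of $H'$), so there is no untouched factor for your mechanism to use. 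The paper disposes of this case by a contradiction of an entirely different nature: $G/K^+\cong G_1/L_1\times G_2/L_2$, so Proposition \ref{prop:lasttop} makes both $G_i/L_i$ rational spheres of dimensions $\ell_-$ and $\ell_-+\ell_+$; since each $L_i$ extends to $L_i\cdot SU(2)\subseteq G_i$, both dimensions are $\equiv 3\pmod 4$, whence $4\mid\ell_+$, while $H'\cong S^1$ and Proposition \ref{prop:gstructure} force $\ell_+=2$. The other diagonal configuration ($L'$ covering exactly two factors) likewise ends not in non-primitivity but in a structural contradiction: one is forced to $H=K_2\times K_3$, contradicting the diagonality of $H'$ in $G_2\times G_3$. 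So the gap is not bookkeeping you could routinely fill in; the hard case requires arguments of a different kind from the one your plan is built around.
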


\begin{proof}We write $G = G_1\times G_2\times G_3$, $K^+ = L_1\cdot L_2\cdot L_3\cdot L'$, $K^- = K_1\cdot K_2\cdot K_3\cdot K'$, and $H = H_1\cdot H_2\cdot H_3\cdot H'$.

We first claim that $K^+$ must be diagonally embedded, i.e., that $L'\neq 0$.  To see this, assume it is false.  Then Proposition \ref{prop:diag} implies that $K'\neq 0$.  As $K^-$ has corank $1$ in $G$, it follows that $K'$ projects non-trivially to precisely two factors of $G$, say $G_2$ and $G_3$.  Then with respect to this decomposition of $G$ into two factors as $G_1\times (G_2\times G_3)$, $K^-$ has no diagonally embedded subgroup, contradicting Proposition \ref{prop:diag}.  Thus, $L'\neq 0$.

We next claim that for each $i=1,2,3$,  $L_i\subsetneq G_i$.  To see this, assume for a contradiction that it is false, so, without loss of generality, that $L_1 = G_1$.  Since $H\subseteq K^+$ has full rank, each $H_i\subseteq L_i$ and $H'\subseteq L'$ have full rank.  Now, $H_1\subsetneq L_1 = G_1$ since the action is irreducible.  As $K^+/H = S^{\ell_+}$ is, up to cover, diffeomorphic to $L_1/H_1 \times L_2/H_2\times L_3\times H_3\times L'/H'$ and $H_1\neq L_1$, it follows that $H_2 = L_2, H_3 = L_3, H' = L'$.  Since $H\subseteq K^-$, it now follows that $K^\pm\subseteq G_1\times p_{23}(K^-)$.  Since the action is primitive (Theorem \ref{thm:primitive}), we must have $p_{23}(K^-) = G_2\times G_3$.

Now, there is an obvious map $K^-/H\rightarrow p_{23}(K^-)/p_{23}(H)$ which is a fiber bundle with fiber $K_1\cdot H/H$.  As $K^-/H\cong S^{\ell_-}$, it follows from \cite[Corollary 2.5]{DeVitoKennard20} that $\dim \pi_{odd}(p_{23}(K^-)/p_{23}(H))) \leq 1$.  By \cite[Theorem 4.8]{Totaro02}, this can only happen if $H$ projects surjectively onto at least one of $G_2$ or $G_3$, contradicting irreducibility.  Thus, we conclude that each $L_i\subseteq G_i$ is proper.

\bigskip

Now, if $L'$ does not project surjectively onto some $G_i$, then again by Totaro's theorem, $\dim \pi_{odd}(G/K^-)\otimes \Q \geq 3$, contradicting Proposition \ref{prop:lasttop}.  Thus, $L'$ projects surjectively onto, say, $G_3$, which implies $L_3 = H_3 = 0$.  Irreducibility implies $H'\subsetneq L'$, which then implies, as above, that $H_1 = L_1$ and $H_2 = L_2$.  Now, either $L'$ projects non-trivially to two factors of $G$, or it projects non-trivially to all three factors of $G$.  We conclude the proof by handling each of these cases separately.

Assume initially that $L'$ projects non-trivially to exactly two factors, say, $G_2$ and $G_3$.  Interpreting $G$ as a product of two factors $G_1\times (G_2\times G_3)$ and using Proposition \ref{prop:diag}, it follows that $K'$ must have non-trivial projection to $G_1$.   Now, since $H_1 = L_1$ and $L'$ has trivial projection to $G_1$, it follows that $K^\pm \subseteq (K_1\cdot p_1(K'))\times G_2\times G_3$.  Primitivity forces $K_1\cdot p_1(K') = G_1$, and, as $p_1(K')$ is non-trivial, it now follows that $K_1 = 0$, so $L_1 = H_1\subseteq K_1= 0$ as well.

Thus, $H = H_2\cdot H'$ with $H'$ diagonally embedded in $G_2$ and $G_3$, while $K^- = K_2\cdot K_3\cdot K'$ with $K'$ diagonally embedded in $G_1$ and some other $G_i$.  As $H$ projects trivially to $G_1$, it must also project trivially to $K'$, so $H\subseteq K_2\times K_3$.  Thus $K^-/H\cong S^{\ell_-} \cong  ((K_2\times K_3)/H)\cdot K'$.  Since $K'$ is non-trivial, $K = SU(2)$ and $H = K_2\times K_3$.  But this contradicts the fact that $H'$ is diagonally embedded in $G_2\times G_3$.  This concludes the case where $L'$ projects to exactly two factors of $G$.

Thus, the only remaining case is when $L'$ projects non-trivally to all three $G_i$.  It follows that $L'$ has rank $1$ and, since $K^+$ is semi-simple, that $L' = SU(2)$ up to cover.  This is implies that $G_3 = SU(2)$ as well.  Then $G/K^+\cong G_1/L_1\times G_2/L_2$.  Since both $L_i\subsetneq G_i$ and $H^\ast(G/K^+;\Q)\cong H^\ast(S^{\ell_-}\times S^{\ell_+};\Q)$, it follows that both $G_i/L_i$ are rational spheres, of dimension $\ell_-$ and $\ell_- + \ell_+$.   Since $L'= SU(2)$ projects non-trivally to both $G_1$ and $G_2$, it follows that $L_i\subseteq G_i$ extends to $L_i\cdot SU(2)\subseteq G_i$, which implies that both $\ell_-$ and $\ell_- + \ell_+$ are congruent to $3$ mod $4$.  Thus, $\ell_+$ is divisible by $4$.  On the other hand,  if $L' = SU(2)$, then $H' = S^1$.  From Proposition \ref{prop:gstructure}, it follows that $\ell_+ = 2$, giving the final contradiction.

\end{proof}

We henceforth assume $G = G_1\times G_2$ has precisely two simple factors.  We will write $K^- = (K_1\times K_2)\cdot K'$, $K^+ = (L_1\times L_2)\cdot L'$, and $H = (H_1\times H_2)\cdot H'$, with the primed groups diagonally embedded.

\begin{proposition}\label{prop:kernel}Under the assumptions of Theorem \ref{thm:difparity} with $\ell_+$ even and $\ell_-$ odd, at least one of the two projections $p_i:K^+\rightarrow G_i$ has finite kernel.

\end{proposition}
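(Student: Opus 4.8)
The plan is to argue by contradiction: suppose both projections $p_1\colon K^+\to G_1$ and $p_2\colon K^+\to G_2$ have positive-dimensional kernel. In the notation $K^+=(L_1\times L_2)\cdot L'$, the kernel of $p_1|_{K^+}$ is $L_2$ and that of $p_2|_{K^+}$ is $L_1$, so the hypothesis is exactly that both $L_1$ and $L_2$ are positive-dimensional. I will extract a contradiction with one of irreducibility (Convention \ref{convention}), Proposition \ref{prop:lasttop}, or primitivity (Theorem \ref{thm:primitive}).

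The key structural input is that $K^+/H\cong S^{\ell_+}$ is an \emph{even}-dimensional sphere. By Borel's classification (Table \ref{table:transsphere}), the only compact groups acting effectively and transitively on an even sphere are $\SO(\ell_++1)$ on $S^{\ell_+}$ and $\mathbf{G}_2$ on $S^6$, both simple. Hence, writing $N\trianglelefteq K^+$ for the ineffective kernel of the isotropy action on $K^+/H$ (so $N\subseteq H$, being the largest normal subgroup of $K^+$ inside $H$), the quotient $K^+/N$ is simple. Since $L_1,L_2,L'$ are commuting normal subgroups of $K^+$, each maps to a normal subgroup of $K^+/N$, hence either lands in $N\subseteq H$ or surjects onto $K^+/N$; as distinct such subgroups that both surjected would have to commute elementwise with a simple group, exactly one of the three surjects and the other two lie in $N\subseteq H$.

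Next I rule out the possibility that $K^+$ contains an entire simple factor, i.e. that $A:=p_1(K^+)=G_1$ (and symmetrically $B:=p_2(K^+)=G_2$). If $A=G_1$, then $A=L_1\cdot p_1(L')$ is an almost-direct product of commuting subgroups of the \emph{simple} group $G_1$, so one factor is finite; since $L_1$ is positive-dimensional this forces $L'$ finite, whence $K^+=G_1\times L_2$. Applying the dichotomy above to the two commuting factors $G_1$ and $L_2$, either $G_1\subseteq N\subseteq H$ or $L_2\subseteq N\subseteq H$. The former contradicts irreducibility, since then $H$ would surject onto the simple factor $G_1$. In the latter case $G/K^+=G_2/L_2$ is rationally a single odd sphere and $G/H\cong_\Q (G_1/H_1)\times(G_2/L_2)\cong_\Q S^{\ell_+}\times S^{\mathrm{odd}}$ has $\dim\pi_{odd}^\Q=2$, contradicting Proposition \ref{prop:lasttop}, which requires $\dim\pi_{odd}^\Q(G/H)=3$. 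The same argument applies with the roles of the factors reversed, so $A\subsetneq G_1$ and $B\subsetneq G_2$ are both proper, and $K^+\subseteq A\times B$.

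To finish I invoke primitivity. Since $H\subseteq K^+$ we have $p_i(H)\subseteq p_i(K^+)\in\{A,B\}$, both proper. The claim I must establish is that the odd singular sphere $K^-/H\cong S^{\ell_-}$ projects trivially under at least one factor, i.e. $p_1(K^-)=p_1(H)$ or $p_2(K^-)=p_2(H)$. Granting this, say $p_1(K^-)=p_1(H)\subseteq A$, the proper subgroup $A\times G_2\subsetneq G$ contains $H$, $K^+$, and $K^-$, contradicting Theorem \ref{thm:primitive} and finishing the proof. The main obstacle is precisely this last claim: the two $K^-$-equivariant maps $S^{\ell_-}=K^-/H\to p_i(K^-)/p_i(H)$ are homogeneous fibrations, and the point is that they cannot both be nontrivial. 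I expect to deduce this from the fact that an odd sphere has total rational homotopy of dimension one, concentrated in degree $\ell_-$, so that applying the monotonicity of $\dim\pi_{odd}^\Q$ under fibrations (\cite[Corollary 2.5]{DeVitoKennard20}) to the combined map $K^-/H\to \bigl(p_1(K^-)/p_1(H)\bigr)\times\bigl(p_2(K^-)/p_2(H)\bigr)$ forces one projection, together with its complementary fiber, to be rationally trivial. Carefully handling the possibly even-dimensional images $p_i(K^-)/p_i(H)$ is the delicate part of this step.
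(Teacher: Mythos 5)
There are two genuine gaps in your argument, and the second is the heart of the matter. First, your step ruling out $p_1(K^+)=G_1$ does not work. In your ``latter case'' you have $K^+=G_1\times L_2$ with $L_2\subseteq N\subseteq H$, so that $H=H_1\times L_2$ with $G_1/H_1\cong S^{\ell_+}$ (up to ineffectiveness) and $G/H\cong S^{\ell_+}\times (G_2/L_2)$. Your contradiction rests on the assertion that $G/K^+=G_2/L_2$ is rationally a \emph{single} odd sphere, but nothing justifies this; indeed Proposition \ref{prop:lasttop} itself gives $G/K^+\cong_\Q S^{\ell_-}\times S^{\ell_-+\ell_+}$, a product of \emph{two} odd spheres (both $\ell_-$ and $\ell_-+\ell_+$ are odd here), and then $G/H\cong_\Q S^{\ell_+}\times S^{\ell_-}\times S^{\ell_-+\ell_+}$ has $\dim\pi_{odd}^\Q(G/H)=3$, which is exactly what Proposition \ref{prop:lasttop} requires. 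So rational homotopy counting produces no contradiction, and this configuration (e.g.\ with $G_2/L_2$ rationally a product like $SU(3)$ or $Sp(2)$) survives your argument; ruling it out genuinely needs the primitivity machinery below.

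Second, the claim you concede is unproven --- that $p_1(K^-)=p_1(H)$ or $p_2(K^-)=p_2(H)$ --- is not a consequence of the fibration monotonicity you invoke. Applying \cite[Corollary 2.5]{DeVitoKennard20} to each homogeneous fibration $K^-/H\rightarrow p_i(K^-)/p_i(H)$ only yields $\dim\pi_{odd}^\Q\bigl(p_i(K^-)/p_i(H)\bigr)\leq 1$ for each $i$ separately, and the combined map lands in the orbit $K^-/\bigl(K^-\cap(p_1(H)\times p_2(H))\bigr)$ rather than surjecting onto the product, so no additivity is available. The claim is in fact false as an abstract statement about pairs $H\subseteq K^-$: for $K^-=\Delta SU(2)\subseteq G_1\times G_2$ and $H$ trivial, $K^-/H\cong S^3$ while both quotients $p_i(K^-)/p_i(H)\cong S^3$ are nontrivial. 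So one must feed in the structural constraints coming from $K^+$, and this is exactly where the paper's proof goes a different way: it first uses semi-simplicity of $K^+$ (Proposition \ref{prop:gstructure}) together with $\pi_4^\Q(G/K^+)=0$ (Proposition \ref{prop:lasttop}) to show the diagonal factor $L'$ vanishes, so $K^+=L_1\times L_2$ and $H=H_1\times H_2$ with, say, $H_1=L_1$; then Proposition \ref{prop:diag} forces $K^-$ to have a diagonal factor $K'\neq 0$, and the sphere condition on $K^-/H$ forces $H=K_1\times K_2$, making $H_1$ a positive-dimensional group that is normal in both $K^\pm$ and contained in $H$, hence acting trivially on both $K^\pm/H$ --- contradicting Proposition \ref{prop:finiteintersection}. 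Note the final contradiction is the finiteness statement of Proposition \ref{prop:finiteintersection}, not the containment form of non-primitivity you aim for. Your opening observation --- that the effective transitive group on the even sphere $K^+/H$ is simple by Borel's classification, so exactly one of $L_1,L_2,L'$ survives in it --- is correct and is a nice alternative to the paper's semi-simplicity step, but neither of your two closing steps goes through as written.
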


\begin{proof}  Write $G = G_1\times G_2$, $K^+ = (L_1\times L_2)\cdot L'$, $K^- = (K_1\times K_2)\cdot K'$, and $H = (H_1\times H_2)\cdot H'$.  We must prove that at least one $L_i$ is trivial.

So, assume for a contradiction that both $L_1$ and $L_2$ are non-trivial.  Since $K^+$ is semi-simple from Proposition \ref{prop:gstructure}, it follows that $\pi^\Q_3(G_i/L_i) =0$.  From the long exact sequence in rational homotopy groups associated to the bundle $L'\rightarrow G_1/L_1\times G_2/L_2\rightarrow G/K^+$, we deduce that if $L'\neq 0$, then $\pi^\Q_4 (G/K^+)\neq 0$, contradicting Proposition \ref{prop:lasttop}.  Thus, $L' = 0$, so by Proposition \ref{prop:product}, we may write $K^+ = L_1\times L_2\subseteq G_1\times G_2$.

Since $H$ has full rank in $K^+$, it now follows that $H' = 0$, and $S^{\ell_+}\cong (L_1/H_1) \times (L_2/H_2)$.  Thus, we may assume without loss of generality that $H_1 = L_1$.

Since $L'= 0$, Proposition \ref{prop:diag} implies $K'\neq 0$.  Since $H'=0$ and $H\subseteq K^-$, $H\subseteq K_1\times K_2$.  Then $S^{\ell_-}\cong K^-/H\cong ((K_1\times K_2)/H)\cdot K' $.  It follows that $H = K_1\times K_2$.  In particular $H_1$ acts trivially on both $S^{\ell_\pm}$, contradicting Proposition \ref{prop:finiteintersection}.

\end{proof}

We hencefoth assume that $L_2 = 0$.  This implies that $H_2 = 0$ as well, so $K^+ = L_1\cdot L'$ and $H  = H_1\cdot H'$.  We now break into further subsections, depending on whether or nor $(\ell_-, \ell_+) = (3,2)$ or not.

\subsubsection{The case where $(\ell_-, \ell_+) = (3,2)$}\label{sec:32}

The main goal of this subsection is to classify all actions on rational spheres of dimension $11$  for which $G$ is a product of two simple factors and $(\ell_-, \ell_+) = (3,2)$.  In order to state the main result, recall that $\mathbf{G}_2$ contains subgroups (unique up to conjugacy) which are isomorphic to $SU(3)$ and $SO(4)$.   Writing the double cover of $SO(4)$ as $SU(2)_1\times SU(2)_3$ (with the subscript refering to the Dynkin index of $SU(2)_i\subseteq \mathbf{G}_2$), we have a natural action of $SU(3)\times SU(2)_i$ on $\mathbf{G}_2/SU(2)_{4-i}$ given by $(A,B)\ast C\, SU(2)_{4-i} = ACB^{-1}\, SU(2)_{4-i}$.  Since the two $SU(2)$s in $SO(4)$ centralize each other, this action is well defined.

\begin{theorem}\label{thm:23}  Suppose $G = G_1\times G_2$ is a product of two non-trivial compact simple simply connected Lie groups and that $G$ acts on a rational sphere $M^{11}$ with  homotopy fiber $\mathcal{F}$ of the inclusion $G/H\rightarrow M$ of a principal orbit falling into Case (4) of Theorem \ref{thm:GH} with $\ell_- =2$ and $\ell_+ = 3$.  Then $G = SU(3)\times SU(2)$ and, up to equivalence, the action is one of three possibilities: the tensor product action of $SU(3)\times SU(2)$ on $S^{11}\subseteq \mathbb{C}^3\otimes\mathbb{C}^2$, or the above $SU(3)\times SU(2)_i$ action on $\mathbf{G}_2/SU(2)_{4-i}$ for $i=1,2$.  Conversely, these three actions are all cohomogeneity one actions meeting all the hypothesis in the first sentence.

\end{theorem}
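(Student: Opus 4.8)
The plan is to first pin down the groups up to isomorphism and then to classify the embeddings. Recall that we have already reduced to $G = G_1\times G_2$ with two simple factors, $L_2 = 0$, $K^+ = L_1\cdot L'$, $H = H_1\cdot H'$, and that $\{\ell_-,\ell_+\} = \{2,3\}$, so $n = 2(\ell_-+\ell_+)+1 = 11$. By Proposition~\ref{prop:lasttop}, $G/K^+\simeq_\Q S^3\times S^5$ and $G/K^-\simeq_\Q S^2\times S^5$, so $K^+$ has corank $2$ and $K^-$ corank $1$ in $G$. Since $K^+$ is semisimple (Proposition~\ref{prop:gstructure}) and $K^+/H\cong S^2$, Table~\ref{table:transsphere} forces $K^+\cong SU(2)$ up to cover, with $H$ its maximal torus. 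Any additional semisimple factor of $H$ would act trivially on $K^+/H\cong S^2$ (a semisimple group has no nontrivial $2$-dimensional real representation), hence by Proposition~\ref{prop:finiteintersection} together with primitivity (Theorem~\ref{thm:primitive}) it would act nontrivially on $K^-/H\cong S^3$ and so contain an $SU(2)$; a rank-and-dimension count over products of two simple groups then excludes this, leaving $H\cong S^1$. Thus $\operatorname{rank}G = \operatorname{rank}K^+ + 2 = 3$ and $\dim G = \dim H + 10 = 11$, which among products of two simple groups of total rank $3$ leaves only $G = SU(3)\times SU(2)$; Proposition~\ref{prop:gstructure} gives $K^-\cong SU(2)\cdot S^1$.

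With the isomorphism types fixed, the next step is to classify the embeddings up to the equivalence moves for group diagrams (conjugation in $G$, interchanging $K^\pm$, and twisting $K^-$ by $N(H)_0$). Since $K^+\cong SU(2)$ is simple and $L_2 = 0$, there are exactly three conjugacy types: either $L' = 0$, so $K^+ = L_1$ is the block $SU(2)\subseteq SU(3) = G_1$, or $L_1 = 0$ and $K^+ = L'$ is diagonally embedded, projecting isomorphically to $G_2 = SU(2)$ and to $G_1 = SU(3)$ by one of the two conjugacy classes of homomorphisms $SU(2)\to SU(3)$, namely the block embedding or the $\operatorname{Sym}^2$ (that is, $SO(3)$) embedding. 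In each case $H\subseteq K^+$ is the maximal torus, and one must then determine the rank-$2$ subgroup $K^-\cong SU(2)\cdot S^1$ with $H\subseteq K^-$ and $K^-/H\cong S^3$. I expect the standing constraints --- primitivity, the requirement from Convention~\ref{convention} that $H$ not surject onto a simple factor, and finiteness of $H_+\cap H_-$ --- to pin down $K^-$ uniquely for each of the three choices of $K^+$, so that at most three diagrams arise.

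It then remains to identify these diagrams and to prove the converse. A direct singular-value-decomposition computation of the isotropy groups of the tensor-product representation of $SU(3)\times SU(2)$ on $S^{11}\subseteq\C^3\otimes\C^2$ shows that its two singular orbits are the rank-one tensors $v\otimes w$ and the orthogonal extreme, giving $K^-\cong U(2)$ and a diagonally embedded $K^+\cong SU(2)$; this matches one of the three types above, and Straume's and Asoh's classifications \cite{Straume96,Asoh81} confirm the data $(G,K^+,\ell_\pm)$ determines its diagram. For the remaining two types I would recognize the diagram as the $SU(3)\times SU(2)_{4-i}$ action on $\mathbf{G}_2/SU(2)_i$ by computing the isotropy groups of that homogeneous action from the inclusions $SU(3),SO(4)\subseteq\mathbf{G}_2$, the two classes of $SU(2)\to SU(3)$ corresponding to the two factors $SU(2)_1,SU(2)_3$ of $SO(4)\subseteq\mathbf{G}_2$. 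For the converse, each of $S^{11}$, $\mathbf{G}_2/SU(2)_1\cong T^1S^6$, and $\mathbf{G}_2/SU(2)_3$ is a rational $11$-sphere (the $\mathbf{G}_2$ quotients carrying their extra mod $2$ and mod $3$ classes as torsion), and from each group diagram one checks the action is cohomogeneity one with $\{\ell_-,\ell_+\} = \{2,3\}$; since these dimensions have opposite parity, Proposition~\ref{prop:Connecting} places the homotopy fiber in Case (4) of Theorem~\ref{thm:GH}.

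The hard part will be the identification of the two $\mathbf{G}_2$ examples together with the uniqueness of $K^-$. These quotients are not $\mathbb{Z}/2$-homology spheres, so one cannot simply invoke Asoh or Straume as for $S^{11}$; instead the abstract diagrams must be matched by hand to the homogeneous spaces, which requires a careful description of how the standard $SU(3)$ and $SO(4)$ sit inside $\mathbf{G}_2$ and of their intersection with the relevant circle. Tracking the diagonal $SU(2)$'s and the circle $H$ through the equivalence moves, verifying that exactly two inequivalent diagonal diagrams occur, and confirming that each corresponds to one of $SU(2)_1,SU(2)_3$ (so that the total count is precisely three) is the delicate core of the argument.
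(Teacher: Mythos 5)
Your first step (forcing $G = SU(3)\times SU(2)$, $H\cong S^1$, $K^+\cong SU(2)$, $K^-\cong SU(2)\cdot S^1$ up to cover) is sound and close to the paper's, which gets $H$ toral directly from $\dim\pi_3^\Q(G/H)=2$. The genuine gap is your claim that primitivity, irreducibility, and Proposition \ref{prop:finiteintersection} ``pin down $K^-$ uniquely for each of the three choices of $K^+$, so that at most three diagrams arise.'' This fails on both ends. First, your third choice of $K^+$ (the block $SU(2)\subseteq SU(3)$ with $L'=0$) supports no diagram at all: the paper proves $p_2(K^+)$ must be non-trivial, since otherwise $H\subseteq SU(3)\times\{1\}$, the sphere condition on $K^-/H$ forces $K^-$ to contain a diagonal $SU(2)$ that $H$ must centralize, which puts $H$ on the circle $\diag(z,z,\overline{z}^{2})$ and contradicts $H\subseteq K^+$. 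Second, for the two diagonal choices of $K^+$ the constraints you list admit \emph{five} diagrams (Table \ref{table:5examples}), not two. Primitivity eliminates exactly one of them; another, $K^+=(\pi(A),A)$ with $K^-=(\diag(B,1),\diag(z,\overline{z}))$, satisfies every condition you invoke and is nonetheless not an action on a rational sphere. Eliminating it is the real work of the paper's proof: one computes the two maps $H^5(G/K^\pm;\Q)\rightarrow H^5(G/H;\Q)$ via the classifying-space description of the rational cohomology of these homogeneous spaces and shows their images coincide, so the Mayer--Vietoris sequence forces $H^5(M;\Q)\neq 0$. Nothing in your proposal substitutes for this computation, and without it the count of three cannot be established.

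Relatedly, your identification plan is aimed at the wrong targets. The three surviving diagrams are not distributed one per $K^+$-type: the tensor-product action is the one with $K^+=(\diag(A,1),A)$, while \emph{both} $\mathbf{G}_2/SU(2)_i$ actions have $K^+=(\pi(A),A)$ and are distinguished by $K^-$ --- by whether the circle factor of $K^-$ lies only in $SU(3)$, i.e.\ $K^-=(\diag(z^{-2},zB),B)$, or diagonally, i.e.\ $K^-=(\diag(z^{-2},zB),\diag(z,\overline{z}))$. So matching the two $\mathbf{G}_2$ quotients to ``the two classes of $SU(2)\to SU(3)$'' cannot succeed. Finally, the hand-matching you call the delicate core can be avoided entirely, as the paper does: once exactly three admissible diagrams are known, one checks that the three explicit actions (the tensor product, identified via Asoh, and $SU(3)\times SU(2)_i$ acting on $\mathbf{G}_2/SU(2)_{4-i}$, which are cohomogeneity one by Iwata's theorem, are rational $11$-spheres, and fall into Case (4) with the right $\ell_\pm$ by Proposition \ref{prop:Connecting}) live on manifolds with pairwise different cohomology, hence give pairwise inequivalent diagrams and therefore exhaust the three; the explicit octonionic identification appears only as a supplementary remark.
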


We will prove this by a series of proposition.  We first classifying the groups which can appear in a group diagram of an action as in Theorem \ref{thm:23}, finding that all the groups are determined up to cover.   We then study the problem of the embeddings of $H,K^\pm$ into $G$.  We will find that, up to equivalence, there are precisely five cohomogeneity one actions which need to be considered.  We finally show that two of them correspond to actions on spaces which are not rational spheres, while the remaining three correspond to the possibilities listed in Theorem \ref{thm:23}.

\begin{proposition}  Suppose a compact Lie group $G = G_1\times G_2$, with both $G_i$ simple, acts on a rational sphere $M^{11}$ with homotopy fiber $\mathcal{F}\sim_\Q S^{2}\times S^{3}\times \Omega S^{6}$.  Then $G\cong SU(3)\times SU(2)$, $K^+\cong SU(2)$, and up to cover $K^- = S^1\times SU(2)$.  Further, up to conjugacy, $K^+$ is embedded into $G$ as either $\Delta SU(2)\subseteq SU(2)\times SU(2)$ or $\Delta SU(2)\subseteq SO(3)\times SU(2)$, and $K^-$ is embedded up to conjugacy as either $(z,A)\mapsto (\diag(zA,\overline{z}^{2}, A)$ or $A\mapsto (\diag(z^a A, \overline{z}^2, \diag(z^b, \overline{z}^b)$ for relatively prime integers $a$ and $b$.

\end{proposition}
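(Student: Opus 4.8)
The plan is to first determine the isomorphism types of $H,K^\pm,G$ using rational homotopy theory, and only afterward to pin down the embeddings up to conjugacy. By Proposition~\ref{prop:lasttop} we have $n=11$, $\ell_-=3$, $\ell_+=2$, together with $H^\ast(G/H;\Q)\cong H^\ast(S^2\times S^3\times S^5;\Q)$, $H^\ast(G/K^+;\Q)\cong H^\ast(S^3\times S^5;\Q)$, $H^\ast(G/K^-;\Q)\cong H^\ast(S^2\times S^5;\Q)$, and the corresponding statements for rational homotopy groups. I would feed the fibration $H\to G\to G/H$ into the long exact sequence of rational homotopy groups. Since $\pi_k^\Q(G/H)=0$ for $k\geq 6$ and $\pi_{even}^\Q$ of a compact Lie group vanishes, the sequence yields a short exact sequence $0\to\pi_3^\Q(H)\to\pi_3^\Q(G)\to\pi_3^\Q(G/H)\to 0$. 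As $G$ is a product of two simple factors, $\dim\pi_3^\Q(G)=2$, while $\dim\pi_3^\Q(S^2\times S^3\times S^5)=2$; hence $\pi_3^\Q(H)=0$ and $H$ has no simple factors. Combined with Proposition~\ref{prop:gstructure}, which (as $\ell_+=2$) forces $H$ to have a one dimensional torus factor and be otherwise semisimple, this gives $H\cong S^1$ up to finite cover.

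With $H\cong S^1$ the remaining groups follow by counting ranks and dimensions. An Euler characteristic count in the same long exact sequence gives $\rank G-\rank H=\dim\pi_{odd}^\Q(G/H)-\dim\pi_{even}^\Q(G/H)=3-1=2$, while $\chi(K^+/H)=\chi(S^2)>0$ and $\chi(K^-/H)=\chi(S^3)=0$ give $\rank K^+=\rank H=1$ and $\rank K^-=\rank H+1=2$. Thus $K^+$ is semisimple of rank one, namely $SU(2)$ up to cover, and $K^-$, having a circle factor and rank two, is $S^1\times SU(2)$ up to cover. Finally $\dim G=\dim H+\dim(G/H)=1+10=11$; the rank must split as $2+1$, the rank one factor is $SU(2)$ of dimension $3$, and the rank two factor must have dimension $8$, forcing $SU(3)$. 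Hence $G\cong SU(3)\times SU(2)$.

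For the embedding of $K^+\cong SU(2)$, note $H=S^1$ is a maximal torus of $K^+$, so the embedding is determined up to conjugacy by the two projections $p_1(K^+)\subseteq SU(3)$ and $p_2(K^+)\subseteq SU(2)$. I would rule out the two degenerate possibilities. If $p_1(K^+)=\{e\}$ then $K^+=\{e\}\times SU(2)$ is an entire simple factor, and primitivity (Theorem~\ref{thm:primitive}) would force $p_1(K^-)=SU(3)$, impossible since $\dim K^-=4<8$. If $p_2(K^+)=\{e\}$ then $K^+$, and hence $H$, lies in $SU(3)$, so $G/H$ splits as $(SU(3)/S^1)\times SU(2)$; here I would run the Mayer--Vietoris sequence of the double disk bundle decomposition in degree five. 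Because $SU(3)$ is the only simple factor of $G$ carrying rational cohomology in degree five, the degree five generators of both $H^\ast(G/K^\pm;\Q)$ are pulled back from that of $H^\ast(SU(3);\Q)$; tracking their images in $H^5(G/H;\Q)$ should show they span only a line, so that $H^5(G/K^+;\Q)\oplus H^5(G/K^-;\Q)\to H^5(G/H;\Q)$ is not surjective and $H^5(M;\Q)\neq 0$, contradicting that $M$ is a rational sphere. Both projections of $K^+$ being nontrivial, $p_2(K^+)=SU(2)$ and $p_1(K^+)$ is one of the two conjugacy classes of $SU(2)$-subgroups of $SU(3)$ (the block $SU(2)$ or $SO(3)$), giving exactly the two diagonal embeddings. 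For $K^-=S^1\times SU(2)$ a parallel analysis, using $H^\ast(G/K^-;\Q)\cong H^\ast(S^2\times S^5;\Q)$, the requirement $H\subseteq K^-$ with $K^-/H\cong S^3$, Proposition~\ref{prop:diag}, Proposition~\ref{prop:finiteintersection}, and a centralizer computation for $H$, locates the $SU(2)$-factor (either diagonal or inside $SU(3)$); after normalizing the remaining circle by the equivalence move that conjugates $K^-$ by $N(H)_0$ and invoking Asoh's classification~\cite{Asoh81}, one arrives at the two displayed forms with coprime $(a,b)$.

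The hard part will be the case $p_2(K^+)=\{e\}$, i.e. $K^+\subseteq SU(3)$. This configuration is consistent with all of the rational homotopy and rational cohomology data of the individual orbits and, as a centralizer analysis shows, is not excluded by primitivity alone, so it can only be eliminated by using that $M$ itself is a rational sphere. Carrying out the attendant Mayer--Vietoris/Serre spectral sequence bookkeeping—verifying that the two degree five classes genuinely land on the same line in $H^5(G/H;\Q)$ once the decomposable corrections have been accounted for—is the delicate core of the argument, and is also where Asoh's explicit classification of these actions is likely to be the most efficient tool.
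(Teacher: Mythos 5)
Your first two paragraphs are correct and, in substance, identical to the paper's argument: the long exact sequence of rational homotopy groups for $H\to G\to G/H$ kills $\pi_3^\Q(H)$, which together with $\pi_2^\Q(G/H)\cong\Q$ gives $H\cong S^1$, and rank/dimension counting then forces $G\cong SU(3)\times SU(2)$, $K^+\cong SU(2)$, and $K^-\cong S^1\times SU(2)$ up to cover. Your primitivity argument excluding $p_1(K^+)=\{e\}$ is also valid (the paper gets this for free from its standing convention that $p_1|_{K^+}$ has finite kernel). But the proposal stops being a proof exactly where the statement starts to have content: neither the exclusion of $p_2(K^+)=\{e\}$ nor the derivation of the two displayed embeddings of $K^-$ is actually carried out — the first is deferred to unexecuted ``Mayer--Vietoris/Serre spectral sequence bookkeeping'' or to Asoh, and the second is a list of tool names (Propositions \ref{prop:diag}, \ref{prop:finiteintersection}, ``a centralizer computation,'' \cite{Asoh81}) with no argument connecting them to the conclusion.

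Moreover, your diagnosis of the case $p_2(K^+)=\{e\}$ is wrong in a way that matters. This case is not ``consistent with all the data of the individual orbits'' and does not require the rational-sphere hypothesis through Mayer--Vietoris; it is killed by elementary group theory. If $K^+\subseteq SU(3)\times\{e\}$, then $H\subseteq SU(3)$, so $H$ lies in $K_1$, the identity component of $\ker(p_2|_{K^-})$. Primitivity (Theorem \ref{thm:primitive}) and the requirement $K^-/H\cong S^3$ rule out the decompositions in which the $SU(2)$ of $K^-$ lies wholly in $SU(3)$ or equals the factor $G_2$, so that $SU(2)$ is diagonally embedded and $K_1=H$ is the circle factor of $K^-$. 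Since $K_1$ commutes with the diagonal $SU(2)$, the circle $H$ must centralize its $SU(3)$-projection, forcing $H$ conjugate to $\{\diag(z,z,\overline{z}^2)\}$; but $H\subseteq p_1(K^+)$, a block $SU(2)$ or an $SO(3)$, forces $H$ conjugate to $\{\diag(z,\overline{z},1)\}$, and these circles are not conjugate in $SU(3)$. Hence no group diagram exists at all in this configuration — and consequently your proposed cohomological argument cannot even be set up, since Mayer--Vietoris presupposes a valid diagram and a manifold $M$ to decompose. The same kind of concrete work is what is missing for $K^-$: one must show its $SU(2)$ factor projects nontrivially to $SU(3)$ (else non-primitivity), in fact injectively (an $SO(3)$ image would force $K^-\cong SO(3)\times S^1$, making $K^-/H$ non-simply connected), so that the circle of $K^-$ centralizes a block $SU(2)$ and $p_1(K^-)=\{\diag(z^aA,\overline{z}^{2a})\}$, after which the dichotomy of where $p_2$ is nontrivial yields exactly the two displayed forms. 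Asoh's classification is not the right tool for any of this; the paper invokes it only afterwards, to identify the resulting five diagrams with known actions.
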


\begin{proof}Since $\pi^\Q_3(G/H)\cong \pi^\Q_3(S^2\times S^3\times S^5)$ has dimension $2$ and $G = G_1\times G_2$ with both $G_i$ simple, it follows that $H$ must be a torus.  Since $\pi_2(G/H)\otimes \Q \cong \Q$, $H = S^1$.  Since $G/H$ has dimension $10$, we conclude from the classification of simple groups that $G = SU(3)\times SU(2)$, from which it easily follows that, up to cover, $K^+$ is isomorphic to $SU(2)$ and $K^-$ is isomorphic to $S^1\times SU(2)$.

We now focus on the embeddings.  Since $K^+ = L_1\cdot L'\cong SU(2)$, $p_1(K^+)$ is non-trivial.  Recalling that there, up to conjugacy, precisely two non-trivial maps from $SU(2)$ to $SU(3)$, being the block embedding, and the double cover $SU(2)\rightarrow SO(3)\subseteq SU(3)$, it follows that $p_1(K^+)$ is, up to conjugacy,  one of these two options.  As $p_1(H)\subseteq p_1(K^+)$, it now follows that $p_1(H)$ is conjugate to a subgroup of the form $\diag(z,z^{-1},1)$ for $z\in S^1$.

We next claim that $p_2(K^+)$ is also non-trivial.  For it is trivial, then $H' = L' = 0$, from which it follows that $H\subseteq K_1$.  Since $K^-/H$ is an odd dimensional sphere, we conclude that $K_1 = S^1$ and $K' =SU(2)$.  This must centralize $p_1(\{1\}\times SU(2)))$, so it follows that $K_1$ must be conjugate to $\diag(z,z,z^{-2})$.  This contradicts the fact that $K_1 = H$ is conjugate to $\diag(z,z^{-1},1).$  It now follows that $K^+$ is isomorphic to $SU(2)$.  In summary, up to conjugacy, we must have $K^+ = \Delta SU(2)\subseteq SU(2)\times SU(2)\subseteq G$ or $K^+ = \Delta SU(2)\subseteq SO(3)\times SU(2)\subseteq G$.

\bigskip

We now work towards obtaining the embedding of $K^-$.  We first claim that $p_1$ is non-trivial on the $SU(2)$ factor of $K^-$.  For if this is not true, then $p_1(K^-)\subseteq p_1(H)\subseteq p_1(K^+)$.  This follows as the projection of $H$ to the circle factor of $K^-$ is surjective, for otherwise $\pi_1(K^-/H)$ would be infinite.  Thus, if $SU(2)\subseteq K^-$ is in the kernel of $p_1$, the action is non-primitive, being contained in $p_1(K^+)\times SU(2)\subseteq G$.

Moreover, in fact, $p_1$ must be injective on the $SU(2)$ factor of $K^-$.  For otherwise, since $SO(3)\subseteq SU(3)$ is maximal, it would follow that $K^-$ is a product $SO(3)\times S^1$ in $SU(3)\times SU(2)$.  But then the long exact sequence in homotopy groups shows that the quotient $K^-/H \cong S^3$ is not simply connected, giving a contradiction.  Thus, the $SU(2)$ factor in $K^-$ maps, up to conjugacy, as the usual $SU(2)$ in $SU(3)$.  Further, since the $S^1$ factor of $K^-$ centralizes it, $p_1(K^-)$ must have the form $(\diag(z^a A, z^{-2a})$ for $(z,A)\in S^1\times SU(2)$.

Lastly we note that since $H\subseteq K^-$ and $p_2(H)$ is non-trivial, it follows that $p_2(K^-)$ is non-trivial.  If $p_2$ is non-trivial on the $SU(2)$ factor, then the $S^1$ factor must only embedded into $SU(3)$, so $a\neq 0$.  But then $K^-\subseteq G$ is independent of $a$, so without loss of generality, we may assume $a=1$.  On the other hand, if $p_2$ is non-trivial on the $S^1$ factor, we get the second listed form.

\end{proof}

We now fully classify the possible group diagrams.  Note that changing $(a,b)$ to $(-a,-b)$ does not change the image of $K^-$ and there is an inner automorphism of $G$ which changes $b$ to $-b$.  Hence, we may assume that both $a,b\geq 0$. 

\begin{proposition}\label{prop:23possibilities}  Suppose $G $, $K^\pm$ are as in the previous proposition.  There, up to equivalence, there are precisely five cohomogeneity one diagrams, and all five are determined by the conjugacy class of the embeddings of $H,K^+$ and $K^-$.  These embeddings are given in Table \ref{table:5examples}, where $A,B\in SU(2)$, $z\in S^1$, and $\pi:SU(2)\rightarrow SO(3)$ the usual double cover.



\begin{center}
\begin{tabular}{c|c|c}

$H$ & $K^-$ & $K^+$\\

\hline

$(\diag(\overline{z}^2, z^2,1), \diag(z,\overline{z}))$ & $(\diag(z^{-2},zB), B)$ & $(\pi(A),A)$\\

$(\diag(\overline{z}^2,z^2,1), \diag(z,\overline{z}))$ & $(\diag(B,1) , \diag(z,\overline{z}))$ & $(\pi(A),A)$\\

$(\diag(\overline{z}, z,1), \diag(z,\overline{z}) )$ & $(\diag(B,1), \diag(z,\overline{z}))$ & $(\diag(A,1),A)$\\

$(\diag(\overline{z},z,1), \diag(z,\overline{z}))$ & $(\diag(z^{-2}, zB), \diag(z^2,\overline{z}^2))$ & $(\diag(A,1), A)$\\

$(\diag(\overline{z}^2, z^2, 1), \diag(z,\overline{z}))$ & $(\diag(z^{-2}, zB), \diag(z, \overline{z}))$ & $(\pi(A),A)$\\

\end{tabular}\label{table:5examples}

\end{center}

\end{proposition}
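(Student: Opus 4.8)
The plan is to start from the previous proposition, which fixes the two conjugacy classes of $K^+$ (and hence of its maximal torus $H$) and the two families of admissible embeddings of $K^-$, and then to determine, for each choice, exactly which $K^-$ are compatible with the constraints $H\subseteq K^-$ and $K^-/H\cong S^3$. By the proposition describing equivalent diagrams, two such diagrams sharing the fixed $H$ and $K^+$ are equivalent precisely when their $K^-$ are conjugate by an element of $N(H)_0$; since $H$ is a circle, the discreteness of $\operatorname{Aut}(S^1)$ gives $N(H)_0=Z(H)_0$, which in both cases is the full maximal torus $T^3$ of $G$. Thus it suffices to classify the admissible $K^-$ up to $T^3$-conjugacy. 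I would phrase everything in terms of integral weights: record the maximal torus of each candidate $K^-$ as a subtorus of $T^3$ spanned by the weight of its circle factor — a pair of integers $(a,b)$ giving its image in the tori of $SU(3)$ and $SU(2)$, necessarily centralizing the block $SU(2)\subseteq K^-$ — together with the weight of the torus of that block.

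The core is then a finite linear computation. Requiring $H\subseteq K^-$ forces the weight vector of $H$ (namely $(1,-1,0;1)$ for the block-type $K^+$ and $(2,-2,0;1)$ for the $SO(3)$-type $K^+$, up to sign) to be an integral combination of the two generating weights of $K^-$, giving a small linear system in $(a,b)$ and the combination coefficients. Imposing in addition that $\gcd(a,b)=1$ and that $K^-/H$ be simply connected — equivalently, by Proposition~\ref{prop:ellinfo} and the homotopy sequence of $H\to K^-\to K^-/H$, that the generator of $\pi_1(H)$ map with degree $\pm1$ into $\pi_1$ of the circle factor of $K^-$ — pins the solutions to finitely many. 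Carried out case by case, this shows that the Type~I embedding of $K^-$ (diagonally embedded $SU(2)$) is compatible only with the $SO(3)$-type $K^+$; that the block-type $K^+$ admits exactly the two Type~II solutions $(a,b)=(0,1)$ and $(a,b)=(1,2)$; and that the $SO(3)$-type $K^+$ admits the Type~I solution together with the Type~II solutions $(a,b)=(0,1)$ and $(a,b)=(1,1)$. Using the sign symmetries noted before the proposition (the inner automorphism sending $b\mapsto -b$, and the equality of images under $(a,b)\mapsto(-a,-b)$) to normalize $a,b\geq 0$, these are precisely the five triples displayed in Table~\ref{table:5examples}. Finally I would verify directly that each listed triple is a genuine group diagram, i.e. $H\subseteq K^\pm$ with $K^+/H\cong S^2$ and $K^-/H\cong S^3$, and that distinct rows have non-conjugate $K^-$ — distinguished by whether the $SU(2)\subseteq K^-$ projects nontrivially to the $SU(2)$ factor of $G$ and by the value of $b$ — so that exactly five inequivalent diagrams remain.

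The main obstacle is the bookkeeping with the finite central subgroups. Several candidate $K^-$ are not $S^1\times SU(2)$ on the nose but a $\mathbb{Z}/2$-quotient of it: for instance in the $(a,b)=(1,2)$ row the circle $H$ winds twice relative to the $SU(2)$-factor, so the two lifts of an element of $H$ to $S^1\times SU(2)$ are identified in $K^-$. Keeping track of which integral weight vectors are primitive — and hence of when the containment $H\subseteq K^-$ actually holds and when the quotient $K^-/H$ is genuinely $S^3$ rather than a lens space — is what makes the analysis delicate. One must also guard against overcounting, since the same abstract $K^-$ can appear with its defining block in different coordinate positions; the $T^3$-conjugacy step is precisely what identifies these before concluding that there are exactly five.
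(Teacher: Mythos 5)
Your skeleton (fix representatives, reduce the diagram count to a conjugacy classification governed by $N_G(H)$, then run a finite weight computation) is the same as the paper's, and your weight computation lands on the same five configurations. But your equivalence step contains a genuine gap that, carried out as written, breaks the count. You assert that two diagrams sharing the fixed $H$ and $K^+$ are equivalent \emph{precisely} when their $K^-$'s are conjugate by an element of $N(H)_0 = T^3$. The ``only if'' direction is false. The normalizer $N_G(H)$ has a second component $\sigma T^3$, where $\sigma$ acts on $H\cong S^1$ by inversion (concretely $\sigma = (P,A_0)$ with $P\in SU(3)$ swapping the first two coordinates and $A_0$ a Weyl element of $SU(2)$). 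Conjugating an entire diagram by such a $\sigma$ fixes $H$ setwise, carries $K^+$ to a $T^3$-conjugate of itself (see below), and therefore produces an equivalent diagram whose $K^-$ is in general \emph{not} $T^3$-conjugate to the original. For instance, applying $\sigma$ to the fifth row carries $K^- = (\diag(z^{-2},zB),\diag(z,\overline{z}))$, whose $SU(2)$-block sits in positions $\{2,3\}$ of $SU(3)$, to an admissible subgroup containing the same $H$ but with its block in positions $\{1,3\}$; the same happens for rows $1$ and $4$. These mirror configurations satisfy all your admissibility constraints and are distinct $T^3$-classes, so a faithful classification up to $T^3$-conjugacy yields strictly more than five classes, not five.

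Your proposed safeguard --- that ``the $T^3$-conjugacy step is precisely what identifies'' the same abstract $K^-$ realized with its block in different coordinate positions --- cannot work: conjugation by the diagonal torus preserves coordinate positions, so it never identifies a block in $\{2,3\}$ with a block in $\{1,3\}$. Such identifications require the non-identity component of $N_G(H)$, and the whole point of the paper's argument is to justify using it: since $K^+\cong SU(2)$ contains a Weyl element $\rho$ of $H\subseteq K^+$ acting on $H$ by inversion, one has $\rho\in K^+\cap\bigl(N_G(H)\setminus N_G(H)^0\bigr)$, so $N_G(H)=T^3\cup\rho T^3$ and the $N_G(H)$-orbit of $K^+$ under conjugation coincides with its $T^3$-orbit. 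This is exactly what lets one conjugate a whole diagram by elements of the second component while returning $K^+$ to itself via Proposition \ref{prop:equiv}, and hence pass from conjugacy classes of embeddings (where the five-fold count is correct) to equivalence classes of diagrams. Without this lemma, or some substitute that identifies the mirror configurations, your argument either overcounts or silently uses an identification it has not justified.
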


\begin{proof}



We first prove that that diagram is determined up to equivalence solely by the conjugacy classes of $H$ in $K^-$ and both $K^\pm$ in $G$.  From Proposition \ref{prop:equiv}, we may assume the embedding of $H $and $K^-$ are fixed.   Suppose $K^+\subseteq G$, and $K\subseteq G$ is conjugate to $K^+$ and contains $H$.  We need to show the group diagram corresponding to $H\subseteq K^\pm \subseteq G$ is equivalent to that of $H\subseteq K^-, K\subseteq G$.  Because $K^+$ and $K$ are conjugate, and all copies of $H$ in $K^+$ are conjugate, it follows that $K = gK^+ g$ for some $g\in N_G(H)$. A simple computation reveals that $N_G(H) = T^3 \cup \sigma T^3$ where $T^3$ is a maximal torus of $G$ and $\sigma$ is the Weyl group element which acts as complex conjugation on $H\cong S^1$.  Since $K^+\cong SU(2)$, the Weyl group of $H\subseteq K^+$ contains an element $\rho$ which acts as complex conjugation on $H$.  Then $\rho\in K^+\cap (N_G(H)\setminus N_G(H)^0)$.  It follows that the $N_G(H)$ orbit of $K^+$ (via the conjugation action) coincides with the $N_G(H)^0$ orbit of $K^+$.  Thus, we may assume $g\in N_G(H)^0$.  By Proposition \ref{prop:equiv}, the diagram involving $K^+$ is equivalent to the diagram using $K$.

Now, we show that the condition that $H\subseteq K^\pm \subseteq G$ forms a valid group diagram determines the conjugacy classes of $H$ in $K^-$ and $K^\pm$ in $G$ up to precisely five possibilities.  Note first that elementary representation theory shows that all diagonal embeddings of $K^+$ into $G$ are, up to conjugacy, the two given ones.

Assume initially that $K^-$ is embedded into $G$ as $(\diag(\overline{z}^2, zB), B)$.   Because $H\subseteq K^+$, every element of $p_1(H)$ has $1$ as an eigenvalue.  We may conjugate $p_1(H)$  into the standard maximal torus of $K^-$,  elements of the form $(\diag(z^{-2}, zw, z\overline{w}), \diag(w,\overline{w}))$.  Then, we find that having an eigenvalue of $1$ forces either $z=1$, $zw=1$, or $z\overline{w} = 1$.  If the case $z=1$ occurs, then $H$ is contained in the semi-simple part of $K^-$, which contradicts that fact that $K^-/H\cong S^{\ell_-}$ with $\ell_- > 1$.  Thus, by replacing $w$ with $\overline{w}$ if necessary, we may assume $H$ has the form $(\diag(\overline{z}^2, z^2, 1), \diag(z,\overline{z}))$.  It follows that in this case, $K^+$ must be embedded as $\Delta SU(2)\subseteq SO(3)\times SU(2)$.

We may now assume $K^-$ is embedded into $G$ as $(\diag(\overline{z}^{2a}, z^a B), \diag(z^b, \overline{z}^b))$ with $\gcd(a,b) = 1$ and both $a,b\geq 0$.
We know that $H\subseteq K^+$ is embedded into $G$, up to conjugacy, either as $(\diag(1, z,\overline{z}), \diag(z,\overline{z}))$ or as $(\diag(1,z^2,\overline{z}^2), \diag(z,\overline{z})).$  

We seek conditions on $a,b$ which guarantee that $H$ is conjugate to a subgroup of $K^-$.  To that end, notice that every element in $K^-$ is conjugate to an element of the form $(\diag(z^{-2a}, z^a w ,z^a \overline{w}), \diag(z^b, \overline{z}^b))$.  By replacing $w$ with $\overline{w}$, we must have $2a=0$ or $\lambda = z^a$.  In the first case, $a = 0$, which then implies $b=1$.  In the second case, we find that $a=b$ or $2a=b$, depending on the embedding of $K^+$ into $G$.  Since $\gcd(a,b) = 1$, we must have $a=b=1$ or $2a=b=2$.

\end{proof}

We will now show that the two cases of Proposition \ref{prop:23possibilities} where the circle factor of $K^-$ is embedded only into the $SU(2)$ factor of $G$ correspond to actions on spaces which are not rational spheres.

\begin{proposition}  The actions corresponding to the second two entries of Table \ref{table:5examples} are not actions on rational spheres.

\end{proposition}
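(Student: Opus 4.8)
The plan is to derive a contradiction from the Mayer--Vietoris sequence of the double disk bundle decomposition of $M$, by exhibiting a restriction map that would have to be an isomorphism if $M$ were a rational sphere but in fact fails to be surjective in degree $5$. Throughout I use Proposition~\ref{prop:lasttop} with $(\ell_-,\ell_+)=(3,2)$, so $\dim M=11$ and, were $M$ a rational sphere, $H^\ast(G/H;\Q)\cong H^\ast(S^2\times S^3\times S^5;\Q)$, $H^\ast(G/K^+;\Q)\cong H^\ast(S^3\times S^5;\Q)$, and $H^\ast(G/K^-;\Q)\cong H^\ast(S^2\times S^5;\Q)$. Comparing dimensions degree by degree shows that $M$ is a rational sphere only if the map $\phi^5\colon H^5(G/K^+;\Q)\oplus H^5(G/K^-;\Q)\to H^5(G/H;\Q)$ induced by the projections $\pi_\pm\colon G/H\to G/K^\pm$ is an isomorphism; I will show its image is only one-dimensional, so $H^6(M;\Q)=\operatorname{coker}\phi^5\neq 0$.

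First I would record the geometry common to the second and third entries. In both, $K^-=SU(2)\cdot S^1$ is an honest product, with $SU(2)$ the block subgroup of $SU(3)=G_1$ and $S^1=\diag(z,\overline z)$ in the factor $G_2=SU(2)$, so $G/K^-=SU(3)/SU(2)\times SU(2)/S^1=S^5\times S^2$. Let $a_5,a_2\in H^\ast(G/H;\Q)$ be the $\pi_-$-pullbacks of the $S^5$- and $S^2$-generators and let $a_3$ be a degree-three generator, so that $H^5(G/H)=\langle a_5,\ a_2a_3\rangle$ and $a_2a_3a_5$ generates $H^{10}$. A short coset computation gives $G/K^+\cong SU(3)$ via $g\mapsto (g,I)K^+$, under which $\pi_+$ becomes $gH\mapsto p_1(g)\,\rho(p_2(g))^{-1}\in SU(3)$, where $\rho\colon SU(2)\to SU(3)$ is the relevant homomorphism: the $\operatorname{Sym}^2$ map with image $SO(3)$ for the second entry, and the block embedding for the third. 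Taking $c_5\in H^5(G/K^+)$ to be the pullback of the generator $u$ of $S^5=SU(3)/SU(2)$ under $\overline{(\,\cdot\,)}\colon SU(3)\to S^5$, the image of $\phi^5$ is $\langle a_5,\ \pi_+^\ast c_5\rangle$; writing $\pi_+^\ast c_5=\alpha a_5+\beta a_2a_3$, surjectivity of $\phi^5$ is equivalent to $\beta\neq 0$, i.e.\ to $\pi_+^\ast c_5\cup a_5\neq 0$ in $H^{10}(G/H;\Q)$.

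The crux is to compute this cup product by fiber integration. The map $s_1\colon G/H\to SU(3)/SU(2)=S^5$, $gH\mapsto \overline{p_1(g)}$, is a homogeneous fiber bundle with fiber $N=(SU(2)\times SU(2))/H$, a closed oriented $5$-manifold, and $a_5=s_1^\ast u$; moreover $\pi_+^\ast c_5=t^\ast u$ for $t=\overline{(\,\cdot\,)}\circ\pi_+$. Integration over the fibers of $s_1$ (equivalently, pairing $\pi_+^\ast c_5$ with the Poincar\'e dual $[N]$ of $a_5$) then yields $\pi_+^\ast c_5\cup a_5=\deg(t|_N)\cdot[a_2a_3a_5]$ up to a nonzero scalar, reducing everything to showing $\deg(t|_N)=0$. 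On $N$, where $p_1(g)$ lies in the block $SU(2)$, the map is $t|_N(g_1,g_2)=g_1\cdot\overline{\rho(g_2)^{-1}}$, with $g_1\in SU(2)$ acting on $S^5=S(\C^2\oplus\C)$ through its standard action on $\C^2$ and fixing the $\C$-summand. Since the image of $\rho$ lies in $SO(3)$ (respectively in the block $SU(2)$), the point $\overline{\rho(g_2)^{-1}}=\rho(g_2)^{-1}e_3$ has real third coordinate (respectively equals $e_3$), and the block $SU(2)$ fixes that coordinate; hence every value of $t|_N$ has real third coordinate, so $t|_N$ misses the open set of $S^5$ where the third coordinate is non-real. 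Thus $t|_N$ is not surjective, $\deg(t|_N)=0$, so $\beta=0$, $\phi^5$ is not onto, and $H^6(M;\Q)\neq 0$, the desired contradiction.

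I expect the main obstacle to be the middle step: correctly identifying the degree-five restriction maps, reducing surjectivity of $\phi^5$ to the nonvanishing of the single cup product $\pi_+^\ast c_5\cup a_5$, and then carrying out the fiber-integration bookkeeping (verifying that $N$ is closed and oriented and that $a_5$ is Poincar\'e dual to its fundamental class). Once the problem is localized to $\deg(t|_N)$, the non-surjectivity is a short and uniform orbit computation valid for both entries. As an independent check on the third entry I note that there both $K^\pm$ lie in the proper subgroup $SU(2)\times SU(2)\subsetneq G$ (block $SU(2)$ in $SU(3)$ times $G_2$), so that action is non-primitive and is excluded at once by Theorem~\ref{thm:primitive}; the cohomological argument above has the advantage of disposing of both entries simultaneously.
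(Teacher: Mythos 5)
Your proof is correct, and it ends at the same place as the paper's---the Mayer--Vietoris restriction $H^5(G/K^+;\Q)\oplus H^5(G/K^-;\Q)\to H^5(G/H;\Q)$ has one-dimensional image, so $H^6(M;\Q)\neq 0$---but the way you compute that image is genuinely different. The paper disposes of the third entry of Table \ref{table:5examples} instantly by non-primitivity (both $K^\pm$ lie in the block $SU(2)\times SU(2)\subsetneq G$, contradicting Theorem \ref{thm:primitive}), which is exactly the shortcut you note at the end, and then treats the second entry by a purely algebraic computation: using the classifying-space (Cartan) model for the rational cohomology of homogeneous spaces \cite[Theorem 2.71]{FelixOpreaTanre08}, it computes the three degree-five groups and finds that both restriction maps have image spanned by the same class $[su_6]$. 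You instead argue geometrically and uniformly for both entries: you identify $G/K^+\cong SU(3)$ and $G/K^-\cong S^5\times S^2$ concretely, reduce surjectivity to the nonvanishing of the single cup product $\pi_+^\ast c_5\cup a_5$, and evaluate that product by Poincar\'e duality as the mapping degree of $t|_N\colon N\to S^5$, which vanishes because the image of $t|_N$ lies in the locus of points of $S^5\subseteq \C^3$ with real third coordinate (and is even a single point for the third entry). Your route buys elementarity and uniformity---degree theory and an explicit orbit computation in place of classifying-space algebra, with no case split; the paper's route buys an unconditional determination of the orbit cohomology and the restriction maps (no need to assume $M$ is a rational sphere, hence no appeal to Proposition \ref{prop:lasttop}), with machinery it reuses in Proposition \ref{prop:wang75}. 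One point inside the ``bookkeeping'' you flag should be made explicit: to write $H^5(G/H;\Q)=\langle a_5,\,a_2a_3\rangle$ with $a_2a_3a_5$ generating $H^{10}$, where $a_2,a_5$ are the $\pi_-$-pullbacks, you need $\pi_-^\ast$ to be injective so that the $S^5$-component of $a_5$ is nonzero; this follows from Mayer--Vietoris since $H^k(M;\Q)=0$ for $0<k<11$ (for instance, $a_2a_5\neq 0$ in degree $7$ rules out $a_5$ being proportional to $a_2a_3$), and is the same degeneration established in the proof of Proposition \ref{prop:lasttop}. With that line added, the argument is complete.
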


\begin{proof}

The action given in the third row is not primitive, with both $K^\pm\subseteq p_1(K^-)\times SU(2)$.  So, we focus on the case where $K^+$ is embedded as $K^+ = (\pi(A), A)\subseteq G$.  We will show the two induced maps $H^5(G/K^\pm;\Q)\rightarrow H^5(G/H;\Q)$ have the same image.  Assuming this for the moment, it then follows from the Mayer-Vietoris sequence for the double disk bundle decomposition of $M$ that $H^5(M;\Q)\neq 0$.

To compute the maps $H^\ast(G/K^\pm;\Q)\rightarrow H^\ast(G/H;\Q)$, we use the description of the rational cohomology of a homogeneous space as found in \cite[Theorem 2.71]{FelixOpreaTanre08}.  Let us first set up notation.

For a Lie group $L$, we use the notation $BL$ for the classifying space of $L$.  Then we may identify $$H^\ast(BH;\Q)\cong \Q[z_2], H^\ast(BK^+;\Q)\cong \Q[y_4], H^\ast(BK^-;\Q)\cong \Q[x_2, x_4], $$ and $$H^\ast(BG;\Q)\cong \Q[u_4,u_6,v_4]$$ with the subscripts denoting degrees of the elements.  Further, we let $sG$ denote $H^\ast(BG;\Q)$ with the degree of the generators all shifted down by one, so $sG\cong  \Lambda_\Q [su_4, su_6, sv_4]$ with $|su_i| = i-1$.

The inclusion $i:H\rightarrow G$ induces a map $Bi^\ast:H^\ast(BG;\Q)\rightarrow H^\ast(BH;\Q)$.  Then,\cite[Theorem 2.71]{FelixOpreaTanre08} gives an isomorphism $H^5(G/H;\Q)\cong H^5( \Q[z_2]\otimes sH, d)$ with $dz_2 = 0$ and $dsu_4 = Bi^\ast(u_4)$, $dsu_6 = Bi^\ast(u_6)$, and $dsv_4 = Bi^\ast(v_4)$.

A simple calculation shows that $Bi^\ast(su_4) = -4z_2^2$, $Bi^\ast(su_4) = -z_2^2$, and $Bi^\ast(su_6) = 0$.  Thus, $H^5(G/H;\Q)$ is isomorphic to $\Q^2$, generated by $[su_6]$ and $ [z_2(4sv_4 - su_4)]$.

In a similar fashion, the inclusions $i_\pm:K^\pm\rightarrow G$ induce the following:

$$\begin{matrix} Bi_+^\ast(su_4) = 4x_4 & Bi_+^\ast(su_6) = 0 & Bi_+^\ast(sv_4) = x_4\\
Bi_-^\ast(su_4) = y_4 & Bi_-^\ast(su_6) = 0 & Bi_-^\ast(sv_4) = y_2^2
\end{matrix}$$

It follows that in both cases $H^5(G/K^\pm;\Q)\cong Q$, generated by $[su_6]$.

Now, the inclusions $j_\pm:H\rightarrow K^\pm$ induce maps $H^\ast(BK^\pm;\Q)\otimes sH\rightarrow H^\ast(BH;\Q)\otimes sH$ of the form $Bj_\pm\otimes 1$ which, in turn, induce the obvious maps $H^\ast(G/K^\pm;\Q)\rightarrow H^\ast(G/H;\Q)$.  In particular, the image of both $H^\ast(G/K^\pm;\Q)$ in $H^\ast(G/H;\Q)$ is spanned by $[su_6]$.

\end{proof}

To complete the proof of Theorem \ref{thm:23}, we show that the remaining three actions as described in Proposition \ref{prop:23possibilities} correspond to three actions listed in Theorem \ref{thm:23}

\begin{proposition}  The first, fourth, and fifth entries of Table \ref{table:5examples} correspond to the three listed actions in Theorem \ref{thm:23}.

\end{proposition}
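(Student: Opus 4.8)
The plan is to match each of the three surviving diagrams --- rows $1$, $4$, and $5$ of Table \ref{table:5examples} --- with one of the three named actions, by computing the principal and singular isotropy groups of each named action and comparing conjugacy classes, using the equivalence moves of Proposition \ref{prop:equiv} to absorb the freedom in a diagram.

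I would first treat the tensor product action of $SU(3)\times SU(2)$ on $S^{11}\subseteq\mathbb{C}^3\otimes\mathbb{C}^2$, realized on $3\times 2$ complex matrices $X$ by $X\mapsto AXB^t$ and restricted to $\|X\|=1$. Stratifying by singular values, the orbit space is the arc $\{(\sigma_1,\sigma_2):\sigma_1\geq\sigma_2\geq 0,\ \sigma_1^2+\sigma_2^2=1\}$, so the action is cohomogeneity one. For a generic $X=\mathrm{diag}(\sigma_1,\sigma_2)$ (padded by a zero row) with $\sigma_1>\sigma_2>0$, solving $AX=X\overline B$ forces the off-diagonal $SU(2)$-entries to vanish and gives $H=\{(\mathrm{diag}(\overline z,z,1),\mathrm{diag}(z,\overline z))\}$; the same computation at the two endpoints $\sigma_2=0$ and $\sigma_1=\sigma_2$ produces stabilizers conjugate to $(\mathrm{diag}(z^{-2},zB),\mathrm{diag}(z^2,\overline z^2))$ and to $(\mathrm{diag}(A,1),A)$, with fibers $S^3$ and $S^2$. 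This is exactly the fourth entry of Table \ref{table:5examples} (the third entry has the same $H$ but is non-primitive, hence already excluded), and since the action is linear, $M\cong S^{11}$.

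I would then turn to the two homogeneous examples. The orbits of $SU(3)\times SU(2)_{4-i}$ on $\mathbf{G}_2/SU(2)_i$ are the double cosets $SU(3)\backslash\mathbf{G}_2/SO(4)$, which I would analyze through the identification $\mathbf{G}_2/SU(3)=S^6\subseteq\operatorname{Im}(\mathbb{O})=\mathbb{R}^7$ together with the restriction $\mathbb{R}^7|_{SO(4)}=\mathbb{R}^3\oplus\mathbb{R}^4$, in which $SO(4)$ acts on the $\mathbb{R}^3$ summand through one of its two $SU(2)$ projections. This presents the residual $SO(4)$-action on $S^6$ as cohomogeneity one with orbit space an interval, confirms that the fiber data lands in Case (4) of Theorem \ref{thm:GH} with $\{\ell_-,\ell_+\}=\{2,3\}$, and lets me read off the principal and singular isotropy groups. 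The distinction is governed by the Dynkin index: which $SU(2)$ factor acts through $SO(3)$ on the $\mathbb{R}^3$ summand is what separates $i=1$ from $i=3$ and matches them to the first and fifth entries, whose only difference is whether the semisimple part of $K^-$ is embedded diagonally (row $1$) or lies in the $SU(3)$-factor alone (row $5$).

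The hard part is this last identification inside $\mathbf{G}_2$: fixing the embeddings $SU(3),SO(4)\subseteq\mathbf{G}_2$ and their mutual position precisely enough to recover the exact circle weights ($z$ versus $z^2$) appearing in $H$ and the placement of the $K^-$-copy of $SU(2)$. To make the correspondence rigorous I would supplement the direct computation with two invariants. The three named manifolds are pairwise non-diffeomorphic --- $S^{11}$, $\mathbf{G}_2/SU(2)_1\cong T^1S^6$, and $\mathbf{G}_2/SU(2)_3$ (which carries $3$-torsion, unlike the other two) --- so, together with the fact that there are exactly three admissible diagrams, this already forces a bijection onto $\{\text{row }1,\text{row }4,\text{row }5\}$; with the tensor case pinned to row $4$, the two $\mathbf{G}_2$-quotients must occupy rows $1$ and $5$. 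To assign them correctly I would compute $H^\ast(M;\mathbb{Z})$ from each of rows $1$ and $5$ by Mayer--Vietoris applied to the double disk bundle decomposition and compare the torsion with the known integral cohomology of $T^1 S^6$ and of $\mathbf{G}_2/SU(2)_3$ recorded in the introduction, thereby matching each homogeneous space to its row and completing the proof, including the converse assertion of Theorem \ref{thm:23}.
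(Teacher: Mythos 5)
Your proposal is correct, and its logical skeleton is the same as the paper's: pin the tensor product action to row 4, verify that the two homogeneous examples are cohomogeneity one actions on rational $11$-spheres whose homotopy fiber falls into Case (4) of Theorem \ref{thm:GH} with $(\ell_-,\ell_+)=(3,2)$ so that their diagrams must occur among the five of Table \ref{table:5examples}, and finish by elimination. The difference is in execution: you unpack into direct computations what the paper outsources to the literature. For the tensor action the paper simply cites Asoh \cite{Asoh81} for the identification with row 4, whereas you derive the isotropy groups from the singular-value stratification; your computation is right ($H$, the $\sigma_2=0$ stabilizer $(\diag(z^{-2},zB),\diag(z^2,\overline z^2))$, and the $\sigma_1=\sigma_2$ stabilizer $(\diag(A,1),A)$ match row 4 exactly). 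For the $\mathbf{G}_2$ quotients the paper cites Iwata for cohomogeneity one, cites \cite{KapovitchZiller04} for the fact that both $\mathbf{G}_2/SU(2)_i$ are rational spheres, and obtains $(\ell_-,\ell_+)=(3,2)$ from a dimension count plus Proposition \ref{prop:Connecting}, whereas you in effect reprove Iwata's result via the $SO(4)$-action on $S^6=\mathbf{G}_2/SU(3)$ and the splitting $\mathbb{R}^7=\mathbb{R}^3\oplus\mathbb{R}^4$, reading off $\ell_\pm$ from the slice representations; both routes are sound. Two small points: you should state explicitly that both $\mathbf{G}_2/SU(2)_i$ are rational $11$-spheres, since this hypothesis is needed to invoke the five-diagram classification (it follows from the cohomological facts you quote, or from \cite{KapovitchZiller04}); and your final Mayer--Vietoris step assigning rows 1 and 5 to the two quotients individually is not required for the proposition as stated --- the paper makes that finer identification only in a subsequent remark, by octonion arithmetic. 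Your explicit counting argument (pairwise non-diffeomorphism of $S^{11}$, $T^1S^6$, and $\mathbf{G}_2/SU(2)_3$ forces a bijection onto rows 1, 4, 5) is exactly the step the paper leaves implicit, and it is the correct way to bypass the embedding computation you rightly flag as the hard part.
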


\begin{proof}When $K^+ = (\diag(1,A),A)$ and $K^- = (\diag(z^{-2}, zB), \diag(z,z^{-1}))$, this diagram is found in Asoh's work \cite[7.13]{Asoh81}, where he shows  this corresponds to the tensor product action of $SU(3)\times SU(2)$ on $\mathbb{C}^{3}\otimes \mathbb{C}^2$.  We now turn to the remaining two cases.

As shown by Iwata \cite{Iwata78}, the natural $SU(3)$ action on $\mathbf{G}_2/SO(4)$ is cohomogeneity one.  Writing the double cover of $SO(4)$ as $SU(2)_1\times SU(2)_3$, it follows immediately that the action by $SU(3)\times SU(2)_i$ on $\mathbf{G}_2/SU(2)_{4-i}$ given by $(A,B_i)\ast C SU(2)_{4-i} = ACB_i^{-1} SU(2)_{4-i}$ is a cohomogeneity one action.  Further, both $\mathbf{G}_2/SU(2)_i$ are rational $11$-spheres, as shown in, e.g., \cite{KapovitchZiller04}.  We need only to show that these two examples fall within Case (4) of Theorem \ref{thm:GH} with $(\ell_-,\ell_+) = (3,2)$.

By counting dimensions, the principal isotropy group of either of these two actions must be $H = S^1$ (up to components).  From Table \ref{table:transsphere}, the dimension of the spheres $K^\pm/H$ must be in $\{1,2,3\}$.  On the other hand, from Proposition \ref{prop:Connecting}, the loop space factor of the homotopy fiber of the inclusion $\mathcal{F}\rightarrow G/H\rightarrow \mathbf{G}_2/SU(2)_i$ must either be $\Omega S^6$ or $\Omega S^{11}$.  It now follows from Proposition \ref{prop:Connecting} that $\ell_+ = 2$ and $\ell_- = 3$ and that these examples must fall into Case (4) of Theorem \ref{thm:GH}.

\end{proof}

\begin{remark}  In fact, the first action in Table \ref{table:5examples} of Proposition \ref{prop:23possibilities}, with $K^- = (\diag(z^{-2}, zB), B)$, is the action of $SU(3)\times SU(2)_1$ on $\mathbf{G}_2/SU(2)_3$.  One can see using basic knowledge of $\mathbf{G}_2$ and the octonions $\mathbb{O}$ as follows.  Recall $\mathbf{G}_2$ acts transitively on $S^6\subseteq \operatorname{Im}(\mathbb{O})\cong \mathbb{R}^7$ with isotropy group at $i\in \mathbb{O}$ given by a copy of $SU(3)$.  This $SU(3)$ contains $SU(2)_1$.  Now, if $(A,B_1)\in SU(3)\times SU(2)_1$ is in the isotropy group at the identity coset $eSU(2)_3$, then there is a matrix $B_3\in SU(2)_3$ with $AB_1^{-1} = B_3$.  Since $SU(3)\subseteq \mathbf{G}_2$ is the isotropy at $i\in \mathbb{O}$, it follows that $B_3$ fixes $i$.  Since $B_3\in SU(2)_3$, this implies that $B_3\in Z_{SU(3)}(SU(2)_1)$.  Since $A = B_1 B_3^{-1}$, it is now straightforward to see that the isotropy group at $e$ is, up to conjugacy, $(\diag(z^{-2}, zB), B)$ with $B\in SU(2)$.  In a similar fashion, one can verify the other singular isotropy group occurs at $g SU(2)_3$ where $g\in \mathbf{G}_2$ moves a point in $\operatorname{Im}(\mathbb{H})^\bot\subseteq \operatorname{Im}(\mathbb{O})$ to $i$.  The identity and this $g$ are also singular points of the $SU(3)\times SU(2)_3$ action on $\mathbf{G}_2/SU(2)_1$.

\end{remark}

\subsubsection{The case where$(\ell_-, \ell_+)\neq (3,2)$}

In this section, we complete the proof of Theorem \ref{thm:paritycase} in the case where $G$ is semi-simple but not simple.  Recall that Propositions \ref{prop:2simp} and \ref{prop:kernel} imply that we may assume $G = G_1\times G_2$ is a product of two simple groups, and that $K^+ = L_1\cdot L'$, $H_1\cdot H'$, and $K^- = (K_1\times K_2)\cdot K'$, where the primed groups are diagonally embedded and, e.g., $K_1\subseteq G_1$ and $K_2\subseteq G_2$.  Moreover, from Proposition \ref{prop:diag}, at least one of $L'$ or $K'$ must be non-trivial.  Because the case of $(\ell_-,\ell_+) = (3,2)$ has already been handled in Section \ref{sec:32}, we assume $K^+/H \cong S^{\ell_+}$ and $K^-/H\cong S^{\ell_-}$ with $(\ell_-, \ell_+)\neq (3,2)$.  Recall that $\ell_-\geq 3$ is odd and $\ell_+\geq 2$ is even.  Finally, recall Convention \ref{convention}

We also note that from Proposition \ref{prop:lasttop}, it easily follows that $K^+$ has corank $2$ in $G$ and that $K^-$ has corank $1$ in $G$.  This easily implies that $\rk K'\leq 1$ and $\rk L'\leq 2$.

\begin{proposition}\label{prop:g2rank}  Under the above assumptions, $\rk L'\leq \rk G_2\leq 2$.
\end{proposition}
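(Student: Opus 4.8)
The plan is to treat the two inequalities separately. The left inequality $\rk L'\le\rk G_2$ is immediate from the definition of a diagonally embedded subgroup: since $L'$ is diagonally embedded in $G_1\times G_2$, the projection $p_2|_{L'}\colon L'\to G_2$ has finite kernel, so its image $p_2(L')\subseteq G_2$ is a subgroup with $\rk p_2(L')=\rk L'$. Hence $\rk L'\le\rk G_2$; when $L'$ is trivial this is vacuous.

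For the right inequality, the key observation is that the standing assumption $L_2=0$ forces the projection $p_1|_{K^+}\colon K^+\to G_1$ to have finite kernel. Indeed, writing $K^+=L_1\cdot L'$ as a quotient of the finite cover $L_1\times L'$, the kernel of $p_1|_{K^+}$ is $K^+\cap(\{e\}\times G_2)$, whose preimage in $L_1\times L'$ lies in $L_2\times\ker(p_1|_{L'})$; both factors are finite, since $L_2=0$ by assumption and $\ker(p_1|_{L'})$ is finite because $L'$ is diagonally embedded. Consequently $p_1(K^+)$ is a subgroup of $G_1$ with $\rk p_1(K^+)=\rk K^+$, so that $\rk K^+\le\rk G_1$.

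Finally I would combine this with the corank computation recorded just before the proposition: since $K^+$ has corank $2$ in $G=G_1\times G_2$, we have $\rk K^+=\rk G_1+\rk G_2-2$. Substituting into $\rk K^+\le\rk G_1$ gives $\rk G_1+\rk G_2-2\le\rk G_1$, i.e. $\rk G_2\le 2$, completing the chain $\rk L'\le\rk G_2\le 2$. The only real content is the middle step---recognizing that $L_2=0$ makes $p_1$ injective on $K^+$ up to a finite group---after which the bound drops out of the corank-$2$ condition; in particular no case analysis on the transitive sphere actions of $K^+$ is needed.
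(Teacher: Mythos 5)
Your proof is correct and is essentially the paper's own argument: the first inequality comes from the finite kernel of $p_2|_{L'}$ (diagonal embedding), and the second from combining the corank-$2$ fact $\rk K^+ = \rk G_1 + \rk G_2 - 2$ (via Proposition \ref{prop:lasttop}) with the finiteness of $\ker(p_1|_{K^+})$, exactly as in the paper. One small caveat: your intermediate containment of the preimage of the kernel in $L_2 \times \ker(p_1|_{L'})$ is not literally correct --- an element $(a,c) \in L_1 \times L'$ in that preimage only satisfies $a\,p_1(c)=e$, from which one deduces $p_2(c)=e$ rather than $p_1(c)=e$ --- but this is harmless, since under the paper's notational convention the image of $L_2$ is \emph{by definition} the kernel of $p_1|_{K^+}$, so the assumption $L_2=0$ yields the required finiteness immediately.
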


\begin{proof}
The inequality  $\rk L'\leq \rk G_2$ follows since the projection $p_2:G\rightarrow G_2$ has finite kernel when restricted to $L'$, so $\rk(L') = \rk(p_2(L')) \leq \rk G_2$.

For the other inequality, first observe that Proposition \ref{prop:lasttop} gives $H^\ast(G/K^+;\Q)\cong H^\ast(S^{\ell_- }\times S^{\ell_- + \ell_+};\Q)$, so $K^+$ has corank $2$ in $G$.  Since $K^+ = L_1\cdot L'$, $p_1:G\rightarrow G_1$ has finite kernel when restricts to $K^+\subseteq G$.  In particular, $\rk(K^+) = \rk(p_1(K^+)) \leq \rk G_1$.  Then $\rk G_1 + \rk G_2 - 2 = \rk K^+\leq \rk G_1$.  Thus, $\rk G_2\leq 2$.

\end{proof}

Of course, if $H_1$, and therefore $L_1$ are both trivial, the same proof shows that $\rk L'\leq \rk G_1\leq 2$.  We briefly handle this case in the following proposition.

\begin{proposition}\label{prop:h1trivial}  Suppose $H_1$ is trivial.  Then the quadruple $(H,K^-, K^+, G)$ is isomorphic to $(Sp(1)^2,Sp(1)^3,Sp(2),Sp(2)\times Sp(2))$.

\end{proposition}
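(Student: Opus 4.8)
The plan is to exploit the diagonal structure forced by the hypothesis $H_1 = 0$ and then run the classification of transitive actions on even spheres against the rank and dimension constraints already assembled. As noted in the discussion preceding the proposition, triviality of $H_1$ forces $L_1$ to be trivial as well: since $H$ has full rank in $K^+ = L_1\cdot L'$ and $\rk H = \rk H' \le \rk L'$, equality of ranks leaves no room for a nontrivial $L_1$. Consequently both $K^+ = L'$ and $H = H'$ are diagonally embedded in $G_1\times G_2$, with each projection $p_i$ of finite kernel. In particular $p_i$ identifies $K^+$, up to a finite cover, with a subgroup of the simple group $G_i$, and by Proposition \ref{prop:g2rank} together with its companion inequality one has $\rk K^+ = \rk L' \le \rk G_i \le 2$. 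Since $K^+/H\cong S^{\ell_+}$ with $\ell_+$ even and $K^+$ is semisimple (Proposition \ref{prop:gstructure}), I would first invoke Table \ref{table:transsphere}: the groups acting transitively on an even-dimensional sphere are all simple, so $K^+$ is, up to cover, a simple group of rank at most $2$, forcing $K^+\in\{SU(2), Sp(2), \mathbf{G}_2\}$ and correspondingly $\ell_+\in\{2,4,6\}$.

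Next I would eliminate $\ell_+=2$ and $\ell_+=6$. If $\ell_+=2$ then $K^+=SU(2)$ and $H=S^1$; but scanning Table \ref{table:transsphere} the only transitive action on an \emph{odd} sphere whose isotropy is a circle is $SU(2)\times S^1$ acting on $S^3$, so $K^-/H\cong S^{\ell_-}$ would force $\ell_-=3$, contradicting $(\ell_-,\ell_+)\ne(3,2)$. If $\ell_+=6$ then $K^+=\mathbf{G}_2$ and $H=SU(3)$; since $\mathbf{G}_2$ has trivial center, each $p_i$ embeds it into the rank-$2$ simple group $G_i$, and a dimension count shows $\mathbf{G}_2$ fits into no rank-$2$ simple group but itself, whence $G=\mathbf{G}_2\times\mathbf{G}_2$. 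Then $\dim G/H = 28-8 = 20$, while Proposition \ref{prop:lasttop} gives $\dim G/H = 2(\ell_-+\ell_+)=2(\ell_-+6)$, forcing $\ell_-=4$, which is even — a contradiction.

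This leaves $\ell_+=4$, where $K^+=Sp(2)=Spin(5)$ and $H=Spin(4)=Sp(1)^2$ (using $Spin(5)/Spin(4)\cong S^4$). Here the key embedding step is to show $G_1=G_2=Sp(2)$: each $p_i(K^+)$ is $Sp(2)$ or its quotient $SO(5)$ inside the rank-$2$ simple group $G_i$, and dimension rules out $SU(3)$ while the maximal-rank subgroup structure of $\mathbf{G}_2$ (whose rank-$2$ subgroups are $SU(3)$ and $SO(4)$) rules out $\mathbf{G}_2$; since $SO(5)$ is not a subgroup of $Sp(2)$ either, one is left with $G_i=Sp(2)$ and $K^+=\Delta Sp(2)$. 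A final dimension count, $\dim G/H = 20-6 = 14 = 2(\ell_-+4)$, yields $\ell_-=3$, so $K^-$ has dimension $9$ and corank $1$ (rank $3$) with $K^-/H\cong S^3$; the only such group containing $H=Sp(1)^2$ is $K^- = Sp(1)^3$. This produces the asserted quadruple $(H,K^-,K^+,G)=(Sp(1)^2, Sp(1)^3, Sp(2), Sp(2)\times Sp(2))$. I expect the main obstacle to be the embedding bookkeeping in this last case — confirming that $Sp(2)$ (and $SO(5)$) sits inside a rank-$2$ simple group only as $Sp(2)\subseteq Sp(2)$, and that $K^-$ is pinned down exactly rather than merely up to local isomorphism; the parity and dimension counts via Propositions \ref{prop:lasttop} and \ref{prop:ellinfo} are comparatively routine, and primitivity (Theorem \ref{thm:primitive}) together with Proposition \ref{prop:finiteintersection} can be used to rule out any stray isotropy placement.
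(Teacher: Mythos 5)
There is a genuine gap at the very first reduction step: you claim that because the groups acting transitively on an even-dimensional sphere are simple, $K^+$ must be, up to cover, a \emph{simple} group of rank at most $2$. But Table \ref{table:transsphere} classifies \emph{effective} transitive actions, whereas the action of $K^+$ on $K^+/H \cong S^{\ell_+}$ may have a positive-dimensional ineffective kernel (necessarily contained in $H$). Since you only know $K^+$ is semisimple with $\rk K^+ \le 2$, the case $\ell_+ = 2$ admits the configuration where $K^+$ is, up to cover, $SU(2)\times SU(2)$ acting on $S^2$ through its quotient $SO(3)$, with $H \cong SU(2)\cdot S^1$ (note Proposition \ref{prop:gstructure} explicitly allows the circle factor in $H$ exactly when $\ell_+=2$, so nothing forbids this). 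This configuration is realizable at the group level, e.g.\ $K^+ = \Delta(Sp(1)\times Sp(1)) \subseteq Sp(2)\times Sp(2)$ with $H = \Delta(Sp(1)\times S^1)$, so it cannot be dismissed. Your $\ell_+=2$ analysis assumes $H = S^1$ and scans Table \ref{table:transsphere} for circle isotropy, which only covers the almost-effective subcase $K^+ \cong SU(2)$; the paper's own ``$\ell_+=2$'' case is devoted precisely to the configuration you miss, and rules it out by observing that both $G_i$ would have to contain $SU(2)\times SU(2)$ with finite kernel, forcing $G_i \in \{Sp(2), \mathbf{G}_2\}$, whose odd rational homotopy is supported in degrees $\{3,7,11\}$, while $\pi^\Q_{odd}(G/K^+)$ is supported in degrees $\ell_-$ and $\ell_-+2$, which cannot both lie in that set.

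The gap is fillable, even in your own dimension-counting style: in the missed configuration $\dim H = 4$ and $\dim G \in \{20,24,28\}$, so Proposition \ref{prop:lasttop} gives $2(\ell_- + 2) = \dim G/H = \dim G - 4 \in \{16,20,24\}$, forcing $\ell_- \in \{6,8,10\}$, contradicting that $\ell_-$ is odd. With that case added, the rest of your argument stands: for $\ell_+ = 4$ and $\ell_+ = 6$ the rank bound genuinely forces the ineffective kernel to be finite (the effective groups $SO(5)$ and $\mathbf{G}_2$ already have rank $2$), so there simplicity of $K^+$ is legitimate, and your treatment of those cases by dimension counts and subgroup structure (no $Sp(2)$ or $SO(5)$ inside $SU(3)$ or $\mathbf{G}_2$; $\dim G/H = 20$ forcing $\ell_- = 4$ when $K^+ = \mathbf{G}_2$) is a valid and arguably more elementary alternative to the paper's bookkeeping of rational homotopy degrees. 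Your $K^+ \cong SU(2)$ subcase also correctly recovers, via the Table \ref{table:transsphere} scan, what the paper handles as its ``$G_2 = SU(2)$'' case.
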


We will later see in Proposition \ref{prop:spclass} that there is, up to equivalence, a unique cohomogeneity one action on a rational sphere having these isotropy groups.

\begin{proof}(Proof of Proposition \ref{prop:h1trivial})  As mentioned above, the hypotheses imply that both $G_1$ and $G_2$ have rank at most $2$.  As $\dim \pi^\Q_{odd}(G/H) = 3$, at least one $G_i$ has rank $2$.  Without loss of generality, we may assume $G_1$ has rank $2$.  We now break into cases depending on $G_2$.

If $G_2 = SU(2)$, the Proposition \ref{prop:g2rank} implies that $L'$ has rank $1$.  As $L'/H'\cong S^{\ell_+}$, we conclude that $L'\cong SU(2)$, $H'\cong S^1$, and that $\ell_+ = 2$.  Further, since $K^+$ is simple, it now follows that $\pi^\Q_3(G/K^+)\neq 0$.  As $\ell_-\geq 3$ by assumption, it follows that $\ell_- = 3$, contradicting the fact that $(\ell_-,\ell_+)\neq (3,2)$.

Thus, $G_2$ must have rank $2$.  Since $K^+$ has corank $2$, it must thus have rank $2$ and act transitively on $S^{\ell_+}$.  It follows from Table \ref{table:transsphere} that $\ell_+\in \{2,4,6\}$, corresponding to $K^+$ being, up to cover, isomorphic to $SU(2)\times SU(2), Sp(2),$ or $\mathbf{G}_2$.

If $\ell_+  = 2$, then, as $SU(2)\times SU(2)$ does not map to $SU(3)$ with finite kernel, both $G_i\in \{Sp(2), \mathbf{G}_2\}$.  In particular, the odd rational homotopy groups of $G$ occur in dimensions which are a subset of $\{3,7,11\}$.  Since $\pi^\Q_{odd}(G/K^+)$ is supported in dimensions $\ell_-$ and $\ell_- + \ell_+ = \ell_- + 2$, we have a contradiction, as no pair of elements in $\{3,7,11\}$ has a difference of $2$.

If $\ell_+ = 6$, then $K^+\cong \mathbf{G}_2$.  Since $\mathbf{G}_2$ has no non-trivial maps to any rank 2 Lie group other than itself, we must have $G_1 = G_2 = \mathbf{G}_2$.  In particular, $\dim \pi^\Q_{odd}(G/K^+)$ is supported in degrees $3$ and $11$, which have a difference of $8\neq 6$.  Thus, this case cannot occur either.

So, we must be in the case where $\ell_+ = 4$, so $K^+$ is covered by $Sp(2)$ and $H$ is covered by $Sp(1)^2$.  Since $\dim \pi^\Q_3(K^+)$, it follows that $\pi_3^\Q(G/K^+)$ is non-trivial.  Since $\ell_+ = 4$, $\pi^\Q_{odd}(G/K^+)$ must be supported in degrees $\ell_-=3$ and $\ell_- + \ell_+ = 7$.  It follows that $G_1 = G_2 = Sp(2)$.  As any non-trivial homomorphism from $Sp(2)$ to itself is an isomorphism, $K^+= Sp(2)$, from which it follows that $H = Sp(1)^2$.  Lastly, since $K^-/H\cong S^3$, $K^-$ must be isomorphic to $Sp(1)^3$.

\end{proof}

We now continue to classify the possible groups appearing in a cohomogeneity one action on a rational sphere as in Theorem \ref{thm:paritycase}.

\begin{proposition}\label{prop:l1notg1} We cannot have $L_1 = G_1$.

\end{proposition}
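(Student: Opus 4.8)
The plan is to argue by contradiction: assume $L_1 = G_1$ and deduce that $M$ cannot be a rational sphere. The first step is to upgrade the hypothesis $L_1 = G_1$ to the much stronger statement that $K^+ = G_1$ is itself a normal simple factor of $G$. Since $L'\subseteq G_1\times G_2$ is diagonally embedded, $p_1(L')\subseteq G_1$, so $K^+ = G_1\cdot L'$ is exactly $G_1\times p_2(L')$, whence $\rk K^+ = \rk G_1 + \rk p_2(L')$. As $p_2|_{L'}$ has finite kernel, $\rk p_2(L') = \rk L'$. Because $K^+$ has corank $2$ in $G$ (Proposition \ref{prop:lasttop}), this forces $\rk G_2 - \rk L' = 2$; combined with $\rk L'\le \rk G_2\le 2$ from Proposition \ref{prop:g2rank}, we get $\rk L' = 0$. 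Since $K^+$ is semisimple (Proposition \ref{prop:gstructure}), $L'$ is trivial and $K^+ = G_1$.

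The second step records the consequences. Since $K^+ = G_1$, we have $H\subseteq K^+ = G_1$, so $p_2(H) = \{e\}$; and $G/K^+ = (G_1\times G_2)/G_1\cong G_2$. By Proposition \ref{prop:lasttop}, $G_2\simeq_\Q S^{\ell_-}\times S^{\ell_- + \ell_+}$, which forces $G_2$ to be a rank-two simple group (so $\dim G_2\ge 8$) and, since the lowest-degree rational homotopy of any simple group sits in degree $3$, pins down $\ell_- = 3$, i.e.\ $K^-/H\cong S^3$. Next I invoke primitivity: both $K^\pm$ and $H$ lie in $G_1\times p_2(K^-)$, so Theorem \ref{thm:primitive} forces $G_1\times p_2(K^-) = G$, that is, $p_2(K^-) = G_2$.

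The final step is a dimension count that produces the contradiction. Setting $N = K^-\cap G_1 = \ker(p_2|_{K^-})$, surjectivity of $p_2|_{K^-}$ gives $K^-/N\cong G_2$, while $p_2(H) = \{e\}$ gives $H\subseteq N$. The chain $H\subseteq N\subseteq K^-$ then yields a homogeneous fibration $N/H\to K^-/H\to K^-/N\cong G_2$, so $\dim(K^-/H)\ge \dim G_2\ge 8$. But $K^-/H\cong S^3$ has dimension $3 < 8$, a contradiction, so $L_1\neq G_1$. I expect the only delicate point to be the rank bookkeeping of the first step—correctly handling the diagonal factor $L'$ so as to conclude $K^+ = G_1$ rather than merely $G_1\subseteq K^+$; once $K^+$ is a normal factor, the forced inclusion $H\subseteq G_1$ and the resulting impossibility of the small sphere $K^-/H$ surjecting onto the large group $G_2$ make the contradiction immediate.
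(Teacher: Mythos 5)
Your proof is correct. Its first half tracks the paper's own argument: both force $L' = 0$ (hence $K^+ = G_1$ and $\rk G_2 = 2$) and both then invoke primitivity (Theorem \ref{thm:primitive}) to conclude $p_2(K^-) = G_2$; in fact your rank bookkeeping, comparing $\rk K^+ = \rk G_1 + \rk L'$ with the corank-$2$ condition and Proposition \ref{prop:g2rank}, supplies a more detailed justification of the step $L' = 0$, which the paper asserts outright. The endgame, however, is genuinely different. The paper stays inside the structure theory of $K^-$: Proposition \ref{prop:diag} gives $K' \neq 0$, simplicity of $G_2$ then forces $p_2(K') = G_2$, and the contradiction is the rank obstruction $\rk K' \leq 1 < 2 = \rk G_2$. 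You instead exploit the inclusion $H \subseteq K^+ = G_1$: since $p_2(H)$ is trivial while $p_2|_{K^-}$ is surjective, the sphere $K^-/H$ fibers over $K^-/\ker(p_2|_{K^-}) \cong G_2$, giving $\ell_- = \dim K^-/H \geq \dim G_2 \geq 8$, absurd since $\ell_- = 3$. (In fact you never need $\ell_- = 3$ or $\dim G_2 \geq 8$: Proposition \ref{prop:lasttop} gives $\dim G_2 = \dim G/K^+ = 2\ell_- + \ell_+ > \ell_-$, which already contradicts $\ell_- \geq \dim G_2$, so the homotopy-theoretic detour through $\pi_3$ of a simple group could be dropped.) Your route buys independence from Proposition \ref{prop:diag} and from the decomposition $K^- = (K_1 \times K_2)\cdot K'$ with its bound $\rk K' \leq 1$; the paper's route is the one that harmonizes with the neighboring propositions, which all argue through the diagonal-factor decompositions. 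Both are sound, and non-circular, since Propositions \ref{prop:lasttop}, \ref{prop:g2rank}, and Theorem \ref{thm:primitive} are all established before this point.
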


\begin{proof}
If $L_1 = G_1$, then $L' = 0$.  Since $K^+ = L_1\cdot L'$ has corank $2$ in $G$, it now follows that $G_2$ has rank $2$.  Because $L' = 0$, Proposition \ref{prop:diag} implies $K'\neq 0$.

Now, since $L' =0$, it follows that both $K^\pm \subseteq G_1\times p_2(K^-)$.  This contradicts Theorem \ref{thm:primitive}, unless $p_2(K^-) = K_2\cdot p_2(K') = G_2$.  Since $K'\neq 0$, it follows that $p_2(K') = G_2$.  Since $K'$ has rank at most $1$, we have a contradiction.

\end{proof}






\begin{proposition}
We cannot have $H' = L'$.
\end{proposition}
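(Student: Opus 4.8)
The plan is to argue by contradiction using primitivity, in the spirit of Propositions \ref{prop:l1notg1} and \ref{prop:2simp}. Assume $H' = L'$. Writing $H = H_1 \cdot H'$ with $H_1 \subseteq G_1$, the projection $p_2$ kills $H_1$, so $p_2(H) = p_2(H') = p_2(L')$. Set $P = p_2(L') \subseteq G_2$. Since the action is irreducible (Convention \ref{convention}), $p_2(H)$ cannot be all of the simple factor $G_2$, so $P \subsetneq G_2$. Because every element of $L'$ has its $G_2$-coordinate in $P$, we have $L' \subseteq G_1 \times P$, whence $K^+ = L_1 \cdot L' \subseteq G_1 \times P$ and $H = H_1 \cdot L' \subseteq G_1 \times P$. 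As $P \subsetneq G_2$, the subgroup $G_1 \times P$ is proper in $G$.

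By Theorem \ref{thm:primitive} the action is primitive, so $K^-$ cannot also lie in $G_1 \times P$; thus $p_2(K^-) \not\subseteq P$, and since $P = p_2(H) \subseteq p_2(K^-)$ we get the strict inclusion $P \subsetneq p_2(K^-)$ of connected groups, so $\dim p_2(K^-) > \dim P$. Next I would exploit the sphere $K^-/H \cong S^{\ell_-}$ through the map $p_2$. The kernel of $p_2$ restricted to $K^-$ is, up to finite index, $K_1$, so $p_2$ induces a homogeneous fiber bundle $(K_1 \cdot H)/H \to K^-/H \to p_2(K^-)/P$ whose base $B = p_2(K^-)/P$ is positive-dimensional. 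Since the total space is a sphere, \cite[Corollary 2.5]{DeVitoKennard20} forces $\dim \pi^\Q_{odd}(B) \le 1$.

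The remaining and hardest step is to turn this into a contradiction. I would combine the bound $\dim \pi^\Q_{odd}(B) \le 1$ with the structural constraints already in hand: $p_2(K^-) \subseteq G_2$ with $\rk G_2 \le 2$ (Proposition \ref{prop:g2rank}), $\rk K' \le 1$, and that $P = p_2(L')$ is locally isomorphic to $L'$ because $p_2|_{L'}$ has finite kernel. Running through the short list of rank-$\le 2$ simple groups that $G_2$ can be, the bound $\dim \pi^\Q_{odd}(B) \le 1$ together with Totaro's description of the rational homotopy of homogeneous spaces \cite[Theorem 4.8]{Totaro02} should force $p_2(K^-)$ either to equal $G_2$ with $P$ of full rank, or to be so small that $\langle K^+, K^-\rangle$ is again contained in a proper subgroup of the form $(\text{proper in } G_1) \times G_2$, reproducing the non-primitivity contradiction seen in Proposition \ref{prop:l1notg1}; any surviving possibility is then checked against the rational homotopy of $G/H$ from Proposition \ref{prop:lasttop}. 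I expect this casework --- pinning down $p_2(K^-)$ and $P$ inside $G_2$ and excluding each configuration --- to be the main obstacle, since, unlike the clean rank count of Proposition \ref{prop:l1notg1}, here the escape of $K^-$ beyond $G_1 \times P$ must be ruled out on topological rather than purely rank-theoretic grounds.
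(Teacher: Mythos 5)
Your opening move is the right idea in spirit but is applied to the wrong subgroup, and this weakens everything downstream. You place $H$ and $K^+$ inside $G_1\times P$ with $P = p_2(L') = p_2(H')$ and conclude from Theorem \ref{thm:primitive} only that $p_2(K^-)\not\subseteq P$. The paper instead observes that under the assumption $H' = L'$ \emph{all three} groups $H, K^+, K^-$ lie in $G_1\times p_2(K^-)$: indeed $p_2(K^+) = p_2(L') = p_2(H')\subseteq p_2(K^-)$ because $H\subseteq K^-$. Primitivity applied to this larger subgroup forces $p_2(K^-) = G_2$ outright. Since $p_2(K^-) = K_2\cdot p_2(K')$ and $G_2$ is simple, this yields a clean dichotomy: either $K_2 = G_2$ (so $K' = 0$), or $p_2(K') = G_2$ (so $K_2 = 0$ and, as $\rk K'\leq 1$, $G_2 = SU(2)$). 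Your weaker conclusion $P\subsetneq p_2(K^-)$ produces no such dichotomy, which is exactly why your subsequent casework looks open-ended.

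The genuine gap is that this casework is never carried out; you say so yourself (``should force,'' ``I expect this casework \dots to be the main obstacle''), and that casework \emph{is} the proof. In the paper, the case $K_2 = G_2$ is closed by applying \cite[Theorem 4.8]{Totaro02} to the bundle $H'\rightarrow (K_1/H_1)\times K_2\rightarrow K^-/H\cong S^{\ell_-}$ to force $K_1 = H_1\cdot p_1(H')$, and then by a second application of primitivity to $(L_1\cdot p_1(L'))\times G_2$, which forces $p_1(H') = G_1$ and contradicts irreducibility. The case $p_2(K') = G_2$ leads to $H' = L' = 0$ and $\ell_- = 3$, hence $\ell_+\geq 4$ by the standing exclusion $(\ell_-,\ell_+)\neq (3,2)$, and then $G_1/L_1$ would be a homogeneous rational $S^{\ell_+ + 3}$ with $L_1/H_1\cong S^{\ell_+}$; this is ruled out by the classification of homogeneous rational spheres ($L_1 = \mathbf{G}_2$ is never such an isotropy group, and $L_1 = Spin(\ell_+ + 1)$ fails a dimension count). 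Note in particular that the paper's proof genuinely uses the hypothesis $(\ell_-,\ell_+)\neq(3,2)$, which your sketch never invokes; an argument that ignores it should be regarded with suspicion, since the structure of this whole subsection turns on that exclusion. As written, your proposal establishes only a weakened form of the first reduction and defers the substance, so it does not constitute a proof.
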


\begin{proof}  Assume for a contradiction that $H' = L'$.  Then both $K^\pm \subseteq G_1\times p_2(K^-)$.  This contradicts Theorem \ref{thm:primitive} unless $p_2(K^-) = G_2$, which then implies that either $p_2(K_2) = G_2$ or  $p_2(K') = G_2$.  We consider both possibilities in turn.

Assume initially that $K_2 = p_2(K_2) = G_2$, which then implies $K'=0$.  Irreducibility of the action forces $p_2(H')\subsetneq K_2$.  Now, from the bundle $H'\rightarrow (K_1/H_1)\times K_2\rightarrow K^-/H\cong S^{\ell_-}$, if $p_1(H')$ does not act transitively on $K_1/H_1$, then from \cite[Theorem 4.8]{Totaro02}, $\dim \pi_{odd}^\Q (K^-/H) \geq 2$, giving a contradiction.  Thus,  $K_1/(H_1\cdot p_1(H'))$ must be a point, so $K_1 = H_1\cdot p_1(H')$.  But then both $K^\pm\subseteq (L_1\cdot p_1(L'))\times G_2$.  Since the action is primitive and $L_1\subsetneq G_1$ (from Proposition \ref{prop:l1notg1}), we conclude that $p_1(H') =p_1(L')= G_1$, contradicting irreducibility of the action.

So, we may assume $p_2(K') = G_2$ which then implies $K_2 = 0$.  Since $K'$ has rank $1$ and $G_2$ is simple, it follows that $K'\cong G_2=SU(2)$.   Now, since $K_2=0,$ we must have $H'\subseteq K'$, and then irreducibility implies that $H' = S^1$ or $H' = 0$.  However, $H' = S^1$ cannot occur, for otherwise, $S^{\ell_-} = K^-/H = K_1/H_1 \times K'/H' = K_1/H_1\times S^2$, giving a contradiction.

Thus, $H' = L' = 0$.  Since $K^-/H = S^{\ell_-}$, we have $\ell_ - = 3$.  Thus, we must have $\ell_+\geq 4$.  Now, since $G/K^+ = G_1/L_1\times G_2$, we conclude that $G_1/L_1$ is a rational $S^{\ell_+ + 3}$.  Since $G_1$ is simple, it follows that $L_1$ is simple.  Further, since $K^+/H = L_1/H_1 = S^{\ell_+}$, we now deduce that either $L_1 = \mathbf{G}_2$ with $H_1 = SU(3)$ or $L_1 = Spin(\ell_+ + 1)$ with $H_1 = Spin(\ell_+)$.  In the first case, from, e.g., \cite[Table B]{KapovitchZiller04}, we see that $\mathbf{G}_2$ is not the isotropy group of any transitive action on a rational sphere, so this cannot occur.

Since $\ell_+\geq 4$, the only rational spheres of the form $G_1/Spin(\ell_+ + 1)$ are standard spheres, or $Spin(\ell_+ + 3)/Spin(\ell_+ + 1)$.  However, by inspecting each one, one easily sees that $\dim G_1/Spin(\ell_+  +1)\neq \ell_+ + 3$, so we have a final contradiction.

\end{proof}

Since $H$ has full rank in $K^+$, it follows that $H'\subsetneq L'$ is full rank and that $H_1 = L_1$.  Further, if $H_1$ is non-trivial, then Proposition \ref{prop:finiteintersection} implies that $H_1$ must act non-trivially on $K^-/H\cong S^{\ell_-}$.  In particular, $H_1$ cannot be a normal subgroup of $K_1$, unless $H_1 = 0$.

\begin{proposition}  The group $K_1$ must be non-trivial.

\end{proposition}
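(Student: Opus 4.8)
The plan is to argue by contradiction, using the containment $H_1\subseteq K_1$ that is forced by $H\subseteq K^-$, and then invoking Proposition~\ref{prop:h1trivial} to reduce to a configuration that fails a rank count. The key preliminary observation holds for free: by the convention on the decompositions $H = H_1\cdot H'$ and $K^- = (K_1\times K_2)\cdot K'$, the image of $H_1$ in $H$ is exactly $\ker(p_2|_H) = H\cap(G_1\times\{e\})$, and the image of $K_1$ in $K^-$ is exactly $\ker(p_2|_{K^-}) = K^-\cap(G_1\times\{e\})$. Since $H\subseteq K^-$, intersecting both sides with $G_1\times\{e\}$ gives $H_1\subseteq K_1$. (This is implicit already in the sentence preceding the statement, which speaks of $H_1$ as a subgroup of $K_1$.)

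With this in hand I would run a single contradiction argument. Suppose $K_1$ is trivial. Then the containment forces $H_1$ to be trivial as well, so Proposition~\ref{prop:h1trivial} applies and identifies the quadruple as $(H,K^-,K^+,G)\cong(Sp(1)^2,\,Sp(1)^3,\,Sp(2),\,Sp(2)\times Sp(2))$. Now I extract the contradiction from ranks: $K_1$ trivial means $\ker(p_2|_{K^-})$ is finite, so $p_2$ restricts to a finite-to-one map of $K^-$ onto its image in $G_2$, whence $\rk K^-\le \rk G_2$. But in this configuration $\rk K^- = \rk Sp(1)^3 = 3$ while $\rk G_2 = \rk Sp(2) = 2$, which is impossible. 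Hence $K_1\neq 0$. (If one prefers to avoid citing Proposition~\ref{prop:h1trivial}, the case $H_1\neq 0$ is even more immediate, since then $K_1\supseteq H_1\neq 0$ directly; only the residual case $H_1=0$ needs the rank obstruction.)

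I do not expect any computational obstacle here. The only point requiring care is bookkeeping with the $\cdot$-notation, namely verifying that $H_1$ and $K_1$ are genuinely the kernels of the \emph{same} projection $p_2$ restricted to $H$ and to $K^-$ — which is where the containment comes from — keeping in mind that $L_2=H_2=0$ has already been established, so that $H = H_1\cdot H'$ and $K^+ = L_1\cdot L'$ with $H_1=L_1$. Once that identification is made, both the trivial branch ($H_1\neq 0$) and the Proposition~\ref{prop:h1trivial} branch ($H_1=0$) close immediately.
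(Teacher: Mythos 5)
Your proposal is correct and follows essentially the same route as the paper: assume $K_1$ trivial, deduce $H_1=L_1\subseteq K_1$ is trivial, invoke Proposition~\ref{prop:h1trivial} to pin down $(H,K^-,K^+,G)=(Sp(1)^2,Sp(1)^3,Sp(2),Sp(2)\times Sp(2))$, and then derive a rank contradiction for $K^-$. Your way of getting the upper bound (finite kernel of $p_2|_{K^-}$ gives $\rk K^-\leq \rk G_2=2$, against $\rk Sp(1)^3=3$) is in fact a cleaner justification than the paper's phrasing, which bounds $\rk K^-$ via $K'$ and contradicts the corank-one condition, but the two arguments are the same in substance.
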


\begin{proof}Assume for a contradiction that $K_1$ is trivial.  Since $L_1=H_1\subseteq K_1$, both $H_1$ and $L_1$ are trivial as well, and thus, by Proposition \ref{prop:h1trivial} $G = Sp(2)\times Sp(2)$.  Since $K'$ has rank at most one, we now see that $K^-$ has rank at most two, contradicting the fact that $K^-$ has corank $1$ in $G$.



\end{proof}

\begin{proposition} \label{prop:secondprojection} If $H_1$ is non-trivial, then we must have $p_2(H') = p_2(K^-)$.  If $H_1$ is trivial 0, then both $p_i(H')=p_i(K^-)$.

\end{proposition}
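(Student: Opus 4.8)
The plan is to study the homogeneous fibration obtained by projecting $K^-$ onto the second factor. By our conventions $K_1 = \ker(p_2|_{K^-}) = K^- \cap G_1$, so $p_2 \colon K^- \to p_2(K^-)$ sends $H$ onto $p_2(H) = p_2(H')$ (as $H_1 \subseteq G_1 = \ker p_2$) and induces a fiber bundle
\[
K_1/H_1 \longrightarrow K^-/H \cong S^{\ell_-} \longrightarrow B := p_2(K^-)/p_2(H').
\]
Here the fiber is $K_1 \cdot H/H \cong K_1/(K_1 \cap H) = K_1/H_1$, using $K_1 \cap H = H \cap G_1 = H_1$ together with $H_1 \subseteq K^- \cap G_1 = K_1$. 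The conclusion $p_2(H') = p_2(K^-)$ is precisely the statement that $B$ is a point, so the entire task is to show this base is trivial.

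First I would dispose of the fiber. The fiber $K_1/H_1$ is a single point exactly when $K_1 = H_1$, in which case $H_1$ is (trivially) normal in $K_1$. When $H_1$ is non-trivial this is impossible: since the action is primitive (Theorem \ref{thm:primitive}), Proposition \ref{prop:finiteintersection} forces $H_1$ to act non-trivially on $K^-/H \cong S^{\ell_-}$, and as noted just before the statement this means $H_1$ cannot be normal in $K_1$. Hence, for $H_1 \neq 0$, the fiber $K_1/H_1$ is always positive-dimensional.

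Next I would bound the base. Applying \cite[Corollary 2.5]{DeVitoKennard20} to the bundle $S^{\ell_-} \to B$ gives $\dim \pi^\Q_{odd}(B) \leq 1$, so $B$ is either a point or rationally a single odd sphere, an even sphere, or a projective-type space. The heart of the proof is to exclude positive-dimensional $B$. This cannot be done by numerical invariants alone: a ``Hopf-type'' configuration in which $K_1/H_1 \simeq_\Q S^{a}$ fibers $S^{\ell_-}$ over an even rational sphere $B$ is consistent both with the bound $\dim\pi^\Q_{odd}(B) \le 1$ and with the rational-homotopy Euler-characteristic identity $\sum_k(-1)^k\dim\pi_k^\Q(S^{\ell_-}) = \sum_k(-1)^k\dim\pi_k^\Q(K_1/H_1) + \sum_k(-1)^k\dim\pi_k^\Q(B)$. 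To rule these out I would use the finer structure coming from $K^+$: since $K^+ = L_1 \cdot L'$ is an almost direct product and $L_1 = H_1$, the group $H_1$ centralizes $L'$ inside $G_1$, whereas the previous paragraph shows $H_1$ is a \emph{non-normal} subgroup of $K_1$. Together with the corank bookkeeping ($K^-$ has corank $1$ and $K^+$ corank $2$ in $G$) and Totaro's computation \cite[Theorem 4.8]{Totaro02} of $\dim\pi^\Q_{odd}(K^-/H)$, I expect this tension between ``$H_1$ centralizes $L'$'' and ``$H_1$ is non-normal in $K_1$'' to force $B$ to be a point. Verifying this against the list of transitive sphere actions in Table \ref{table:transsphere} is the step I anticipate being the main obstacle.

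Finally, when $H_1 = 0$ we are, by Proposition \ref{prop:h1trivial}, in the situation $(H, K^-, K^+, G) = (Sp(1)^2, Sp(1)^3, Sp(2), Sp(2)\times Sp(2))$, in which the whole configuration is symmetric in the two factors $G_1$ and $G_2$; one checks directly that $K_1$ and $K_2$ are both non-trivial. Consequently the fiber of the analogous projection bundle is positive-dimensional for \emph{both} $p_1$ and $p_2$, and running the argument above for each projection (the groups being completely pinned down, the crux above is resolved by direct inspection) yields $p_i(H') = p_i(K^-)$ for $i = 1,2$.
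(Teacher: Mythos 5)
Your proposal sets up the right fibration $K_1/H_1 \to K^-/H \to B := p_2(K^-)/p_2(H')$ and correctly reduces the statement to showing that $B$ is a point, but it does not prove this: you explicitly leave the exclusion of positive-dimensional $B$ as something you ``expect'' the tension between centralizing and non-normality to force, and you flag its verification as ``the main obstacle.'' That exclusion \emph{is} the content of the proposition, so as written the proposal has a genuine gap at its central step. Moreover, the input you do establish --- that the fiber $K_1/H_1$ is positive-dimensional, i.e.\ $K_1 \neq H_1$ --- is too weak to drive the argument that actually works here.

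The paper closes the gap with two moves you are missing. First, it upgrades non-normality to non-transitivity: since $H_1$ is the kernel of $p_2|_H$, it is normal in $p_1(H) = H_1\cdot p_1(H')$, so if $K_1 \subseteq H_1\cdot p_1(H')$ then $H_1$ would be normal in $K_1$, which is exactly what Proposition \ref{prop:finiteintersection} forbids when $H_1 \neq 0$. Hence $K_1 \not\subseteq H_1\cdot p_1(H')$, i.e.\ the $H'$-action on the factor $K_1/H_1$ is \emph{not transitive} --- a strictly stronger statement than $K_1 \neq H_1$. Second, rather than classifying possible bases $B$, the paper applies \cite[Theorem 4.8]{Totaro02} to the total space: up to cover, $S^{\ell_-} = K^-/H = ((K_1/H_1)\times K_2\times K')/H'$, and since $\dim\pi^\Q_{odd}(S^{\ell_-}) = 1$ while $H'$ fails to act transitively on the first factor, $H'$ must project onto $K_2\times K'$; this immediately gives $p_2(H') \supseteq K_2\cdot p_2(K') = p_2(K^-)$. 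This also shows that your assertion that the problem ``cannot be done by numerical invariants alone'' is off the mark: a single rational-homotopy bound suffices once it is applied to $K^-/H$ with the sharpened non-transitivity input, rather than to $B$. (For the $H_1 = 0$ case the paper is likewise more direct: by Proposition \ref{prop:h1trivial}, $p_i(H') \cong Sp(1)^2$ is maximal among connected proper subgroups of $Sp(2)$ and $p_i(K^-)$ cannot be all of $Sp(2)$, so $p_i(H') = p_i(K^-)$ is forced with no appeal to the fibration argument at all.)
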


\begin{proof}Since $H\subseteq K^-$, it follows that $p_2(H') = p_2(H)\subseteq p_2(K^-) = K_2\cdot p_2(K')$.  We must show the reverse inclusion.

When $H_1$ is trivial, Proposition \ref{prop:h1trivial} gives that $H = H' \cong Sp(1)^2\subseteq Sp(2)\times Sp(2)$, while $K^-\cong Sp(1)^3$.   Note that for both $i=1,2$, $p_i(H')\cong Sp(1)^2\subseteq Sp(2)$ is maximal among connected proper subgroups.  Since $p_i(K^-)$ cannot be surjective and $p_i(H')\subseteq p_i(K^-)$, we must have $p_i(H')=p_i(K^-)$.

Thus, we may assume $H_1$ is non-trivial.  We first claim that $K_1$ is not a subset of $H_1\cdot p_1(H')$  To see this, assume it is false.  Since $H_1\subseteq K_1$ and $H_1$ is obviously normal in $H_1\cdot p_1(H')$, if $K_1\subseteq  H_1\cdot p_1(H')$, then $H_1$ is also normal in $K_1$.  As mentioned above, this cannot happen without contradicting Proposition \ref{prop:finiteintersection}. 

Up to cover, $S^{\ell_-} = K^-/H$ has the form $((K_1/H_1)\times K_2\times K')/H'$.  The previous paragraph establishes that the projection of the $H'$ action onto $K_1/H_1$ is not surjective.  Since $\dim\pi_{odd}^\Q(K^-/H) = 1$, it now follows from \cite[Theorem 4.8]{Totaro02} that the projection of the $H'$ action to $K_2\times K'$ is surjective.  Thus, $p_2(K^-)$ is contained in $p_2(H')$.

\end{proof}

We have two immediate corollaries.

\begin{corollary}  The map $p_2:L'\rightarrow G_2$ is an isomorphism, and $G/K^+\cong G_1/L_1$.  Further $L_1$ must be simple.

\end{corollary}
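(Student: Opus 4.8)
The plan is to establish the surjectivity of $p_2$ on $L'$ first, since the remaining two assertions follow quickly from it. The essential input is Proposition~\ref{prop:secondprojection}: we are in the case $H_1$ non-trivial, the case $H_1 = 0$ having been settled by Proposition~\ref{prop:h1trivial}, so that proposition gives $p_2(K^-) = p_2(H')$. Since $H'\subseteq L'$, this yields the containment $p_2(K^-)\subseteq p_2(L')$. I would combine this with primitivity (Theorem~\ref{thm:primitive}) to force $p_2(L')=G_2$, and then observe that the finite kernel of $p_2|_{L'}$, together with the simple connectivity of $G_2$, upgrades surjectivity to an isomorphism.

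Concretely, write $Q:=p_2(L')\subseteq G_2$. I would check that $H,K^+,K^-$ all lie in the subgroup $G_1\times Q$. For $K^+=L_1\cdot L'$ this is immediate, since $L_1\subseteq G_1$ and every element of $L'$ has $G_2$-component in $Q=p_2(L')$; hence also $H\subseteq K^+\subseteq G_1\times Q$. For $K^-$, any element has $G_2$-component lying in $p_2(K^-)=p_2(H')\subseteq p_2(L')=Q$, so $K^-\subseteq G_1\times Q$ as well. Thus $H\subseteq K^\pm\subseteq G_1\times Q$, and if $Q$ were proper in $G_2$ this would exhibit the action as non-primitive, contradicting Theorem~\ref{thm:primitive}. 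Hence $Q=G_2$. Because $L'$ is connected and $p_2|_{L'}$ has finite kernel (by definition of a diagonally embedded factor), $p_2|_{L'}$ is a finite covering of $G_2$; as $G=G_0$ is semisimple here and $G_2$ is one of its simply connected simple factors, this covering is an isomorphism.

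With $p_2|_{L'}\colon L'\to G_2$ an isomorphism, $L'$ is the graph of a homomorphism $\phi\colon G_2\to G_1$, and I would identify $G/K^+$ through the orbit map $g_1\mapsto (g_1,e)K^+$ from $G_1$ into $G/K^+$. An element $(g_1,e)$ lies in $K^+=L_1\cdot L'$ precisely when its $G_2$-component vanishes, which by injectivity of $p_2|_{L'}$ forces the $L'$-part to be trivial and hence $g_1\in L_1$; thus the induced map $G_1/L_1\to G/K^+$ is injective. It is surjective because any $(g_1,g_2)K^+$ can be translated by the inverse of the unique element of $L'$ whose $G_2$-component is $g_2$ to a coset $(g_1\phi(g_2)^{-1},e)K^+$. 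This bijection of compact homogeneous spaces is the desired diffeomorphism $G/K^+\cong G_1/L_1$. Finally, Proposition~\ref{prop:lasttop} gives $G_1/L_1\simeq_\Q G/K^+\simeq_\Q S^{\ell_-}\times S^{\ell_-+\ell_+}$, so $\pi^\Q_{even}(G_1/L_1)=0$ and $L_1$ is semisimple; and $L_1=H_1\neq 0$. Running the rational homotopy sequence of $L_1\to G_1\to G_1/L_1$ in degree $3$, the exclusion $(\ell_-,\ell_+)\neq(3,2)$ together with $L_1\neq 0$ forces $\ell_-\geq 5$ (were $\ell_-=3$, exactness would make $\pi^\Q_3(L_1)\to\pi^\Q_3(G_1)=\Q$ the zero map, impossible for a non-trivial semisimple subgroup, whose simple factors include into $G_1$ with positive Dynkin index). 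Hence $\pi^\Q_3(G_1/L_1)=0$, so $\pi^\Q_3(L_1)\hookrightarrow\pi^\Q_3(G_1)=\Q$, forcing $\dim\pi^\Q_3(L_1)\leq 1$; since $L_1$ is non-trivial semisimple this means it has a single simple factor, i.e.\ $L_1$ is simple.

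The step I expect to require the most care is the surjectivity $p_2(L')=G_2$, where primitivity does the real work and everything hinges on correctly tracking the $G_2$-components of $K^-$ via the identity $p_2(K^-)=p_2(H')$ from Proposition~\ref{prop:secondprojection}. The simplicity of $L_1$ is also a little delicate, as it genuinely relies on the running reduction $H_1\neq 0$ and on excluding $(\ell_-,\ell_+)=(3,2)$ in order to guarantee $\ell_-\geq 5$; without $H_1\neq 0$ the subgroup $L_1$ can be trivial, as in the $Sp(2)\times Sp(2)$ example of Proposition~\ref{prop:h1trivial}.
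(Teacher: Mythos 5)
Your handling of the first (and main) claim is exactly the paper's argument: Proposition \ref{prop:secondprojection} together with $H'\subseteq L'$ places $H$ and both $K^\pm$ inside $G_1\times p_2(L')$, primitivity (Theorem \ref{thm:primitive}) then forces $p_2(L')=G_2$, and finiteness of $\ker(p_2|_{L'})$ plus simple connectivity of $G_2$ promotes the covering to an isomorphism; the trivial-$H_1$ case is delegated to Proposition \ref{prop:h1trivial} in both treatments. The differences are downstream. For $G/K^+\cong G_1/L_1$ the paper simply cites \cite[Lemma 1.3]{KapovitchZiller04}, using that $p_2\colon K^+\to G_2$ is surjective with kernel $L_1$; your graph-of-$\phi$ construction proves the same statement by hand and is correct, just longer. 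For simplicity of $L_1$ the paper's one-line argument is that $G_1/L_1\cong G/K^+$ is rationally a product of two odd-dimensional spheres, so $\pi^\Q_{even}(G_1/L_1)=0$: vanishing of $\pi_2^\Q$ gives semisimplicity of $L_1$, and vanishing of $\pi_4^\Q$ gives injectivity of $\pi_3^\Q(L_1)\to\pi_3^\Q(G_1)\cong\Q$, hence at most one simple factor.

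On this last point your write-up has a slip, though an easily repaired one. From $\pi_3^\Q(G_1/L_1)=0$ the long exact sequence yields \emph{surjectivity} of $\pi_3^\Q(L_1)\to\pi_3^\Q(G_1)$, not the injectivity you assert: exactness at $\pi_3^\Q(G_1)$ is what that vanishing controls, while $\ker\bigl(\pi_3^\Q(L_1)\to\pi_3^\Q(G_1)\bigr)$ is the image of $\pi_4^\Q(G_1/L_1)$. Surjectivity alone gives a lower bound on $\dim\pi_3^\Q(L_1)$ and cannot cap the number of simple factors. The injectivity you actually need follows from $\pi_4^\Q(G_1/L_1)=0$, which holds automatically because $4$ is even and $G/K^+\simeq_\Q S^{\ell_-}\times S^{\ell_-+\ell_+}$ has all rational homotopy in odd degrees. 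Once that is invoked, your whole detour through $\ell_-\geq 5$ (and the appeal to $(\ell_-,\ell_+)\neq(3,2)$, which in any case does not by itself exclude $\ell_-=3$) becomes unnecessary and can be deleted: semisimplicity, $\dim\pi_3^\Q(L_1)\leq 1$, and $L_1=H_1\neq 0$ already give that $L_1$ is simple.
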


\begin{proof}  If $H_1$ is trivial this is clear from Proposition \ref{prop:h1trivial}.  So, assume $H_1$ is non-trivial.  Proposition \ref{prop:secondprojection} gives $p_2(H') = p_2(K^-)$, so we see $K^\pm\subseteq G_1\times p_2(L')$.  Thus, the action is not primitive, unless $p_2(L') = G_2$.  However, the kernel of $p_2:L'\rightarrow G_2$ is finite, so this map must be a covering.  As $G_2$is simply connected, it must be an isomorphism.

Since $p_2:K^+\rightarrow G_2$ is surjective with kernel $L_1$, it follows from \cite[Lemma 1.3]{KapovitchZiller04} that $G/K^+$ is diffeomorphic to $G_1/L_1$.

The last claim follows easily since $G_1$ is simple and $\dim \pi_{even}(G_1/L_1)\otimes \Q = 0$.

\end{proof}

\begin{corollary}  If $H_1$ is non-trivial, then $K_1/H_1\cong K^-/H\cong S^{\ell_-}$.

\end{corollary}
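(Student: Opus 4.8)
The plan is to show that the inclusion $K_1 \hookrightarrow K^-$ descends to a diffeomorphism $K_1/H_1 \to K^-/H$; since $K^-/H \cong S^{\ell_-}$ comes directly from the group diagram via Mostert's structure theorem, this yields both claimed identifications at once. The starting point is Proposition~\ref{prop:secondprojection}: as $H_1$ is non-trivial, we have $p_2(H') = p_2(K^-)$. Because $p_2$ kills $H_1$, this upgrades to $p_2(H) = p_2(K^-)$, i.e.\ the projection of $H$ to $G_2$ already exhausts $p_2(K^-)$. Morally this says that all of the ``$G_2$-directions'' of $K^-$ are already present in $H$, so that passing to $K^-/H$ should leave only the purely-$G_1$ part $K_1/H_1$.

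First I would record the two kernel identifications forced by the dotted-product notation, working throughout with the actual image subgroups of $G$ rather than their finite covers. By definition the image of $K_1$ in $K^-$ is $\ker(p_2|_{K^-})$ and the image of $H_1$ in $H$ is $\ker(p_2|_H)$. Since $H \subseteq K^-$, any $h \in H$ with $p_2(h)=1$ lies in $\ker(p_2|_{K^-}) = K_1$, giving $H_1 \subseteq K_1$, and the same computation shows $K_1 \cap H = H_1$. Next, from $p_2(H) = p_2(K^-)$ I would deduce the set equality $K^- = K_1\cdot H$: given $k \in K^-$, choose $h \in H$ with $p_2(h) = p_2(k)$; then $kh^{-1} \in \ker(p_2|_{K^-}) = K_1$, so $k \in K_1 \cdot H$. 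Note this requires only the set equality, not that $K_1 \cdot H$ be a subgroup.

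With these two facts the conclusion is immediate. The inclusion $K_1 \hookrightarrow K^-$ induces a smooth $K_1$-equivariant map $K_1/H_1 = K_1/(K_1 \cap H) \to K^-/H$ sending $kH_1 \mapsto kH$. It is surjective because $K^- = K_1\cdot H$, so every coset $kH$ equals some $k_1 H$ with $k_1 \in K_1$, and injective because $k_1 H = k_1' H$ with $k_1,k_1'\in K_1$ forces $k_1^{-1}k_1' \in K_1 \cap H = H_1$. A smooth equivariant bijection between compact homogeneous spaces is a diffeomorphism, so $K_1/H_1 \cong K^-/H$, and the latter is $S^{\ell_-}$. I expect the only real care needed to be the bookkeeping in the dotted-product notation, namely verifying that the kernel identifications $\ker(p_2|_{K^-}) = K_1$ and $\ker(p_2|_H) = H_1$ hold on the nose for the image subgroups; the remaining group theory is entirely standard.
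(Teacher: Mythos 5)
Your proof is correct and is essentially the same as the paper's: both hinge on Proposition \ref{prop:secondprojection} giving $p_2(H)=p_2(K^-)$, which the paper phrases as the bundle $K^-/H \rightarrow p_2(K^-)/p_2(H)$ having fiber $K_1\cdot H/H\cong K_1/H_1$ over a one-point base, and which you unwind into the equivalent coset identities $K^- = K_1\cdot H$ and $K_1\cap H = H_1$. The difference is purely presentational (explicit double-coset bookkeeping in place of fiber-bundle language), so nothing further is needed.
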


\begin{proof}  The projection map $p_2:K^-\rightarrow p_2(K^-)\subseteq G_2$ descends to a fiber bundle map $K^-/H\rightarrow p_2(K^-)/p_2(H)$ with fiber $K_1\cdot H/H\cong K_1/H_1$.  From Proposition \ref{prop:secondprojection}, when $H_1$ is non-trivial, $p_2(K^-)/p_2(H)$ is a point, so the inclusion $K_1/H_1\cong K^-/H = S^{\ell_-}$.

\end{proof}

Summarizing the previous propositions, we have obtained the following structure result.

\begin{proposition}  Suppose $G_1$ and $G_2$ are simple compact Lie groups and $G = G_1\times G_2$ acts via a cohomogeneity one action on a rational sphere $M^n$.  Assume the homotopy fiber of the inclusion of a principal orbit into $M$ belongs to Case (4) of Theorem \ref{thm:GH} and that $\ell_-$ and $\ell_+$ have different parities.  Then

\begin{itemize}\item  $\rk G_2\leq 2$

\item  $K^+ = L_1\cdot L'$ with $L'\cong G_2$

\item  $H\cong L_1\cdot  H'$ with $L'/H'\cong S^{\ell_+}$

\item  $K^- = (K_1\times K_2)\cdot K'$ with $p_2(H') = p_2(K^-)$, and $K_1/H_1\cong S^{\ell_-}$.

\end{itemize}

\end{proposition}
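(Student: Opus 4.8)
The plan is to assemble this structure result directly from the propositions and corollaries already established in this subsection; no new argument is required beyond bookkeeping together with a split into the cases $H_1 = \{e\}$ and $H_1 \neq \{e\}$. First I would recall the reductions already in force: Propositions \ref{prop:2simp} and \ref{prop:kernel} allow us to write $G = G_1 \times G_2$ with $G_1, G_2$ simple and, after discarding the (trivial) groups $L_2$ and $H_2$, to write $K^+ = L_1 \cdot L'$ and $H = H_1 \cdot H'$, while the standing notation gives $K^- = (K_1 \times K_2) \cdot K'$ with the primed groups diagonally embedded. The bound $\rk G_2 \leq 2$ is then exactly Proposition \ref{prop:g2rank}, which is the first bullet.

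For the second and third bullets I would invoke the first of the two corollaries above, which asserts that $p_2 : L' \to G_2$ is an isomorphism; this is precisely $L' \cong G_2$. Since $H$ has full rank in $K^+$ (as $\ell_+$ is even, Table \ref{table:transsphere} forces $H$ to be equal rank in $K^+$), we already recorded that $H_1 = L_1$, so $H = L_1 \cdot H'$ with $H' \subsetneq L'$ of full rank. Consequently
\[
S^{\ell_+} \cong K^+/H = (L_1 \cdot L')/(L_1 \cdot H') \cong L'/H',
\]
giving the third bullet. The equality $p_2(H') = p_2(K^-)$ in the fourth bullet is the content of Proposition \ref{prop:secondprojection}.

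The only genuinely case-dependent point is the final assertion $K_1/H_1 \cong S^{\ell_-}$. When $H_1 \neq \{e\}$ this is exactly the second corollary above, so nothing further is needed. When $H_1 = \{e\}$, Proposition \ref{prop:h1trivial} pins the groups down to $(H, K^-, K^+, G) = (Sp(1)^2, Sp(1)^3, Sp(2), Sp(2) \times Sp(2))$ with $\ell_- = 3$, and here one checks by inspection that $p_2(K^-) = p_2(H') = Sp(1)^2$ has rank $2$, so the kernel $K_1$ of $p_2|_{K^-}$ inside $K^- \cong Sp(1)^3$ has rank $3 - 2 = 1$ and, being a union of simple factors, is a single $Sp(1)$; thus $K_1/H_1 = K_1 \cong S^3 = S^{\ell_-}$, and the remaining bullets follow at once from the explicit groups. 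I expect this verification in the $H_1 = \{e\}$ case---in particular reconciling the abstract decomposition $K^- = (K_1 \times K_2) \cdot K'$ with the concrete embedding $p_1(K^-) = p_2(K^-) = Sp(1)^2 \subseteq Sp(2)$---to be the only step requiring care, since every other clause is a verbatim citation of a preceding result.
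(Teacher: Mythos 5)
Your proposal is correct and matches the paper's treatment: the paper gives no separate argument for this proposition, presenting it verbatim as a summary of the preceding results (Proposition \ref{prop:g2rank}, Proposition \ref{prop:secondprojection}, and the two corollaries), which is exactly what you assemble, under the same standing assumptions of this subsection (in particular $(\ell_-,\ell_+)\neq(3,2)$, without which the statement would fail). Your explicit verification of $K_1/H_1\cong S^{\ell_-}$ when $H_1$ is trivial---via Proposition \ref{prop:h1trivial} and a rank count of the kernel of $p_2$ inside $Sp(1)^3$---is a small but genuine completion, since the paper's corollary establishing that isomorphism is stated only for $H_1$ non-trivial and the paper passes over the trivial case silently.
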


 Further, since $G/K^+\cong G_1/L_1$ and $H^\ast(G/K^+;\Q)\cong H^\ast(S^{\ell_-}\times S^{\ell_- + \ell_+};\Q)$, it follows that $L_1$ is simple.  Thus, $(G_1,L_1)$ must be in Table \ref{table:corank2}.  However, most of these entries are not valid.
 
 \begin{proposition}  If $(G_1,L_1)$ is on Table \ref{table:corank2} and corresponds to an action on a rational sphere as in Theorem \ref{thm:paritycase}, then $(G_1,L_1) = (SU(n),SU(n-2))$ or $(G_1,L_1) = (Sp(n),Sp(n-2))$.
 
 \end{proposition}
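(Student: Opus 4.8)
The plan is to leverage the structural result just proved together with two further constraints, and then eliminate the offending rows of Table \ref{table:corank2} family by family. Recall we have $L_1\subseteq K_1\subseteq G_1$ with $L_1$ simple, $G/K^+\cong G_1/L_1\simeq_\Q S^{\ell_-}\times S^{\ell_-+\ell_+}$, and $K_1/L_1\cong S^{\ell_-}$, while $K^+=L_1\cdot L'$ is a central product with $L'\cong G_2$ diagonally embedded. I would record three tools. \textbf{(T1)} Since $G_2\cong L'$ is simple, acts transitively on $S^{\ell_+}$ with $\ell_+$ even, and $\rk G_2\leq 2$, Table \ref{table:transsphere} forces $\ell_+\in\{2,4,6\}$, with $G_2$ locally $SU(2),Sp(2),\mathbf{G}_2$ and $\rk G_2=1,2,2$ respectively. \textbf{(T2)} Because $L_1$ and $L'$ commute and $p_1|_{L'}$ has finite kernel, $p_1(L')$ is a subgroup of $Z_{G_1}(L_1)$ of rank $\rk G_2$, so $\rk G_2\leq \rk Z_{G_1}(L_1)$. \textbf{(T3)} The fibration $S^{\ell_-}=K_1/L_1\to G_1/L_1\to G_1/K_1$, combined with $G_1/L_1\simeq_\Q S^{\ell_-}\times S^{\ell_-+\ell_+}$ and the fact that $\ell_-+\ell_+$ is odd, shows $G_1/K_1\simeq_\Q S^{\ell_-+\ell_+}$ is a rational odd sphere; hence $(G_1,K_1)$ is a transitive action on a rational sphere, constrained by \cite{KapovitchZiller04}, and simultaneously $(K_1,L_1)$ is a transitive sphere pair from Table \ref{table:transsphere}.

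Applying (T1) immediately discards every row of Table \ref{table:corank2} with $\ell_+=8$ (the rows $Spin(9)/\mathbf{G}_2$, $\mathbf{E}_6/\mathbf{F}_4$, $\mathbf{F}_4/\mathbf{G}_2$, $\mathbf{G}_2/\{e\}$), since no simple group of rank $\leq 2$ acts transitively on $S^8$, as well as the $\ell_+=0$ row $Spin(8)/\mathbf{G}_2$ (which belongs to Case (5), not Case (4)); the row $\mathbf{G}_2/\{e\}$ is excluded in any case because $L_1$ is simple. This leaves the rows with $\ell_+\in\{2,4,6\}$: the three $SU$-rows, the rows with $L_1$ an odd spin group or a symplectic group, and $Spin(2m)/Spin(2m-3)$ with $2m-4\leq 6$, i.e.\ $m\in\{4,5\}$.

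Next I would use (T2) to kill the three $SU$-rows other than $(SU(m),SU(m-2))$. In $(SU(6),SO(6))$ and $(SU(6),Sp(3))$ the group $L_1$ acts irreducibly on $\mathbb{C}^6$, so $Z_{SU(6)}(L_1)$ is finite and $\rk G_2\geq 1>0$; in $(SU(5),Sp(2))$ both conjugacy classes of embeddings (the irreducible $SO(5)$ one and the reducible $\mathbb{C}^4\oplus\mathbb{C}$ one) have centralizer of rank at most $1<2=\rk G_2$. By contrast, for $(SU(m),SU(m-2))$ the centralizer is a block $U(2)$ of rank $2$, so (T2) is satisfied; moreover $K_1=SU(m-1)$ realizes $K_1/L_1=S^{2m-3}$ and $G_1/K_1=S^{2m-1}\simeq_\Q S^{\ell_-+\ell_+}$, so this row survives.

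Finally I would clear the remaining $Spin$-rows with (T3). For $L_1=Spin(2m-3)$ the only transitive sphere action with this isotropy is on $S^{2m-3}$, whereas here $\ell_-=4m-5$ (for $Spin(2m+1)/Spin(2m-3)$, $m\geq 4$) or $\ell_-=2m-1$ (for $Spin(2m)/Spin(2m-3)$, $m\in\{4,5\}$), both $>2m-3$, so generically no $K_1$ with $K_1/L_1\cong S^{\ell_-}$ exists; the exceptional-isomorphism cases $Spin(8)/Spin(5)$ and $Spin(10)/Spin(7)$ die at once since $Spin(5)=Sp(2)$ is not the isotropy of $S^7$ and $Spin(7)$ is not the isotropy of $S^9$. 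The genuinely delicate cases — and the step I expect to be the main obstacle — are the two rows with $(G_1,L_1)=(Spin(9),Sp(2))$, where $Sp(2)=Spin(5)$ \emph{does} occur as the isotropy of $S^{11}=Sp(3)/Sp(2)$, so the sphere-pair half of (T3) alone is not enough. Here I would invoke the second half of (T3): $\ell_-+\ell_+=15$, and the only transitive action of $Spin(9)$ on a rational $S^{15}$ forces $K_1=Spin(7)$; but $K_1/L_1=Spin(7)/Sp(2)$ cannot be $S^{11}$ because $Spin(7)$ does not act transitively on $S^{11}$, the final contradiction. The symplectic row $(Sp(m),Sp(m-2))$ passes both halves of (T3), realized by $K_1=Sp(m-1)$ with $K_1/L_1=S^{4m-5}$ and $G_1/K_1=S^{4m-1}$, completing the proof.
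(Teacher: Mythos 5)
Your three-tool skeleton is close to the paper's own proof, which rests on exactly three observations: (i) $\ell_+\in\{2,4,6\}$ with $L'$ locally $SU(2)$, $Sp(2)$, $\mathbf{G}_2$; (ii) $L_1=H_1$ must be the isotropy group of a transitive action on $S^{\ell_-}$ by a subgroup of $G_1$; and (iii) the embedding of $L_1$ into $G_1$ must extend to a commuting product $L_1\cdot p_1(L')$ with $p_1(L')$ locally $SU(2)$, $Sp(2)$, or $\mathbf{G}_2$. Your (T1) is (i) and the sphere-pair half of (T3) is (ii); but you have replaced (iii) by the weaker centralizer-rank condition (T2), and that weakening creates a genuine gap: you never eliminate $(G_1,L_1)=(Spin(11),Spin(7))$, the $m=5$ member of the family $Spin(2m+1)/Spin(2m-3)$. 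Your blanket claim that the only transitive sphere action with isotropy $Spin(2m-3)$ is on $S^{2m-3}$ fails precisely there, since $Spin(7)$ is also the isotropy group of the transitive $Spin(9)$-action on $S^{15}$, and for $m=5$ one has $\ell_-=4m-5=15$ on the nose; yet this row appears neither among your ``exceptional-isomorphism'' cases nor among your ``delicate'' ones. Worse, none of your stated tools can kill it. (T2) passes: the block $Spin(7)\subseteq Spin(11)$ is centralized by a $Spin(4)$, of rank $2=\rk Sp(2)=\rk G_2$. Both halves of (T3) also pass at the level of isomorphism types: $K_1=Spin(9)$ gives $Spin(9)/Spin(7)\cong S^{15}$ (spin embedding) and $Spin(11)/Spin(9)\simeq_\Q S^{19}=S^{\ell_-+\ell_+}$. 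What fails is embedding compatibility, which your tools cannot see: every $Spin(9)\subseteq Spin(11)$ is a block subgroup, and a spin-embedded $Spin(7)$ inside it acts on $\R^{11}$ as an $8$-dimensional representation plus three trivial summands, whereas the given block $L_1$ acts as $7$ plus four trivial summands, so no admissible $K_1$ contains $L_1$. The paper's observation (iii) disposes of this in one line, uniformly for all $m\geq 4$ (including both of your $(Spin(9),Sp(2))$ rows): $Spin(2m+1)$ would have to contain commuting copies of $Spin(2m-3)$ and $Sp(2)$ with finite intersection, which is impossible because the smallest orthogonal representation of $Spin(2m-3)\times Sp(2)$ has dimension $(2m-3)+5>2m+1$. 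Your proof needs this stronger tool (or an equivalent character comparison) restored.

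A secondary, smaller issue: the rational-sphere half of (T3) is not fully justified. In the long exact sequence of $S^{\ell_-}=K_1/L_1\to G_1/L_1\to G_1/K_1$, the conclusion $G_1/K_1\simeq_\Q S^{\ell_-+\ell_+}$ requires the fiber inclusion to be rationally essential; otherwise $\pi^\Q_{\ell_-}(G_1/K_1)$ and $\pi^\Q_{\ell_-+1}(G_1/K_1)$ survive. You invoke this half in the $(Spin(9),Sp(2))$ case, where it does hold but is unverified; it is cleaner to argue there as the paper's observation (ii) suggests: $K_1/L_1\cong S^{11}$ with $L_1=Sp(2)$ forces $K_1\cong Sp(3)$ by Borel's classification, and $Sp(3)$ admits no orthogonal representation of dimension $9$, so $K_1\not\subseteq Spin(9)$.
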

 
 \begin{proof}  The following three observations can be used to rule out all other possibilities.

\begin{itemize}\item Since $L'/H' \cong S^{\ell_+}$ and $\rk L'\leq 2$, it follows that $\ell_+\in \{2,4,6\}$.

\item Since $K_1/H_1\cong S^{\ell_-}$, $H_1 = L_1$ must be the isotropy group of a transitive action on $S^{\ell_-}$ by a subgroup of $G_1$.  

\item  Since $p_1:K^+\rightarrow G_1$ is has finite kernel, the embedding of $L_1$ into $G$ must extend to an embedding of $L_1\cdot p_1(L')$ with $L' = SU(2), Sp(2), \mathbf{G}_2$ respectively, for $\ell_+ = 2,4,6$ respectively.

\end{itemize}

We illustrate the argument only for the infinite families appearing in the table.

When $(G_1,K_1) = (Spin(2n+1), Spin(2n-3))$, as $\ell_+ = 4$, $Spin(2n+1)$ must contain a copy of $Spin(2n-3)\cdot Sp(2)$.  However, elementary representation theory shows that the smallest orthogonal representation of $Spin(2n-3)\times Sp(2)$ is of dimension $2n-3 + 5 > 2n  + 1$.

When $(G_1,K_1) = (Spin(2n), Spin(2n-3)$, we have $\ell_+ = 2n-4 \in \{2,4,6\}$.  As $n\geq 4$ in the table, we must have $n \in \{4,5\}$.  When $n=4$, elementary representation theory shows that  $Spin(8)$ does not contain (up to cover) $Spin(5)\cdot Sp(2)$, and when $n=5$, $Spin(10)$ does not contain $Spin(7)\cdot \mathbf{G}_2$.

\end{proof}

To complete the proof of Theorem \ref{thm:paritycase}, for each of the pairs $$(G_1,L_1) = (SU(n), SU(n-2))\text{ and }(Sp(n), Sp(n-2)),$$ we classify the groups $H,K^\pm , G$ and embeddings.

\begin{proposition}  When $(G_1,K_1) = (SU(n), SU(n-2))$, we have $G = SU(n)\times SU(2)$, $K^+ = L_1\cdot L' = SU(n-2)\cdot SU(2)$, $H = SU(n-2)\cdot S^1$, and $K^- = (K_1\times K_2)\cdot K' = (SU(n-1)\times \{e\})\cdot S^1$.  When $n=3$, there are, up to equivalence, precisely three group diagrams corresponding to actions on rational spheres.  When $n\geq 4$, there is, up to equivalence, a unique group diagram created from these groups, corresponding to the linear action of $SU(n)\times SU(2)$ on the unit sphere $S^{4n-1}\subseteq \mathbb{C}^n\otimes_{\mathbb{C}} \mathbb{C}^2$.

\end{proposition}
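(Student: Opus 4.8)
The plan is to read off the isomorphism types of $G,K^\pm,H$ from the structure results already in hand, then determine their embeddings up to conjugacy, and finally invoke Proposition~\ref{prop:equiv} to count group diagrams. First I would observe that since $(G_1,L_1)=(SU(n),SU(n-2))$ is the first row of Table~\ref{table:corank2}, we have $\ell_-=2n-3$ and $\ell_+=2$. The preceding structure proposition gives $L'\cong G_2$ with $L'/H'\cong S^{\ell_+}=S^2$, which forces $L'=SU(2)=G_2$ and $H'=S^1$; hence $G=SU(n)\times SU(2)$, $K^+=SU(n-2)\cdot SU(2)$ and $H=SU(n-2)\cdot S^1$. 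The relation $K_1/H_1\cong S^{\ell_-}=S^{2n-3}$ with $H_1=L_1=SU(n-2)$, together with Table~\ref{table:transsphere}, yields $K_1=SU(n-1)$; since $K^-$ has corank one and $p_2(H')=p_2(K^-)$, the remaining factors are $K_2=\{e\}$ and $K'=S^1$, so $K^-=(SU(n-1)\times\{e\})\cdot S^1\cong U(n-1)$. When $n=3$ we have $(\ell_-,\ell_+)=(3,2)$, which is precisely the setting of Theorem~\ref{thm:23}; that theorem already produces the three diagrams, so I would simply cite it and assume $n\geq 4$ (equivalently $\ell_-\geq 5$) for the rest.

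Next I would fix the embeddings. As $\ell_\pm>1$, Proposition~\ref{prop:ellinfo} makes every group connected, so there are no component subtleties. Choose coordinates so that $H_1=SU(n-2)$ is the standard block on $e_3,\dots,e_n$. The $SU(2)$-factor $L'$ of $K^+$ must centralize this $SU(n-2)$ and project isomorphically onto $G_2$, so its image in $SU(n)$ lies in the $U(2)$-block on $e_1,e_2$ and, being a nontrivial image of $SU(2)$, is the standard block $SU(2)$ there. This is exactly where $n\geq 4$ is essential: when $SU(n-2)$ is nontrivial the centralizer condition excludes the three-dimensional ($SO(3)$-type) embedding which is available, and responsible for the extra diagrams, when $n=3$. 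Thus $K^+$, and with it $H$ and the maximal torus $H'=\{(\diag(\bar\mu,\mu,I_{n-2}),\diag(\mu,\bar\mu)):\mu\in S^1\}$, is determined up to conjugacy.

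The technical heart of the argument is the uniqueness of $K^-$. Here $K_1=SU(n-1)$ is a coordinate block containing $H_1=SU(n-2)$, hence the block on $\{e_i\}\cup\{e_3,\dots,e_n\}$ for some $i\in\{1,2\}$; the Weyl element of the connected group $L'\subseteq N(H)_0$ interchanges $e_1,e_2$, so the two choices are equivalent by Proposition~\ref{prop:equiv} and I may take $K_1$ to fix $e_1$. The diagonal circle $K'$ then has $p_1(K')$ equal to the centralizer circle $\nu\mapsto\diag(\nu^{-(n-1)},\nu I_{n-1})$ of $K_1$ and $p_2(K')$ some circle in $G_2$; matching both the $G_1$- and $G_2$-components of the condition $H'\subseteq K^-$ pins down the two winding numbers up to the single relation $d=(n-1)c$, after which the resulting subgroup $K'$ is seen to be independent of $c$. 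This shows $K^-$ is determined up to conjugacy, so there is a unique group diagram. Verifying this winding relation and confirming that the residual block choice is genuinely absorbed by an $N(H)_0$-conjugation is the main obstacle.

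Finally I would identify the unique diagram with a concrete action. The representation of $SU(n)\times SU(2)$ on $\mathbb{C}^n\otimes_{\mathbb{C}}\mathbb{C}^2=\mathbb{C}^{2n}$ given by $(A,B)\cdot M=AMB^{t}$ is cohomogeneity one on $S^{4n-1}$, the orbits being the level sets of the pair of singular values of $M$; a direct computation of the stabilizers of a rank-one tensor, a balanced rank-two tensor, and a generic tensor gives exactly $U(n-1)$, $SU(n-2)\times SU(2)$, and $SU(n-2)\times S^1$, which are our $K^-,K^+,H$. Citing Straume~\cite{Straume96} for this linear action then shows the unique diagram is realized and produces a (genuine, hence rational) sphere, completing the proof.
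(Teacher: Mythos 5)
Your route is genuinely different from the paper's. After the common first step (reading $\ell_-=2n-3$, $\ell_+=2$ off Table \ref{table:corank2}, identifying the isomorphism types, and reducing $n=3$ to Theorem \ref{thm:23}), the paper disposes of both the uniqueness of the diagram and its identification with the tensor-product action in one stroke by citing Asoh \cite[7.13]{Asoh81}, whereas you classify the embeddings and the admissible $K^-$'s by hand and verify the linear model directly. That self-contained strategy is viable, and your winding-number analysis of $K'$ and your stabilizer computations for the action on $S^{4n-1}$ are correct. But the step you yourself single out as the main obstacle is settled by a false claim, and this is a genuine gap.

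Concretely: $L'\not\subseteq N(H)_0$; in fact $L'$ does not even normalize $H$, since conjugating the circle $H'=\{(\diag(\mu,\bar\mu,I_{n-2}),\diag(\mu,\bar\mu))\}$ by a generic $(\diag(C,I_{n-2}),C)\in L'$ moves it off the diagonal, so only the maximal torus of $L'$ and its Weyl coset lie in $N(H)$. Worse, computing $\mathfrak{n}(\mathfrak{h})$ shows that every element of $N(H)_0$ has $SU(n)$-component of the form $\diag(\alpha,\beta,U)$ with $U\in U(n-2)$ and diagonal $SU(2)$-component; thus $N(H)_0$ preserves the lines $\mathbb{C}e_1$ and $\mathbb{C}e_2$, conjugation by it fixes each of the blocks $SU(e_1^\perp)$ and $SU(e_2^\perp)$ setwise, and the Weyl element $w\in L'$ is \emph{not} in $N(H)_0$. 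So your two candidate $K^-$'s are not related by the third move of Proposition \ref{prop:equiv}, which is exactly the move you invoke. The repair is easy but different: $w$ lies in $N(H)\cap K^+$, so conjugating the \emph{entire} diagram by $w$ (the second move of Proposition \ref{prop:equiv}) fixes $H$ and $K^+$ while interchanging the two candidates for $K^-$; note this distinction between $N_G(H)$- and $N_G(H)^0$-orbits is precisely what the paper treats carefully in Proposition \ref{prop:23possibilities} for $n=3$. A secondary gap of the same flavor: ``$K_1$ is a coordinate block because it contains $H_1$'' is not true as stated -- containing $H_1$ only forces $K_1=SU(v^\perp)$ for some unit $v\in\operatorname{span}(e_1,e_2)$, a $\mathbb{CP}^1$ of possibilities. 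Pinning $v$ to $\{e_1,e_2\}$ requires the full condition $H'\subseteq K^-=K_1\cdot K'$ (every element of $K_1\cdot Z_{G_1}(K_1)$ preserves $\mathbb{C}v$, while $\diag(\mu,\bar\mu,I_{n-2})$ preserves no line of $\operatorname{span}(e_1,e_2)$ other than the two axes), and since $N(H)_0$ cannot move a generic $v$ to a coordinate vector, this containment argument is an essential step, not bookkeeping; it should precede, not follow, the block claim.
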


\begin{proof}From Table \ref{table:corank2}, when $(G_1,K_1) = (SU(n), SU(n-2))$, we must have $\ell_+ = 2$ and $\ell_- = 2n-3$.  Thus, $G_2\cong L'\cong SU(2)$ and $H'\cong S^1$.  Since $K_1/H_1\cong S^{\ell_-} = S^{\ell_-} = S^{2n-3}$, we must have $K_1\cong SU(n-1)$.

When $n=3$, $\ell_- = 3$, so we are in the case $(\ell_-,\ell_+) = (3,2)$, so the result follows in this case from Theorem \ref{thm:23}.  Hence, we assume $n\geq 4$, there is precisely one subgroup of $G_1 = SU(n)$ which is locally isomorphic to $L_1\cdot p_1(L')\cong SU(n-2)\cdot SU(2)$ and which extends the the usual $SU(n-2)\subseteq SU(n)$.

This situation is encountered in \cite[7.13]{Asoh81}, where he shows the conjugacy class of the embeddings of $H,K^\pm$ determine the group diagram up to equivalence.

\end{proof}

\begin{proposition}\label{prop:spclass}When $(G_1,L_1) = (Sp(n),Sp(n-2))$, we have $G = Sp(n)\times Sp(2)$, $K^+ = L_1\cdot L' = Sp(n-2)\cdot Sp(2)$, $H = Sp(n-2)\cdot Sp(1)^2$, and $K^- = (K_1\times K_2)\cdot K' = (Sp(n-2)\times Sp(1))\cdot Sp(1)$.  For any $n\geq 2$, there is, up to equivalence, a unique group diagram created from these groups, corresponding to the linear action of $Sp(n)\times Sp(2)$ on the unit sphere $S^{4n-1}\subseteq \mathbb{H}^n\otimes_{\mathbb{H}} \mathbb{H}^2$.

\end{proposition}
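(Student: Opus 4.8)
The plan is to determine all the groups and their embeddings from the structure results already in hand, deduce uniqueness from the rigidity of those embeddings together with Asoh's classification, and then match the resulting diagram with an explicit linear action. \emph{First}, applying Table~\ref{table:corank2} to $(G_1,L_1)=(Sp(n),Sp(n-2))$ gives $\ell_+=4$ and $\ell_-=4n-5$. Since $L'\cong G_2$ with $L'/H'\cong S^{\ell_+}=S^4$ and $\rk L'\le 2$, the classification of transitive actions on $S^4$ (Table~\ref{table:transsphere}) together with the simple-connectivity of the factors of $G$ forces $L'\cong Sp(2)$, $H'\cong Sp(1)^2$, and $G_2\cong Sp(2)$; hence $G=Sp(n)\times Sp(2)$, $K^+=Sp(n-2)\cdot Sp(2)$, and $H=Sp(n-2)\cdot Sp(1)^2$ with the $Sp(1)^2$ diagonally embedded. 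For $K^-$ I would argue separately in the cases $n=2$ and $n\ge 3$: when $n=2$ the subgroup $H_1=Sp(0)$ is trivial and $K^-\cong Sp(1)^3$ is already supplied by Proposition~\ref{prop:h1trivial}; when $n\ge 3$, $H_1=Sp(n-2)$ is nontrivial, so the corollary giving $K_1/H_1\cong S^{\ell_-}=S^{4n-5}$ yields $K_1\cong Sp(n-1)$.

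\emph{Next} I would pin down the remaining factors of $K^-$ by a rank count. Since $p_1|_H$ is injective, $p_1(H)\cong Sp(n-2)\cdot Sp(1)^2$ has rank $n$, whereas $K_1\cong Sp(n-1)$ has rank $n-1$; as $H\subseteq K^-$, this forces the diagonal factor $K'$ to be nontrivial. Because $K^-$ has corank one in $G$ (so rank $n+1$) and $p_2(K^-)=p_2(H')=Sp(1)^2$, the only possibility is $K'\cong Sp(1)$ and $K_2\cong Sp(1)$, giving $K^-=(Sp(n-1)\times Sp(1))\cdot Sp(1)$ in every case, with the $Sp(1)$ factor $K'$ shared diagonally between the two simple factors of $G$. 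All embeddings appearing here are rigid: the block inclusions of $Sp(n-2),Sp(n-1),Sp(2)$ into $Sp(n)$ and of $Sp(1)^2$ into $Sp(2)$ are unique up to conjugacy, and the diagonal factors $L'\cong Sp(2)$, $K'\cong Sp(1)$ are determined up to conjugacy since each projects isomorphically onto a rigidly embedded subgroup and $\operatorname{Out}(Sp(k))$ is trivial. Crucially, $H$ is semisimple, so there is no torus factor and hence none of the integer slope parameters that produced three distinct diagrams in the $(\ell_-,\ell_+)=(3,2)$ case of Theorem~\ref{thm:23}; the conjugacy classes of $H\subseteq K^\pm\subseteq G$ are completely determined. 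Invoking Asoh's analysis~\cite{Asoh81}, exactly as in the $(SU(n),SU(n-2))$ case, these conjugacy classes determine the group diagram up to equivalence, which is the asserted uniqueness.

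\emph{Finally} I would exhibit the linear model and match diagrams. Realizing $\mathbb{H}^n\otimes_{\mathbb{H}}\mathbb{H}^2$ as the space of $n\times 2$ quaternionic matrices, let $Sp(n)\times Sp(2)$ act by $(g,h)\cdot A=gAh^{\ast}$; as a real representation this is $\mathbb{R}^{8n}$, so the action restricts to $S^{8n-1}$, which is the correct dimension since Proposition~\ref{prop:lasttop} gives $\dim M=2(\ell_-+\ell_+)+1=8n-1$. Quaternionic singular value decomposition shows the orbit space of the sphere is one-dimensional (the two singular values subject to the unit-norm constraint), so the action is cohomogeneity one; computing the stabilizers of a generic diagonal matrix $\diag(s_1,s_2,0,\dots)$ with $s_1>s_2>0$, of the rank-one point $\diag(s_1,0,\dots)$, and of the equal-singular-value point $\diag(s_1,s_1,0,\dots)$ returns exactly $H$, $K^-$ (with $K^-/H\cong S^{4n-5}$), and $K^+$ (with $K^+/H\cong S^4$) as above. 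This reproduces the group diagram, so by the uniqueness just proved the action is orbit-equivalent to this linear action; the representation also appears in Straume's tables~\cite{Straume96}.

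The \textbf{main obstacle} is the isotropy computation for the linear model, namely carrying out the quaternionic singular value decomposition and confirming the diagonal structure of $K^-$ --- in particular that $K_1\cong Sp(n-1)$ (rather than $Sp(n-2)$) and that a single $Sp(1)$ is shared diagonally between the two factors of $G$. This is the one step where the abstract structure result is reconciled with an explicit geometric model, and it is what forces the sphere to be $S^{8n-1}$. The uniqueness, by contrast, is comparatively soft once the semisimplicity of $H$ excludes moduli and Asoh's classification is available.
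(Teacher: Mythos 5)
Your proposal is correct and follows essentially the same route as the paper's proof: read off $\ell_+=4$, $\ell_-=4n-5$ from Table~\ref{table:corank2}, deduce $L'\cong G_2\cong Sp(2)$, $H'\cong Sp(1)^2$, and $K_1\cong Sp(n-1)$ from the structure results, pin down $K_2\cong K'\cong Sp(1)$ by rank counting (the paper rules out $K_2=0$ via the corank-one argument, which is what justifies your phrase ``the only possibility''), and then combine rigidity of the embeddings with Asoh's analysis (the paper cites \cite[7.15]{Asoh81}) for uniqueness; your explicit quaternionic SVD verification of the linear model is extra detail that the paper delegates to Asoh and Straume. You also correctly worked with the repaired version of the statement, consistent with the paper's own proof: $K_1\cong Sp(n-1)$ (not $Sp(n-2)$), and the unit sphere is $S^{8n-1}\subseteq\mathbb{H}^n\otimes_{\mathbb{H}}\mathbb{H}^2$ (not $S^{4n-1}$).
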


\begin{remark}As the quaternions are non-commutative, the notation $\mathbb{H}^n\otimes_{\mathbb{H}} \mathbb{H}^2$ is to be interpreted as follows.  The skew-vector space $\mathbb{H}^n$ has the skew-field of scalars  $\mathbb{H}$ multiplying on the right, while $\mathbb{H}^2$ has $\mathbb{H}$ multiplying on the left.  Then $\mathbb{H}^n\otimes_{\mathbb{H}}\mathbb{H}^2$ is an $\mathbb{R}$-vector space of real dimension $8n$.

\end{remark}

\begin{proof}When $(G_1,L_1) = (Sp(n),Sp(n-2))$, then $\ell_+ = 4$ and $\ell_- = 4n-5$.  Thus, $G_2\cong L'\cong Sp(2)$ and $H'\cong Sp(1)^2$.  Since $K_1/H_1\cong S^{\ell_-}$ and $H_1=L_1\cong Sp(n-2)$, we must have $K_1\cong Sp(n-1)$.  Now $Sp(1)^2 \cong p_2(H')  = p_2(K^-) = K_2\cdot p_2(K')$.  Since $K_1\subseteq G_1$ has corank $1$, $p_1:K^-\rightarrow G_1$ must have non-trivial kernel, so $K_2\neq 0$.  Since $p_1(H)$ has full rank and $p_1(H)\subseteq p_1(K^-)$, we deduce that $K'\neq 0$ as well.  It follows that $K_2\cong K'\cong Sp(1)$.  It is easy to see that the embeddings of each of $H,K^\pm$ are determined up to conjugacy by these calculations.

This setup is encountered in \cite[7.15]{Asoh81}, where he shows the conjugacy classes of the embeddings of $H,K^\pm$ determine the cohomogeneity one manifold up to equivalence.

\end{proof}

\appendix
\section{Homology of Brieskorn varieties}\label{sec:appendix}

In this appendix, we compute the homology groups of the Brieskorn varieties $B_d^{2m-1}$ appearing in Theorem \ref{thm:main}.  In particular, we show

\begin{proposition}\label{prop:britop}  For any $m\geq 3$ and $d\geq 1$, $B_d^{2m-1}$ is $(m-2)$-connected.  In addition, if $m$ is odd then $H_\ast(B_d^{2m-1})\cong H_\ast(S^{m-1}\times S^m)$ for $d$ even and $H_\ast(B_d^{2m-1})\cong H_\ast (S^{2m-1})$ for $d$ odd.  If $m$ is even, then $B_d$ is a rational sphere whose only torsion is $H_{m-1}(B_d)\cong \mathbb{Z}/d\mathbb{Z}$.

\end{proposition}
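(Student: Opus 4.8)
The plan is to realize $B=B_d^{2m-1}$ as the link of the isolated hypersurface singularity at the origin of $f(z)=z_0^d+z_1^2+\cdots+z_m^2$ (for $d=1$ the origin is a regular point and $B\cong S^{2m-1}$, so assume $d\geq 2$) and to exploit Milnor's fibration theory. By Milnor's theorem on links of isolated singularities, $B$ is $(m-2)$-connected; in particular it is simply connected since $m\geq 3$, so the asserted $(m-2)$-connectedness will follow once the homology is shown to vanish in degrees $\leq m-2$. The Milnor fiber $F$ is $(m-1)$-connected with $\widetilde H_\ast(F)$ free and concentrated in degree $m$, and there is the exact sequence
$$0\to H_m(B)\to H_m(F)\xrightarrow{\,h_\ast-I\,}H_m(F)\to H_{m-1}(B)\to 0,$$
with $\widetilde H_j(B)=0$ outside degrees $m-1,m$ (apart from the fundamental class in top degree), where $h_\ast$ is the integral monodromy. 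Thus the whole computation reduces to identifying $\ker(h_\ast-I)$ and $\operatorname{coker}(h_\ast-I)$ as abelian groups.

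The second step is to pin down the pair $(H_m(F),h_\ast)$ integrally via Thom--Sebastiani. Writing $f$ as the direct sum of the one-variable singularities $z_0^d$ and $z_1^2,\ldots,z_m^2$, each factor $z_i^2$ with $i\geq 1$ has Milnor fiber two points, $\widetilde H_0\cong\Z$ and monodromy $-1$, while $z_0^d$ has Milnor fiber $d$ points, $\widetilde H_0\cong\Z^{d-1}$ and monodromy $h_0$ equal to the cyclic permutation of the $d$ roots. Thom--Sebastiani then gives $H_m(F)\cong\widetilde H_0$ of the $d$-point fiber and $h_\ast=(-1)^m h_0$. It therefore suffices to compute $\ker$ and $\operatorname{coker}$ of $h_0-I$ (for $m$ even) and of $h_0+I$ (for $m$ odd) on $\widetilde H_0\cong\Z^{d-1}$.

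The third step, which I expect to be the main obstacle, is to extract the integral rather than merely rational structure of these kernels and cokernels: the eigenvalue computation (eigenvalues of $h_\ast$ are $(-1)^m\zeta^{\,j}$ with $\zeta=e^{2\pi i/d}$, $j=1,\dots,d-1$) only yields ranks and orders, not the group structure. I would instead identify the $d$-point reduced homology with the augmentation ideal inside the group ring $R=\Z[t]/(t^d-1)$, on which $h_0$ acts as multiplication by $t$, and apply the snake lemma to $0\to\widetilde H_0\to R\xrightarrow{\epsilon}\Z\to 0$ with vertical map $t\mp1$. Directly on $R$ one finds $\operatorname{coker}(t-1)=\Z[t]/(t^d-1,\,t-1)\cong\Z$ and $\ker(t-1)=\Z\cdot(1+t+\cdots+t^{d-1})$, and analogous formulas for $t+1$. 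Feeding these into the snake sequence, and using that $\epsilon$ sends the norm element to $d$, yields $\ker(h_0-I)=0$ and $\operatorname{coker}(h_0-I)\cong\Z/d\Z$; for $t+1$ the cokernel is trivial when $d$ is odd and $\cong\Z$ when $d$ is even, with the kernel trivial, respectively $\cong\Z$. This bookkeeping is precisely what guarantees that the torsion is \emph{cyclic} of order $d$ and that no spurious torsion appears when $m$ is odd and $d$ even.

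Finally I would assemble the cases from $h_\ast-I=(-1)^m h_0-I$. For $m$ even this is $h_0-I$, giving $H_m(B)=0$ and $H_{m-1}(B)\cong\Z/d\Z$, so $B$ is a rational sphere whose only torsion is $\Z/d\Z$. For $m$ odd it equals $-(h_0+I)$, which has the same kernel and cokernel as $h_0+I$: when $d$ is odd both vanish, so $B$ is an integral homology sphere and hence, being simply connected, has the homology of $S^{2m-1}$; when $d$ is even both are $\cong\Z$, giving $H_\ast(B)\cong H_\ast(S^{m-1}\times S^m)$. In every case the reduced homology vanishes below degree $m-1$, which together with simple connectivity and the Hurewicz theorem delivers the $(m-2)$-connectedness and completes the proof.
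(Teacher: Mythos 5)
Your proposal is correct, and it reaches the integral answer by a genuinely different route than the paper in the decisive step. Both arguments run through the Milnor fibration: Alexander/Poincar\'e duality plus the Wang sequence of the fibration $S^{2m+1}\setminus B \to S^1$ identify $H_{m-1}(B)$ with $\operatorname{coker}(h_\ast - I)$ (and, in your version, $H_m(B)$ with $\ker(h_\ast - I)$) acting on $H_m(F)\cong \Z^{d-1}$. The difference is how the integral group structure is pinned down. The paper only extracts the \emph{order} of the cokernel, namely $|\Delta(1)|$ computed from Milnor's characteristic polynomial formula; since a determinant alone does not determine an abelian group, the paper needs a separate geometric argument to show $H_{m-1}(B_d)$ is cyclic, and for this it uses the cohomogeneity one structure of $B_d$ (the $SO(2)\times SO(m)$ action, vanishing of the Euler class of the circle bundle $G/H\to G/K^-\cong T^1S^{m-1}$, Mayer--Vietoris for the double disk bundle decomposition, and a Wang sequence for $G/K^+$ as an $S^{m-1}$-bundle over $S^1$). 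You bypass that entirely: by the Sebastiani--Thom theorem you identify $(H_m(F),h_\ast)$ integrally with the augmentation ideal of $\Z[t]/(t^d-1)$ carrying multiplication by $(-1)^m t$, and the snake lemma applied to $0\to I\to \Z[t]/(t^d-1)\xrightarrow{\epsilon}\Z\to 0$ computes kernel and cokernel exactly, so cyclicity of the torsion (and the absence of extra torsion when $m$ is odd and $d$ is even) falls out of the algebra rather than the geometry. Your computations check out, including the sign $h_\ast=(-1)^m h_0$, which is consistent with Milnor's eigenvalue formula that the paper quotes, and the separate treatment of $d=1$. The trade-offs: your route requires the integral, monodromy-equivariant form of Sebastiani--Thom (a genuine extra input beyond Milnor's book), while the paper stays within Milnor's fibration theorems plus the cohomogeneity-one machinery it has already built; in exchange, your argument is self-contained within singularity theory, gives $H_m(B)$ directly as a kernel instead of recovering it by duality, and would adapt with only bookkeeping changes to more general weighted Brieskorn links, where the paper's cyclicity-via-geometry step has no analogue.
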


Recall that $B_d^{2m-1}$ is the intersection of the zero set of the complex polynomial $z_0^d + \sum_{i=1}^m z_i^2$ with a sphere about the origin in $\mathbb{C}^{m+1}$; we always assume that $d\geq 1$ and $m\geq 3$.  We first show that $H_{m-1}(B_d)$ is cyclic, and then compute the order.

\begin{proposition}  The space $B_d^{2m-1}$ is $(m-2)$-connected with $H_{m-1}(B_d)$ cyclic.

\end{proposition}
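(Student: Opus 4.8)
The plan is to realize $B_d^{2m-1}$ as the link of the isolated hypersurface singularity of $f = z_0^d + \sum_{i=1}^m z_i^2$ at the origin and to read off its homology from Milnor's fibration theorem together with the Sebastiani--Thom/Pham description of the monodromy. When $d = 1$ the polynomial $f$ is nonsingular and $B_1^{2m-1}$ is diffeomorphic to $S^{2m-1}$, so the statement is immediate; I therefore assume $d \geq 2$, in which case the gradient $(dz_0^{d-1}, 2z_1, \dots, 2z_m)$ vanishes only at the origin and the singularity is isolated. First I would invoke Milnor's fibration theorem: the link $K = B_d^{2m-1}$ is $(m-2)$-connected, and the Milnor fiber $F$ is $(m-1)$-connected with $\widetilde H_\ast(F)$ concentrated in degree $m$ and free of rank equal to the Milnor number $\mu = \prod_i(a_i-1) = (d-1)\cdot 1^m = d-1$; in particular $F \simeq \bigvee^{d-1} S^m$. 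This already yields the connectivity assertion.

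Next, to identify $H_{m-1}(K)$ I would run the Wang sequence of the fibration $F \hookrightarrow S^{2m+1}\setminus K \to S^1$. Since $\widetilde H_q(F) = 0$ for $q \neq m$, the relevant portion
$$H_m(F) \xrightarrow{\ h_\ast - I\ } H_m(F) \longrightarrow H_m(S^{2m+1}\setminus K) \longrightarrow H_{m-1}(F) = 0$$
gives $H_m(S^{2m+1}\setminus K) \cong \operatorname{coker}(h_\ast - I)$, where $h_\ast$ denotes the algebraic monodromy. Alexander duality in $S^{2m+1}$ followed by Poincar\'e duality on the closed oriented $(2m-1)$-manifold $K$ then converts this isomorphism into $H_{m-1}(K) \cong \operatorname{coker}(h_\ast - I : H_m(F) \to H_m(F))$.

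It therefore suffices to prove that $\operatorname{coker}(h_\ast - I)$ is cyclic, and for this I would show that $H_m(F)$ is a cyclic module over $\Z[h_\ast]$. By the Sebastiani--Thom/Pham theorem the fiber of the Brieskorn--Pham polynomial is the join of the fibers of the individual monomials and the monodromy is the corresponding join, so on reduced homology $H_m(F) \cong \widetilde H_0(F_0)\otimes \bigotimes_{i=1}^m \widetilde H_0(F_i)$ with $h_\ast = \bigotimes_i h_{i\ast}$. For $i\geq 1$ the factor comes from $z_i^2$, hence is $\Z$ with $h_{i\ast} = -1$, so $H_m(F) \cong \widetilde H_0(F_0) \cong \Z^{d-1}$ with $h_\ast = (-1)^m h_{0\ast}$, where $h_{0\ast}$ is induced by the cyclic permutation of the $d$ points of $F_0$. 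The group $\widetilde H_0(F_0)$ is generated over $\Z[h_{0\ast}]$ by a single difference of points, hence is cyclic, and therefore so is $H_m(F)$ over $\Z[h_\ast]$. In the cokernel $h_\ast$ acts as the identity, so every class is an integer multiple of the image of the cyclic generator; thus $H_{m-1}(K) \cong \operatorname{coker}(h_\ast - I)$ is cyclic, as claimed.

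The main obstacle is exactly the input of the last paragraph: pinning down the $\Z[h_\ast]$-module structure on $H_m(F)$. The connectivity statement and the duality chain are essentially formal, but cyclicity genuinely requires the join description of the monodromy of a Brieskorn--Pham singularity. With that structure in hand, the remaining content of the full Proposition \ref{prop:britop}, namely the order of $H_{m-1}(K)$, also falls out: imposing $h_\ast = (-1)^m$ in the annihilator relation $1 + h_\ast + \cdots + h_\ast^{d-1} = 0$ evaluates the cyclic group to $\Z/d\Z$ when $m$ is even, to $\Z$ or $0$ according to the parity of $d$ when $m$ is odd.
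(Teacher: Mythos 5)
Your proof is correct, but it reaches the cyclicity of $H_{m-1}(B_d)$ by a genuinely different route than the paper. The paper splits the two assertions between two toolkits: connectivity is quoted from Brieskorn, while cyclicity is extracted from the cohomogeneity one geometry itself --- the circle bundle $G/H \to G/K^- \cong T^1S^{m-1}$ has vanishing Euler class, so Mayer--Vietoris applied to the double disk bundle decomposition exhibits $H_{m-1}(B_d)$ as a quotient of $H_{m-1}(G/K^+) \cong \Z$, which is computed by a Wang sequence for the $S^{m-1}$-bundle $G/K^+ \to S^1$. You instead stay entirely inside singularity theory: Milnor's fibration gives connectivity, the Wang sequence of $F \to S^{2m+1}\setminus K \to S^1$ plus Alexander and Poincar\'e duality identifies $H_{m-1}(K)$ with $\operatorname{coker}(h_\ast - I)$, and cyclicity comes from the Sebastiani--Thom/Pham join description, which makes $H_m(F)$ a cyclic $\Z[h_\ast]$-module, so that its quotient $\operatorname{coker}(h_\ast - I)$, on which $h_\ast$ acts trivially, is cyclic as an abelian group. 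Notably, the paper itself runs your duality-plus-Wang-sequence argument in the \emph{next} proposition to compute the order of the (already known to be cyclic) group, but there it only invokes the characteristic polynomial of the monodromy, from which cyclicity of the cokernel cannot be deduced; your argument correctly identifies that the missing ingredient is the finer $\Z[h_\ast]$-module structure and supplies it via Pham's theorem. The trade-off: the paper's route needs only eigenvalue data about the monodromy but leans on the group action and the double disk bundle decomposition, whereas yours requires the deeper join/module input but is independent of the cohomogeneity one structure and, as you note, yields both propositions of the appendix (cyclicity and order) in one uniform stroke.
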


\begin{proof} From \cite{Brieskorn66}, $B_d$ is $(m-2)$-connected, so must only show $H_{m-1}(B_d)$ is cylic.

As shown in \cite{GroveVerdianiWilkingZiller06}, the group $G = SO(2)\times SO(m)$  acts via a cohomogeneity one action with isotropy group $$K^- \cong SO(2)SO(m-2) = (e^{-i\theta}, \diag(R(d\theta), A))$$ with $R(\theta)$ being a standard rotation and $A\in SO(m-2)$.  Thus, $G/K^-\cong T^1 S^{m-1}$.  Now, $G/H$ is a circle bundle over $G/K^-$ and the Euler class must vanish because $H^2(G/K^-) = 0$.  It follows that the induced map $H_{m-1}(G/H)\rightarrow H_{m-1}(G/K^-)$ is surjective and that $H_{m-2}(G/H) = 0$.  From the Mayer-Vietoris sequence for $B_d$ associated to the double disk bundle decomposition, it now follows that $H_{m-1}(B_d)$ is a quotient of $H_{m-1}(G/K^+)$, so we need only demonstrate this group is cyclic.   Again from \cite{GroveVerdianiWilkingZiller06}, $G/K^+$ is either diffeomorphic to $S^1\times S^{m-1}$ or to a $\mathbb{Z}_2$ quotient of it with $\mathbb{Z}_2$ acting via the antipodal map on both factors.  In the latter case, viewing this space as the total space of an $S^{m-1}$-bundle over $S^1$, the Wang sequence easily shows that $H_{m-1}(G/K^+)\cong \mathbb{Z}$ in all cases.

\end{proof}

Because $B_d$ is $(m-2)$-connected, Poincar\'e duality and the universal coefficient theorem completely determine $H_\ast(B_d)$ once we have determined the order of $H_{m-1}(B_d)$.

\begin{proposition}   If $m$ is odd and $d$ is even, then $H_{m-1}(B_d)$ is infinite.  If $m$ is odd and $d$ is odd, then $H_{m-1}(B_d)$ is trivial.  If $m$ is even, then $H_{m-1}(B_d)$ has order $d$.

\end{proposition}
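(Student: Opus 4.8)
The plan is to sidestep the Mayer--Vietoris bookkeeping that would require a parity-by-parity analysis of the group diagram, and instead read off the order of $H_{m-1}(B_d)$ directly from the Milnor fibration of $f(z)=z_0^d+z_1^2+\cdots+z_m^2$, using the Brieskorn--Pham description of its monodromy. Since the previous proposition already shows that $H_{m-1}(B_d)$ is cyclic, it suffices to determine its order, where ``infinite'' is to be read as $H_{m-1}(B_d)\cong\Z$.

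First I would assemble two standard inputs. The Milnor fiber $F$ of the isolated singularity $f$ is $(m-1)$-connected and homotopy equivalent to a wedge of $\mu=\prod_j(a_j-1)=(d-1)\cdot 1^m=d-1$ copies of $S^m$, so $H_m(F)\cong\Z^{\,d-1}$ and all other reduced homology vanishes. By the standard analysis of the Milnor fibration (Wang sequence of $S^{2m+1}\setminus B_d\to S^1$ together with Alexander duality), the link $B_d=f^{-1}(0)\cap S^{2m+1}$, the fiber $F$, and the monodromy $h_\ast\colon H_m(F)\to H_m(F)$ sit in an exact sequence
\[
0\longrightarrow H_m(B_d)\longrightarrow H_m(F)\xrightarrow{\ h_\ast-\mathrm{id}\ }H_m(F)\longrightarrow H_{m-1}(B_d)\longrightarrow 0 .
\]
Hence $H_{m-1}(B_d)\cong\operatorname{coker}(h_\ast-\mathrm{id})$, which is finite of order $\lvert\det(h_\ast-\mathrm{id})\rvert$ when this determinant is nonzero and infinite otherwise.

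The heart of the computation is the Brieskorn--Pham eigenvalue formula: the eigenvalues of $h_\ast$ are exactly the products $\omega_0\omega_1\cdots\omega_m$, where each $\omega_j$ ranges over the $a_j$-th roots of unity different from $1$. For the exponents $(d,2,\dots,2)$ this forces $\omega_j=-1$ for $j\ge 1$, so the eigenvalues are precisely $(-1)^m\zeta$ as $\zeta$ runs over the $d-1$ solutions of $\zeta^d=1$, $\zeta\ne 1$. Therefore
\[
\det(h_\ast-\mathrm{id})=\prod_{\zeta^d=1,\ \zeta\ne 1}\bigl((-1)^m\zeta-1\bigr),
\]
which I would evaluate using the cyclotomic identity $\prod_{\zeta^d=1,\ \zeta\ne1}(t-\zeta)=(t^d-1)/(t-1)$. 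When $m$ is even the product is $\prod_{\zeta\ne1}(\zeta-1)=\pm d$, so $H_{m-1}(B_d)$ is finite of order $d$. When $m$ is odd it equals $\pm\prod_{\zeta\ne1}(1+\zeta)=\pm\tfrac{1-(-1)^d}{2}$, which is $0$ for $d$ even (giving an infinite group) and $\pm1$ for $d$ odd (giving the trivial group). Combined with the cyclicity already established, this produces exactly the three stated cases, and in passing it also pins down $H_m(B_d)=\ker(h_\ast-\mathrm{id})$, consistent with the remark that $B_d$ has the integral homology of $S^{m-1}\times S^m$ precisely when $m$ is odd and $d$ is even.

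The main obstacle is not any single calculation but marshalling the two structural facts correctly: the exact sequence relating $H_{m-1}(B_d)$ and $H_m(B_d)$ to $h_\ast-\mathrm{id}$ (valid because $m\ge 3$, so $B_d$ is highly connected and the Wang sequence collapses to this single four-term piece), and the precise Brieskorn--Pham eigenvalues \emph{with} their multiplicities, so that the determinant is taken on the full $(d-1)$-dimensional space $H_m(F)$. I would take particular care with the orientation and sign conventions in the monodromy, since reversing $h_\ast-\mathrm{id}$ to $\mathrm{id}-h_\ast$ changes nothing about $\lvert\det\rvert$ but must be stated consistently; once the eigenvalues are in hand, the evaluation via cyclotomic identities is routine.
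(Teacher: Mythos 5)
Your proposal is correct and takes essentially the same route as the paper's proof: both reduce $H_{m-1}(B_d)$ to the cokernel of $h_\ast - \mathrm{id}$ on $H_m(F)$ via the Milnor fibration of the complement $S^{2m+1}\setminus B_d$ (Wang sequence combined with Alexander duality), and both then evaluate the Brieskorn--Pham characteristic polynomial of the monodromy at $t=1$ using the cyclotomic identity, with the same parity analysis in $m$ and $d$. The only cosmetic differences are that you compute the Milnor number $\mu = d-1$ explicitly and package duality plus Wang into a single four-term exact sequence (which also identifies $H_m(B_d)$ as the kernel), whereas the paper only needs $\mu \geq 1$ together with the cyclicity of $H_{m-1}(B_d)$ established in the preceding proposition.
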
 

\begin{proof}Via Poincar\'e and Alexander duality, we see that $H_{m-1}(B_d)\cong H^m(B_d)\cong H_m(S^{2m+1}\setminus B_d)$.  From \cite[Theorem 4.8, Theorem 6.5]{MilnorSingular}, the space $B':= S^{2m+1}\setminus B_d$ is a fiber bundle over $S^1$ with fiber $F$ having the homotopy type of a bouquet of spheres $\bigvee_{i=1}^\mu S^m$.  Note that $\mu \geq 1$ \cite[Theorem 9.1]{MilnorSingular}.  Applying the Wang sequence to this bundle, we deduce that $H_m(B')$ is isomorphic to the cokernel of the map $h_\ast - Id_{H_m(F)}:H_m(F)\cong \Z^\mu \rightarrow H_m(F)$ where $h:F\rightarrow F$ is the monodromy of this bundle.  Since $H_{m-1}(B_d)$ is cyclic, it now follows that $H_{m-1}(B_d)$ is cylic of order $|\det(h_\ast -Id_{H_m(F)})|$, unless this determinant is $0$ in which case $H_{m-1}(B_d)$ is infinite cyclic.

Set $\Delta(t) = \det(tId_{H_m(f)} - h_\ast)$, and observe that we are interested in computing $\Delta(1)$.  According to \cite[Theorem 9.1]{MilnorSingular}, $$\Delta(t) = \prod_{\omega\in\C, \omega^d = 1, \omega\neq 1} (t - \omega(-1)^m).$$

If $m$ is even, we find $\Delta(t) \cdot \frac{t-1}{t-1} = \frac{t^d - 1}{t-1}$ since the numerators are both monic degree $d$ polynomials whose roots are all of the $d$-th roots of unity.  It now easily follows that $\Delta(1) = d$ in this case.

If $m$ is odd, we find $$\Delta(-t) = \prod_{\omega^d = 1, \omega\neq 1} ( -t + \omega) = (-1)^{d-1} \left(\prod_{\omega^d = 1, \omega\neq 1} (t-\omega)\right) \cdot \frac{t-1}{t-1} = (-1)^{d-1}  \frac{t^d - 1}{t-1}.$$  Thus, $\Delta(t) = (-1)^{d-1} \frac{(-1)^d t^d - 1}{-t-1}$.  Then $\Delta(1) = 0$ if $d$ is even and $\Delta(1) = 1$ if $d$ is odd.
\end{proof}

\bibliographystyle{alpha}
\bibliography{myrefs}

\end{document}